\newcommand{\Tint}{\mathbb{T}^{{\rm in}}}
\newcommand{\Tstr}{\mathbb{T}^{{\rm bs}}}
\definecolor{c8b8b8b}{RGB}{139,139,139}
\definecolor{c999999}{RGB}{153,153,153}
\definecolor{c9b9b9b}{RGB}{155,155,155}
\definecolor{c808080}{RGB}{128,128,128}
\definecolor{cfa796b}{RGB}{250,121,107}
\definecolor{cfb786a}{RGB}{251,120,106}
\definecolor{cfb7b6a}{RGB}{251,123,106}
\definecolor{c8b8b8b}{RGB}{139,139,139}
\definecolor{c999999}{RGB}{153,153,153}
\definecolor{c9b9b9b}{RGB}{155,155,155}
\definecolor{c808080}{RGB}{128,128,128}
\definecolor{cff0000}{RGB}{255,0,0}
\definecolor{c00ff00}{RGB}{0,255,0}
\definecolor{c14fe00}{RGB}{0,255,0}
\definecolor{c0000ff}{RGB}{0,0,255}
\definecolor{c008080}{RGB}{0,128,128}
\definecolor{cfb786a}{RGB}{251,120,106}
\definecolor{c009b33}{RGB}{0,155,51}
\definecolor{c009933}{RGB}{0,153,51}
\definecolor{c8b8b8b}{RGB}{139,139,139}
\definecolor{c999999}{RGB}{153,153,153}
\def \globalscale {1.000000}
\def\timenow{\@tempcnta\time
	\@tempcntb\@tempcnta
	\divide\@tempcntb60
	\ifnum10>\@tempcntb0\fi\number\@tempcntb
	\multiply\@tempcntb60
	\advance\@tempcnta-\@tempcntb
	:\ifnum10>\@tempcnta0\fi\number\@tempcnta}
\numberwithin{equation}{section}
\theoremstyle{plain}
\newtheorem{theo}{Theorem}[section]
\newtheorem{prop}[theo]{Proposition}
\newtheorem{lemme}[theo]{Lemma}
\newtheorem{cor}[theo]{Corollary}
\theoremstyle{remark}
\newtheorem{remark}[theo]{Remark}
\newcommand{\mofe}[1]{\color{black}{#1}\color{black}}
\newcommand{\modifbis}[1]{\color{black}{#1}\color{black}}
\DeclareFontFamily{U}{mathx}{\hyphenchar\font45}
\DeclareFontShape{U}{mathx}{m}{n}{
	<5> <6> <7> <8> <9> <10>
	<10.95> <12> <14.4> <17.28> <20.74> <24.88>
	mathx10
}{}
\DeclareSymbolFont{mathx}{U}{mathx}{m}{n}
\DeclareMathAccent{\widecheck}{0}{mathx}{"71}
\def\E{\mathbb{E}}
\def\N{\mathbb{N}}
\def\P{\mathbb{P}}
\def\R{\mathbb{R}}
\def\Z{\mathbb{Z}}
\newcommand\Cs{{\mathcal C}}
\newcommand{\Db}{{\mathbb D}}
\newcommand{\cF}{{\mathcal F}}
\newcommand{\Ps}{{\mathcal P}}
\newcommand{\cL}{{\mathcal L}}
\newcommand{\cA}{{\mathcal A}}
\newcommand{\cM}{{\mathcal M}}
\newcommand{\rW}{{w}}
\newcommand\1{{\textbf{1}}}
\def\dd{\textnormal{d}}
\newcommand{\median}[1]{{\mathrm{median}\left\{{#1} \right\}}}
\newcommand{\supp}{{\mathrm{supp}}}
\newcommand{\cV}{{\mathcal V}}
\newcommand{\ind}[1]{{\1_{\{#1\}} }}
\newcommand{\wht}{{\mathbb{T}^{\rm in}}}
\newcommand{\wct}{{\mathbb{T}^{\rm bs}}}
\newcommand{\wctn}[1]{{\mathbb{T}^{\rm bs}_{#1}}}
\newcommand{\waitbds}[1]{{\mathbb{T}^{\rm bs}_{#1}}}
\newcommand{\waitbdsu}{{\mathbb{T}^{\rm bs}}}
\newcommand{\waitint}[1]{{\mathbb{T}^{\rm in}_{#1}}}
\newcommand{\rin}{{\mathring{r}}}
\newcommand{\rbs}{\bar{r}}
\newcommand{\tin}{{\tau}}
\newcommand{\tstop}{t_{\rm in}}
\newcommand{\tur}{{T}^{\rbs}}
\newcommand{\rur}{{R}^{\rbs}}
\newcommand{\uur}{{U}^{\rbs}}
\newcommand{\repsb}{\bar{r}_{\star}}
\newcommand{\turb}{{T}^{\repsb}}
\newcommand{\rurb}{{R}^{\repsb}}
\newcommand{\uurb}{{U}^{\repsb}}
\newcommand{\repsek}{\rbs}
\newcommand{\Nstuck}{N_{\dagger}}
\newcommand{\whtns}[1]{{{T}^{\rm in}_{#1}}}
\newcommand{\wctns}[1]{{{T}^{\rm bs}_{#1}}}
\newcommand{\wctny}[1]{{{\bar{T}}^{{\rm bs}}_{#1}}}
\newcommand{\whtnl}[1]{{{\hat{T}}^{{\rm in}}_{#1}}}
\newcommand{\renewal}{\mathscr{R}_Y}
\newcommand{\turk}{{T}^{\kappa}}
\newcommand{\rurk}{{R}^{\kappa}}
\newcommand{\uurk}{{U}^{\kappa}}
\newcommand{\turt}{{T}^{\theta}}
\newcommand{\uurt}{{U}^{\theta}}
\newcommand{\turtk}{{T}^{2\xi}}
\newcommand{\rurtk}{{R}^{2\xi}}
\newcommand{\uurtk}{{U}^{2\xi}}
\newcommand{\tu}[1]{{T}^{{#1}}}
\newcommand{\ru}[1]{{R}^{{#1}}}
\newcommand{\uu}[1]{{U}^{{#1}}}
\newcommand{\Yups}[2]{\mro{Y}{#1}{#2}}
\newcommand{\Ylows}[2]{\mru{Y}{#1}{#2}}
\newcommand{\capco}[1]{\mathrm{K}_{#1}}
\newcommand{\fray}{\mathsf{Y}}
\newcommand{\hL}{{L}}
\newcommand{\sel}{{\sigma}}
\newcommand{\mr}{{\mathrm{m}}}
\newcommand{\mru}[3]{{\underline{\mathrm{m}}({#1}_{#2-},{#3}) }}
\newcommand{\mro}[3]{{\overline{\mathrm{m}}({#1}_{#2-},{#3}) }}
\newcommand{\maxsupp}[1]{{\max\supp\,{#1}}}
\newcommand{\cadlag}{c{\`a}dl{\`a}g }
\begin{document}

\begin{frontmatter}
\title{$\Lambda$-Wright--Fisher processes with general selection and opposing environmental effects: fixation and coexistence}
\runtitle{$\Lambda$-Wright--Fisher processes with selection}

\begin{aug}
\author[A]{\fnms{Fernando}~\snm{Cordero}\ead[label=e1]{fcordero@techfak.uni-bielefeld.de}},
\author[B]{\fnms{Sebastian}~\snm{Hummel}\ead[label=e2]{shummel@berkeley.edu}}
\and
\author[C]{\fnms{Gr\'egoire}~\snm{V{\'e}chambre}\ead[label=e3]{vechambre@amss.ac.cn}}
\address[A]{Faculty of Technology, Bielefeld University, Box 100131, 33501 Bielefeld, Germany\printead[presep={,\ }]{e1}}
\address[B]{Department of Statistics, University of California at Berkeley, 367 Evans Hall, Berkeley, CA 94720-3860, U.S.A.\printead[presep={,\ }]{e2}}
\address[C]{Academy of Mathematics and Systems Science, Chinese Academy of Sciences, No. 55, Zhongguancun East Road, Haidian District, Beijing, China\printead[presep={,\ }]{e3}}
\end{aug}

\begin{abstract}
Our results characterize the long-term behavior for a broad class of $\Lambda$-Wright--Fisher processes with frequency-dependent and environmental selection. In particular, we reveal a rich variety \mofe{of } parameter-dependent behaviors and provide explicit criteria to discriminate between them. That includes the situation in which the (entire) boundary is repelling -- a new phenomenon in this context. \mofe{This has } significant biological implications, because \mofe{it means that } selection alone can maintain coexistence. If a boundary point is attractive, we derive polynomial/exponential decay rates for the probability of not being polynomially/exponentially close to \mofe{that } boundary, depending on some weak/strong integrability conditions. Moreover, we provide a handy representation of the fixation probability. \modifbis{In our proofs we make use of Siegmund duality. The dual process can be sandwiched near the boundaries in between transformed L{\'e}vy processes}. In this way we relate the boundary behavior of the dual process to fluctuation properties of these L{\'e}vy processes and shed new light on previously established conditions for attractive/repelling boundary points. Our method allows us to treat models that so far could not be analyzed by means of moment or Bernstein duality. This closes an existing gap in the literature. 
\end{abstract}

\begin{keyword}[class=MSC]
\kwd[Primary ]{92D15}
\kwd{60J25}
\kwd[; secondary ]{60J27}
\end{keyword}

\begin{keyword}
\kwd{$\Lambda$-Wright--Fisher process}
\kwd{frequency-dependent selection} 
\kwd{environmental selection} 
\kwd{Siegmund duality} 
\kwd{fixation probability} 
\kwd{absorption probability} 
\kwd{L{\'e}vy processes} 
\kwd{fluctuation theory}
\kwd{convergence rates}
\end{keyword}

\end{frontmatter}
\tableofcontents

\section{Introduction} \label{s1}
The one dimensional \emph{$\Lambda$-Wright--Fisher process}~$X=(X_t)_{t\geq 0}$ is a L{\'e}vy-type process on the unit interval. Interest in this process stems from its connections to coalescent processes~\citep{Bertoin2003,Bertoin2005,Bertoin2006,Pitman1999,Sa99}, and its application in mathematical population genetics~\citep{Eldon2006,DEP12,birkner_blath_2009,Cordero2020}, where it appears as the type-frequency process of one of two alleles in an essentially infinite population that evolves under \emph{neutrality} (i.e. allele types do not affect offspring rates). The "$\Lambda$" in the name of the process appeals to a finite measure on~$[0,1]$ that drives the process and codes the rate of neutral offspring of various sizes. In many biological settings, evolutionary selection affects small offspring sizes and we then refer to it as \emph{selective drift}~\citep{BLW16,EGT10,foucart2013impact,GS18,CHS19,CF21}. In contrast, the effect of instantaneous change (hurricanes, floods,...) in an environment of a population can {make }one allele type particularly thrive and cause a large progeny. We refer to the latter phenomenon as \emph{environmental selection}. From a modeling perspective, selective drift and environmental selection lead to a variant of the $\Lambda$-Wright--Fisher process with a drift term vanishing at the boundary (so at $0$ and $1$) and an additional jump term, respectively. The resulting variant then exhibits connections to branching-coalescing processes~\citep{CV19,casanova2019effective,Vec2023}.

\smallskip

The boundary behavior of the $\Lambda$-Wright--Fisher process with selection can be peculiar. For example, it has been shown that for some parameter choices, one boundary point is repelling and the process converges almost surely to the other boundary point, when started \mofe{at any point in $(0,1)$ }\citep{foucart2013impact,Griffiths2014,GS18,CHS19}. This is particularly intriguing when one considers the biological implications: one of the two alleles is doomed to extinction no matter its initial frequency. 

\smallskip

Unfortunately, the boundary behavior is only known for a rather restrictive class of parameters. In fact, most previous works consider the simplest \mofe{possible } selective drift (modeling genic selection, i.e. \mofe{when } selection is independent of allele frequencies), which is parametrized by a positive \mofe{constant $\sigma_0^{}$}, and there is no environmental selection~\citep{foucart2013impact,bah2015,Griffiths2014}. To determine the \mofe{long-term } behavior in this case, the strength of selection has to be compared at the boundary to that of neutral evolution. The measure for the former \mofe{is~$\sigma_0^{}$}; the latter turns out to be well-captured by the quantity $\int_{[0,1]}\log(1/(1-r))r^{-2}\Lambda(\dd r)$. The two important cases are distinguished by the sign of \[C(\Lambda,\sigma_0^{})\coloneqq -\int_{[0,1]}\log\Big(\frac{1}{1-r}\Big)\frac{\Lambda(\dd r)}{r^2}+\sigma_0^{}.\] If $C(\Lambda,\sigma_0^{})<0$, neutral evolution dominates selection and both boundary points are attractive; if $C(\Lambda,\sigma_0^{})>0$, selection dominates neutral evolution and one boundary point is attractive and the other one is repelling.

\smallskip

The proof of this \emph{dichotomy} crucially relies on a \emph{moment duality} between~$X$ and the line-counting process $L=(L_t)_{t\geq 0}$ of a certain branching-coalescing structure, called the ancestral selection graph (ASG). The branching-coalescing structure has an interpretation in terms of potential genealogies that are compatible with the model, see~\citep{GS18} for details. In this structure, selection leads to branching, while neutral evolution leads to coalescence. The formal relation between $X$ and $L$ is \begin{equation}
	\forall x\in [0,1],\, n\in \N,\, t\geq 0\qquad \E_x[(1-X_t)^n]=\E_n[(1-x)^{L_t}]. \label{eq:momentduality}
\end{equation}
Heuristically, the proof idea underlying the dichotomy is that if selection dominates neutral evolution, then~$L$ is transient and the {right-hand side }of~\eqref{eq:momentduality} converges to~$0$ for any $x\in (0,1)$. Thus, also the left-hand side converges to~$0$, which is only possible if $X$ converges to~$1$ from any $x\in (0,1)$. \citep{GS18} showed that essentially the same dichotomy holds for drift terms \mofe{of very specific form}. In that setting, only one specific allele is favored by selection, with an intensity that changes with the type composition. Unfortunately, moment duality is only known to hold for this restrictive set of selective drifts; making the class of $\Lambda$-Wright--Fisher processes for which the boundary behavior is understood rather small. A first step to overcome these limitations was carried out by~\citep{Cordero2020}. For \mofe{drifts of the form $x(1-x)\sigma(x)$ for some polynomial~$\sigma$} , they generalized the moment duality to a \emph{Bernstein duality}. This new duality allows the representation of mixed moments of~$X$ in terms of a dual process that again has an interpretation in terms of a branching-coalescing structure. In this setting, they \mofe{determined a constant $C(\Lambda,\sigma)$ (generalizing $C(\Lambda,\sigma_0^{})$ for non constant functions $\sigma$) satisfying that if $C(\Lambda,\sigma)<0$, then } both boundary points are attractive. However, the case $C(\Lambda,\sigma)>0$ has continued to be notoriously difficult to analyze, also with the new duality. 
\smallskip

The moment duality and the corresponding dichotomy was extended in \citep{casanova2019effective,CV19} to situations where both selective drift and environmental selection favor always the same allele. Our aim here is to depart from this restrictive setting and characterize the boundary behavior of $\Lambda$-Wright--Fisher processes for a fairly general class of selective drifts and environments with opposing effects. The case of \mofe{strong } neutral reproductions, i.e. $
	\int_{[0,1]}r^{-1}\Lambda(\dd r)=\infty \text{ or } \Lambda(\{1\})>0,$ \mofe{will be treated in a forthcoming paper using the Bernstein-duality approach}. \modifbis{ Prominent examples for this setting are $\Lambda(\{0\})>0$, i.e. there is a diffusive component, or $\Lambda=\mathrm{Beta}(2-\alpha,\alpha)$ for $1<\alpha<2$}. In the present article we restrict our attention to $\Lambda$ satisfying 
\begin{equation}\label{eq:dust}
	\int_{[0,1]}r^{-1}\Lambda(\dd r)<\infty\text{ and }\Lambda(\{1\})=0,
\end{equation}
i.e. to the case with \emph{dust}~\cite[Thm 3.5]{Berestycki2009}.

\smallskip

Our first main result establishes explicit conditions \mofe{for the attraction or repulsion of the boundary points } for a broad class of $\Lambda$-Wright-Fisher processes with frequency-dependent selective drift and environmental selection that can favor both types. In particular, we complete the picture of the long-time behavior studied in~\citep{GS18,CHS19}, and extend results of~\citep{casanova2019effective,CV19} to environmental selection acting in opposing directions. Moreover, our results reveal parameter regions for the entire boundary to be repelling. In the context of~$\Lambda$-Wright--Fisher processes without mutation or immigration, this seems to be a new phenomenon.

\smallskip

\modifbis{An entirely repelling boundary has intriguing biological implications. It means that genetic variation at a single locus can be maintained by neutral and selective evolution alone; even in temporal homogeneous systems, outside the framework of the deterministic mutation--selection balance, or without any additional mutation or immigration term. In our framework, we show that a frequency-dependent selective drift is necessary to maintain coexistence and observe that already a simple instance of balancing selection can lead to such situations.}

\smallskip

If exactly one boundary point is attractive, one would like to quantify how fast $X$ converges to that point. We show that in this situation\mofe{, depending on some integrability conditions, the probability for $X$ not being exponentially (resp. polynomially) close to that attractive point decays at least exponentially (resp. polynomially)}. If both boundary points are attractive, the decay is always exponential and we provide a representation of the fixation probability that we consider useful for simulations.

\smallskip

To prove our results, we establish a Siegmund duality for~$X$. Siegmund duality has proven to be useful in various contexts~\citep{BLW16,BEH21,CF21,Kolo11,Kolokoltsov2013} and has the advantage to hold in more general settings than the moment duality. \modifbis{In our case, the Siegmund dual can be almost surely sandwiched in between transformed L{\'e}vy processes near the boundaries. Thus, we can employ the fluctuation theory of L{\'e}vy processes to analyze~$X$. In particular, we can give meaning to the mysterious quantity~$C(\Lambda,\sigma)$ as the limit rate of growth of the L{\'e}vy processes. Moreover, in addition to being interesting in its own right, the dual process enjoys in certain parameter settings some other characteristics useful to us. For example, to establish our results we make use of a renewal structure, properties of the stationary measure, and coalescence properties of its flow}. 

\section{Main results} \label{sec:main}
$\Lambda$-Wright Fisher processes provide a broad framework to model neutral evolution. Here we consider an extension that incorporates frequency-dependent selection and environmental selection.\mofe{ It is parametrized by an element $(\Lambda,\mu,\sel)$ of the set $\Ps\coloneqq \cM_f([0,1])\times\cM((-1,1))\times\Cs^1([0,1])$,
where $\cM_f([0,1])$ denotes the finite Borel measures on~$[0,1]$, $\cM((-1,1))$ the sigma-finite Borel measures on~$(-1,1)$, and $\Cs^1([0,1])$ are the continuously differentiable functions on $[0,1]$. We will restrict our attention to the case where $(\Lambda,\mu,\sel)$ belongs to the set
\begin{equation*}
\textstyle \Theta\coloneqq\left\{(\lambda,\nu,f)\in\Ps:0\neq \lambda\text{ satisfies~\eqref{eq:dust}},\,\nu(\{0\})=0,\,\int_{(-1,1)}\lvert r\rvert \nu(\dd r)<\infty\right\},
\end{equation*}}
For $(\Lambda,\mu,\sel)\in \Theta$, the
\emph{$\Lambda$-Wright-Fisher process} with selective drift~$\sel$ and environmental selection measure~$\mu$ is the solution $X=(X_t)_{t\geq 0}$ of the SDE \mofe{
\begin{align}\label{eq:SDEWFP}
	\dd X_t  &=\ \int_{(0,1)^2}r\big(\1_{\{u\leq X_{t-}\}}(1-X_{t-})-\textbf{1}_{\{u>X_{t-}\}} X_{t-} \big)N(\dd t, \dd r,\dd u)\nonumber\\
	&\ +X_t(1-X_t) \sel(X_t)\dd t \,+\,\int_{(-1,1)}rX_{t-}(1-X_{t-})S(\dd t,\dd r), 
\end{align} 
where $N(\dd t,\dd r,\dd u)$ is a Poisson measure on $[0,\infty)\times(0,1)^2$ with intensity $\dd t\times r^{-2}\Lambda(\dd r)\times \dd u$, and $S(\dd t, \dd r)$ is a Poisson measure on $[0,\infty)\times(-1,1)$ with intensity $\dd t \times \mu$; $N$ and $S$ are independent. We refer to the process also as $(\Lambda,\mu,\sel)$-Wright-Fisher process for short. For any $x\in[0,1]$, there is a pathwise unique strong solution to~\eqref{eq:SDEWFP} such that $X_0=x$ almost surely, and $X_t\in[0,1]$ for all $t\geq0$; we show this in Appendix~\ref{sec:ExistenceUniqueness} in a slightly more general framework applying a result of~\cite{li2012strong}}. We write $\P_x$ and $\E_x$, respectively, for the measure induced by~$X$ and its expectation under $X_0=x\in[0,1]$.
\smallskip

The $(\Lambda,\mu,\sel)$-Wright--Fisher process describes the evolution of the type composition of an infinite haploid population with two types, say type $a$ and type $A$, which is subject to neutral reproductions, frequency-dependent selection and environmental shocks. More precisely, $X_t$ represents the proportion of type $a$ individuals at time~$t$. The measure $\Lambda$ drives neutral reproductions; a point $(t,r,u)$ of~$N$ means that at time~$t$ a fraction~$r$ of the population is replaced by the offspring of one individual, its type being~$a$ (resp.~$A$) if $X_{t-}\leq u$ (resp. $X_{t-}>u$). The function $\sel$ models frequency-dependent selection; the strength and direction of selection at time $t$ depends on the modulus and the sign of~$\sel(X_t)$, respectively. The measure $\mu$ encodes the effect of the environment; a point $(t,r)$ of $S$ with $r>0$ (resp. $r<0$) means that at time $t$ conditions are exceptionally favorable for the reproduction of type $a$ (resp. $A$) individuals, so that a proportion~$\lvert r\rvert $ of the type $a$ (resp. $A$) population duplicates (i.e. each reproducing individual has one offspring) and the offspring replaces uniformly at random a proportion $\lvert r\rvert X_{t-}$ (resp. $\lvert r\rvert (1-X_{t-})$) of the population that was alive before the environmental shock.

\smallskip 
\modifbis{Our conditions on $\Lambda$ and $\mu$ prevent neutral reproductions and environmental effects from being too strong. For example, the conditions $\int_{[0,1]}r^{-1}\Lambda(\dd r)<\infty$ and $\int_{(-1,1)}\lvert r\vert \mu(\dd r)<\infty$ imply that the $r$-components of the jumps of $S$ and $N$ are locally summable (in particular, our processes have paths of bounded variation)}. \mofe{They also imply that $\Lambda(\{0\})=0$; without this condition, SDE \eqref{eq:SDEWFP} would have an extra diffusion term, see Appendix~\ref{sec:ExistenceUniqueness}. The condition $\Lambda(\{1\})=0$ prevents the process from jumping directly to the boundary}. \modifbis{Some cases with strong neutral reproductions (i.e. $
	\int_{[0,1]}r^{-1}\Lambda(\dd r)=\infty \text{ or } \Lambda(\{1\})>0,$) can be studied via the ASG. For example, in \citep{CHS19}, they consider the case where $\mu= 0$, $\sigma$ is a polynomial, and the ASG is recurrent (which is slightly stronger than $
	\int_{[0,1]}r^{-1}\Lambda(\dd r)=\infty \text{ or } \Lambda(\{1\})>0$). There, both boundary points are attractive. As mentioned in the introduction, a more general study of the case of strong neutral reproductions will be the object of a forthcoming paper}. \mofe{This is why we focus here on the parameter regime $\Theta$}. In full generality, this case has eluded previous analyses because here the corresponding ancestral processes can be transient, leading to difficulties in applying methods based on \mofe{Bernstein } duality.

\subsection{Boundary classification}\label{sec:main:accessibility}
Our results reveal a rich variety of parameter-dependent long-term behaviors for the model~\eqref{eq:SDEWFP} and allow to discriminate the different boundary behaviors for~$X$ by explicit criteria. 

\smallskip

A \mofe{point } $b\in\{0,1\}$ is said to be attractive for $X$ from $x\in(0,1)$ if $\P_x(\lim_{t\to\infty}X_t=b)>0$. Similarly, $b\in\{0,1\}$ is said to be repelling for $X$ from $x\in(0,1)$ if $\P_x(\lim_{t\to\infty}X_t=b)=0$. We say that $b\in\{0,1\}$ is \textit{attractive (repelling)} for $X$ if $b$ is attractive (repelling) for $X$ from any $x\in(0,1)$. \modifbis{In contrast to the notion of accessibility, the notion of attractiveness does not require that the point must be reached in finite time. Remark~\ref{boundaryfinitetime} below will show that in our case the boundary can only be reached asymptotically}.

\smallskip

To determine if a boundary point is attractive or repelling, one has to compare the neutral forces, parameterized by~$\Lambda$, with the selective forces, parametrised by~$\mu$ and~$\sel$. Define \begin{align*}
	 C_0(\Lambda,\mu,\sel)&\textstyle \coloneqq \left(\sel(0)- \int_{(-1,1)}\log\big(\frac{1}{1-r}\big) \bar{\mu}(\dd r) \right)-\int_{(0,1)}\log\big(\frac{1}{1-r}\big) \frac{\Lambda(\dd r)}{r^2},\\ 
	C_1(\Lambda,\mu,\sel)&\textstyle \coloneqq \left(-\sel(1) -\int_{(-1,1)}\log\big(\frac{1}{1-r}\big) \mu(\dd r)\right)-\int_{(0,1)}\log\big(\frac{1}{1-r}\big) \frac{\Lambda(\dd r)}{r^2},
\end{align*}
where $\bar{\mu}$ denotes the pushforward measure of $\mu$ under $r\mapsto -r$. For $(\Lambda,\mu,\sigma)\in\Theta$, we have $C_0(\Lambda,\mu,\sel),C_1(\Lambda,\mu,\sel)\in[-\infty,\infty)$. 
We note that the quantity $\int_{(0,1)}\log({1}/(1-r)) r^{-2}\Lambda(\dd r)$ has been previously referred to as the \emph{coalescence impact} of $\Lambda$, because it is related to properties of the associated $\Lambda$-coalescent (see, e.g., \citep[Def.~2.18]{CHS19}). 
\mofe{The seeming asymmetry of $C_0(\Lambda,\mu,\sigma)$ and $C_1(\Lambda,\mu,\sigma)$ in $\sigma$ can be explained as follows. Attractiveness of~$1$ for~$X$ is equivalent to attractiveness of~$0$ for $1-X$. Moreover, if $X$ is a $(\Lambda,\mu,\sigma)$-Wright--Fisher process, then $1-X$ is a $(\Lambda,\bar{\mu},\bar{\sigma})$-Wright--Fisher process, where $\bar{\sigma}(y)=-\sigma(1-y)$}. The term inside the bracket in the definition of $C_0(\Lambda,\mu,\sel)$ can be understood as the strength at which selective and environmental forces repel/push (depending on its sign) $X$ from/towards~$0$ when~$X$ is close to that boundary point; the integral with respect to $\Lambda$ represents the strength at which neutral forces push $X$ towards $0$. An analogous interpretation applies to the boundary point $1$ and~$C_1(\Lambda,\mu,\sel)$. This motivates to distinguish the parameter regions
\begin{align*}
	\Theta_0\coloneqq\{(\Lambda,\mu,\sel)\in\Theta: C_0(\Lambda,\mu,\sel)<0, C_1(\Lambda,\mu,\sel)>0\},\\
	\Theta_1\coloneqq\{(\Lambda,\mu,\sel)\in\Theta: C_0(\Lambda,\mu,\sel)>0, C_1(\Lambda,\mu,\sel)<0\},\\
	\Theta_2\coloneqq\{(\Lambda,\mu,\sel)\in\Theta: C_0(\Lambda,\mu,\sel)<0, C_1(\Lambda,\mu,\sel)<0\},\\
	\Theta_3\coloneqq\{(\Lambda,\mu,\sel)\in\Theta: C_0(\Lambda,\mu,\sel)>0, C_1(\Lambda,\mu,\sel)>0\}.
\end{align*}

To formulate our result, we require some control on the big jumps of $N$ and $S$. To this end, define for $\nu\in\cM(A)$, with $A= [-1,1]$ or $A=[0,1]$, and $\gamma\geq 0$, \begin{equation}\label{eq:weakint}
	w_\gamma(\nu)\coloneqq\int_{(1/2,1)}\log\Big(\frac{1}{1-r}\Big)^{1+\gamma} \nu(\dd r).
\end{equation}
\begin{theo}[Boundary classification]\label{thm:accessibility_condition}
	Let $(\Lambda,\mu,\sel)\in\Theta$. Let $X$ be the solution of \eqref{eq:SDEWFP} with $X_0=x\in (0,1)$. \begin{enumerate}
		\item[(0)] If $(\Lambda,\mu,\sel)\in\Theta_0$ and for some $\gamma>0$, $\rW_\gamma(r^{-2}\Lambda(\dd r))<\infty$ and $\rW_\gamma(\mu)<\infty$, then a.s. $\lim_{t\to\infty}X_t=0$, \mofe{i.e. $0$ is attractive and $1$ is repelling.} 
		\item[(1)] If $(\Lambda,\mu,\sel)\in\Theta_1$ and for some $\gamma>0$, $\rW_\gamma(r^{-2}\Lambda(\dd r))<\infty$ and $\rW_\gamma(\bar{\mu})<\infty$, then a.s. $\lim_{t\to\infty}X_t=1$, \mofe{i.e. $1$ is attractive and $0$ is repelling.} 
		\item[(2)] If $(\Lambda,\mu,\sel)\in\Theta_2$, then a.s. $\lim_{t\to\infty}X_t$ exists and belongs to $\{0,1\}$. Moreover, both $0$ and~$1$ are attractive. 
		\item[(3)] If $(\Lambda,\mu,\sel)\in\Theta_3$, then $0$ and~$1$ are repelling. Moreover, $X$ admits a stationary distribution~$\pi_X$ with~$\pi_X(0,1)=1$ and, for any $x \in (0,1)$, \mofe{under $\P_{x}$, $X_t$ } converges in distribution to $\pi_X$. 
	\end{enumerate}
\end{theo}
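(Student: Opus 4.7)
The plan is to exploit the Siegmund duality from Section~\ref{sec:proofsiegdual} together with the sandwiching of the dual at each boundary by transformed L\'evy processes (Section~\ref{sec:comparisonlevy}). Let $D=(D_t)_{t\geq 0}$ denote the Siegmund dual of~$X$, so that
\begin{equation*}
	\P_x(X_t\leq y)=\P_y(D_t\geq x),\qquad x,y\in[0,1],\ t\geq 0.
\end{equation*}
Monotone-class manipulations of this identity convert the four assertions of Theorem~\ref{thm:accessibility_condition} into equivalent statements about the long-time behavior of~$D$ at its boundary points. For example, $X_t\to 0$ a.s.\ under $\P_x$ for every $x\in(0,1)$ is equivalent to $D_t\to 1$ a.s.\ under $\P_y$ for every $y\in(0,1)$; the case $X_t\to 1$ is symmetric; and the existence of a stationary $\pi_X$ for~$X$ on~$(0,1)$ with $X_t$ converging in distribution to~$\pi_X$ is equivalent to the analogous statement for~$D$.

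The L\'evy sandwich is the tool for the dual side. Inside a one-sided neighborhood of each boundary of~$D$, an appropriate logarithmic transform of~$D$ is pathwise trapped between two L\'evy processes whose asymptotic drifts are determined by the sign of $C_0(\Lambda,\mu,\sel)$ or $C_1(\Lambda,\mu,\sel)$. With the sign convention matching the biological interpretation of the $C_b$, a positive $C_b$ makes boundary~$b$ of~$X$ asymptotically repulsive while a negative~$C_b$ makes it attractive; through duality, this amounts to a push/pull of~$D$ at the opposite boundary. The required tool to turn these local pathwise estimates into actual (in)accessibility results is the strong law of large numbers for L\'evy processes, and this is where the integrability hypothesis $w_\gamma(\cdot)<\infty$ enters: it provides a finite drift rate for the big-jump components and hence a quantitative LLN with an error term strong enough to control the escape/absorption times.

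With this dictionary, the four cases unfold as follows. Cases~(0) and~(1) are exchanged by the symmetry $X\leftrightarrow 1-X$ (which swaps $(\mu,\sel)$ with $(\bar\mu,-\sel(1-\cdot))$ and $C_0$ with $C_1$), so it suffices to treat~(0): the sandwich makes~$1$ the unique attractive boundary for~$D$, a coupling with the attracting L\'evy process gives $D_t\to 1$ a.s., and duality yields $X_t\to 0$ a.s. In Case~(2), both boundaries of~$D$ are attractive; one must first show that $D$ converges a.s.\ (rather than oscillating), and then that each boundary is reached with positive probability from any starting point in $(0,1)$. This is done by combining the two sandwich bounds with a renewal structure tracking successive returns of~$D$ to a fixed interior level, so that at each renewal epoch absorption at either boundary has positive probability independent of the past. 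In Case~(3), both boundaries of~$D$ are repulsive; the sandwich yields exponential tails for the sojourn time of~$D$ in a neighborhood of each boundary, from which one extracts tightness of the marginal laws of~$D$ and then, via Krylov--Bogolyubov, a stationary $\pi_D$ on~$(0,1)$. Uniqueness and convergence in distribution of~$D_t$ follow from a coalescence argument on the flow of~$D$, and duality transfers all statements to~$X$.

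The main obstacle will be Case~(3). The L\'evy sandwich is purely local: it tells us what happens when $D$ sits in a small one-sided neighborhood of~$0$ or of~$1$, but away from the boundary the dual dynamics are genuinely interval-valued. To upgrade the two local boundary estimates to a uniform-in-time tightness of $\{\P_y(D_t\in\cdot)\}_{t\geq 0}$, one must carefully patch them together across the interior and rule out a slow drift toward either endpoint; I expect this to rely on a Lyapunov function built out of both logarithmic transforms, controlled via the exponential sojourn-time tails from the repulsive regime. A secondary difficulty arises in Case~(2), where one has to exclude pathological scenarios in which $D$ oscillates indefinitely between the two attractive boundaries without converging; here too the renewal structure is essential, since each renewal epoch offers an independent chance of absorption and a Borel--Cantelli argument then delivers almost sure convergence.
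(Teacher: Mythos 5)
The overall plan — combine Siegmund duality with the L\'evy sandwich for the dual — is indeed the paper's strategy, but your dictionary between $X$ and its dual $D$ has the signs exactly backwards in cases (2) and (3), and your equivalence claim about almost sure convergence overstates what Siegmund duality delivers.

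First, the sign/boundary bookkeeping. You write that a positive $C_b$ makes boundary~$b$ of~$X$ repulsive and that "through duality, this amounts to a push/pull of~$D$ at the opposite boundary." The correct relationship is different: $C_0>0$ makes $0$ repulsive for~$X$ and $0$ \emph{attractive} for the dual~$Y$ (same boundary, opposite direction). This is precisely what the sandwich encodes: $\hat L^b$ is a lower bound for $\log(1/Y)$ near~$0$, and $C_0>0$ makes $\E[\hat L^b_1]>0$, so $\log(1/Y)$ drifts to $+\infty$ and $Y$ is trapped at~$0$. Consequently, in $\Theta_2$ (both $C_b<0$) the dual has \emph{both boundaries repulsive}, is open-set recurrent, and carries a stationary law $\pi_Y$ on $(0,1)$ (Theorem~\ref{prop:stationary_distribution_Y}); in $\Theta_3$ (both $C_b>0$) the dual has \emph{both boundaries attractive} and converges a.s.\ to $\{0,1\}$ (Corollary~\ref{prop:limitXcoexist}). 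Your proposal claims the opposite in both cases. This is not a cosmetic mislabeling: your tightness-plus-Krylov--Bogolyubov plan for $\Theta_3$ would be trying to manufacture an interior stationary measure for a process that in fact runs off to the boundary, and your a.s.-convergence-to-the-boundary plan for $\Theta_2$ would be trying to show a recurrent process converges.

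Second, the transfer of a.s.\ statements. You assert that "$X_t\to 0$ a.s.\ under $\P_x$ for every $x$ is equivalent to $D_t\to 1$ a.s.\ under $\P_y$ for every $y$." Siegmund duality is a family of fixed-time identities $\P_x(X_t\ge y)=\P_y(Y_t\le x)$; it transports convergence in distribution, not almost sure convergence. The paper must therefore argue separately: in $\Theta_0/\Theta_1$ it first obtains convergence in probability from the essential-fixation estimates, then compares pathwise with a $\Theta_2$-process $\tilde X\geq X$ via \citep[Thm.~2.2]{DL12} and uses the Markov property to upgrade to a.s.\ convergence; in $\Theta_2$ it never proves a.s.\ convergence of the dual at all (indeed the dual is recurrent), but shows $f(X_t):=\pi_Y([0,X_t])$ is a bounded martingale via the duality and the stationarity of $\pi_Y$, applies Doob's theorem to get a.s.\ convergence of~$X$, and identifies the limit as $\ind{X_\infty=1}$ using Theorem~\ref{thm:survival_probability}; in $\Theta_3$ it uses the a.s.\ convergence of $Y$ to $\{0,1\}$ together with duality only to conclude $X_t$ converges \emph{in distribution}, which is exactly what the statement requires. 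These are the missing arguments in your proposal.
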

Theorem~\ref{thm:accessibility_condition} provides, under some integrability conditions on $\Lambda$ and $\mu$, explicit criteria for a boundary point to be attractive/repelling. The proof is provided in Section~\ref{sec:accessibility}.
\smallskip

Theorem~\ref{thm:accessibility_condition} has important biological implications. For $(\Lambda,\mu,\sel)\in\Theta_0$ (resp. $\Theta_1$), type $A$ (resp. type~$a$) almost surely takes over the entire population eventually, regardless of its initial frequency. For $(\Lambda,\mu,\sel)\in\Theta_2$, both types have a positive probability to eventually fixate in the population. In all three cases, it is of practical relevance to know the rate at which fixation (or equivalently extinction) essentially takes place. We provide an answer in Theorem~\ref{thm:survival_probability} below. Moreover, if both boundary points are attractive, it is a natural question how the fixation probabilities depend on the initial distribution. We provide a useful representation for the fixation probability in Theorem~\ref{thm:representation_absorption_probability} below. For $(\Lambda,\mu,\sel)\in\Theta_3$, both types persist in the long run, which leads to interesting biological implications as discussed in the introduction. To the best of our knowledge, this is a new phenomenon, even in cases where there is no environmental selection.

\begin{remark}[\mofe{Balancing selection}] \label{rem:balancing}
	In our framework, balancing selection~\citep{hedrick2007balancing} can be modeled by choosing~$\sigma(x)=1-2x$. Consider the situation with no environmental effects (i.e. $\mu= 0$). \mofe{In this case,} 
\[C_0(\Lambda,\mu,\sel)>0 \ \text{and} \ C_1(\Lambda,\mu,\sel)>0 \Leftrightarrow \, \textstyle 1>\int_{(0,1)}\log(\frac{1}{1-r})\frac{\Lambda(\dd r)}{r^2}. \] 
\mofe{According to Theorem~\ref{thm:accessibility_condition}(3), balancing selection maintains genetic variation in this context}. Thus, our result adds another argument to the long lasting controversy between the `(neo)classical' and `balance' school in explaining the observed abundance of genetic variation in natural populations~\citep{Beatty1987}.
\end{remark}

\begin{remark}[Coexistence requires frequency-dependent selection] \label{coeximplyfreqdepsel}
	If $(\Lambda,\mu,\sel)\in\Theta_3$, then $\sel$ is not constant. To see this, consider $(\Lambda,\mu,\tilde{\sel})\in\Theta$ with $\tilde{\sel}$ constant. Then,
	\begin{align*}
		\textstyle C_0(\Lambda,\mu,\tilde{\sel})+C_1(\Lambda,\mu,\tilde{\sel})=&-2\textstyle \int_{(0,1)}\!\log\big(\frac{1}{1-r}\big) \frac{\Lambda(\dd r)}{r^2}- \int_{(-1,1)}\!\log\big(\frac{1}{1-r^2}\big) \mu(\dd r)<0.
	\end{align*}
	Hence, $C_0(\Lambda,\mu,\tilde{\sel})<0$ or $C_1(\Lambda,\mu,\tilde{\sel})<0$, which implies that $(\Lambda,\mu,\tilde{\sel})\notin\Theta_3$. In particular, without frequency-dependent selection at least one of the two boundary points is attractive in our framework.
\end{remark}
Theorem~\ref{thm:accessibility_condition} connects to the existing literature as follows.
\smallskip

\emph{No environment.} In the absence of environmental shocks (i.e $\mu=0$), Theorem \ref{thm:accessibility_condition} generalizes \citep[Thm.~4.3]{bah2015}, \citep[Thm.~2.14]{CHS19}, \citep[Thm. 1.1]{foucart2013impact}, \citep[Thm. 4.6]{GS18}, and \citep[Thm.~3]{Griffiths2014} for the non-critical case (i.e. for $C_0(\Lambda,\mu,\sigma)\neq 0$). \citep{foucart2013impact,Griffiths2014,bah2015} consider the case of a constant~$\sigma$; \citep{GS18} and \citep{CHS19} consider special cases of frequency-dependent selection. In none of these studies coexistence has been observed. For \citep{foucart2013impact,Griffiths2014,bah2015} this is in line with Remark \ref{coeximplyfreqdepsel}. In \citep{GS18}, $\sel(x)=-\sum_{i\geq0} s_ix^i$, where $(s_i)_{i\in \N_0}$ is a decreasing sequence in~$\R_+$ whose series is absolutely convergent. In this case, $\sel(0)\leq 0$, and hence $C_0(\Lambda,0,\sel)<0$ and $0$ is always attractive. This is proved in \citep{GS18} using moment duality. Unfortunately, existence of a moment dual seems to be the exception rather than the rule. However, for $\sel$ being a general polynomial, \citep{CHS19} established a more involved duality relation that allows the representation of mixed moments of~$X$ as the expectation of a Bernstein polynomial with (random) coefficients driven by a dual process. It was shown that if the Bernstein-dual process to~$X$ is recurrent, both boundary points are attractive. \modifbis{Inspecting the conditions in \citep[Prop.~2.27]{CHS19} reveals that under~\eqref{eq:dust}, the Bernstein-dual can sometimes be recurrent and all cases where this happens fall in our parameter region $\Theta_2$ (the inclusion being strict), so that part of their result is also covered by Theorem~\ref{thm:accessibility_condition}. What happens if the dual is transient had remained an open problem, already in the absence of environmental selection}. 
\smallskip

\emph{One sided environmental shocks.} Environments favoring exclusively one type (i.e. when $\mu$ is either supported in $(0,1)$ or in $(-1,0)$) have been considered in \citep{casanova2019effective,CV19}. \citep{CV19} considers $\Lambda=\delta_0$ and shows that both boundary points are always attractive, which is related to the existence of a (positive recurrent) moment dual. \citep{casanova2019effective} assumes  $\smash{\int_{[0,1]}\log(1/(1-r))r^{-2}\Lambda(\dd r)<\infty}$ and $\sel(x)=-\sum_{i\geq0} s_ix^i$, where $(s_i)_{i\in \N_0}$ is a decreasing sequence in~$\R_+$ whose series is absolutely convergent. In that setting, coexistence has also not been observed. Note that their environmental selection mechanism is somewhat different. In their notation: choosing for $y\in (0,1)$, $K_y-1$ as a Bernoulli with parameter~$1-y$ leads to our setup with $\mu=\delta_1$ \modifbis{(however, we assume $\mu \in \cM((-1,1))$).}

\smallskip

\emph{Wright--Fisher diffusion with opposing environments.} A first step in studying the effect of opposing environments has been made in \citep{Vec2023}. There, $\Lambda=\delta_0$, $\sigma\leq 0$ and $\mu$ is assumed to be a finite measure in $(-1,1)$. \modifbis{In that setting, both boundary points are attractive and \citep{Vec2023} uses combinatorial properties of the ASG to derive a series representation and Taylors expansions for the fixation probability (i.e. the probability that $X$ is absorbed at the boundary point $1$).}

\subsection{Essential fixation and extinction}\label{sec:main:survival}
If a boundary point is attractive, one would like to know how long it takes for the process to essentially reach that point. In more biological terms, we would like to determine the time until fixation of one of the two types has essentially taken place. More precisely, for $\rho,t>0$, we say that type $a$ is essentially extinct (fixated) of polynomial order~$\rho$ at time~$t$ if $X_t\leq t^{-\rho}$ ($X_t\geq 1-t^{-\rho}$). Similarly, we speak of
extinction (fixation) of exponential order~$\rho$ at time~$t$ if $X_t\leq e^{-\rho t}$ ($X_t\geq 1-e^{-\rho t}$).

\smallskip

We distinguish two major regimes, depending on integrability conditions on the measures $\Lambda$ and $\mu$. To this end, for $\gamma\geq 0$ and $\nu\in\cM(A)$, with $A=[-1,1]$ or $A=[0,1]$, we define 
\begin{equation}\label{eq:strongint}
	s_\gamma(\nu)\coloneqq \int_{(1/2,1)} \left(\frac{1}{1-r}\right)^{\gamma} \nu(\dd r).
\end{equation}
Clearly, for $\gamma>0$, $s_\gamma(\nu)<\infty$ implies $w_\gamma(\nu)<\infty$ ($w_\gamma$ is defined in \eqref{eq:weakint}). In this sense, it is a stronger integrability condition on the measure around $1$.
To simplify the presentation of the next result, we use the notation $\mu_0\coloneqq\mu$ and $\mu_1\coloneqq\bar{\mu}$.
\begin{theo}[Essential fixation/extinction] \label{thm:survival_probability} 
\,
	\begin{enumerate}
		\item[(1)] Let $b\in\{0,1\}$ and $(\Lambda,\mu,\sigma)\in \Theta_b$.
		\begin{enumerate} 
			\item[(i)] (polynomial decay) Suppose that for some~$\gamma>0$, $$w_\gamma(r^{-2}\Lambda(\dd r))<\infty\quad\text{and}\quad w_\gamma(\mu_b)<\infty.$$
			Then, for any $\alpha \in (0,\gamma)$ and $\varepsilon \in (0,1)$, there is a constant $K=K(\alpha,\varepsilon)> 0$ such that for any $x \in (\varepsilon b,1-\varepsilon(1-b))$, $\rho \in (0,4 \alpha)$ and $t \geq 0$, \begin{equation}
				\mathbb{P}_x  ( |X_t-b| \leq t^{-\rho}  ) \geq 1-K t^{-(\alpha-\rho/4)}. 
			\end{equation}
			\item[(ii)] (exponential decay) Suppose that for some~$\gamma>0$, $$s_\gamma(r^{-2} \Lambda(\dd r))<\infty\quad \text{and}\quad s_\gamma(\mu_b)<\infty.$$ Then for any $\varepsilon \in (0,1)$ there are constants $K_1=K_1(\varepsilon), K_2=K_2(\varepsilon) > 0$ such that for any $x \in (\varepsilon b,1-\varepsilon(1-b))$, $\rho \in (0,4K_2)$, and $t \geq 0$,
			\begin{equation}
				\mathbb{P}_x  ( |X_t-b| \leq e^{-\rho t}  ) \geq 1-K_1 e^{-(K_2-\rho/4) t}.
			\end{equation}
		\end{enumerate} 
		\item[(2)] Let $(\Lambda,\mu,\sigma)\in \Theta_2$. Then, there are constants $K_1, K_2 > 0$ such that for any $\rho \in (0,4K_2)$, $x \in (0,1)$ and $t \geq 0$,
		\begin{eqnarray}
			\mathbb{P}_x \left ( X_t \in [0,e^{-\rho t}]\cup [1-e^{-\rho t},1] \right ) \geq 1-K_1 e^{-(K_2-\rho/4) t}. \label{eq:expocvtoboundary00}
		\end{eqnarray}
	\end{enumerate}
\end{theo}
\mofe{Theorem~\ref{thm:survival_probability} } quantifies the deviation of~$X$ from the behavior established in Theorem~\ref{thm:accessibility_condition}. In fact, \mofe{we will use Theorem~\ref{thm:survival_probability} to prove Theorem~\ref{thm:accessibility_condition} for the parameter regime $\Theta_0 \cup \Theta_1 \cup \Theta_2$ }(see Section~\ref{sec:accessibility}). The proof of Theorem~\ref{thm:survival_probability} is in Section~\ref{sec:essfix}. 

\subsection{Siegmund dual process}\label{sec:main:siegmund}

The proofs of Theorems \ref{thm:accessibility_condition} and \ref{thm:survival_probability} use properties of an auxiliary process that exhibits a duality relation to $X$. We now introduce that process and state the corresponding relation. Subsequently, we discuss the role of duality in proving Theorem~\ref{thm:survival_probability}.
Let $(\Lambda,\mu,\sel)\in\Theta$ and $Y=(Y_t)_{t\geq0}$ be the solution of the SDE
\begin{align}\label{eq:SDE_Y}
\dd Y_t  \,= &\int_{(0,1)^2} \big(\mr_{r,u}(Y_{t-}) - Y_{t-}\big)N(\dd t, \dd r, \dd u)\nonumber\\
		&-Y_t(1-Y_t)\, \sel(Y_t)\, \dd t \ + \int_{(-1,1)}(s_r(Y_{t-})-Y_{t-})\, S(\dd t, \dd r).
\end{align}
\mofe{where $N$ } and $S$ are as in SDE \eqref{eq:SDEWFP},
\begin{equation}
	\mr_{r,u}(y) =  \median{\frac{y-r}{1-r}, \frac{y}{1-r}, u} \ \ \text{and}\ \ s_r(y)= \frac{1+r-\sqrt{(1+r)^2-4r y} }{2r}.\label{eq:defmruandsr}
\end{equation}
We set $s_0(y)=y$ so that $r\in(-1,1)\mapsto s_r(y)$ is continuous. We establish existence of a strong pathwise unique solution to~\eqref{eq:SDE_Y} with initial condition $Y_0=y\in[0,1]$ in Appendix~\ref{sec:ExistenceUniqueness}. We also show that the solution is such that $Y_t\in[0,1]$ for all $t\geq 0$.

The duality relation connecting $X$ and~$Y$ is stated in the next theorem.
\begin{theo}[Siegmund duality]\label{thm:siegmund_duality}
	Let $(\Lambda,\mu,\sigma)\in \Theta$. Then, for $t\geq 0$, \begin{equation}
		\forall x,y\in [0,1],\qquad \P_x(X_t\geq y)=\P_y(x\geq Y_t).\label{eq:siemgund_duality_relation}
	\end{equation} 
\end{theo}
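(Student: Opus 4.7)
My plan is to adapt the Kolokoltsov (2013) framework for Siegmund duality to the present jump-diffusion setting, by verifying that the generators of $X$ and $Y$ intertwine with respect to the (generalised) indicator kernel $H(x,y) = \1_{\{x \geq y\}}$. The fundamental observation is that each basic transition of $X$ in \eqref{eq:SDEWFP} has been designed into $Y$ in \eqref{eq:SDE_Y} as its Siegmund-dual. Concretely, I would first check that for each $(r,u) \in (0,1)^2$ the $\Lambda$-jump of $X$, namely $x \mapsto f_{r,u}(x) := (1-r)x + r\1_{\{u\leq x\}}$, satisfies
\[
f_{r,u}(x) \geq y \ \iff\ x \geq \median{(y-r)/(1-r),\ y/(1-r),\ u} = \mr_{r,u}(y),
\]
so that $\mr_{r,u}$ is precisely the generalised inverse of the non-decreasing step function $f_{r,u}$. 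Similarly, for $r \in (-1,1)$ the environmental jump $x \mapsto x + rx(1-x)$ is a strictly increasing bijection of $[0,1]$ whose inverse, obtained by solving the quadratic, is exactly $s_r$. Finally, the two drift ODEs $\dot{x} = x(1-x)\sigma(x)$ and $\dot{y} = -y(1-y)\sigma(y)$ are Siegmund-dual at the level of their flows. Thus, transition by transition, the mechanisms driving $X$ and $Y$ are inverses of each other.

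Next, I would formalise the intertwining. Define a smooth, non-increasing approximation $H_\varepsilon(x,y) = \phi_\varepsilon(x-y)$ of the indicator $\1_{\{x\geq y\}}$. A direct computation of the generators $\cA_X$ (acting on the $x$-variable) and $\cA_Y$ (acting on the $y$-variable) yields, away from the diagonal $\{x=y\}$ and using the pointwise dualities above, a generator identity
\[
(\cA_X)_x H_\varepsilon(x,y) - (\cA_Y)_y H_\varepsilon(x,y) \ \xrightarrow[\varepsilon\to 0]{}\ 0.
\]
To turn this into the duality \eqref{eq:siemgund_duality_relation}, I would take an independent copy $Y'$ of $Y$ started from $y$, apply It\^o's formula to $s \mapsto H_\varepsilon(X_s, Y'_{t-s})$, take expectations so the martingale part drops, and use the generator identity to cancel the remaining drift/jump contributions; sending $\varepsilon \to 0$ would then give
\[
\E_x\big[\1_{\{X_t \geq y\}}\big] \;=\; \E_y\big[\1_{\{x\geq Y_t\}}\big].
\]
Pathwise uniqueness for both SDEs, established in Appendix~\ref{sec:ExistenceUniqueness}, guarantees that these expectations depend on $(x,y)$ in a well-defined way and characterise the laws.

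The main obstacle will be the control of the mollification at the discontinuity set $\{x=y\}$, since the generator identity holds only away from the diagonal and the indicator is not in the domain of either generator. To handle this I would exploit the finite-variation structure enforced by $(\Lambda,\mu,\sigma) \in \Theta$: the integrability $\int_{(0,1)} r^{-1}\Lambda(\dd r) < \infty$ and $\int_{(-1,1)} |r|\, \mu(\dd r) < \infty$ make the jump kernels summable, while the Lipschitz regularity of $\sigma$ and of the maps $\mr_{r,u}$, $s_r$, $f_{r,u}$, $f_r$ in the relevant variables lets one bound the error terms in the It\^o expansion uniformly in $\varepsilon$ before passing to the limit. A minor auxiliary point is that one must separately inspect the boundary values $y\in\{0,1\}$ and $x\in\{0,1\}$, but since $X_t, Y_t \in [0,1]$ a.s., the identity at $y=0$ and $x=1$ is trivial and at $y=1$ and $x=0$ follows by taking monotone limits in \eqref{eq:siemgund_duality_relation}.
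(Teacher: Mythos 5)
Your strategy is a genuinely different route from the paper's, and it contains a gap that you flag but do not close. The paper does not mollify the indicator at all. Instead, it first proves an abstract lemma (Lemma~\ref{gendual}) constructing a Markov transition kernel $p_t^D$ with $p_t^D(y,[0,x])=p_t(x,[y,1])$ directly from stochastic monotonicity (Proposition~\ref{prop:existuniqueX}) and the Feller property of $X$ (Proposition~\ref{Feller}) — the Feller property is used precisely to get right-continuity of $x\mapsto p_t^D(y,[0,x])$, which is what makes $p_t^D(y,\cdot)$ a genuine probability kernel. It then computes the generator $\cL^D$ of this abstract dual via the transpose map $F^{-1}f(\dd z)=f(1)\delta_1(\dd z)-f'(z)\dd z$, verifies $\cL^D=\cA$ on $\Cs^\infty([0,1])$, and closes by well-posedness of the martingale problem for $(\cA,\Cs^\infty([0,1]))$ (via pathwise uniqueness for~\eqref{eq:SDE_Y}). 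The discontinuity of $\1_{\{x\geq y\}}$ never enters.

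The gap in your approach is the mollification limit, and it is not a ``minor'' technicality. You correctly observe that the jump intertwining is exact at the indicator level (since $f_{r,u}(x)\geq y \Leftrightarrow x\geq m_{r,u}(y)$ and $x+rx(1-x)\geq y\Leftrightarrow x\geq s_r(y)$), but after mollification the two sides are no longer pointwise equal; the error is concentrated in an $\varepsilon$-neighbourhood of the sets $\{f_{r,u}(x)=y\}$ and $\{x=m_{r,u}(y)\}$. Likewise, for the drift part you get an error term of the form $\E\big[(b_X(X_s)+b_Y(Y'_{t-s}))\,\phi_\varepsilon'(X_s-Y'_{t-s})\big]$; since $b_X(x)+b_Y(y)=O(|x-y|)$ while $\phi_\varepsilon'=O(1/\varepsilon)$ on a strip of width $\varepsilon$, the integrand is $O(1)$ on that strip — \emph{bounded} uniformly in $\varepsilon$, exactly as you claim, but not \emph{small}. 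Making the error vanish requires that the joint law of $(X_s,Y'_{t-s})$ not charge the diagonal, i.e.\ $\P(X_s=Y'_{t-s})=0$ for a.e.\ $s$, which you neither state nor prove, and which is not automatic for a jump process that can have atoms in its marginal law. A uniform bound on the error terms does not pass to zero in the limit. You would also have to address the possibility that $\E_x[\phi_\varepsilon(X_t-y)]$ converges to something strictly between $\P_x(X_t>y)$ and $\P_x(X_t\geq y)$ at atoms; the paper's use of right-continuity in Lemma~\ref{gendual} is exactly what fixes the correct version of the indicator. So while your generator/mollification route is a standard template for duality and your structural observations (generalised inverse of $\tilde m_{r,u}$, inverse of $x\mapsto x+rx(1-x)$, sign flip of $\sigma$) are correct and are indeed what the paper also uses, the argument as sketched is incomplete without a concrete mechanism for the diagonal.
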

This duality holds in many cases where classical genealogical methods (i.e. based on the ASG) fail to provide information on $X$; for example, if $X$ has no moment dual and the Bernstein dual is transient. 
Theorem \ref{thm:siegmund_duality} will be proved in Section \ref{sec:proofsiegdual} in two steps; we first prove the duality in the absence of small and large jumps of the involved Poisson measures using an elementary approach, and then, we extend the result using a limiting procedure. 
\smallskip

Theorem \ref{thm:siegmund_duality} can be strengthen into the following corollary providing a duality-type result involving two trajectories of $X$ and one of $Y$ (and vice versa). 
\begin{cor}\label{2dual}
Let $(\Lambda,\mu,\sigma)\in \Theta$. For any $0\leq \hat{x}< \check{x}\leq 1$ (the notation aims at $\hat{x}=\hat{x}\wedge \check{x}\coloneqq\min\{\hat{x},\check{x}\}$ and $\check{x}=\hat{x}\vee \check{x}\coloneqq\max\{\hat{x},\check{x}\}$), $y\in [0,1]$ and $t\geq 0$,
$$\P_{\hat{x},\check{x}}(\widehat{X}_t<y\leq \check{X}_t)=\P_y(\hat{x}<Y_t \leq \check{x}),$$
where under $\P_{\hat{x},\check{x}}$, $\check{X}$ and $\widehat{X}$ are strong solutions of SDE \eqref{eq:SDEWFP} {in the same background }with $\widehat{X}_0=\hat{x}$ and $\check{X}_0=\check{x}$. Similarly, for any $0\leq \hat{y}< \check{y}\leq 1$, $x\in [0,1]$ and $t\geq 0$,
$$\P_{x}(\hat{y}\leq X_t<\check{y})=\P_{\hat{y},\check{y}}(\widehat{Y}_t\leq x< \widecheck{Y}_t),$$
where under $\P_{\hat{y},\check{y}}$, $\widecheck{Y}$ and $\widehat{Y}$ are strong solutions of SDE \eqref{eq:SDEWFP} {in the same background }with $\widehat{Y}_0=\hat{y}$ and $\widecheck{Y}_0=\check{y}$.
\end{cor}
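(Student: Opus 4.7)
The plan is to reduce both identities to the one-point Siegmund duality (Theorem~\ref{thm:siegmund_duality}) by exploiting a pathwise monotone coupling of the two $X$-trajectories (respectively $Y$-trajectories) driven by the same noise. The key structural observation is that the jump map $x \mapsto x + r(\1_{\{u \leq x\}}(1-x) - \1_{\{u > x\}} x)$ appearing in~\eqref{eq:SDEWFP} is nondecreasing in $x$ (and likewise for the environmental and drift terms, via the sign conventions in~\eqref{eq:SDE_Y} and~\eqref{eq:medianexplanation}). Together with strong pathwise uniqueness (Proposition~\ref{prop:existuniqueX} for $X$ and Proposition~\ref{lem:existuniqueY} for $Y$), this yields the comparison principle: if $\hat{x} \leq \check{x}$ then $\hat{X}_t \leq \check{X}_t$ a.s.\ for all $t \geq 0$, and analogously for the $Y$-coupling.

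For the first identity, the monotonicity $\hat{X}_t \leq \check{X}_t$ implies $\{\hat{X}_t \geq y\} \subseteq \{\check{X}_t \geq y\}$, so one can write
$$\{\hat{X}_t < y \leq \check{X}_t\} = \{\check{X}_t \geq y\} \setminus \{\hat{X}_t \geq y\}.$$
Taking probabilities and noting that the marginal law of $\check{X}$ (resp.\ $\hat{X}$) under $\mathbb{P}_{\hat{x},\check{x}}$ agrees with its law under $\mathbb{P}_{\check{x}}$ (resp.\ $\mathbb{P}_{\hat{x}}$), one gets
$$\mathbb{P}_{\hat{x},\check{x}}(\hat{X}_t < y \leq \check{X}_t) = \mathbb{P}_{\check{x}}(X_t \geq y) - \mathbb{P}_{\hat{x}}(X_t \geq y).$$
Applying Theorem~\ref{thm:siegmund_duality} to each term on the right gives $\mathbb{P}_y(Y_t \leq \check{x}) - \mathbb{P}_y(Y_t \leq \hat{x}) = \mathbb{P}_y(\hat{x} < Y_t \leq \check{x})$, as required.

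The second identity is proved by the same template, with the roles of $X$ and $Y$ swapped. Using monotonicity of the $Y$-coupling, $\{\hat{Y}_t \leq x < \check{Y}_t\} = \{\hat{Y}_t \leq x\} \setminus \{\check{Y}_t \leq x\}$, and then the complementary form $\mathbb{P}_x(X_t < y) = 1 - \mathbb{P}_x(X_t \geq y) = 1 - \mathbb{P}_y(Y_t \leq x) = \mathbb{P}_y(Y_t > x)$ of Theorem~\ref{thm:siegmund_duality} converts $\mathbb{P}_x(\hat{y} \leq X_t < \check{y}) = \mathbb{P}_x(X_t \geq \hat{y}) - \mathbb{P}_x(X_t \geq \check{y})$ into $\mathbb{P}_{\hat{y}}(Y_t \leq x) - \mathbb{P}_{\check{y}}(Y_t \leq x)$, which is precisely $\mathbb{P}_{\hat{y},\check{y}}(\hat{Y}_t \leq x < \check{Y}_t)$.

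The only nontrivial ingredient is the monotone coupling; once this comparison property is in hand (via strong pathwise uniqueness applied to the ordered pair $(\hat{X},\check{X})$ or $(\hat{Y},\check{Y})$, using that the coefficients preserve the order), the corollary is just set-algebra plus one invocation of Theorem~\ref{thm:siegmund_duality}. I expect the comparison step to be the only place where care is needed, particularly because the $N$-driven term is neither Lipschitz nor continuous in $x$; however, its monotonicity in $x$ (uniformly in $(r,u)$) is enough for a standard pathwise comparison argument to go through in the setting of~\cite{li2012strong}, and this is implicit in the existence/uniqueness propositions cited above.
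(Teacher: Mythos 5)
Your proof is correct and follows essentially the same route as the paper: invoke the monotone coupling from Proposition~\ref{prop:existuniqueX} (resp.\ Proposition~\ref{lem:existuniqueY}), write the two-point event as a set difference, and apply Theorem~\ref{thm:siegmund_duality} twice. The paper's proof is a terser version of exactly this argument.
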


In light of Theorem~\ref{thm:siegmund_duality}, a remark addressing the \mofe{appearance of a median } in~\eqref{eq:SDE_Y} is in order. Consider $x,y,u,r\in (0,1)$ and note that
\begin{equation}
	x+r(\1_{\{u\leq x\}}(1-x)-\textbf{1}_{\{u>x\}} x) \, \geq \, y \quad \Longleftrightarrow \quad x \, \geq \, \median{\frac{y-r}{1-r}, \frac{y}{1-r}, u},\label{eq:medianexplanation}
\end{equation}
which follows by distinguishing the cases $u\leq x$ and $u>x$. Thus, $m_{r,u}$ is the generalized inverse of the non-decreasing, right-continuous function $x \mapsto x+r(\textbf{1}_{\{u\leq x\}}(1-x)-\textbf{1}_{\{u>x\}} x)$ appearing in the {SDE~\eqref{eq:SDEWFP} at each jump of $N(\dd t, \dd r,\dd u)$}. Similarly, the function $s_r$ is the inverse of the increasing, continuous function $x\mapsto x+rx(1-x)$ \modifbis{(from~$[0,1]$ to~$[0,1]$)}. 

\modifbis{\begin{remark} \label{boundaryfinitetime}
A consequence of Theorem \ref{thm:siegmund_duality} is that for $(\Lambda,\mu,\sigma)\in \Theta$ and $x\in (0,1)$, the process $X$ under $\P_x$ cannot reach the boundaries $0$ and $1$ in finite time. Indeed, if it was the case for the boundary $1$, there would exist a $t>0$ such that $\P_x(X_t=1)>0$ but, by Theorem~\ref{thm:siegmund_duality}, this probability equals $\P_1(x\geq Y_t)$ and, by uniqueness for \eqref{eq:SDE_Y}, the later probability is null. The same argument applied to $\tilde X:=1-X$ shows the claim for the boundary $0$.  
\end{remark}}

\begin{remark}
In the neutral case, i.e. $\sigma=0$ and $\mu=0$, \citep{Bertoin2003,Bertoin2005,Bertoin2006} already established connections between~$X$ and~$Y$. There, $X$ corresponds to the $1$-point
motion of the {dual flow }of Bridges associated to the $\Lambda$-coalescent; see \citep[Sect. 5]{Bertoin2003}) and \citep[Thm. 2]{Bertoin2005}. Whereas $Y$ corresponds to the $1$-point motion of the flow of inverses; see \citep[Sect. 5]{Bertoin2005} (their function $\psi_{z,v}(\cdot)$ is our~$m_{z,v}(\cdot)$). More generally, what they call $p$-motion corresponds to $p$ copies of~$Y$ starting at~$p$ different (space) points and they show that it solves a martingale problem \citep[Thm. 5]{Bertoin2005}. 
\end{remark}
\begin{remark}[Role of~$C_0(\Lambda,\mu,\sel)$ and $C_1(\Lambda,\mu,\sel)$]The coefficients in the SDE \eqref{eq:SDE_Y} allow for convenient approximations of $Y$ by functions of L\'evy processes near the boundaries $0$ and $1$. The coefficients $C_0(\Lambda,\mu,\sel)$ and $C_1(\Lambda,\mu,\sel)$ arise as the limit rate of growth of those L\'evy processes; see Remark \ref{limrateofgrowth} for details. It should then not come as a surprise that the signs of $C_0(\Lambda,\mu,\sel)$ and $C_1(\Lambda,\mu,\sel)$ play a crucial role in the long-term behavior of $X$. Moreover, the structure of the SDE \eqref{eq:SDE_Y} allows to establish and exploit interesting renewal properties for $Y$ (see below, {and Section \ref{sec:accbdd}}) and coalescing properties for several trajectories of $Y$ in the same background ({see the discussion after Theorem~\ref{thm:survival_probability}, and }Section \ref{sec:accbdd}). This turns out to be very helpful for establishing our results in the case $(\Lambda,\mu,\sigma)\in \Theta_2$.
\end{remark}
The next result describes the long-term behavior of~$Y$. Let $J_N$ be the set of jump times of the Poisson measure $N$, and for $T\in J_N$, let $(T,R_T, U_T)$ be the corresponding jump.
\begin{theo}[Asymptotic behavior of $Y$]\label{prop:stationary_distribution_Y}
$\,$
\begin{enumerate}
\item[(1)] Let $(\Lambda,\mu,\sigma)\in \Theta_1$ (resp. $\Theta_0$)\mofe{ and $y\in(0,1)$. Assume } that $\rW_\gamma(r^{-2}\Lambda(\dd r))<\infty$ and $\rW_\gamma(\bar{\mu})<\infty$ (resp. $\rW_\gamma(\mu)<\infty$) for some $\gamma>0$. Then we have $\P_y$-almost surely $\lim_{t\to\infty} Y_t=0$ (resp. $1$).
\item[(2)] \mofe{Let $(\Lambda,\mu,\sigma)\in \Theta_2$ and $y \in (0,1)$. Then, under~$\P_y$, $Y$ is conservative, open-set recurrent, and $\P_y(Y_t\in~\cdot~)$ converges in total variation distance to $\pi_Y(\cdot)$ as $t$ goes to infinity, where $\pi_Y(\cdot)$ is the unique stationary distribution supported on $(0,1)$}.
	\item[(3)] Let $(\Lambda,\mu,\sel)\in\Theta_3$. For any $y \in (0,1)$ we have $\P_y$-almost surely that $Y_{\infty}\coloneqq \lim_{t\to\infty} Y_t$ exists and $Y_\infty\in \{0,1\}$.
\end{enumerate}
\end{theo}
With regard to Theorem~\ref{prop:stationary_distribution_Y}(2), note that $\delta_0$ and $\delta_1$ are also stationary distributions for $Y$, but they are not supported on $(0,1)$.
\smallskip

Let us now assume that $(\Lambda,\mu,\sigma)\in \Theta_2$. Fix $\kappa\in (0,\maxsupp{\Lambda})$ and $\eta > 0$ such that $\theta\coloneqq \frac{\kappa+\eta}{1+\eta} < \maxsupp{\Lambda}$. Define 
	\begin{equation}\textstyle \renewal \coloneqq \inf \left\{ T\in J_N: \  Y_{T-} \in \left [\frac{1-\kappa}{2},\frac{1+\kappa}{2} \right ],\ R_T\in (\theta,1) \text{ and }U_T\in\left [\frac{1-\eta}{2},\frac{1+\eta}{2}\right ]\right\}. \label{eq:defrenewaltime}\end{equation}
 The random time~$\renewal$ in~\eqref{eq:defrenewaltime} is a renewal time for~$Y$. Its definition might seem a little unwieldy; but the idea is simple: it is defined such that $Y_{\renewal}$ is uniformly distributed on $[(1-\eta)/2,(1+\eta)/2 ]$. Indeed, if $(\renewal,R_{\renewal},U_{\renewal})$ is the corresponding jump in~$N$, then $$Y_{\renewal}=\median{\frac{Y_{{\renewal}-}-R_{\renewal}}{1-R_{\renewal}},\frac{Y_{{\renewal}-} }{1-R_{\renewal}},U_{\renewal}}.$$ But $\theta$ in the definition of~$\renewal$ is chosen such that $(Y_{{\renewal}-}-R_{\renewal})/(1-R_{\renewal})<(1-\eta)/2$ and $Y_{{\renewal}-}/(1-R_{\renewal})>(1+\eta)/2$. Since $U_{\renewal}\in [(1-\eta)/2,(1+\eta)/2]$, it follows that $Y_{\renewal}=U_{\renewal}$, and it is not difficult to show that $U_{\renewal}$ is a uniform random variable in $[0,1]$ conditioned to be in $[(1-\eta)/2,(1+\eta)/2]$.
\begin{theo}[Representation of the stationary distribution]\label{prop:stationary_distribution_Y-II}
Let $(\Lambda,\mu,\sigma)\in \Theta_2$ and denote by $\cV$ the uniform distribution on $[(1-\eta)/2,(1+\eta)/2]$. Write $\mathbb{P}_{\cV}$ for the distribution of $Y$ under the starting law $\cV$; and $\E_{\cV}$ for the associated expectation. Then there exists $\zeta>0$ such that $\E_{\cV}[e^{\zeta \renewal}]<\infty$ and for any Borel set $A$, 
	\begin{eqnarray}
		\pi_Y(A) = \frac{1}{\E_{\cV}[\renewal]} \E_{\cV} \left [ \int_0^{\renewal} \1_{\{Y_s \in A\}} \dd s \right ]. \label{eq:represpiy}
	\end{eqnarray}
\end{theo}
In the proof of Theorem~\ref{thm:survival_probability}, we use that the long-time behavior of~$X$ is linked to that of its dual~$Y$. More precisely, possible fixation (resp.~possible extinction) is related to~$1$ (resp. $0$) being repelling for~$Y$. Thus, in~$\Theta_1$ (resp.~$\Theta_0$), we study the probability for $Y$ to not be close to $0$ (resp.~$1$). \modifbis{To this end, we sandwich $\log(1/Y)$ (resp. $\log(1/(1-Y))$) between two L\'evy processes near the boundary $0$ (resp. $1$)}. The polynomial and exponential decay in $\Theta_1$ (and $\Theta_0$) is related to \mofe{fine } properties of L\'evy processes. We conjecture that the polynomial upper bound for $\P_x(\lvert X_t-b\rvert >t^{-\rho})$ in Theorem~\ref{thm:survival_probability}(1)(i) is optimal in the sense that, under some additional conditions, we can expect polynomial lower bounds to hold as well; see Section~\ref{rem:conjpolynomial} for more details. This in turn suggests some optimality of the strong integrability conditions in Theorem~\ref{thm:survival_probability}(1)(ii) in order to get exponential convergence. Remarkably, in $\Theta_2$, the decay of the probability of not having essential fixation or extinction is always exponential, \modifbis{irrespective of the properties of $\Lambda$ near~$1$, and of $\mu$ near~$1$ and $-1$}. This is because in $\Theta_2$, $Y$ is recurrent and {the probability of not having essential fixation or extinction at time~$t$ } is related to the probability of two trajectories of $Y$ not being merged before~$t$; we show that the latter always decays exponentially. Obtaining estimates for the rate of convergence of~$X$ to its stationary distribution in $\Theta_3$, i.e. when the whole boundary is repelling, seems to require different techniques. We plan to study this \modifbis{in a future work}. 

\smallskip

\modifbis{The Siegmund duality leads to a representation of the fixation probability of~$X$}. 
This representation implies that simulating $(Y_s)_{{0 \leq s < \renewal}}$ under $\P_{\cV}$ suffices to estimate the fixation probability of~$X$ for all starting points $x$ simultaneously. 

\begin{theo}[Representation of absorption probability] \label{thm:representation_absorption_probability}
	Let $(\Lambda,\mu,\sel)\in\Theta_2$. For $x\in (0,1)$, $\P_x(\lim_{t\to\infty}X_t=1)=\pi_Y([0,x])\in (0,1).$
	In particular, 
	\begin{eqnarray}
		\mathbb{P}_x\big(\lim_{t\to\infty}X_t=1\big) = \frac{1}{\mathbb{E}_{\cV}[\renewal]} \mathbb{E}_{\cV} \left [ \int_0^{\renewal} \1_{\{Y_s \leq x\}} \dd s \right ]. \label{eq:represhx}
\end{eqnarray}\end{theo}
Theorems~\ref{prop:stationary_distribution_Y} and \ref{thm:representation_absorption_probability} are proved in Section~\ref{sec:behavy} and \ref{sec:accessibility}, respectively.

\subsection{Organization of the paper} \label{sec:orgpaper}

The rest of the paper is organized as follows. In Section~\ref{sec:stratproof} we state some intermediary results about $Y$ (Theorems~\ref{thm:stuckatboundary}, \ref{thm:probmergeYt}, and \ref{thm:coexistenceY}) and use them to prove all results of Section~\ref{sec:main} except those related to Siegmund duality (Theorem \ref{thm:siegmund_duality} and Corollary~\ref{2dual}). \modifbis{Sections \ref{sec:comparisonlevy} to \ref{sect:coex} are then dedicated to proving the intermediary results about~$Y$ from Section \ref{sec:stratproof}. More precisely, in }Section \ref{sec:comparisonlevy} we prove some key properties of $Y$, including the L{\'e}vy sandwiching property. In Section \ref{sec:accbdd} we focus on the case $(\Lambda,\mu,\sel)\in \Theta_2$ and prove Theorem \ref{thm:probmergeYt}. In Section \ref{sec:asextinction} we focus on the case $(\Lambda,\mu,\sel)\in \Theta_0\cup\Theta_1$ and prove Theorem \ref{thm:stuckatboundary}. In Section \ref{sect:coex} we focus on the case $(\Lambda,\mu,\sel)\in \Theta_3$ and prove Theorem \ref{thm:coexistenceY}. In Appendix~\ref{sec:ExistenceUniqueness} we establish the existence and uniqueness of~$X$ and $Y$. The Siegmund duality is proved in Appendix \ref{sec:proofsiegdual}. Appendix~\ref{sec:app:levy} contains some fluctuation-theory results for L{\'e}vy processes. Finally, Appendix~\ref{sec:TechEst} collects technical estimates we require throughout the manuscript.

\section{Proofs of main results} \label{sec:stratproof}
This section states results describing the long-term behavior of the dual process $Y$ and uses them, together with the Siegmund duality (Theorem \ref{thm:siegmund_duality} and Corollary~\ref{2dual}), to prove our main results. We start out in Section~\ref{sec:mainingredients} to collect results about long-term properties and decay rates of~$Y$ in the four different parameter regions. In Section~\ref{sec:behavy}, we prove Theorem~\ref{prop:stationary_distribution_Y}. Section~\ref{sec:essfix} contains the proof of Theorem~\ref{thm:survival_probability}. Finally Section~\ref{sec:accessibility} contains the proof of Theorems~\ref{thm:accessibility_condition} and \ref{thm:representation_absorption_probability}. In particular, by the end of Section~\ref{sec:stratproof} all main results, except the ones related to Siegmund duality, will have been proved using the results stated in Section~\ref{sec:mainingredients} and the Siegmund duality.

\subsection{Long-term properties and decay rates of the dual process} \label{sec:mainingredients}

The following theorems \mofe{summarize } the long-term \mofe{behavior } of~$Y$ in the four parameter settings $\Theta_i$, $i\in\{1,2,3,4\}$.
They are instrumental in proving our main results.

The following result provides the long-term behavior of $Y$ in \mofe{$\Theta_0$ and $\Theta_1$ } under weak and strong integrability assumptions on the measures $\Lambda$ and $\mu$.
\begin{theo} \label{thm:stuckatboundary} Let $b\in\{0,1\}$ and $(\Lambda,\mu,\sigma)\in \Theta_b$. Recall that $\mu_0\coloneqq \mu$ and $\mu_1\coloneqq \bar{\mu}$.
	\begin{enumerate} 
		\item[(i)] (polynomial decay) Suppose $\rW_\gamma(r^{-2}\Lambda(\dd r))<\infty$ and $\rW_\gamma(\mu_b)<\infty$ for some $\gamma>0$. Then we have $\P_y$-almost surely $\lim_{t\to\infty} Y_t=1-b$, and, for any $\alpha\in (0,\gamma)$ and $\varepsilon\in (0,1)$, there is $K=K(\alpha,\varepsilon)>0$ such that for any $y\in (0,1)$ and $t\geq 0$, \begin{eqnarray}
			\P_y(\lvert Y_t-(1-b)\rvert>\varepsilon)\leq (1-y)^{-1/4}Kt^{-\alpha}. \label{survYpoldecay}
		\end{eqnarray}
		\item[(ii)] (exponential decay) Suppose $s_\gamma(r^{-2}\Lambda(\dd r))<\infty$ and $s_\gamma(\mu_b)<\infty$ for some $\gamma>0$. Then, for any $\varepsilon\in (0,1)$, there are $K_1=K_1(\varepsilon)>0$, $K_2=K_2(\varepsilon)>0$ such that for any $y\in (0,1)$ and $t\geq 0$, \begin{eqnarray}
			\P_y(\lvert Y_t-(1-b)\rvert>\varepsilon)\leq (1-y)^{-1/4}K_1e^{-K_2t}. \label{survYexpodecay}
		\end{eqnarray}
	\end{enumerate}
\end{theo}

In the next theorem, we consider \mofe{parameter regime $\Theta_2$ } and study coalescing and renewal properties of the flow of $Y$, as well as its invariant measure. In particular, we provide an estimate of the decay rate of the probability that two coupled trajectories $\widehat{Y}$ and $\widecheck{Y}$ of~$Y$ (as defined in Corollary \ref{2dual}) have not merged before time $t$. Note that by the pathwise uniqueness and the strong Markov property (see Proposition \ref{fullgenerator}), two such solutions agree after the first time they intersect.

\begin{theo} \label{thm:probmergeYt} Assume $(\Lambda,\mu,\sel)\in \Theta_2$. 
	\begin{enumerate}
		\item[(i)] (exponential decay) Then there are constants $K_1,K_2>0$ such that for any $\hat{y},\check{y}\in (0,1)$ with $\hat{y}\leq \check{y}$ and $t\geq 0$, \begin{align*}
			\P_{\hat{y},\check{y}}(\widehat{Y}_t\neq \widecheck{Y}_t)\leq \big(\hat{y}^{-1/4}+(1-\check{y})^{-1/4}\big) K_1 e^{-K_2t}.
		\end{align*}
		\item[(ii)] \label{prop:boundrenewalexponential} The renewal time $\renewal$ (defined in~\eqref{eq:defrenewaltime}) admits exponential moments. More specifically, there exist positive constants $\zeta$ and $M$ such that for any $y\in (0,1)$ we have $$\E_y[e^{\zeta \renewal}]\leq (y^{-1/4}+(1-y)^{-1/4})M.$$
		\item[(iii)]\label{thm:propYstat} \mofe{The function
		$x\in[0,1]\mapsto\E_{\cV}\big[\int_0^{\renewal}\ind{Y_t\leq x} \dd t\big]/\E_{\cV}[\renewal]$ is bounded, continuous, strictly increasing, vanishes at $0$ and equals $1$ at $1$.}
	\end{enumerate}
\end{theo}

Finally, let us consider parameter regime $\Theta_3$.
\begin{theo} \label{thm:coexistenceY} Assume $(\Lambda,\mu,\sigma)\in \Theta_3$. For any $\rho\in (0,C_0(\Lambda,\mu,\sigma) \wedge C_1(\Lambda,\mu,\sigma))$ and $y\in (0,1)$ we have $\mathbb{P}_y$-almost surely that either a) $Y_t \in [0,e^{-\rho t}]$ for all large $t$, or b) $Y_t \in [1-e^{-\rho t},1]$ for all large $t$. Moreover, 
	
	\begin{enumerate}
		\item[(i)] (polynomial decay)  If for some $\gamma>0$, $\rW_\gamma(r^{-2}\Lambda(\dd r))<\infty,\ \rW_\gamma(\mu)<\infty, \ \text{and}\ \rW_\gamma(\bar{\mu})<\infty$, then for any {$\rho\in (0,C_0(\Lambda,\mu,\sigma) \wedge C_1(\Lambda,\mu,\sigma))$ }and $\alpha\in (0,\gamma)$ there is $K=K(\alpha,\rho)$ such that for any $y\in (0,1)$ and $t\geq 0$ we have \begin{equation}
			{\P_y(Y_t\in [e^{-\rho t},1-e^{-\rho t}])\leq Kt^{-\alpha}}.
		\end{equation}
		\item[(ii)] (exponential decay) If for some $\gamma>0$, $s_\gamma(r^{-2}\Lambda(\dd r))<\infty,\ s_\gamma(\mu)<\infty$, and $s_\gamma(\bar{\mu})<\infty$, then for any $\rho\in (0,C_0(\Lambda,\mu,\sigma) \wedge C_1(\Lambda,\mu,\sigma))$ there are positive constants $K_1=K_1(\rho), K_2=K_2(\rho)$ such that for any $y\in (0,1)$ and $t\geq 0$ we have \begin{equation}
			{\P_y(Y_t\in [e^{-\rho t},1-e^{-\rho t}])\leq K_1e^{-K_2t}}.
		\end{equation}
		
	\end{enumerate}
\end{theo}
\modifbis{Theorem \ref{thm:coexistenceY} is stronger than what is required for proving the results from Section~\ref{sec:main} in parameter region $\Theta_3$. But it opens the way to studying the rate of convergence of $X$ to its stationary distribution in the case $\Theta_3$, which will be the object of a future work}. 
Theorems~\ref{thm:stuckatboundary}, \ref{thm:probmergeYt}, and \ref{thm:coexistenceY} are proved in Sections~\ref{sec:asextinction}, \ref{sec:accbdd}, and~\ref{sect:coex}, respectively.

\subsection{Proofs of Theorems \ref{prop:stationary_distribution_Y} and \ref{prop:stationary_distribution_Y-II}: asymptotic behavior of $Y$} \label{sec:behavy}
In this section we assume the results from Section \ref{sec:mainingredients} hold true. 
While we have presented Theorems \ref{prop:stationary_distribution_Y} and \ref{prop:stationary_distribution_Y-II} separately for the sake of clarity, their proofs are in fact intertwined. Therefore, we will prove both results simultaneously.

\begin{proof}[Proof of Theorems \ref{prop:stationary_distribution_Y} and \ref{prop:stationary_distribution_Y-II}]
Theorem \ref{prop:stationary_distribution_Y}(1) is already contained in Theorem~\ref{thm:stuckatboundary}(i), and Theorem \ref{prop:stationary_distribution_Y}(3) is a direct consequence of Theorem~\ref{thm:coexistenceY}. 

We now prove Theorem \ref{prop:stationary_distribution_Y}(2) and Theorem \ref{prop:stationary_distribution_Y-II}. Note first that Theorem~\ref{thm:probmergeYt}(ii) implies that $\E_y[e^{\zeta \renewal}]<\infty$ and $\E_{\cV}[e^{\zeta \renewal}]<\infty$ for some $\zeta>0$. In particular, for all $y\in (0,1)$, $\E_{y}[\renewal]<\infty$, and $\E_{\cV}[\renewal]<\infty$. {It follows from this and the strong Markov property (see Proposition \ref{fullgenerator}) }that $Y$ under $\P_\cV$ (resp. under~$\P_y$) is a (resp. delayed) regenerative process, i.e. $(Y_{t+\renewal})_{t\geq 0}$ is independent of $((Y_t)_{t<\renewal} ,\renewal)$, $(Y_{t+\renewal})_{t\geq 0}$ is stochastically equivalent to $(Y_{t})_{t\geq 0}$, and the regeneration epoch is $\renewal$. By the Poissonian properties of $N$ and $S$, $\renewal$ is nonlattice. Therefore, since $Y$ is right-continuous, we can use~\citep[Chap. VI., Thm. 1.2]{asmussen2008applied} to deduce that the limiting distribution $\pi_Y$ of~$Y$ exists and satisfies for all measurable sets $A\subseteq[0,1]$, $$\pi_Y(A)=\frac{\E_{\cV}\big[\int_0^{\renewal}\ind{Y_t\in A} \dd t\big]}{\E_{\cV}[\renewal]}.$$
\mofe{According to Theorem~\ref{thm:probmergeYt}(iii), $\pi_Y$ is supported in $(0,1)$}. We now show the convergence of $\P_y(Y_t\in~\cdot~)$ to $\pi_Y(\cdot)$ for the total variation distance. Let $y,z \in (0,1)$ and~$Y,\tilde{Y}$ be solutions to~\eqref{eq:SDE_Y} in the same random background with~${Y}_0=y$ and~$\tilde{Y}_0=z$. If $y \in (0,1)$ and~$z$ is chosen according to~$\pi_Y(\cdot)$, then ${Y}_t \sim \P_y(Y_t\in~\cdot~)$ while $\tilde{Y}_t \sim \pi_Y(\cdot)$. We thus get that for any $y \in (0,1)$, 
\begin{eqnarray}
d_{TV}(\P_y(Y_t\in~\cdot~),\pi_Y(\cdot)) \leq \int_{(0,1)} \P_{y,z} ({Y}_t \neq \tilde{Y}_t) \pi_Y(\dd z). \label{dtvytpiy}
\end{eqnarray}
By Theorem~\ref{thm:probmergeYt}(i), for any $z \in (0,1)$, {$\P_{y,z} ({Y}_t \neq \tilde{Y}_t)$} converges to $0$ as $t$ goes to infinity. By dominated converge the right-hand side of \eqref{dtvytpiy} converges to $0$, yielding the convergence of $\P_y(Y_t\in~\cdot~)$ for the total variation distance. This implies that there are no other stationary distributions of $Y$ supported on $(0,1)$. \mofe{This already ends the proof of Theorem \ref{prop:stationary_distribution_Y-II}. For the proof of Theorem \ref{prop:stationary_distribution_Y} it only remains to prove that $Y$ is open set recurrent.}
	
For any sufficiently small interval centered around $1/2$, $\renewal$ can be set up in a way (by choosing~$\eta$ appropriately) so that $Y_{\renewal}$ is distributed according to $\cV$, the uniform law on that interval. By Theorem~\ref{thm:probmergeYt}(ii), $\E_{y}[\renewal]<\infty$ for any $y \in (0,1)$ and $\E_{\cV}[\renewal]<\infty$. Using this and the renewal property, we get that the interval is almost surely visited at arbitrary large times. However, the choice of $1/2$ as the center of the interval was arbitrary and can be replaced by any other point in $(0,1)$ as we explain in Remark~\ref{rem:modificationsrenewal}. In particular, $Y$ is open set recurrent, i.e. for any open set $A\in (0,1)$, almost surely there exists a sequence $(t_n)_{n\in \N}$ such that $t_n\nearrow\infty$ as $n\to\infty$ and $Y_{t_n}\in A$ for all $n\in \N$. 
\end{proof}
\subsection{Proof of Theorem \ref{thm:survival_probability}: essential fixation/extinction} \label{sec:essfix}
In this section we assume that the results related to Siegmund duality (Theorem \ref{thm:siegmund_duality} and Corollary~\ref{2dual}) and the results of Section \ref{sec:mainingredients} hold true. We now prove Theorem \ref{thm:survival_probability}, which contains the probability for essential fixation/extinction. 
\begin{proof}[Proof of Theorem~\ref{thm:survival_probability}]
Since  $(\Lambda,\mu,\sigma)\in \Theta_0$ implies that $(\Lambda,\bar{\mu},\bar{\sigma})\in\Theta_1$ with $\bar{\sigma}(y)=-\sigma(1-y)$, it suffices to prove part (1) for $(\Lambda,\mu,\sigma)\in \Theta_1$; the proof for $(\Lambda,\mu,\sigma)\in \Theta_0$ follows by applying the result in $\Theta_1$ to $1-X$, which is a $(\Lambda,\bar{\mu},\bar{\sigma})$-Wright--Fisher process.

Consider $(\Lambda,\mu,\sigma)\in \Theta_1$.	Let $\varepsilon\in (0,1)$ and $x\in (\varepsilon,1)$. We first prove (1)(i). Assume that $\rW_\gamma(r^{-2}\Lambda(\dd r))<\infty$ and $\rW_\gamma(\bar{\mu})<\infty$ for some $\gamma>0$. Let $\alpha\in (0,\gamma)$ and $K>0$ such that $ \P_y(Y_t>\varepsilon)\leq (1-y)^{-1/4}Kt^{-\alpha}$ for all $y\in (0,1)$ and $t\geq 0$; the existence of such $K$ is ensured by Theorem~\ref{thm:stuckatboundary}(i). Let $\rho\in (0,4\alpha)$. \mofe{Using the previous bound and Theorem~\ref{thm:siegmund_duality}}, we obtain $$\P_x(X_t<1-t^{-\rho} )=\P_{1-t^{-\rho}}(x<Y_t)\leq \P_{1-t^{-\rho}}(\varepsilon<Y_t)\leq Kt^{-(\alpha-\rho/4)}.$$
	This proves (1)(i). \mofe{The proof of (1)(ii) is analogous, but using Theorem \ref{thm:stuckatboundary}(ii) instead of Theorem \ref{thm:stuckatboundary}(i).}

We now consider $(\Lambda,\mu,\sigma)\in \Theta_2$. Using Corollary~\ref{2dual} we get for $\hat{y}<\check{y}$, $$\P_x(\hat{y}< X_t< \check{y})\leq \P_x(\hat{y}\leq X_t< \check{y})={\P_{\hat{y},\check{y}}(\widehat{Y}_t\leq x< \widecheck{Y}_t)\leq \P_{\hat{y},\check{y}}(\widehat{Y}_t\neq \widecheck{Y}_t)}.$$
	Choose $\hat{y}=e^{-\rho t}$ and $\check{y}=1-e^{-\rho t}$. Thus, by \modifbis{Theorem \ref{thm:probmergeYt}(i)}, $$\P_x(X_t\in(e^{-\rho t}, 1-e^{-\rho t}))\leq 2K_1e^{-(K_2-\rho/4)t},$$ which completes the proof (note that the $K_1$ from Theorem \ref{thm:survival_probability}(2) is taken as the $K_1$ from Theorem \ref{thm:probmergeYt}(i) multiplied by $2$).\end{proof}

\subsection{Proofs of Theorems \ref{thm:accessibility_condition} and  \ref{thm:representation_absorption_probability}: boundary classification and representation of fixation probability} \label{sec:accessibility}
Assuming the Siegmund duality (Theorem \ref{thm:siegmund_duality}) and Theorem~\ref{thm:propYstat}  hold true, 
and having already proved Theorems~\ref{thm:survival_probability}, \ref{prop:stationary_distribution_Y} and \ref{prop:stationary_distribution_Y-II}, 
we proceed to prove Theorems~\ref{thm:accessibility_condition} and~\ref{thm:representation_absorption_probability}. 
As the proof of Theorem \ref{thm:accessibility_condition}(0/1) relies on Theorem~\ref{thm:accessibility_condition}(2), we begin proving the latter. For convenience, we simultaneously prove Theorem~\ref{thm:representation_absorption_probability}.

\begin{proof}[Proofs of Theorems~\ref{thm:accessibility_condition}(2) and \mofe{\ref{thm:representation_absorption_probability}}]
Assume $(\Lambda,\mu,\sigma)\in \Theta_2$. Consider $f(x)\coloneqq \pi_Y([0,x])$. Recall from \mofe{Theorem~\ref{prop:stationary_distribution_Y-II} and Theorem~\ref{thm:propYstat}(iii) } that~$f$ is well-defined, bounded, continuous, strictly increasing and satisfies $f(0)=0$ and $f(1)=1$. In particular, $f^{-1}$ exists and is continuous. \mofe{According to~\cite[Thm.~4.1]{Foucart2022} (with $\theta=0$ and $\mu_{\theta}=\pi_Y$) } $(f(X_t))_{t\geq 0}$ is a bounded martingale. Hence, by Doob's martingale convergence theorem, $\lim_{t\to\infty} f(X_t)$ exists almost surely. Since $f^{-1}$ is continuous, $X_{\infty}\coloneqq \lim_{t\to\infty} X_t$ exists almost surely. From Theorem~\ref{thm:survival_probability}, it follows that $X_{\infty}\in \{0,1\}$. Thus, $$\E_x[f(X_{\infty})]=f(0)\P_x(X_{\infty}=0)+f(1)\P_x(X_{\infty}=1)=\P_x(X_{\infty}=1).$$ By the martingale property of $(f(X_t))_{t\geq0}$, {$\E_x[f(X_{\infty})]=f(x)$}. Thus, since $f(0)=0$, $f(1)=1$ and $f$ is strictly increasing, \mofe{$\P_x(X_{\infty}=1)=f(x)\in(0,1)$. This ends the proof of Theorem~\ref{thm:accessibility_condition}(2). Theorem \ref{thm:representation_absorption_probability} follows using the previous identity and Theorem \ref{prop:stationary_distribution_Y-II}}.
\end{proof}
\mofe{Having proved Theorem~\ref{thm:accessibility_condition} part (2), we now proceed to prove part (0), (1), and (3).}
\begin{proof}[Proofs of Theorem~\ref{thm:accessibility_condition}(0/1/3)]	
\modifbis{Let $(\Lambda,\mu,\sigma)\in \Theta_0$. By assumption there is~$\gamma>0$ such that $w_\gamma(r^{-2}\Lambda(\dd r))<\infty$ and $w_\gamma(\mu)<\infty$. 
By Theorem~\ref{thm:survival_probability}--(1)(i), for any starting point $x \in (0,1)$, $X_t$ converges in probability to $0$}. To deduce the almost sure convergence, choose $\tilde{\sigma}\in \Cs^1([0,1])$ such that for all $z\in [0,1]$, $\tilde{\sigma}(z) \geq \sigma(z)$ and $(\Lambda,\mu,\tilde{\sigma})\in \Theta_2$. Write $\tilde{X}$ for the $(\Lambda,\mu,\tilde{\sigma})$-Wright--Fisher process with $\tilde{X}_0=z$. 
\modifbis{By Theorems \ref{thm:representation_absorption_probability} and \ref{thm:probmergeYt}(iii), we infer that $\P_z(\lim_{t\to\infty} \tilde{X}_t = 0)\to 1$ as $z\to 0$}. By~\citep[Thm.2.2]{DL12}, almost surely for all~$t\geq 0$, $X_t \leq \tilde{X}_t$. \mofe{Thus $\P_z(\lim_{t\to\infty} X_t = 0)\geq\P_z(\lim_{t\to\infty} \tilde{X}_t= 0)$. We conclude that $\P_z(\lim_{t\to\infty}X_t =0)\to 1$ as $z\to 0$. Hence, for $\varepsilon > 0$, } there is $z_0 > 0$ such that for any $z \in (0,z_0]$, $\P_z(\{\lim_{t\to\infty}X_t =0\}^c) \leq \varepsilon/2$. Moreover, since $X_t$ converges in probability to $0$ under $\P_x$, there is $t_0 >0$ such that $\P_x(X_{t_0} > z_0) \leq \varepsilon/2$. The Markov property of $X$ at $t_0$ and the above estimates yield
	\[\P_x(\{\lim_{t\to\infty}X_t =0\}^c) \leq \P_x(X_{t_0} > z_0) + {\E_x[\ind{X_{t_0} \leq z_0} \P_{x}(\{\lim_{t\to\infty} X_{t+t_0} =0\}^c \mid \mathcal{F}_{t_0} ) ]} \leq   \varepsilon. \]
	Therefore, $\P_x(\lim_{t\to\infty} X_t=0) \geq 1-\varepsilon$. \mofe{Letting } $\varepsilon\to0$ proves that {$X_t$ converges }almost surely to~$0$. \mofe{The proof for $(\Lambda,\mu,\sigma)\in \Theta_1$ is analogous}.


For $(\Lambda,\mu,\sigma)\in \Theta_3$, let $Y$ be the solution of~\eqref{eq:SDE_Y} with $Y_0=y\in (0,1)$. By Theorem \ref{prop:stationary_distribution_Y}(3), $Y_{\infty}$ exists and is either~$0$ or~$1$ almost surely. Thus, for all $x\in (0,1)$, $$\P_x(X_t \geq y)=\P_y(x \geq Y_t)\xrightarrow{t\to\infty} \P_y(Y_{ \infty}=0).$$ In particular, the distribution function of $X_t$ under $\P_x$ converges to a limit. Since $(X_t)_{t>0}$ is a tight family of random variables (because they are supported in $[0,1]$), we deduce that $X_t$ converges to a limit distribution $\pi_X$ that satisfies $\pi_X([y,1])=\P_y(Y_{ \infty}=0)$.
	It follows from the SDE~\eqref{eq:SDE_Y} that if $y=1$, then $Y_t=1$ for all $t\geq0$. Hence, $\pi_X(\{1\})=\P_1(Y_{ \infty}=0)=0$ so $1$ is not attractive for $X$ (if it was, we would have $\pi_X(\{1\})>0$). An analogous argument yields that $\pi_X(\{0\})=0$ and that $0$ is not attractive for $X$. 
\end{proof}

All results from Section~\ref{sec:main} are thus proved, assuming the Siegmund duality (Theorem \ref{thm:siegmund_duality} and Corollary~\ref{2dual}) and the results from Section \ref{sec:mainingredients} hold true. In the remainder of the manuscript, we prove the results related to Siegmund duality and those stated in Section~\ref{sec:mainingredients}.

\section{Key properties of \texorpdfstring{$Y$}{Y} and a useful estimate on the tail of \modifbis{some random times}}\label{sec:comparisonlevy}

It remains to prove the results of Section~\ref{sec:mainingredients}. An important tool is a comparison principle near the boundaries of the dual~$Y$ with functions of L{\'e}vy processes. This will allow us to control~$Y$ near the boundaries and derive useful estimates.

In Section~\ref{sec:comparisonlevy:sub:levysandwich}, we \mofe{introduce } the corresponding L{\'e}vy processes and provide the comparison property we will use throughout. \modifbis{In Sections \ref{sec:comparisonlevy:sub:escape} and \ref{sec:asextinction:sub:prepext}, we study those L\'evy processes and derive an estimate for the time it takes $Y$ to exit a boundary strip. In Section~\ref{sec:keyproperties:deviation}, we derive an estimate for the deviation of $Y$ from its initial value in compact time intervals. In Section~\ref{sec:keyproperties:rigthdistributiontail}, we provide conditions for the exponential/polynomial decay of the tail of \modifbis{some random times}}.

\modifbis{Recall that we assume throughout that $(\Lambda,\mu,\sigma)\in \Theta$, in particular, $\int_{(0,1)}r^{-1}\Lambda(\dd r)<\infty$ and $\int_{(-1,1)} |r| \mu(\dd r)<\infty$}. 
\subsection{A L{\'e}vy sandwich}\label{sec:comparisonlevy:sub:levysandwich}
We bound $\log(1/Y)$ from above and below by L\'evy processes when $Y$ is small (analogous estimates follow for $\log(1/(1-Y))$ when $Y$ is close to $1$). This can be thought of as a local and approximated version of Lamperti transform. Indeed, applying $\log(1/\cdot)$ to $Y$ yields an SDE in which all terms are almost state-independent near infinity, leading to L\'evy process approximations for $\log(1/Y)$ when $Y$ is small. Then, using monotonicity with respect to the coefficients of the SDE we obtain a L\'evy process that serves as a lower bound and another that serves as an upper bound, after augmenting a correction term. 

Note from $(\Lambda,\mu,\sigma)\in \Theta$ that the $r$-components of the jumps of $S$ and $N$ are summable on finite time intervals. Using this and It\^{o}'s formula~\citep[Thm.~4.4.10]{Applebaum2009}, we find that
\begin{align}
	\log(1/Y_t)=&\log(1/Y_0)+\int_{[0,t]\times(0,1)^2}\!\!\!\!\!\!\!\!\!\! \log(\mr_{r,u}(Y_{s-})^{-1} Y_{s-})N(\dd s, \dd r, \dd u) \nonumber \\
	&\quad +\int_{[0,t]\times(-1,1)}\!\!\!\!\! \log(s_r(Y_{s-})^{-1} Y_{s-} )S(\dd s, \dd r)+\int_{[0,t]}\sigma(Y_s)(1-Y_s)\dd s\label{eq:loglevy}.
\end{align}
For $b\geq \log(2)$, $\delta\in (0,1)$, define the L\'evy processes $\hat{L}^b=(\hat{L}_t^b)_{t\geq 0}$ and $\check{L}^{b,\delta}\coloneqq (\check{L}^{b,\delta})_{t\geq 0}$ via
\begin{align}
\hat{L}_t^b & \coloneqq \!\int\limits_{[0,t]\times(0,1)^2}\!\!\!\!\!\!\!\!\!\!\log(1-r)N(\dd s,\dd r,\dd u) + \frac{1}{2}\int\limits_{[0,t]\times(-1,1)}\!\!\!\!\!\!\!\!\!\!\log( (1+r)^2-4re^{-b}\ind{r>0} ) S(\dd s, \dd r)\nonumber \\
	& \qquad +t\big(\sigma(0)-e^{-b}\lVert \sigma\rVert_{\Cs^1([0,1])}\big)\label{eq:levylower},  	\\
\check{L}_t^{b,\delta}&\coloneqq \modifbis{\!\!\!\!\int\limits_{[0,t]\times[\delta,1)\times (0,1)}\!\!\!\!\!\!\!\!\!\!\!\!\!\!\!\! \log\Big(\!(1-r)\vee\frac{1}{ue^{b}}\!\Big) N(\dd s,\dd r,\dd u)} \nonumber \\
\textstyle	& \quad +\frac{1}{2} \int\limits_{[0,t]\times(-1,1)}\!\!\!\!\!\!\!\!\!\!\log( (1+r)^2-4re^{-b}\ind{r<0} ) S(\dd s, \dd r) + t\big(\sigma(0)+e^{-b}\lVert \sigma\rVert_{\Cs^1([0,1])} \big). \label{eq:levyupper}
\end{align}
See also Fig.~\ref{fig:levysandwich} for a sketch of the involved processes. 
Note that $\hat{L}^b$ and $\check{L}^{b,\delta}$ are defined from $N$ and $S$; in particular, they live on the same probability space as $Y$. When the L\'evy processes are considered along with a trajectory of $Y$ starting at $y$, we write $\P_y$ for the corresponding probability measure, and $\P$ if they are considered alone. Let us denote by $\hat{\nu}_b$ and $\check{\nu}_{b,\delta}$ the intensity measures associated with the jumps of $\hat{L}^{b}$ and $\check{L}^{b,\delta}$ respectively. The next lemma \mofe{shows } that the L\'evy processes are indeed well-defined.

\begin{figure}[t]
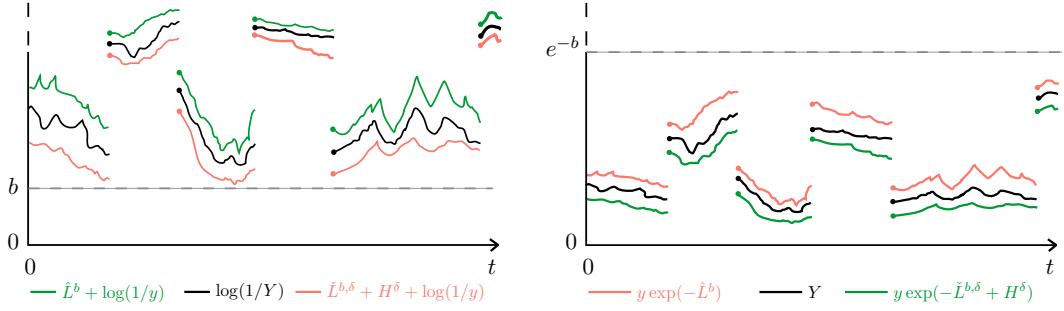

	\begin{minipage}{0.45\textwidth}
		\scalebox{3.5}{
}
	\end{minipage}
	\caption{Sketch of the L{\'e}vy sandwich in terms of $\log(1/Y)$ (left) and $Y$ (right).}\label{fig:levysandwich}
\end{figure}

\begin{lemme} \label{levywelldef}
	Let $(\Lambda,\mu,\sigma)\in \Theta$. For $b\geq \log(2)$ and $\delta\in (0,1)$, we have 
\begin{align}
\modifbis{\int_{\mathbb{R}} (1\wedge|x|)\hat{\nu}_b(\dd x)<\infty\quad\textrm{and}\quad\int_{\mathbb{R}} (1\wedge|x|)\check{\nu}_{b,\delta}(\dd x)<\infty}. \label{levymeascond}
\end{align}
Moreover, $\hat{L}^b$ and $\check{L}^{b,\delta}$ are well-defined and are L\'evy processes.
\end{lemme}
\begin{proof}
\mofe{Note that, for $r\in(-1/4,1/4)$, we have 
\begin{align*}
|\log((1+r)^2-4re^{-b}\ind{r>0})| &\leq \modifbis{|\log(1-3|r|)|},\\
|\log((1+r)^2-4re^{-b}\ind{r<0})| &\leq |\log(1-3|r|)|.
\end{align*}
Therefore,} 
\begin{align*}
\int_{\mathbb{R}} (1\wedge|x|)\hat{\nu}_b(\dd x) & \leq \int_{(0,1/2]}|\log(1-r)|\frac{\Lambda(\dd r)}{r^2} + \int_{(1/2,1)}\frac{\Lambda(\dd r)}{r^2} \\
&+\frac{1}{2} \left (\int_{(-\frac14,\frac14)} |\log(1-3|r|)| \mu(\dd r) + \mu\Big(\Big(-\frac14,\frac14\Big)^c\Big)\right ), \\
\int_{\mathbb{R}} (1\wedge|x|)\check{\nu}_{b,\delta}(\dd x) & \leq \int_{[\delta,1)}\frac{\Lambda(\dd r)}{r^2} +\frac{1}{2} \left (\int_{(-\frac14,\frac14)}\!\!\!\!|\log(1-3|r|)| \mu(\dd r) + \mu\Big(\Big(-\frac14,\frac14\Big)^c\Big)\right ). 
\end{align*}
By $(\Lambda,\mu,\sigma)\in \Theta$, the integrals in the above upper bounds are finite, \modifbis{which proves \eqref{levymeascond}}. This entails that $\hat{L}^b_t$ and $\check{L}^{b,\delta}_t$ are well-defined and \cadlag in $t$ (see for example \citep[proof of Thm.~19.3]{KenIti1999}). The independence and stationarity of increments of $\hat{L}^b$ and $\check{L}^{b,\delta}$ are easy consequences of properties of the Poisson random measures $N(\dd s, \dd r, \dd u)$ and $S(\dd s, \dd r)$. The processes $\hat{L}^b$ and $\check{L}^{b,\delta}$ are thus indeed L\'evy processes.
\end{proof}
Given $Y$, the process of small jumps of~$\log(1/Y)$ induced by~$N$ is defined as
\begin{align*}
	H_t^{\delta}&\coloneqq \int_{[0,t]\times(0,\delta)\times (0,1)}\log\Big(\frac{Y_{s-}}{m_{r,u}(Y_{s-})}\Big)N(\dd s,\dd r,\dd u). 
\end{align*}
Recall that for $A\subset[0,1]$, $T_{Y}A= \inf\{t\geq 0: Y_t\in A\}$. 

\modifbis{The following lemma establishes the announced comparison principle between $\log(1/Y)$ and the two L\'evy processes. Applying the result as it stands is not straightforward due to the term $H_t^{\delta}$. However, in Section \ref{sec:comparisonlevy:sub:escape}, we will establish an estimate to control its effect.}
\begin{lemme}[Sandwich lemma]\label{lem:levysandwich}
	Let $(\Lambda,\mu,\sel)\in\Theta$. Fix $b\geq \log(2)$, $\delta\in (0,1)$ and $Y_0\in (0,e^{-b}]$. 
	Almost surely, for all $t\in [0,T_{Y}(e^{-b},1]]$, \begin{equation}
		\hat{L}_t^b \leq \log(1/Y_t)-\log(1/Y_0)\leq \check{L}_t^{b,\delta}+H_t^\delta.\label{eq:comparison}
	\end{equation}
\end{lemme}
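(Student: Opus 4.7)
The approach is to start from the It\^o expansion \eqref{eq:loglevy} of $\log(1/Y_t)-\log(1/Y_0)$ and establish \eqref{eq:comparison} by pointwise, integrand-by-integrand comparison with the defining integrals of $\hat{L}^b$, $\check{L}^{b,\delta}$, and $H^\delta$. Throughout, I use that $Y_{s-}\leq e^{-b}$ for every $s\leq T_Y(e^{-b},1]$, so every inequality only needs to hold on $\{y\leq e^{-b}\}$. For the drift piece, writing $g(y)\coloneqq\sigma(y)(1-y)$, the bound $|g'|\leq \lVert\sigma\rVert_{\Cs^1([0,1])}$ yields $|g(y)-\sigma(0)|\leq e^{-b}\lVert\sigma\rVert_{\Cs^1([0,1])}$ on $[0,e^{-b}]$, which produces the drift components of both $\hat{L}^b$ and $\check{L}^{b,\delta}$ at once.

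For the $S$-integrand, rationalising the definition of $s_r$ gives the identity $y/s_r(y)=(1+r+\sqrt{(1+r)^2-4ry})/2$, from which $y\mapsto y/s_r(y)$ is decreasing when $r>0$ and increasing when $r<0$. Combining this monotonicity with the elementary inequality $\log((a+c)/2)\geq \log c$ valid for $0<c\leq a$ (and its reverse when $a\leq c$), evaluated at the worst admissible endpoint $y\in\{0,e^{-b}\}$, reproduces the $S$-part of $\hat{L}^b$ as a lower bound and that of $\check{L}^{b,\delta}$ as an upper bound; the asymmetric indicators $\1_{r>0}$ in \eqref{eq:levylower} and $\1_{r<0}$ in \eqref{eq:levyupper} encode precisely the opposite monotonicity on $\{r>0\}$ and $\{r<0\}$.

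The $N$-integrand is the crux. The lower bound is immediate: $(y-r)/(1-r)\leq y/(1-r)$ forces $m_{r,u}(y)\leq y/(1-r)$, whence $\log(y/m_{r,u}(y))\geq \log(1-r)$ pointwise. For the upper bound I split the $N$-integral at $r=\delta$: the portion with $r<\delta$ is, by definition, exactly $H_t^\delta$, and for $r\geq\delta$ the proof reduces, via case analysis on the order of $u$, $(y-r)/(1-r)$ and $y/(1-r)$ (which identifies $m_{r,u}(y)$ as one of these two deterministic values or as $u$), to showing
\[
\log\!\Bigl(\frac{y}{m_{r,u}(y)}\Bigr)\leq \1_{r<1-e^{-b}}\log\!\bigl((1-r)\vee (ue^b)^{-1}\bigr)+\1_{r\geq 1-e^{-b},\,u\leq e^{-b}}\log\!\bigl((ue^b)^{-1}\bigr).
\]
The main obstacle, I expect, is the delicate sub-case in the regime $\delta\leq r<e^{-b}$ (present only when $\delta<e^{-b}$) with $y>r$ and $u\leq(y-r)/(1-r)$, where $m_{r,u}(y)=(y-r)/(1-r)$ and $y/m_{r,u}(y)=y(1-r)/(y-r)$ genuinely exceeds $1-r$; here the assumption $y\leq e^{-b}$ is used essentially, because it upgrades $u\leq(y-r)/(1-r)$ to $u\leq(y-r)/(e^{b}y(1-r))$, which rearranges to $y(1-r)/(y-r)\leq 1/(ue^b)$ as needed.

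Once these three pointwise comparisons are in hand, integrating them against the positive measures $N(\dd s,\dd r,\dd u)$, $S(\dd s,\dd r)$ and $\dd s$ preserves the direction of each inequality and yields \eqref{eq:comparison} almost surely on $[0,T_Y(e^{-b},1]]$. The piecewise form of $\check{L}^{b,\delta}$ in \eqref{eq:levyupper} is tailored so that its $N$-integrand dominates every branch of $y/m_{r,u}(y)$ in this regime, which is why verifying that envelope constitutes the decisive step.
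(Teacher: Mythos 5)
Your argument is correct and mirrors the paper's proof: the same integrand-by-integrand comparison from \eqref{eq:loglevy}, the same Taylor bound for the drift, the same elementary inequality $m_{r,u}(y)\le y/(1-r)$ for the $N$-lower bound, and the same $\delta$-split plus case distinction at $r=1-e^{-b}$ for the $N$-upper bound. The only notable local difference is your handling of the $S$-integrand via the rationalized identity $y/s_r(y)=\tfrac{1}{2}\bigl(1+r+\sqrt{(1+r)^2-4ry}\bigr)$ and its monotonicity in $y$, which is a cleaner route than the paper's integral-representation estimate $Y_{t-}^{-1}s_r(Y_{t-})\le\bigl(\sqrt{(1+r)^2-\1_{\{r>0\}}4re^{-b}}\bigr)^{-1}$ but reaches exactly the same conclusion.
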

\begin{proof}
	Note first that for $t\in [0,T_{Y}(e^{-b},1]]$, \modifbis{$Y_s \vee Y_{s-}\leq e^{-b}$ for all $s \in [0,t]$}. We first prove the \modifbis{lower bound for $\log(Y_0/Y_t)$}. To this end, \mofe{we lower bound the three integrands appearing in}~\eqref{eq:loglevy}, which will turn out to be the corresponding integrands in the definition of $\hat{L}^b$. \mofe{Clearly}, for all $(r,u)\in (0,1)^2$ and $y\in [0,1]$, $\mr_{r,u}(y)\leq y/(1-r)$ so that \modifbis{$\log(\mr_{r,u}(Y_{s-})^{-1}Y_{s-})\geq \log(1-r)$}, \mofe{which yields the desired bound for the first integrand}. For the second integrand, note that for $r\in(-1,1)$, \begin{align*}
		\modifbis{Y_{s-}^{-1}s_r(Y_{s-}) =\frac{Y_{s-}^{-1}}{2\lvert r\rvert }\int_{0\wedge 4rY_{s-}}^{0\vee 4rY_{s-}}\frac{1}{2\sqrt{(1+r)^2-v} } \dd v \ \leq \big((1+r)^2-\ind{r>0}4r e^{-b}\big)^{-1/2}},
	\end{align*}
	where we used that \modifbis{$Y_{s-}\leq e^{-b}$}. Taking the inverse and applying $\log$ yields \[\modifbis{\log(s_r(Y_{s-})^{-1}Y_{s-})}\geq \frac{1}{2}\log\left(\smash[b]{(1+r)^2-\ind{r>0}4r e^{-b}}\right).\]
	The third integrand in~\eqref{eq:loglevy} \modifbis{is bounded using $Y_{s}\leq e^{-b}$ and a Taylor expansion around~$0$. This yields $\sigma(Y_s)(1-Y_s)\geq \sigma(0)-e^{-b}\lVert \sigma\rVert_{\Cs^1([0,1])}$}. 
	
	\smallskip
	
	Next, we turn to \modifbis{the upper bound for $\log(Y_0/Y_t)$}. \mofe{For this, we upper bound the } three integrands in~\eqref{eq:loglevy}. For the first integrand in~\eqref{eq:loglevy},  consider \modifbis{$(s,r,u)$ with $r\in [\delta,1)$ and $s \leq t \leq T_{Y}(e^{-b},1]$, and note that $m_{r,u}(y)\geq u\wedge (y/(1-r))$ and $Y_s \vee Y_{s-}\leq e^{-b}$}. In particular, \[\modifbis{\frac{Y_{s-}}{m_{r,u}(Y_{s-})}\leq \frac{ 1-r }{1\wedge (Y_{s-}^{-1}u(1-r))}}\leq \frac{ 1-r }{1\wedge (e^bu(1-r))}=(1-r)\vee \frac{1}{ue^b}.\]
\modifbis{Taking the $\log$ in this inequality yields the corresponding term of $\check{L}_t^{b,\delta}$}. 
For the second integrand in~\eqref{eq:loglevy}, proceed as for \mofe{the lower bound}, but \mofe{using this time the bound } $\modifbis{Y_{s-}^{-1} s_r(Y_{s-}) \geq
	(\smash[b]{(1+r)^2-\ind{r<0}4r e^{-b}})^{-1/2}}$. Taking the inverse and applying $\log$ yields the second integrand of $\check{L}_t^{b,\delta}$. The third integrand in~\eqref{eq:loglevy} is upper bounded using a Taylor approximation as in the lower bound.
\end{proof}
\subsection{Properties of the approximating L\'evy processes, and time for $Y$ to escape a strip around boundary}\label{sec:comparisonlevy:sub:escape}
In this section, we provide explicit estimates for the exit time of~$Y$ from a strip around the boundary. The corresponding result is presented below. 
\begin{prop}\label{lem:escapetimeY}
	Let $(\Lambda,\mu,\sigma)\in\Theta$. 
	\begin{enumerate}
		\item If $C_0(\Lambda,\mu,\sigma)<0$, there exists $a\in (0,1/2)$ and $\gamma>0$ such that for any $y\in (0,1)$, 
		\[\E_y\left[e^{\gamma T_{Y}(a,1]} \right]\leq 2\left(\frac{a}{y}\vee 1\right)^{1/4}<\infty.\] In particular, $T_{Y}(a,1]<\infty$ $\P_y$-a.s..
		\item If $C_1(\Lambda,\mu,\sigma)<0$, there exists $a\in (1/2,1)$ and $\gamma>0$ such that for any $y\in (0,1)$, \[\E_y[e^{\gamma T_{Y}[0,a)} ]\leq 2\left(\frac{1-a}{1-y}\vee 1\right)^{1/4}<\infty.\] 
		In particular, $T_{Y}[0,a)<\infty$ $\P_y$-a.s..
	\end{enumerate}
\end{prop}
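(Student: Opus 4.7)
\emph{Plan.} By the involution $y \mapsto 1-y$ (which transforms $(\Lambda,\mu,\sigma)$ into $(\Lambda,\bar\mu,\bar\sigma)$ and $C_0$ into $C_1$), part (2) reduces to part (1); I therefore focus on (1). When $y > a$, $T_Y(a,1]=0$ and the inequality $1 \leq 2$ is trivial, so assume $y \leq a$ and write $a = e^{-b}$ with $b \geq \log 2$ to be chosen.

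\emph{Step 1 (sandwich reduction).} Lemma~\ref{lem:levysandwich} gives, for every $t < T_Y(a,1]$, the bound $\log(1/Y_t) \leq \log(1/y) + \check L_t^{b,\delta} + H_t^\delta$. Setting $\Xi_t := \check L_t^{b,\delta} + H_t^\delta$ and $\tilde\tau := \inf\{t \geq 0 : \Xi_t \leq -\log(a/y)\}$, the sandwich forces $T_Y(a,1] \leq \tilde\tau$: on $\{t < T_Y(a,1]\}$, $Y_t \leq a$ gives $\log(1/Y_t) \geq b$, whence $\Xi_t > -\log(a/y)$, i.e.\ $\tilde\tau > t$. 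It thus suffices to estimate $\E_y[e^{\gamma \tilde\tau}]$.

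\emph{Step 2 (negative drift).} Evaluating the compensators of~\eqref{eq:levyupper} and invoking~\eqref{cond:weak} yields $\E[\check L_1^{b,\delta}] \to C_0(\Lambda,\mu,\sigma) < 0$ as $b \to \infty$, $\delta \to 0$ (each Lévy-measure integral converges to the corresponding term in $C_0$). A case analysis of $m_{r,u}(y)$ for $r \in (0,\delta)$ and $y \in [0,e^{-b}]$ shows that the per-unit-time Laplace compensator of $H^\delta$ is uniformly $o(1)$ as $\delta \to 0$. Hence $b, \delta$ may be chosen so that $\Xi$ has strictly negative instantaneous drift, bounded above by $C_0/2 < 0$.

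\emph{Step 3 (exponential moment via a supermartingale).} For small $\theta > 0$, construct a deterministic Laplace-type exponent $\Psi(\theta)$ dominating the Laplace compensator of $\theta \Xi_t$ uniformly over $Y_{s-} \in [0,e^{-b}]$, with $\Psi(0)=0$ and $\Psi'(0) \leq C_0/2 < 0$; then $\Psi(\theta) < 0$ for small $\theta>0$, and $M_t := \exp(\theta \Xi_t - t \Psi(\theta))$ is a supermartingale. Setting $\gamma := -\Psi(\theta) > 0$ and applying optional stopping at $\tilde\tau \wedge t$ with Fatou yields
\[
\E_y\!\left[e^{\gamma \tilde\tau} e^{\theta \Xi_{\tilde\tau}}\right] \leq 1.
\]
On $\{\tilde\tau < \infty\}$ one has $\Xi_{\tilde\tau} \leq -\log(a/y)$ up to overshoot, which by the Lévy fluctuation estimates of Appendix~\ref{sec:app:levy} is controlled by a factor $\leq 2$. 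Choosing $\theta = 1/4$ (and $\gamma$ accordingly) therefore produces $\E_y[e^{\gamma\tilde\tau}] \leq 2(a/y)^{1/4}$, as required.

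\emph{Main obstacle.} The key difficulty is that $\Xi$ is \emph{not} a Lévy process, due to the $Y$-dependence of the integrand in $H^\delta$: one cannot invoke fluctuation theory off-the-shelf. Constructing the dominating exponent $\Psi$, and ensuring it remains negative at $\theta = 1/4$, hinges on a uniform-in-$y$ analysis of the median function $m_{r,u}(y)$ for small $r$, together with a delicate calibration of $b$, $\delta$, and $\theta$ so that both the drift condition of Step~2 and the target exponent $1/4$ hold simultaneously.
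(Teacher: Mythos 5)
The symmetry reduction of (2) to (1), the use of the sandwich lemma, and the negative-drift calibration of $b,\delta$ in Steps~1--2 all match the paper's setup (Lemmas~\ref{lem:levysandwich}, \ref{lem:controlLaplaceofL}, \ref{lem:controlsmalljumps}). However, Step~3 takes a genuinely different route from the paper, and as formulated it has a gap that you would need substantial extra work to close.

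After optional stopping you obtain $\E_y[e^{\gamma\tilde\tau}e^{\theta\Xi_{\tilde\tau}}]\leq 1$. To extract an upper bound on $\E_y[e^{\gamma\tilde\tau}]$ from this, you need a \emph{lower} bound on $e^{\theta\Xi_{\tilde\tau}}$ on $\{\tilde\tau<\infty\}$. But at $\tilde\tau$ one only knows $\Xi_{\tilde\tau}\leq -\log(a/y)$, so $e^{\theta\Xi_{\tilde\tau}}\leq (y/a)^\theta$ -- the inequality points the wrong way, and the overshoot can push $\Xi_{\tilde\tau}$ arbitrarily far below the level. The process $\check{L}^{b,\delta}$ has jumps of size $\log(1-r)$ with $r$ close to~$1$, which are unbounded below; under the basic assumption $(\Lambda,\mu,\sigma)\in\Theta$ one only has the $\gamma=0$ (logarithmic) integrability at the boundary, which is not enough to control the first-passage overshoot. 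Moreover, the results in Appendix~\ref{sec:app:levy} that you cite concern moments of $G_L$, the last time a drifting L\'evy process achieves its supremum; none of them bounds overshoots, and in any case $\Xi=\check{L}^{b,\delta}+H^\delta$ is not a L\'evy process, as you yourself note. So the statement that the overshoot is ``controlled by a factor $\leq 2$'' has no support in the text and is not true under the stated hypotheses.

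The paper sidesteps overshoot entirely: instead of a stopping time, it works at a fixed time $t$ and uses $\{T_Y(a,1]>t\}\Rightarrow\{Y_t\leq a\}\Rightarrow\{\check{L}_t^{b,\delta}+H_{t\wedge T_Y}^{\delta}\geq b+\log y\}$, applies Chernoff directly, then H\"older with exponents $\lambda_0/\lambda$ and $4\lambda_0$ (with $\lambda_0=\lambda/(4\lambda+1)\leq 1/4$) to split the L\'evy part $\check{L}^{b,\delta}$ from the $Y$-dependent part $H^\delta$, where the latter is handled by the exponential supermartingale of Lemma~\ref{lem:controlsmalljumps} (stopped at $T_Y(e^{-b},1]$, so no escape events are needed). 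This produces a tail bound $\P_y(T_Y(a,1]>t)\leq (ye^b)^{-1/4}e^{-2\gamma t}$, which integrates to the claimed estimate. The H\"older split also explains where the exponent $1/4$ really comes from -- it is tied to the $\exp(\tfrac14 H^\delta)$ of Lemma~\ref{lem:controlsmalljumps}, not a free parameter $\theta$ you can set to $1/4$ independently. If you want to keep a stopping-time version of the argument you would first need a separate overshoot lemma, which is exactly the obstruction the paper's fixed-time Chernoff bound is designed to avoid.
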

 \modifbis{The proof of Proposition~\ref{lem:escapetimeY} requires some preparation and takes the rest of this subsection; it relies on our L\'evy sandwich. To use it, we first estimate the Laplace transforms of $\hat{L}_1^{b}$ and $\check{L}_1^{b,\delta}$ and then the accumulation of small jumps of $\log(1/Y)$ induced by~$N$}.

\smallskip 

Note that the support of $\hat{\nu}_b$ is bounded from above by $\log(2)$ and that 
\begin{align*}
\int_{(1,\infty)} e^{\lambda x}\check{\nu}_{b,\delta}(\dd x)\leq \int_{[\delta,1)}r^{-2}\Lambda(\dd r)  \int_{(0,1)}e^{\lambda\log(1/u)}\dd u, \end{align*}
where the right-hand side is finite if $\lambda<1$. One can easily see that $\int_{(1,\infty)} e^{\lambda x}\check{\nu}_{b,\delta}(\dd x)<\infty$ is actually equivalent to $\lambda<1$. By \citep[Thm.~25.3]{KenIti1999} we get that the Laplace transforms $\E[e^{\lambda\hat{L}_1^{b}}]$ and $\E[e^{\lambda\check{L}_1^{b,\delta}}]$ are finite for $\lambda \in [0,\infty)$ and $\lambda \in [0,1)$, \mofe{respectively}. Denote the Laplace exponent of $\hat{L}^{b}$ and $\check{L}^{b,\delta}$ by $\hat{\psi}_{b}(\lambda)\coloneqq\log(\E[e^{\lambda\hat{L}_1^{b}}])$ and $\check{\psi}_{b,\delta}(\lambda)\coloneqq\log(\E[e^{\lambda\check{L}_1^{b,\delta}}])$, \mofe{respectively}. By Lemma \ref{levywelldef} and the L{\'e}vy-Kintchine formula we obtain 
\modifbis{\begin{align*}
\hat{\psi}_{b}(\lambda)&=\int_{\mathbb{R}} (e^{\lambda x}-1)\hat{\nu}_b(\dd x)+\lambda (\sigma(0)-e^{-b} \lVert \sigma\rVert_{\Cs^1([0,1])}),\quad \lambda \in [0,\infty),\\ 
\check{\psi}_{b,\delta}(\lambda)&=\int_{\mathbb{R}} (e^{\lambda x}-1)\check{\nu}_{b,\delta}(\dd x)+\lambda (\sigma(0)+e^{-b}\lVert \sigma\rVert_{\Cs^1([0,1])}),\quad \lambda\in [0,1).
\end{align*}}
This translates into
\begin{eqnarray}
	&\hat{\psi}_{b}(\lambda)=\int_{(-1,1)} \Big( \big( (1+r)^2-4re^{-b}\ind{r>0}\big)^{\lambda/2}-1\Big)\mu(\dd r)+ \lambda \left(\sigma(0)-e^{-b}\lVert \sigma\rVert_{\Cs^1([0,1])} \right)\nonumber \\
	&+\int_{(0,1)}( (1-r)^{\lambda}-1 )r^{-2}\Lambda(\dd r), \label{eq:laplacelower}\\
	&\check{\psi}_{b,\delta}(\lambda)= \int_{(-1,1)} \Big( \big( (1+r)^2-4re^{-b}\ind{r<0}\big)^{\lambda/2}-1\Big)\mu(\dd r) +\lambda \left(\sigma(0)+e^{-b}\lVert \sigma\rVert_{\Cs^1([0,1])} \right)\nonumber \\
	& \quad\quad+\int_{[\delta,1)}\left\{\ind{r< 1-e^{-b}}\left((1-r)^{\lambda}-1 +\frac{\lambda e^{-b}}{(1-\lambda)(1-r)^{1-\lambda}}\right)+ \ind{r\geq 1-e^{-b}} \left ( \frac{e^{-\lambda b}}{1-\lambda}-1 \right ) \right\}\frac{\Lambda(\dd r)}{r^2} \label{eq:laplaceupper}.
\end{eqnarray}
For later reference we note that, if $s_\gamma(r^{-2}\Lambda(\dd r))<\infty$ and $s_\gamma(\bar{\mu})<\infty$ for some $\gamma>0$ then $\int_{(-\infty,-1)} e^{\lambda x}\hat{\nu}_b(\dd x)<\infty$ for all $\lambda\in [-\gamma,\infty)$. Combining with \citep[Thm.~25.3]{KenIti1999}, Lemma \ref{levywelldef} and the L{\'e}vy-Kintchine formula we get 
\begin{lemme} \label{eq:remark_stronglaplace}
If $s_\gamma(r^{-2}\Lambda(\dd r))<\infty$ and $s_\gamma(\bar{\mu})<\infty$ for some $\gamma>0$ then $\E[e^{\lambda\hat{L}_1^{b}}]<\infty$ for all $\lambda\in [-\gamma,\infty)$ and, on this interval, $\hat{\psi}_{b}(\lambda)\coloneqq\log(\E[e^{\lambda\hat{L}_1^{b}}])$ has the expression \eqref{eq:laplacelower}.  
\end{lemme}
The following estimate involving~$\check{\psi}_{b,\delta}$ will be useful in the proof of Proposition~\ref{lem:escapetimeY}.
\begin{lemme}\label{lem:controlLaplaceofL}
	Let $(\Lambda,\mu,\sigma)\in\Theta$. If $C_0(\Lambda,\mu,\sigma)<0$, then there exists $\delta\in (0,1/2)$, $b\geq \log(2)$ and $\lambda\in (0,1)$ such that $\check{\psi}_{b,\delta}(\lambda)+2\lambda\int_{(0,\delta)}r^{-1}\Lambda( \dd r)<0.$
\end{lemme}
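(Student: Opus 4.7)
The plan is to divide by $\lambda>0$ and let $\lambda\to 0^+$, reducing the claim to the analysis of the right-derivative of $\check{\psi}_{b,\delta}$ at $0$. Since $\check{\psi}_{b,\delta}(0)=0$, the ratio $\check{\psi}_{b,\delta}(\lambda)/\lambda$ should converge as $\lambda\to 0^+$ to a quantity $F(b,\delta)$ that I would compute by differentiating the integrands in the formula for $\check{\psi}_{b,\delta}(\lambda)$ at $\lambda=0$ and interchanging limit with integral. I would then show $F(b,\delta)+2\int_{(0,\delta)}r^{-1}\Lambda(\dd r)\to C_0(\Lambda,\mu,\sigma)$ as $b\to\infty$ and $\delta\to 0^+$. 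Because $C_0<0$ by hypothesis, this produces a pair $(b_0,\delta_0)$ for which the limit is negative; picking $\lambda>0$ small enough and multiplying by $\lambda$ then yields the lemma.

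Differentiating term by term gives
\begin{align*}
F(b,\delta) &= \int_{[\delta,1)}\Big\{\ind{r<1-e^{-b}}\Big[\log(1-r)+\tfrac{e^{-b}}{1-r}\Big]+\ind{r\geq 1-e^{-b}}\,e^{-b}\Big\}\frac{\Lambda(\dd r)}{r^2}\\
&\quad+\int_{(-1,1)}\tfrac12\log\!\big((1+r)^2-4re^{-b}\ind{r<0}\big)\,\mu(\dd r)+\sigma(0)+e^{-b}\|\sigma\|_{\Cs^1([0,1])}.
\end{align*}
Letting $b\to\infty$ with $\delta$ fixed, the indicator $\ind{r<1-e^{-b}}\to 1$ on $[\delta,1)$, the $e^{-b}$-terms vanish, and $\tfrac12\log((1+r)^2-4re^{-b}\ind{r<0})\to\log(1+r)$ pointwise. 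Subsequently letting $\delta\to 0^+$ extends the $\Lambda$-integral to $(0,1)$, and $2\int_{(0,\delta)}r^{-1}\Lambda(\dd r)\to 0$ by the bound $\int_{(0,1)}r^{-1}\Lambda(\dd r)<\infty$ implied by the coalescence-impact condition on $\Lambda$. Using the identity $\int_{(-1,1)}\log(1+r)\mu(\dd r)=-\int_{(-1,1)}\log(1/(1-r))\bar{\mu}(\dd r)$, the resulting limit is exactly $C_0(\Lambda,\mu,\sigma)$.

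The bulk of the work lies in justifying two applications of dominated convergence. For the $\lambda$-limit, I would use the elementary bounds $|(1-r)^{\lambda}-1|\leq\lambda|\log(1-r)|$ for $r\in(0,1)$ and $|a^{\lambda/2}-1|\leq(\lambda/2)|\log a|\max(1,a^{\lambda/4})$ for $a>0$ (both valid for $\lambda\in[0,1/2]$), which produce dominating functions of the form $|\log(1-r)|+C_b$ for the $\Lambda$-integrand and $C_b(|\log(1+r)|+|\log(1-r)|+1)$ for the $\mu$-integrand. For the $b$-limit, essentially the same dominating functions (now chosen uniform in $b\geq\log 2$) apply. Their integrability against $r^{-2}\Lambda(\dd r)$ and $\mu$ respectively is exactly ensured by the two conditions $\int_{(0,1)}\log(1/(1-r))r^{-2}\Lambda(\dd r)<\infty$ and $\int_{(-1,1)}\log(1/(1-r^2))\mu(\dd r)<\infty$ built into the definition of $\Theta$.

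The main obstacle is careful bookkeeping of the singularities of the integrands near $r=1$ (in both integrals) and near $r=-1$ (in the $\mu$-integral) so that the strengthened integrability conditions defining $\Theta$ can be invoked uniformly; once that is done, no step is delicate and the reassembly into $C_0$ is immediate.
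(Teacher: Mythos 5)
Your proof is correct and follows essentially the same strategy as the paper: both reduce the claim to showing that the right-derivative of $\lambda\mapsto\check{\psi}_{b,\delta}(\lambda)+2\lambda\int_{(0,\delta)}r^{-1}\Lambda(\dd r)$ at $0$ is negative for a suitable choice of $(b,\delta)$, and both identify this derivative as $C_0(\Lambda,\mu,\sigma)$ plus correction terms that vanish. The only cosmetic difference is that the paper computes $\check{\psi}_{b,\delta}'(0)=\E[\check{L}_1^{b,\delta}]$ directly, explicitly decomposes $p_{b,\delta}'(0)-C_0$ into three pieces, and then chooses $\delta$ small first and $b$ large second to make each piece less than $\lvert C_0\rvert/3$, whereas you phrase the same bookkeeping as an iterated limit $b\to\infty$, $\delta\to 0^+$ justified by dominated convergence.
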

\begin{proof}
Set $p_{b,\delta}(\lambda) \coloneqq \check{\psi}_{b,\delta}(\lambda)+2\lambda\int_{(0,\delta)}r^{-1}\Lambda( \dd r)$. Note that $p_{b,\delta}(0)=0$. Hence, it suffices to show that $p'_{b,\delta}(0)<0$ for some $b\geq \log(2)$ and $\delta\in (0,1/2)$; the result would then follow. 
Differentiating \eqref{eq:laplaceupper} we get that
\begin{align}
&	p_{b,\delta}'(0)=	\int_{[\delta,1)} \left\{ \ind{r< 1-e^{-b}}\left( \log(1-r)+\frac{e^{-b}}{1-r} \right)+\ind{r\geq 1-e^{-b}} \modifbis{(1-b)}\right\} \frac{\Lambda(\dd r)}{r^2} \label{exprderp} \\
		&+\frac{1}{2}\int_{(-1,1)} \log\left((1+r)^2-4re^{-b}\ind{r<0}\right)\mu(\dd r) +  \sigma(0)+e^{-b}\lVert \sigma\rVert_{\Cs^1([0,1])}+ 2\int_{(0,\delta)}\!\!\!\frac{\Lambda( \dd r)}{r}. \nonumber
\end{align}
We first assume that $C_0(\Lambda,\mu,\sigma)\in(-\infty,0)$, which, combined with $(\Lambda,\mu,\sigma)\in \Theta$, implies
\begin{align} 
\int_{(0,1)}|\log(1-r)|r^{-2}\Lambda(\dd r)<\infty\quad\textrm{and}\quad\int_{(-1,0)}|\log(1+r)|\mu(\dd r)<\infty. \label{csqc0finite}
\end{align}
Then $p_{b,\delta}'(0)$ equals
\begin{align*}
		&C_0(\Lambda,\mu,\sigma) +e^{-b}\lVert \sigma\rVert_{\Cs^1([0,1])}+\frac{1}{2}\int_{(-1,0)}\log\left(1-\frac{4re^{-b}}{(1+r)^2}\right)\mu(\dd r)+ 2\int_{(0,\delta)}\frac{\Lambda( \dd r)}{r} \\
		& -\!\!\int_{(0,1)}\!\!\left[\ind{r<\delta} \log(1-r)-\ind{r\in[\delta,1-e^{-b})}\frac{e^{-b}}{1-r}+\ind{r\geq 1-e^{-b}}\left(\log(1-r)-\modifbis{(1-b)}\right) \right]\frac{\Lambda(\dd r)}{r^2}.
	\end{align*}
Thanks to \eqref{csqc0finite} and $(\Lambda,\mu,\sigma)\in \Theta$, all the involved integrals are finite. Also, since $(\Lambda,\mu,\sigma)\in \Theta$, we can choose $\delta\in (0,1/2)$ such that $\int_{(0,\delta)}(2r^{-1}-\log(1-r)r^{-2})\Lambda(\dd r)< \lvert C_0(\Lambda,\mu,\sigma)\rvert/3.$ For this choice of $\delta$, \mofe{by dominated convergence theorem}, we also have 
\begin{align}
\int_{[\delta,1-e^{-b})}\frac{e^{-b}}{(1-r)}\frac{\Lambda(\dd r)}{r^2}\xrightarrow[b\to\infty]{} 0. \label{termcvd}
\end{align}
Hence, for $b\geq\log(2)$ sufficiently large, $\int_{[\delta,1-e^{-b})}e^{-b}(1-r)^{-1}r^{-2}\Lambda(\dd r)< \lvert C_0(\Lambda,\mu,\sigma)\rvert/3.$
	Moreover, \modifbis{choosing a larger $b$ if necessary}, we get \begin{align*}
		e^{-b}\lVert \sigma\rVert_{\Cs^1([0,1])}+\int_{(-1,0)}\frac{1}{2}\log\left(1-\frac{4re^{-b}}{(1+r)^2}\right)\mu(\dd r)-&\int_{[1-e^{-b},1)} \frac{\log(1-r)-\modifbis{(1-b)}}{r^2}\Lambda(\dd r)\\
		&{<} \lvert C_0(\Lambda,\mu,\sigma)\rvert/3.
	\end{align*}
	Altogether, \mofe{for this } choice of $b$ and $\delta$, \mofe{we have } $p_{b,\delta}'(0)<0$. 

Assume now that $C_0(\Lambda,\mu,\sigma)=-\infty$ and fix $\delta\in (0,1/2)$. \modifbis{In this case, because $(\Lambda,\mu,\sigma)\in \Theta$, we have }$\int_{[\delta,1)}\log(1-r)r^{-2}\Lambda(\dd r)=-\infty$ or $\int_{(-1,-1/2)}\log(1+r)\mu(\dd r)=-\infty$. In either case, at least one term in \eqref{exprderp} converges to $-\infty$ as $b$ goes to infinity and, since \eqref{termcvd} still holds, all other terms are either negative or bounded. We thus get that $p_{b,\delta}'(0)$ converges to $-\infty$ as $b$ goes to infinity so $p_{b,\delta}'(0)<0$ for $b$ sufficiently large, which ends the proof.
\end{proof}
\begin{remark} \label{limrateofgrowth} Note that $\check{\psi}_{b,\delta}'(0)=\E[\check{L}_1^{b,\delta}]$. \mofe{By inspecting the previous proof we see that } $\lim_{\delta \rightarrow 0} \lim_{b \rightarrow \infty} \check{\psi}_{b,\delta}'(0)=C_0(\Lambda,\mu,\sigma)$.  
Moreover, one can similarly infer from the proof of Lemma~\ref{lem:existence_suit_approx} that $\E[\hat{L}_1^{b}]$ approximates $C_0(\Lambda,\mu,\sigma)$ as $b$ is large. \mofe{Therefore $C_0(\Lambda,\mu,\sigma)$ is the limit growth rate } of the L\'evy processes $\hat{L}^b$ and $\check{L}^{b,\delta}$ sandwiching $\log(1/Y)$ when $Y$ is close to the boundary $0$. 
\end{remark}
Next, we control the accumulation of small jumps of $\log(1/Y)$ induced by~$N$.

\begin{lemme}\label{lem:controlsmalljumps}
	Let $(\Lambda,\mu,\sigma)\in\Theta$. For any $\delta\in (0,1/2)$, $b \geq \log(2)$, and $y\leq e^{-b}$, \begin{eqnarray}
		\E_{y }\left[\exp\left(\frac{1}{4}H^\delta_{t\wedge T_{Y}(e^{-b},1]}\right)\right]\leq \exp \left(\frac{t}{2}\int_{(0,\delta)}r^{-1}\Lambda(\dd r)\right).
	\end{eqnarray}
\end{lemme}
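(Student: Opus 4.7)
The plan is to apply the exponential supermartingale associated with the Poisson random measure $N$. Set $\tau := t \wedge T_{Y}(e^{-b},1]$ and define
\begin{equation*}
  \mathcal{E}_t := \exp\!\Bigl(\tfrac{1}{4}H^\delta_t - G_t\Bigr),\qquad G_t := \int_0^t\!\int_{(0,\delta)\times(0,1)}\!\biggl[\Bigl(\tfrac{Y_{s-}}{\mr_{r,u}(Y_{s-})}\Bigr)^{\!1/4}-1\biggr]\,\dd u\, r^{-2}\Lambda(\dd r)\,\dd s.
\end{equation*}
By the standard exponential formula for Poisson-driven stochastic integrals, $\mathcal{E}$ is a non-negative local martingale, hence a supermartingale, so $\E_y[\mathcal{E}_\tau] \leq 1$ by optional stopping at the bounded stopping time $\tau$. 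Writing $\exp(\tfrac{1}{4} H^\delta_\tau) = \mathcal{E}_\tau \exp(G_\tau)$, it suffices to produce an almost-sure deterministic bound $G_\tau \leq \tfrac{t}{2}\int_{(0,\delta)} r^{-1}\Lambda(\dd r)$. Because $Y_{s-} \in (0, e^{-b}] \subseteq (0, 1/2]$ for all $s \leq \tau$, this in turn reduces to the pointwise inequality
\begin{equation*}
  \int_0^1\Bigl(\tfrac{y}{\mr_{r,u}(y)}\Bigr)^{1/4}\dd u \;\leq\; 1 + \tfrac{r}{2},\qquad y\in(0,1/2],\; r\in(0,1/2). \qquad (\star)
\end{equation*}

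To prove $(\star)$, I split the $u$-integration along the three regimes of $\mr_{r,u}(y)$ identified around \eqref{eq:medianexplanation}: $\mr_{r,u}(y) = (y-r)/(1-r)$ on $[0,(y-r)^+/(1-r)]$ (empty when $y \leq r$); $\mr_{r,u}(y) = u$ on the intermediate interval $((y-r)^+/(1-r), y/(1-r)]$; and $\mr_{r,u}(y) = y/(1-r)$ on $(y/(1-r),1]$. Elementary computation then yields a closed form $I(y,r)$ for the left-hand side of $(\star)$, with the two sub-cases $y \leq r$ and $y > r$. A direct differentiation gives $\partial_y I = \tfrac{1}{3}(1-r)^{-3/4} > 0$ on $(0,r)$, while on $(r, 1/2]$ the substitution $t = (y-r)/y \in (0,1)$ reduces the sign of $12 (1-r)^{3/4} \partial_y I$ to that of $\psi(t) := 4 - t^{3/4} - 3t^{-1/4}$; since $\psi'(t) = \tfrac{3}{4} t^{-5/4}(1-t) > 0$ on $(0,1)$ and $\psi(1) = 0$, one obtains $\psi < 0$ on $(0,1)$, so $\partial_y I < 0$ on $(r, 1/2]$. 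Thus $I(\cdot, r)$ attains its maximum on $(0, 1/2]$ at $y = r$, where it equals $(1-r)^{-3/4}(1-2r/3)$.

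It remains to verify the purely analytic inequality $(1-r)^{-3/4}(1 - 2r/3) \leq 1 + r/2$ on $(0, 1/2)$, equivalently $\phi(r) := (1-r)^{3/4}(1 + r/2) - (1 - 2r/3) \geq 0$. One has $\phi(0) = 0$ and $\phi(1/2) = \tfrac{5}{4} \cdot 2^{-3/4} - \tfrac{2}{3} > 0$, while
\begin{equation*}
  \phi'(r) = \tfrac{2}{3} - (1-r)^{-1/4}\bigl(\tfrac{1}{4} + \tfrac{7r}{8}\bigr)
\end{equation*}
is strictly decreasing on $(0, 1/2)$ because $(1-r)^{-1/4}$ and $\tfrac{1}{4} + \tfrac{7r}{8}$ are both positive and increasing. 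Hence $\phi'$ has a unique zero in $(0, 1/2)$, $\phi$ is first increasing and then decreasing on $[0, 1/2]$, and $\phi \geq \min(\phi(0), \phi(1/2)) = 0$ throughout. The main technical effort lies in the careful book-keeping in the piecewise evaluation of $I(y,r)$; both the monotonicity in $y$ and the final analytic inequality are conceptually routine once the three regimes of $\mr_{r,u}$ are in hand, and no ingredient beyond the exponential supermartingale of $N$ is required.
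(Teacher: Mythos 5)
Your proof is correct, and it takes a genuinely different (and somewhat more direct) route than the paper. The paper works with the exponential supermartingale attached to the $\tfrac12$-power, i.e. $M_t = \exp\bigl(\tfrac12 H^\delta - \int(\sqrt{Y_{s-}/m_{r,u}(Y_{s-})}-1)\bigr)$, gets $\E_y[M_t]\leq 1$, and then applies Cauchy--Schwarz to peel off the factor $\exp(\tfrac14 H^\delta)$; the deterministic compensator bound is supplied by Lemma~\ref{claim1}, which shows $\int_0^1\sqrt{y/m_{r,u}(y)}\,\dd u-1\leq r$. You instead build the exponential supermartingale directly at the $\tfrac14$-power and then need only the single pointwise inequality $\int_0^1(y/m_{r,u}(y))^{1/4}\dd u\leq 1+\tfrac r2$ for $y\leq 1/2$, $r<1/2$, which you establish by an explicit piecewise evaluation of $m_{r,u}$ and a monotonicity/extremal analysis in $y$ and $r$. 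This removes the Cauchy--Schwarz step at the price of a more delicate elementary inequality (the maximizer sits at $y=r$ and the sharp constant $1+r/2$ requires the two-sided monotonicity argument via $\psi$ and $\phi$), whereas the paper's route keeps the auxiliary analytic lemma cruder ($C(r,y)\leq r$ rather than $r/2$, a bound that is easier to check) and compensates with the Cauchy--Schwarz factor of $\tfrac12$. One small point you glossed over: to invoke optional stopping at $\tau=t\wedge T_Y(e^{-b},1]$ you implicitly use that $\mathcal E$ is a (local) martingale on all of $[0,t]$, which requires $G_t<\infty$ a.s. even past $\tau$; this does hold (the same piecewise computation shows $\int_0^1(y/m_{r,u}(y))^{1/4}\dd u-1\leq r$ uniformly in $y\in(0,1)$ for $r<1/2$, so the compensator is dominated by $t\int_{(0,\delta)}r^{-1}\Lambda(\dd r)$), but it would be cleaner to follow the paper and bake the stopping into the definition of the process, i.e. integrate only over $[0,\tau]\times(0,\delta)\times(0,1)$ from the outset.
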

\begin{proof}
	Define $D(b,\delta,t)\coloneqq[0,t \wedge T_{Y}(e^{-b},1]]\times (0,\delta)\times(0,1)$ and
\begin{eqnarray*}M_t&\coloneqq \exp\Big(\frac{1}{2}H^\delta_{t \wedge T_{Y}(e^{-b},1]}-\int_{D(b,\delta,t)} \Big(\sqrt{Y_{s-}/m_{r,u}(Y_{s-})}-1\Big)\dd s\,\frac{\Lambda(\dd r)}{r^2} \dd u\Big).
\end{eqnarray*} 
\mofe{Recall that the $r$-components of } the jumps of $N$ are summable on finite time intervals. Hence, applying It\^{o}'s formula~\citep[Thm.~4.4.10]{Applebaum2009} to the process 
\begin{eqnarray*}\Big(H^\delta_{t \wedge T_{Y}(e^{-b},1]},\int_{D(b,\delta,t)} \Big(\sqrt{Y_{s-}/{m_{r,u}(Y_{s-})}}-1\Big)\dd s \frac{\Lambda(\dd r)}{r^2} \dd u\Big)_{t\geq 0}
\end{eqnarray*} and the function $(x_1,x_2) \mapsto \exp(x_1/2 - x_2)$ yields
\begin{eqnarray*} M_t - 1 = \int_{D(b,\delta,t)} M_{s-} \times \Big(\sqrt{{Y_{s-}}/{m_{r,u}(Y_{s-})}}-1\Big)\tilde N(\dd s,\dd r,\dd u).
\end{eqnarray*}
Using \eqref{majova} from Lemma \ref{claim1} we get that $(M_t)_{t \geq 0}$ is a local martingale. Moreover, $M_0=1$ almost surely, so $(M_t)_{t \geq 0}$ is a non-negative \cadlag local martingale such that $M_0$ is integrable. Therefore, it is a supermartingale, and so {$\mathbb{E}_y[M_t]\leq 1$} for any $t\geq 0$. Using this, the definition of $M_t$, the Cauchy-Schwartz inequality, and \eqref{majoplusmoins} from Lemma \ref{claim1} yields
	\begin{eqnarray*}
		&\E_{y}\left[\exp\left(\frac{1}{4}H^\delta_{t\wedge T_{Y}(e^{-b},1]}\right)\right]
		=\E_{y}\Big[\sqrt{M_{t}}\exp\Big(\frac{1}{2}\int\limits_{D(b,\delta,t)} \big(\sqrt{{Y_{s-}}/{m_{r,u}(Y_{s-})}}-1\big)\dd s\frac{\Lambda(\dd r)}{r^2}\dd u \Big)\Big] \\
		&\leq{\E_{y}\Big[\exp\Big(\int\limits_{D(b,\delta,t)} \Big(\sqrt{{Y_{s-}}/{m_{r,u}(Y_{s-})}}-1\Big)\dd s\frac{\Lambda(\dd r)}{r^2}\dd u\Big)\Big]}^{\frac12}\leq\exp \Big(\frac{t}{2}\int_{(0,\delta)}\frac{\Lambda(\dd r)}{r}\Big).
	\end{eqnarray*}\end{proof}
We are now ready to prove Proposition~\ref{lem:escapetimeY}.
\begin{proof}[Proof of Proposition~\ref{lem:escapetimeY}]
Let us first prove (1).	Assume $C_0(\Lambda, \mu,\sigma)<0$.
We claim that for $b$ sufficiently large and any $y\leq e^{-b}$, we have $\P_{y }(T_{Y}(e^{-b},1]>t)\leq (ye^b)^{-1/4}e^{-2\gamma t}$ for some fixed $\gamma>0$ {independent of $y$}. Assume the claim is true. Then,  \begin{align*}
		\E_{y }[e^{\gamma T_{Y}(e^{-b},1]} ]&=1+\gamma \int_0^{\infty}e^{\gamma t}\P_{y }(T_{Y}(e^{-b},1]>t)\dd t\\
		&\leq 1+\gamma\int_0^\infty (ye^b)^{-1/4}e^{-\gamma t}\dd t= 1+ (ye^b)^{-1/4}\leq 2(ye^b)^{-1/4}<\infty.
	\end{align*}
Hence, setting $a:=e^{-b}$, we have for any $y\in(0,1)$
$$\E_{y }[e^{\gamma T_{Y}(a,1]} ]\leq 2(a/y)^{1/4}\ind{y\leq a}+\ind{y> a}\leq 2((a/y)\vee 1)^{1/4},$$
proving (1). It remains to prove the claim. By Lemma~\ref{lem:controlLaplaceofL}, we can fix $b\geq \log(2)$, $\delta\in (0,1/2)$, and $\lambda\in (0,1)$ such that $\check{\psi}_{b,\delta}(\lambda)+2\lambda \int_0^{\delta}r^{-1}\Lambda(\dd r)<0$. Set $\lambda_0\coloneqq \lambda/(4\lambda+1)$. Then, using the upper bound in Lemma~\ref{lem:levysandwich} and Chernoff's inequality,
	\begin{align*}
		&\P_{y }(T_{Y}(e^{-b},1]>t)=\P_{y }(T_{Y}(e^{-b},1]>t, \log(1/Y_t)\geq b)\\
		&\leq \P_{y }(T_{Y}(e^{-b},1]>t, \check{L}_t^{b,\delta}+H_t^{\delta}\geq b+\log(y))\leq\P_{y }(\check{L}_t^{b,\delta}+H_{t\wedge T_{Y}(e^{-b},1] }^{\delta}\geq b+\log(y))\\&\mofe{\leq (ye^{b})^{-\lambda_0} \E_{y }\left[\exp\left(\lambda_0 \check{L}_t^{b,\delta} \right) \exp\Big(\lambda_0 H_{t\wedge T_{Y}(e^{-b},1]}^{\delta}\Big) \right]}.
	\end{align*}
Since $\lambda_0\leq 1/4$, $(ye^{b})^{-\lambda_0}\leq (ye^{b})^{-1/4}$ so that it remains to deal with the expectation. Note that using H{\"o}lder's inequality, the definition of the Laplace exponent and Lemma~\ref{lem:controlsmalljumps},
	\begin{eqnarray*}
		\E_{y } &\left[\exp\Big(\lambda_0 \check{L}_t^{b,\delta} +\lambda_0 H_{t\wedge T_{Y}(e^{-b},1]}^{\delta}\Big) \right]
		\leq\E_{y }\left[\exp\left(\lambda \check{L}_t^{b,\delta} \right)\right]^{\frac{\lambda_0}{\lambda}}  \E_{y }\left[\exp\left({\frac{1}{4} H_{t\wedge T_{Y}(e^{-b},1]}^{\delta}}\right) \right]^{4\lambda_0}\\
		&\qquad\qquad\leq \exp\bigg( t\frac{\lambda_0}{\lambda}\bigg( \check{\psi}_{b,\delta}(\lambda)+2\lambda \int_{(0,\delta)}r^{-1}\Lambda(\dd r)\bigg)\bigg)\modifbis{= e^{-2\gamma t}},
	\end{eqnarray*}
	where $\gamma\coloneqq -\frac{\lambda_0}{2\lambda}(\check{\psi}_{b,\delta}(\lambda)+2\lambda\int_{(0,\delta)}r^{-1}\Lambda(\dd r))$, which is positive by Lemma~\ref{lem:controlLaplaceofL}. This completes the proof of (1).
	
	Let us now prove (2). Assume $C_1(\Lambda,\mu,\sigma)<0$ and note that $1-Y$ is \mofe{a $(\Lambda,\bar{\mu},\bar{\sigma})$-Wright--Fisher process with $\bar{\sigma}(y)\coloneqq-\sigma(1-y)$ satisfying $C_0(\Lambda,\bar{\mu},\bar{\sigma})<0$. Hence, we get from } part~(1) that there is $\bar{a}\in (0,1/2)$ and $\gamma>0$ such that $\E_y[e^{\gamma T_{1-Y}(\bar{a},1]}]\leq 2(\bar{a}/(1-y))^{1/4}$.
	Since $T_{Y}[0,1-\bar{a})=T_{1-Y}(\bar{a},1]$, we can choose this $\gamma$ and set $a\coloneqq1-\bar{a}$ to  complete the proof.
\end{proof}
\subsection{On the L\'evy majorant around $0$}\label{sec:asextinction:sub:prepext}
This section focuses on properties of $\hat{L}^b$ and will enable us, in Section \ref{sec:asextinction} (more precisely in the proof of Propositions~\ref{prop:controltimetobdd} and \ref{prop:controltimetobddstrong}), to estimate the time that $Y$ spends at the boundary strip before leaving it (if it departs).

Recall that for $b\geq \log(2)$, the L\'evy process $\hat{L}^b$ is a lower bound for $\log(1/Y)$ as long as $Y$ is in $(0,e^{-b}]$; see~\eqref{eq:levylower} to recall the definition of~$\hat{L}^b$ and Lemma \ref{lem:levysandwich}. Denote by $\iota$ be the identity function in $\R_+$, i.e. $\iota(s)=s$ for $s\geq 0$.  
We first establish the long-term behavior of the process $\hat{L}^{b}-m\iota\coloneqq(\hat{L}^b_t-mt)_{t\geq 0}$ for~$m$ sufficiently small and $b$ sufficiently large. The following lemma states that if $C_0(\Lambda,\mu,\sigma)>0$, then there is a neighborhood around~$0$ for~$m$ such that for $b$ large enough $\hat{L}^b-m\iota$ is essentially bounded from below.
\begin{lemme}\label{lem:existence_suit_approx}
	Let $(\Lambda,\mu,\sel)\in\Theta$ \mofe{and assume that } $C_0(\Lambda,\mu,\sigma)>0$. Then, for any $m \in [0,C_0(\Lambda,\mu,\sigma))$, there exists $\beta_m\geq \log(2)$ such that for all $b\geq \beta_m$, $\E[\hat{L}_1^b]$ is well-defined and $\E[\hat{L}_1^b]>m$. In particular, \[\inf_{t\in [0,\infty)}(\hat{L}_t^b-mt)>-\infty\quad\textrm{almost surely and}\quad \P\left(\inf_{t\in[0,\infty)} (\hat{L}_t^b-mt)>-\log(2)\right)>0.\]
\end{lemme}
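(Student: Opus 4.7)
The plan is to first establish the mean convergence $\E[\hat{L}_1^b]\to C_0(\Lambda,\mu,\sigma)$ as $b\to\infty$, and then to deduce the two ``in particular'' claims from standard L\'evy-process facts. Since $(\Lambda,\mu,\sigma)\in\Theta$ implies $\int_{(0,1)}r^{-1}\Lambda(\dd r)<\infty$ and $\int_{(-1,1)}|r|\mu(\dd r)<\infty$, the process $\hat{L}^b$ is a L\'evy process of bounded variation whose defining Poisson integrals are absolutely convergent against the respective L\'evy measures. Writing $\hat{L}_1^b$ as a linear drift plus two independent Poisson integrals and taking expectations, I would obtain
\begin{equation*}
\E[\hat{L}_1^b]=\int_{(0,1)}\log(1-r)\frac{\Lambda(\dd r)}{r^2}+\tfrac12\int_{(-1,1)}\log\!\big((1+r)^2-4re^{-b}\ind{r>0}\big)\mu(\dd r)+\sigma(0)-e^{-b}\|\sigma\|_{\Cs^1([0,1])}.
\end{equation*}

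Next, I would send $b\to\infty$ by dominated convergence: the first integral is $b$-independent, the drift term tends to $\sigma(0)$, and the integrand of the middle integral converges pointwise to $2\log(1+r)$. On $r\in(0,1)$ the two-sided bound $(1-r)^2\leq(1+r)^2-4re^{-b}\leq(1+r)^2$ yields $2\log(1-r)\leq\text{integrand}\leq 2\log 2$, so the integrability $\int_{(-1,1)}\log(1/(1-r^2))\mu(\dd r)<\infty$ built into $\Theta$ furnishes an integrable dominator (the range $r\in(-1,0]$ is $b$-free). Using $\int\log(1+r)\mu(\dd r)=\int\log(1-r)\bar{\mu}(\dd r)$, the limit is exactly $C_0(\Lambda,\mu,\sigma)$; hence there exists $\beta_m\geq\log(2)$ with $\E[\hat{L}_1^b]>m$ for every $b\geq\beta_m$, proving the first assertion.

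The a.s.\ finiteness of $\inf_{t\geq 0}(\hat{L}_t^b-mt)$ then follows from the strong law of large numbers applied to the L\'evy process $\hat{L}^b-m\iota$: since its mean at time~$1$ is strictly positive, $(\hat{L}_t^b-mt)/t$ converges a.s.\ to a positive limit, so $\hat{L}_t^b-mt\to+\infty$ a.s. For the positive-probability statement I would argue by contradiction: set $\tau_1\coloneqq\inf\{t\geq 0:\hat{L}_t^b-mt\leq-\log(2)\}$ and assume $\P(\tau_1<\infty)=1$. The strong Markov property at $\tau_1$ decomposes $\hat{L}^b_{\tau_1+\cdot}-m(\tau_1+\cdot)=(\hat{L}_{\tau_1}^b-m\tau_1)+Z'$, with $Z'$ an independent copy of $\hat{L}^b-m\iota$. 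Since $\hat{L}_{\tau_1}^b-m\tau_1\leq-\log(2)$, the next hitting time $\tau_2\coloneqq\inf\{t>\tau_1:\hat{L}_t^b-mt\leq-2\log(2)\}$ requires $Z'$ to hit a level in $[-\log(2),0]$, which by monotonicity in the target is stochastically dominated by a copy of $\tau_1$ and is therefore a.s.\ finite. Iterating yields $\tau_n<\infty$ a.s.\ with $\hat{L}_{\tau_n}^b-m\tau_n\leq-n\log(2)$ for every $n$, contradicting the a.s.\ finiteness of $\inf_t(\hat{L}_t^b-mt)$; so $\P(\tau_1=\infty)>0$, which is the desired claim.

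The main technical obstacle is the dominated-convergence step: one must pin down an integrable dominator for the $b$-dependent $\mu$-integral using only the integrability built into~$\Theta$. The probabilistic pieces (the L\'evy SLLN and the strong Markov iteration) are then standard.
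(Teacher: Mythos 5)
Your computation of $\E[\hat L_1^b]$ and the dominated-convergence passage to $C_0(\Lambda,\mu,\sigma)$ are correct and essentially the same as the paper's (the paper factors $\log((1+r)^2-4re^{-b}\ind{r>0})=2\log(1+r)+\log(1-\tfrac{4re^{-b}}{(1+r)^2}\ind{r>0})$ and applies dominated convergence to the remainder, but this is the same calculation), and the SLLN step for $\hat L_t^b-mt\to+\infty$ a.s.\ matches the paper. Where you genuinely diverge is the final positive-probability claim: the paper invokes the fluctuation-theoretic fact that $\inf_{t\geq 0}(\hat L_t^b-mt)$ is infinitely divisible with no drift component, hence $0$ lies in the support of its law (Sato, Thm.~48.1 and Cor.~24.8), whereas you give an elementary renewal argument via the strong Markov property. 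Your argument is a valid alternative and avoids the structure theory of subordinators entirely, which is a nice simplification.

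There is, however, a small but genuine gap in your final step. Your contradiction establishes $\P(\tau_1=\infty)>0$ with $\tau_1=\inf\{t\geq 0:\hat L_t^b-mt\leq-\log 2\}$, and on $\{\tau_1=\infty\}$ you only know $\hat L_t^b-mt>-\log 2$ for all $t$, hence $\inf_{t\geq 0}(\hat L_t^b-mt)\geq-\log 2$. This does not immediately yield the strict inequality $\P(\inf_{t\geq 0}(\hat L_t^b-mt)>-\log 2)>0$ that the lemma asserts, since the infimum could equal $-\log 2$ without being attained. The fix is trivial: run your argument with the hitting level $-\tfrac12\log 2$ (say) in place of $-\log 2$; then $\{\tau_1'=\infty\}\subset\{\inf_{t\geq 0}(\hat L_t^b-mt)\geq-\tfrac12\log 2\}\subset\{\inf_{t\geq 0}(\hat L_t^b-mt)>-\log 2\}$, and the rest of the iteration goes through verbatim since all that matters is that the gap between consecutive target levels is bounded below. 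With that adjustment, your proof is complete. (A second, purely cosmetic, imprecision: when defining $\tau_2$ you say $Z'$ must hit a level in $[-\log 2,0]$, but the relevant level $c=-2\log 2-(\hat L_{\tau_1}^b-m\tau_1)$ is only bounded below by $-\log 2$, not above by $0$; this does not affect the stochastic-domination step, which only uses the lower bound, and if $c>0$ the hitting time is trivially zero.)
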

\begin{proof}
By the discussion \mofe{preceding } \eqref{eq:laplacelower}, the support of $\hat{\nu}_b$ is bounded from above by $\log(2)$. Therefore, by \citep[Thm.~25.3]{KenIti1999}, $\E[\hat{L}_1^b \vee 0]<\infty$ so \modifbis{$\E[\hat{L}_1^b]=\int_{\mathbb{R}} x\hat{\nu}_b(\dd x)\in [-\infty,\infty)$}. This translates into
	\begin{align*}
		\E[\hat{L}_1^b]&=\int_{(0,1)}\log(1-r)\frac{\Lambda(\dd r)}{r^2}+\int_{(-1,1)}\frac{1}{2}\log\Big((1+r)^2-4re^{-b}\ind{r>0}\Big)\mu(\dd r)\\
		&+\sigma(0)-e^{-b}\lVert \sigma\rVert_{\Cs^1([0,1])}\\
		&= C_0(\Lambda,\mu,\sigma)+\frac{1}{2}\int_{(0,1)}\log\Big( 1-\frac{4re^{-b}}{(1+r)^2}\Big)\mu(\dd r)-e^{-b}\lVert \sigma\rVert_{\Cs^1([0,1])}.
	\end{align*}
	Condition $(\Lambda,\mu,\sigma)\in \Theta$ implies that $\int_{(0,1)}|\log( 1-4re^{-b}(1+r)^{-2})|\mu(\dd r)<\infty$ so, by monotone convergence, the above expression converges to $C_0(\Lambda,\mu,\sigma)$ as $b\to\infty$. Hence, for each $m \in [0,C_0(\Lambda,\mu,\sigma))$, we can choose $\beta_m\geq \log(2)$ sufficiently large such that $\E[\hat{L}_1^b]-m>0$ for all $b \geq \beta_m$. Thus, for such $b$, by the law of large numbers for L\'evy processes (e.g. \citep[Thm.~36.5]{KenIti1999}), the L\'evy process $\hat{L}^b - m\iota$ drifts to~$\infty$. This allows us to deduce from \citep[Thm.~48.1]{KenIti1999} that $\inf_{t\in[0,\infty)}(\hat{L}_t^b - mt)>-\infty$ almost surely, and that $\inf_{t\in[0,\infty)}(\hat{L}_t^b - mt)$ is infinitely divisible \modifbis{with null drift component. The nullity of the drift component implies }that $0$ is contained in the support of the law of $\inf_{t\in[0,\infty)}(\hat{L}_t^b - mt)$ (\citep[Cor.~24.8]{KenIti1999}). Therefore, $\P(\inf_{t\in[0,\infty)}(\hat{L}_t^b - mt)>-\log(2))>0$, ending the proof.
\end{proof}


The next result provides conditions for the \mofe{existence of polynomial/exponential moments for the } last time at which $\hat{L}^b-m\iota$ reaches its global minimum. 
As a consequence, we also obtain an upper bound for the tail probability of the first time at which $\hat{L}^b-m\iota$ goes below a given level, when that time is finite.

\begin{lemme}\label{lem:lowerlevyhittingtime}
	Let $(\Lambda,\mu,\sel)\in\Theta$. Assume that $C_0(\Lambda,\mu,\sigma)>0$ and let $m \in [0,C_0(\Lambda,\mu,\sigma))$ and $b>\log(2)$ such that $\E[\hat{L}^b_1]>m$ (such $b$ exists because of Lemma~\ref{lem:existence_suit_approx}). 
	Let \[H\coloneqq \sup\{t\geq 0:\ \hat{L}_t^b\wedge \hat{L}_{t-}^b-mt=\inf_{s\in [0,\infty)}(\hat{L}_s^b-ms)\}.\] 
	\begin{enumerate}
		\item If $\rW_\gamma(r^{-2} \Lambda(\dd r))<\infty$ and $\rW_\gamma(\bar{\mu})<\infty$ for some $\gamma>0$, 
		then we have, for all $\alpha\in (0,\gamma)$, $\E[H^\alpha]<\infty$ and for all $t,x>0$, \[\P({t<T_{\hat{L}^b -m\iota}(-\infty,-x)<\infty} )\leq \E[H^\alpha] t^{-\alpha}.\] 
		\item If $s_\gamma(r^{-2} \Lambda(\dd r))<\infty$ and $s_\gamma(\bar{\mu})<\infty$ for some $\gamma>0$, 
		then we have, for some $\alpha>0$, $\E[e^{2\alpha H}]<\infty$ and for all $t,x>0$,  \[\P({t<T_{\hat{L}^b -m\iota}(-\infty,-x)<\infty} )\leq \E[e^{2\alpha H}] e^{-2 \alpha t}.\]
	\end{enumerate}
\end{lemme}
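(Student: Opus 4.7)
The plan is to combine a simple event inclusion with fluctuation-theoretic moment bounds for $H$.

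\emph{Step 1: From first passage to the tail of $H$.} Fix $t,x>0$ and set $T \coloneqq T_{\hat L^b - m\iota}(-\infty,-x)$. On $\{t < T < \infty\}$, the definition of first passage gives $\hat L^b_s - ms \geq -x$ for every $s < T$, while $\hat L^b_T - mT < -x$. Hence the global infimum of $\hat L^b - m\iota$ is strictly smaller than $\hat L^b_s - ms$ for every $s < T$, so it cannot be attained on $[0,T)$. This forces $H \geq T > t$, yielding
\[ \P\bigl(t < T_{\hat L^b - m\iota}(-\infty,-x) < \infty\bigr) \;\leq\; \P(H > t). \]
Markov's inequality then reduces the lemma to showing $\E[H^\alpha] < \infty$ for every $\alpha \in (0,\gamma)$ in case~(1), and $\E[e^{2\alpha H}] < \infty$ for some $\alpha>0$ in case~(2).

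\emph{Step 2: Moments of $H$ via fluctuation theory.} By Lemma~\ref{lem:existence_suit_approx} and the choice of $b$, the L{\'e}vy process $\hat L^b - m\iota$ has positive mean and therefore drifts to $+\infty$. Under Wiener--Hopf factorisation, $H$ coincides in law with the lifetime of the descending ladder-time subordinator, and classical fluctuation estimates (Rogozin, Doney) bound the moments of this lifetime by the negative tail of the L{\'e}vy measure $\Pi$ of $\hat L^b$. These are the facts collected in Appendix~\ref{sec:app:levy}, which I would invoke to conclude that $\E[H^\alpha] < \infty$ for every $\alpha \in (0,\gamma)$ whenever $\int_{(-\infty,-1)} |x|^{1+\gamma}\,\Pi(\dd x) < \infty$, and that $\E[e^{\beta H}] < \infty$ for some $\beta > 0$ whenever $\int_{(-\infty,-1)} e^{\beta'|x|}\,\Pi(\dd x) < \infty$ for some $\beta' > 0$.

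\emph{Step 3: Translating the hypotheses into tails of $\Pi$.} The negative jumps of $\hat L^b$ come from two sources: (i) the image of $r^{-2}\Lambda(\dd r)$ under $r \mapsto \log(1-r)$, whose magnitude near $r=1$ is $\lvert\log(1-r)\rvert$, and (ii) the image of $\mu$ under $r \mapsto \tfrac12\log((1+r)^2 - 4re^{-b}\ind{r>0})$, which near $r=-1$ behaves like $\lvert\log(1+r)\rvert$; the contribution from $\mu\vert_{(0,1)}$ via the $4re^{-b}$ term is bounded because $b>\log 2$ keeps the argument of the logarithm away from $0$. Substituting in the integrals of Step~2, the required conditions on $\Pi$ reduce after a change of variables to
\[ \rW_\gamma\bigl(r^{-2}\Lambda(\dd r)\bigr) + \rW_\gamma(\bar\mu) < \infty \quad\text{and}\quad s_\gamma\bigl(r^{-2}\Lambda(\dd r)\bigr) + s_\gamma(\bar\mu) < \infty, \]
which are precisely the assumptions of cases~(1) and~(2). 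Taking $\alpha = \beta/2$ in the exponential case yields the stated bound.

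The main obstacle lies in Step~2: cleanly converting integrability properties of the negative L{\'e}vy measure into moment bounds for $H$. The cleanest route is Wiener--Hopf, but one can alternatively dominate $H$ by the sum of the depth of the global minimum (which has the law of the terminal descending ladder height and whose tail is directly controlled by the negative part of $\Pi$) and the first-passage time of the post-minimum process above a fixed positive level. The strong integrability in case~(2) also ensures, via \eqref{eq:remark_stronglaplace}, that the Laplace exponent of $\hat L^b - m\iota$ is finite in a neighbourhood of $0$ on the negative side, so the ladder subordinator has exponential moments; in case~(1), only polynomial moments of the ladder lifetime are available, matching the polynomial decay in the conclusion.
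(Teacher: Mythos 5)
Your proposal is correct and follows essentially the same route as the paper: the inclusion $\{t < T_{\hat{L}^b - m\iota}(-\infty,-x) < \infty\}\subset\{H>t\}$ followed by Markov/Chernoff, and then moment bounds on $H=G_{-(\hat{L}^b-m\iota)}$ obtained from the fluctuation lemmas of Appendix~\ref{sec:app:levy} (your Lemma~\ref{lem:aux_driftlevy} for polynomial moments and Lemma~\ref{lem:auxlevyexp} for exponential moments, the latter using \eqref{eq:remark_stronglaplace} to get a finite Laplace exponent on $[-\gamma,\infty)$ and hence a $\lambda_0$ with $\tilde\psi_{b,m}(-\lambda_0)<0$). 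Your Step~3 translation of the hypotheses $w_\gamma$ and $s_\gamma$ into tail conditions on the positive jumps of $-(\hat L^b - m\iota)$ is exactly what the paper checks.
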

\begin{proof}
	We first prove (1). 
Since $\E[\hat{L}_1^b]>m$, \modifbis{$\hat{L}^b-m\iota$ drifts to $\infty$ (see the proof of Lemma~\ref{lem:existence_suit_approx}), so $H$ is well-defined and finite}.
By \eqref{eq:levylower}, if $(t,r,u)$ is a jump of $N$, then the corresponding jump of $\hat{L}^b-m\iota$ is $\log(1-r)<0$. Similarly, if $(t,r)$ is a jump of~$S$ with $r<0$, then the corresponding jump of $\hat{L}^b-m\iota$ is $\log(1+r)<0$. If $(t,r)$ is a jump of~$S$ with $r>0$, then the corresponding jump of $\hat{L}^b-m\iota$ is non-negative. 
	Thus, $\rW_\gamma(r^{-2} \Lambda(\dd r))<\infty$ and $\rW_\gamma(\bar{\mu})<\infty$ ensures that $-(\hat{L}^b-m\iota)$ satisfies the requirement of Lemma~\ref{lem:aux_driftlevy} (with $\theta=\gamma)$. Hence, for all $\alpha \in (0,\gamma)$, $\E[H^\alpha]<\infty$. Then, using the definition of~$H$ and Markov's inequality, we obtain $$\P({t<T_{\hat{L}^b-m\iota}(-\infty,-x)<\infty})\leq \P(H>t)\leq \E[H^\alpha]t^{-\alpha}.$$ 
	This ends the proof of (1).	For (2), note from Lemma \ref{eq:remark_stronglaplace} that $\hat{L}^b$ (and therefore $\hat{L}^b-m\iota$) has a Laplace transform on $[-\gamma,\infty)$ and that its Laplace exponent on this interval is given by $\hat{\psi}_b$. Let us write $\tilde{\psi}_{b,m}(\lambda)\coloneqq \hat{\psi}_b(\lambda)-m\lambda$ for the Laplace exponent of $\hat{L}^b-m\iota$. Then, using that $(\tilde{\psi}_{b,m})'(0)=\E[\hat{L}^b_1-m]>0$, and that $\tilde{\psi}_{b,m}(0)=0$, we deduce that there is $\lambda_0\in (0,\gamma)$ small enough such that $\tilde{\psi}_{b,m}(-\lambda_0)<0$. Thus, $-(\hat{L}^b-m\iota)$ satisfies the requirements of Lemma~\ref{lem:auxlevyexp}. In particular, there exists $\alpha>0$ such that $\E[e^{2\alpha H}]<\infty$. Then, using the definition of~$H$ and Chernoff's inequality, $$\P({t<T_{\hat{L}^b-m\iota}(-\infty,-x)<\infty})\leq \P(H>t)\leq \E[e^{2\alpha H}]e^{-2\alpha t},$$ which completes the proof of (2). 
\end{proof}
\begin{cor}\label{coro:lowerlevyhittingtime}
	Let $(\Lambda,\mu,\sel)\in\Theta$. Assume that $C_0(\Lambda,\mu,\sigma)>0$, $s_\gamma(r^{-2} \Lambda(\dd r))<\infty$ and $s_\gamma(\bar{\mu})<\infty$ for some $\gamma>0$. Let $m \in [0,C_0(\Lambda,\mu,\sigma))$ and $b>\log(2)$ such that $\E[\hat{L}^b_1]>m$  (the existence of $b$ follows from Lemma~\ref{lem:existence_suit_approx}). Then, there is $\alpha,K>0$ such that for any $x>0$, $$\E[e^{\alpha T_{\hat{L}^b - m\iota}(-\infty,-x)}\ind{T_{\hat{L}^b - m\iota}(-\infty,-x)<\infty}]\leq K.$$
\end{cor}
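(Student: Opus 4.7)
The plan is to derive Corollary~\ref{coro:lowerlevyhittingtime} directly from Lemma~\ref{lem:lowerlevyhittingtime}(2) by a standard layer-cake argument, exploiting that the tail bound provided by the lemma is uniform in~$x$. Write $T_x \coloneqq T_{\hat{L}^b-m\iota}(-\infty,-x)$ for short. The hypothesis $s_\gamma(r^{-2}\Lambda(\dd r))<\infty$ together with $s_\gamma(\bar\mu)<\infty$ is exactly what Lemma~\ref{lem:lowerlevyhittingtime}(2) requires (in addition to $\E[\hat L^b_1]>m$, which holds by hypothesis). Applying it yields an $\alpha_0>0$, depending only on $\Lambda,\mu,\sigma,b,m$ but not on $x$, such that $C\coloneqq \E[e^{2\alpha_0 H}]<\infty$ and, for every $x>0$ and $t>0$,
\begin{equation*}
\P(t<T_x<\infty)\leq Ce^{-2\alpha_0 t}.
\end{equation*}

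Next, I would apply the elementary identity, valid for any nonnegative random variable $T$ and any $\alpha>0$,
\begin{equation*}
\E\bigl[e^{\alpha T}\ind{T<\infty}\bigr]
=\P(T<\infty)+\int_0^{\infty}\alpha e^{\alpha t}\,\P(t<T<\infty)\,\dd t,
\end{equation*}
which follows from Fubini after writing $e^{\alpha T}=1+\int_0^T\alpha e^{\alpha t}\,\dd t$. Taking $T=T_x$ and $\alpha=\alpha_0$, and inserting the uniform tail estimate above, I obtain
\begin{equation*}
\E\bigl[e^{\alpha_0 T_x}\ind{T_x<\infty}\bigr]
\leq 1+\alpha_0 C\int_0^{\infty}e^{-\alpha_0 t}\,\dd t
=1+C.
\end{equation*}
Setting $\alpha\coloneqq \alpha_0$ and $K\coloneqq 1+C$, both independent of $x$, gives the claimed uniform bound.

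There is essentially no obstacle beyond verifying that the constants appearing in Lemma~\ref{lem:lowerlevyhittingtime}(2) are genuinely $x$-independent; once this is noted, the corollary follows from a one-line calculation. The argument also shows that any $\alpha\in(0,2\alpha_0)$ would work, with a correspondingly larger constant $K$, which is occasionally convenient in applications.
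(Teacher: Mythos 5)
Your proof is correct and follows essentially the same route as the paper: extract $\alpha$ and the moment bound on $H$ from Lemma~\ref{lem:lowerlevyhittingtime}(2), invoke the layer-cake identity for $\E[e^{\alpha T}\ind{T<\infty}]$, and insert the $x$-uniform tail bound $\P(t<T_x<\infty)\leq \E[e^{2\alpha H}]e^{-2\alpha t}$ to arrive at the same constant $K=1+\E[e^{2\alpha H}]$.
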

\begin{proof}
Let $H$ be as in Lemma~\ref{lem:lowerlevyhittingtime}. By Lemma~\ref{lem:lowerlevyhittingtime}--(2), there is $\alpha>0$ such that $\E[e^{2\alpha H}]<\infty$. Fix $x\in \R$. We have, \begin{align*}
		&\E[e^{\alpha T_{\hat{L}^b - m\iota}(-\infty,-x)}\ind{T_{\hat{L}^b - m\iota}(-\infty,-x)<\infty}]
		= \P(T_{\hat{L}^b - m\iota}(-\infty,-x)<\infty)\\
		&+\alpha\int_0^\infty e^{\alpha t}\P(t<T_{\hat{L}^b - m\iota}(-\infty,-x)<\infty)\dd t
		 \leq 1+ \E[e^{2\alpha H}]<\infty,
	\end{align*}
where in the second last inequality we have used Lemma~\ref{lem:lowerlevyhittingtime}--(2).
\end{proof}
\subsection{Deviation from initial condition after removing large jumps}\label{sec:keyproperties:deviation}
In some proofs to come (of Lemmas~\ref{lem:aux_mergingeventestimate}, \ref{lem:renewalauxeventbound}, and \ref{lem:specialeventbound}), it will be useful to control the maximal deviation of $Y$ from its initial value in compact time intervals if we cap the large jumps of~$N$ (i.e. with large $r$-component). This is the content of the following lemma.

\begin{lemme}\label{lem:capdeviation}
	{Let $(\Lambda,\mu,\sigma)\in \Theta$}. For $c\in (0,1)$, denote by $Y^c=(Y_s^c)_{s\geq 0}$ the unique strong solution to 
\begin{align}
\dd Y_t^c&=\int_{(0,c]\times(0,1)}\!\!\! \!\!\!\!\!(\mr_{r,u}(Y_{t-}^c)-Y_{t-}^c)\,N(\dd t, \dd r, \dd u)\nonumber\\ 
&-Y^c_t(1-Y^c_t)\, \sel(Y^c_t)\, \dd t + \int_{(-1,1)}\!\!\!\!\!(s_r(Y^c_{t-})-Y^c_{t-})\, S(\dd t, \dd r),\label{Yc}
\end{align}
with $Y_0^c=y\in (0,1)$. {There is a constant $\capco{\star}>0$ such that for any $c\in (0,1)$}, $y\in (0,1)$, $t\in [0,1]$, and $\lambda>0$, we have  
	\[	\P_y\Big(\sup_{s\in [0,t]}\lvert Y_s^c-y\rvert\geq \lambda\Big)\leq {\capco{\star}} \frac{\sqrt{t}}{\lambda}. \]
\end{lemme}
\begin{proof}
\modifbis{The process $Y^c$ from \eqref{Yc} is a $(\Lambda^c,\mu,\sigma)$-Wright--Fisher process (where $\Lambda^c(\dd r):=\textbf{1}_{r \leq c}\Lambda(\dd r)$) so Proposition \ref{lem:existuniqueY} provides existence and pathwise uniqueness of strong solutions to SDE \eqref{Yc}}. By Markov's inequality, $$\P_y\Big(\sup_{s\in[0,t]}\lvert Y_s^c-y\rvert\geq \lambda\Big)\leq\frac{\E_y[\sup_{s\in[0,t]}\lvert Y_s^c-y\rvert]}{\lambda}.$$ We now show that {\mofe{there is $\capco{\star}>0$ (independent of $c, y$ and $t$) such that the expectation above is upper bounded by } $\capco{\star}\sqrt{t}$, from which the result follows}. Let $\tilde{N}$ be the compensated Poisson measure with intensity $\dd t\times r^{-2}\Lambda(\dd r)\times \dd u$. \mofe{Setting $I_{s,c}\coloneqq[0,s]\times(0,c]\times(0,1)$, we have 
\begin{eqnarray}
		&\qquad\E_y\left[\sup_{s\in[0,t]}\lvert Y_s^c-y\rvert\right] \leq \E_y\bigg[\sup_{s\in[0,t]}\Big\lvert \int_{I_{s,c}}(\mr_{r,u}(Y^c_{w-})-Y^c_{w-})\tilde{N}(\dd w, \dd r, \dd u)\Big\rvert\bigg]\label{eq:capbound1} \\ 
		 &+\E_y\bigg[  \int_{[0,t]}\Big\lvert\int_{(0,c]\times(0,1)} (\mr_{r,u}(Y^c_{s-})-Y^c_{s-}) r^{-2} \Lambda(\dd r) \dd u\Big\rvert\dd s\bigg] \label{eq:capbound2}\\
		& \qquad\quad+ \E_y\bigg[\int_{[0,t]} \lvert Y_s^{c}(1-Y_s^{c})\, \sel(Y^c_{s})\rvert \dd s\bigg]+ \E_y\bigg[\int_{[0,t]\times(-1,1)}\left\lvert s_r(Y_{s-}^c)-Y_{s-}^c\right\rvert S(\dd s, \dd r)\bigg].\label{eq:capbound3}
	\end{eqnarray}
	Using Cauchy-Schwartz inequality, we get that the term on the right-hand side of ~\eqref{eq:capbound1} is less than or equal to 
	\begin{eqnarray*}
&\E_y\left[\Big(\sup_{s\in[0,t]} \int_{I_{s,c}}(\mr_{r,u}(Y^c_{w-})-Y^c_{w-})\tilde{N}(\dd w, \dd r, \dd u)\Big)^2\right]^{\frac12}\\
		&\quad\qquad\qquad\leq \left(4\E_y\left[\Big(\int_{I_{t,c}}(\mr_{r,u}(Y^c_{s-})-Y^c_{s-})\tilde{N}(\dd s, \dd r, \dd u)\Big)^2\right]\right)^{\frac12}\leq 4\sqrt{\Lambda((0,1))t},
	\end{eqnarray*}}
	\modifbis{where in the last two steps we used Doobs martingale inequality, and Lemma~\ref{lem:medianboundspoisson}}. The term in \eqref{eq:capbound2} is less than or equal to $4\Lambda([0,1])t$ by Lemma~\ref{lem:medianboundspoisson}. The first term in \eqref{eq:capbound3} is upper bounded by $t\lVert \sigma\rVert_{\infty}/4$. Using Lemma~\ref{bsr} and that $s_r(y)\in[0,1]$ for all $r\in(-1,1)$ and $y\in[0,1]$, we deduce that the second term in \eqref{eq:capbound3} is upper bounded by
$$\E_y\bigg[\int_{[0,t]\times(-1,1)} (C_1|r|\wedge 1) \,S(\dd s, \dd r)\bigg]=t\int_{(-1,1)} (C_1|r|\wedge 1) \, \mu(\dd r),$$	
where $C_1>0$ is the constant in Lemma~\ref{bsr}. Putting all pieces together yields the result. \end{proof}

\subsection{Estimates on the decay of the tail probabilities of some random times}\label{sec:keyproperties:rigthdistributiontail}
In the next sections, we will require an estimate on the tail probabilities of some specific random times (e.g. merging time of two trajectories, renewal time, trapping time, etc.). To this end, we define two \emph{interlaced sequences of stopping times}; one sequence providing the entrance times into a favorable situation in which we can estimate the probability for the random time of interest to occur, and the other sequence providing the exit times from the favorable situation. The following lemma provides upper bounds for the \mofe{tail probabilities } of such random times under some assumptions on the interlacing stopping time sequences. The lemma will be used in particular to conclude the proofs of Theorems~\ref{thm:stuckatboundary}, \ref{thm:probmergeYt}, and \ref{thm:coexistenceY} in Sections~\ref{sec:asextinction}, \ref{sec:accbdd}, and~\ref{sect:coex}, respectively. The assumptions in the lemma ensure that the number of iterations necessary for the random time to occur is bounded by a geometric random variable, and that the cumulative duration of $n$ iterations can be controlled.
\begin{lemme} \label{unificationlemma}
Let $T$ be a random time (not necessarily a stopping time). Assume that there are two sequences of (possibly infinite) stopping times $(T^1_n)_{n\geq 0}$ and $(T^2_n)_{n\geq 0}$ such that the following holds: 
\begin{enumerate}
\item[(i)] Almost surely, $T^1_0=0$ and $T^1_n \leq T^2_n \leq T^1_{n+1}$ for all $n\in \N_0$ (with the convention $\infty\leq \infty$). 
\item[(ii)] There is a constant $c>0$ such that for any $n\in \N_0$, \mofe{$$\P(T \in (T^2_n, T^1_{n+1}]\mid T>T^2_n)>c$$}(with the convention $\infty \nless \infty$, so that $T>T^2_n$ implies $T^2_n<\infty$). 
\item[(iii)] For any $n\geq 1$, $\P(\{T^2_{n-1}<\infty, T^2_{n}=\infty\} \cap \{T\neq T^1_n\})=0$. 
\item[(iv)] There are constants $\lambda>0$, $K>1$ and $a\geq 1$ such that for any $n\geq 1$, \mofe{$$\E[e^{\lambda T^1_n}\textbf{1}_{T^2_{n-1}<\infty}]\leq aK^n.$$} In particular, $\P(T^1_n<\infty \mid T^2_{n-1}<\infty)=1$. 
\end{enumerate}
The condition (iv) will sometimes be replaced by the following 
\begin{enumerate}
\item[(iv)'] There are constants $\ell>0$ and $a\geq 1$ such that for any $n\geq 1$ and $t>0$, \mofe{$$\P(T^1_n>t,T^2_{n-1}<\infty)\leq a n^{1+\ell}t^{-\ell}.$$} In particular, $\P(T^1_n<\infty\mid T^2_{n-1}<\infty)=1$. 
\end{enumerate}
Under Conditions (i), (ii), (iii), (iv) (resp. (i), (ii), (iii), (iv)'), $T$ is almost surely finite and there is a constant $H>0$ (resp. for any $h \in (0,\ell)$ there is a constant $M>0$) such that for any $t>0$ we have 
\begin{align}
\P(T>t) \leq a \Big ( \frac{K}{K-1}+\frac{1}{1-c} \Big ) e^{-H t} \qquad \big ( \text{resp.} \ \P(T>t) \leq a M t^{-h} \big{)}, \label{estimexpopol}
\end{align}
where $H:=\lambda \log(1/(1-c))/\log(K/(1-c))$ (resp. $M\coloneqq 1\vee ((2+\ell)^{-1} + (1-c)^{-2}m_h)$ with  $m_h\coloneqq \textstyle\sup_{s \geq 0} s^h \exp(s^{(\ell-h)/(2+\ell)}\log(1-c))$). 
\end{lemme}

\begin{proof}
We assume that Conditions (i), (ii), (iii) hold. Let $t>0$ and $z=z(t)$ that will be determined later. We have 
\begin{align}\label{eq:probunif}
\P(T>t)\leq \P \left (T>T^1_{\lfloor z \rfloor} \right)+\P \left (T^1_{\lfloor z \rfloor} \geq T > t\right).
\end{align}
We first deal with the first term on the right-hand side in \eqref{eq:probunif}. Note that for any~$n\geq1$,
\begin{align*}
\P \left (T>T^1_n \right)&\leq \E \left [\ind{T>T^2_{n-1}} \P \left(T>T^1_n \mid T>T^2_{n-1}\right )\right ]\\
	&\leq (1-c)\P \left( T>T^2_{n-1} \right)
	\leq (1-c) \P \left (T>T^1_{n-1} \right),
\end{align*}
where we used Conditions (i) and (ii). By induction, we get $\P(T>T^1_n)\leq (1-c)^n$. Thus, 
\begin{align}
\P \left (T>T^1_{\lfloor z \rfloor} \right) \leq (1-c)^{\lfloor z \rfloor}\leq e^{\log(1-c)z}/(1-c). \label{eq:probunif1}
\end{align}
Next, we deal with the second term on the right-hand side in \eqref{eq:probunif}. Using Conditions (i) and (iii) we get 
\begin{align}
\P \left (T^1_{\lfloor z \rfloor} \geq T > t\right)&\leq \sum_{k=1}^{\lfloor z\rfloor } \P \left ( T^1_k>t, T \in (T^1_{k-1}, T^1_k] \right) \leq \sum_{k=1}^{\lfloor z \rfloor} \P \left ( T^1_k>t, T^2_{k-1}<\infty \right). \label{eq:probunif2}
\end{align}
Now we have to distinguish cases according to the assumptions. In the first case we assume that Condition (iv) holds true. Using Chernoff's inequality in \eqref{eq:probunif2} and Condition (iv) we get 
\begin{align*}
\P \left (T^1_{\lfloor z \rfloor} \geq T > t\right) \leq e^{-\lambda t}\sum_{k=1}^{\lfloor z \rfloor}\E \left [ e^{\lambda T^1_k}\textbf{1}_{T^2_{k-1}<\infty} \right ]\leq a e^{-\lambda t} \frac{K^{z+1}}{K-1}.
\end{align*}
Choosing $z=\lambda t/\log(K/(1-c))$ and combining with \eqref{eq:probunif} and \eqref{eq:probunif1} yields the first part of \eqref{estimexpopol}. 

In the second case we assume that Condition (iv)' holds true. The sought estimate clearly holds for $t\leq 1$ so we assume $t>1$. Using Condition (iv)' in \eqref{eq:probunif2} we get 
\begin{align*}
\P \left (T^1_{\lfloor z \rfloor} \geq T > t\right) \leq a t^{-\ell}\sum_{k=1}^{\lfloor z \rfloor} k^{1+\ell} \leq a t^{-\ell}\int_1^{z+1} v^{1+\ell}\dd v\leq a t^{-\ell}\frac{(z+1)^{2+\ell}}{2+\ell}.
\end{align*}
Choosing $z=t^{(\ell-h)/(2+\ell)}-1$ and combining with \eqref{eq:probunif} and \eqref{eq:probunif1} yields the second part of \eqref{estimexpopol}. 
\end{proof}


\section{The case \texorpdfstring{$\Theta_2$}{Theta2}: Proof of Theorem \ref{thm:probmergeYt}}\label{sec:accbdd}

In this section, we prove Theorem \ref{thm:probmergeYt}. To begin, we outline the strategy for establishing part (i) and (ii). Subsequently, we state preparatory results and then prove the theorem. The \mofe{rest of the section will be devoted to the proofs of the preparatory results}.
\smallskip

Recall that in Theorem~\ref{thm:probmergeYt}, we consider two strong solutions $\widehat{Y}$ and $\widecheck{Y}$ of SDE~\eqref{eq:SDEWFP} that evolve in the same background and with $\widehat{Y}_0=\hat{y}\leq\check{y} =\widecheck{Y}_0$. In part (i) and (ii), we want to find an upper bound in terms of $\hat{y}$ and $\check{y}$ for the probability that the two processes \mofe{have not merged by time }~$t$ and for an exponential moment of the renewal time~$\renewal$\mofe{, respectively}. To this end, it will be convenient to consider two sub-intervals of~$[0,1]$. For $\rin\in (0,1/2)$ and $\rbs\in (0,\rin)$, we refer to $(\rin,1-\rin)$ as the \emph{$\rin$-interior} and to $(0,1)\setminus[\rbs,1-\rbs]$ as the \emph{$\rbs$-boundary strip}. We will assign distinct values to $\rin$ and $\rbs$ based on the situation.

The three main ingredients to obtain the result are: 1) Give a lower bound for the probability that the relevant event happens before the $\rbs$-boundary strip is reached and before some fixed finite time (where the event for Theorem~\ref{thm:probmergeYt} part (i) and (ii) is the merger of two trajectories and the occurrence of the renewal time~$\renewal$, respectively).  2) If one of the two solutions has entered the $\rbs$-boundary strip before the occurrence of the event of interest, estimate the time until the two trajectories are again simultaneously in the $\rin$-interior. 3) \modifbis{Control the total time taken by $n$ iterations of the phases from steps 1) and 2) and conclude using Lemma \ref{unificationlemma}}. We think our approach is of interest in its own right, because the argument seems to generalize to processes that admit a L{\'e}vy sandwich \modifbis{near the boundary of their domain}.

The underlying idea appears in classic works and in many settings it is formalized in a Lyapunov-function framework. A minorisation condition on the kernel out of a predefined set usually ensures the possibility of merging two solutions whenever they are in that set. A drift condition, which usually involves the generator and a so-called Lyapunov-function, ensures that the predefined set is sufficiently recurrent. We refer to~\citep{Jones2001} for an accessible introduction, and to~\cite{Meyn1993} for relevant details. In contrast, our approach employs a (generalized) Doeblin condition as it appears for example in~\citep{Kulik2011} (see \citep[p.192 ff.]{Doob1990} for a more classic formulation). 

In order to carry out the steps outlined above, we define the following sequence of stopping times (see Fig.~\ref{fig:bothboundaries} for an illustration). Let $\tin>0$. Set $\waitbds{0}\coloneqq 0$ and for $n\geq 0$,
\begin{equation}\label{eq:hittingtimes}
	\begin{split}
		\waitint{n}&\coloneqq \inf\{t\geq \waitbds{n}:\rin< \widehat{Y}_t\leq \widecheck{Y}_t<1-\rin\}, \\
	\waitbds{n+1}&\coloneqq (\waitint{n}+\tin)\wedge \inf\{t\geq \waitint{n}: \widehat{Y}_t<\rbs \text{ or }\widecheck{Y}_t>1-\rbs\}.
	\end{split}
\end{equation}
\modifbis{The dependence of $(\waitbds{n}, \waitint{n})_{n\geq 0}$ on the choice of $\rin, \rbs, \tin$ is omitted in the notation. Let also}
\begin{align*}
\Tint&\coloneqq \inf\{t\geq 0: \rin< \widehat{Y}_t\leq \widecheck{Y}_t<1-\rin\},\quad
\modifbis{\Tstr\coloneqq \inf\{t\geq 0: \widehat{Y}_t <\rbs \text{ or }\widecheck{Y}_t >1-\rbs\}}. 
\end{align*}
Then $\waitint{}$ and $\waitbdsu$ are the first time $\widehat{Y}$ and $\widecheck{Y}$ are simultaneously in the $\rin$-interior and the first time $\widehat{Y}$ or $\widecheck{Y}$ enters the $\rbs$-boundary strip, respectively. We have \mofe{$\waitint{}=\waitint{0}$}. 
Note that if $\widehat{Y}_{\waitbds{n}},\widecheck{Y}_{\waitbds{n}}\in (\rin,1-\rin)$, then $\waitint{n}=\waitbds{n}$. 
Moreover, if $\hat{y},\check{y}\in(\rin,1-\rin)$, then $\waitint{0}=0$ so in this case $\waitbds{1}=\waitbdsu \wedge \tau$. \modifbis{By a slight abuse of notation, we shall use $\waitint{n}, \waitbds{n}, \Tint, \Tstr$ also if we consider only a single trajectory under $\P_y$, i.e. if $\hat{y}=\check{y}=y$}, for example in Proposition \ref{prop:renewaloccbound}.


In the strategy outlined above, \mofe{step 1) corresponds to } the following Doeblin's conditions.
\begin{prop}[Doeblin's condition for part (i)]\label{prop:mergeprob}
	Let $(\Lambda,\mu,\sel)\in\Theta$ and $\rin\in(0,3^{-1}\wedge \max\supp\Lambda)$. 
	Then there exists $\rbs\in (0,\rin)$, $\tstop>0$ and $c_{\rin}>0$ 
	such that for any $\hat{y},\check{y}$ 
	with $\rin\leq  \hat{y}\leq  \check{y}\leq 1-\rin$,
	$$\P_{\hat{y},\check{y}}(\widehat{Y}_{\tstop\wedge  \waitbdsu}= \widecheck{Y}_{\tstop\wedge\waitbdsu})\geq c_{\rin}.$$
\end{prop}

\begin{prop}[Doeblin's condition for part (ii)]\label{prop:renewaloccbound}
	Let $(\Lambda,\mu,\sel)\in\Theta$. Recall the definition of $\renewal$ from \eqref{eq:defrenewaltime}. For all $\rin\in (0,1/2)$ there is $\rbs\in (0,\rin)$, $\tstop>0$, and $c_{\rin}\in (0,1)$ such that for all $y\in [\rin,1-\rin]$, 
	\begin{equation}\label{Doeb2}
		\P_y(\renewal<\tstop\wedge \waitbdsu)\geq c_{\rin}.
	\end{equation}	
\end{prop}

\modifbis{For step 2), the following proposition estimates the time we need to wait for two trajectories to be both found in the interior. The proof relies on the results from Section \ref{sec:comparisonlevy}}. 
\begin{prop}[Time to interior]\label{prop:bddprobreturn}
	Let $(\Lambda,\mu,\sel)\in \Theta_2$. There exists $\rin\in(0,3^{-1}\wedge \max\supp\Lambda)$ and $\lambda,\, C>0$ such that for any $\hat{y},\, \check{y}\in (0,1)$ with $\hat{y}\leq \check{y}$, we have 
	\begin{equation}\label{expmom}
		\E_{\hat{y},\check{y}}[e^{\lambda \wht}]\leq C(\hat{y}^{-1/4}+(1-\check{y})^{-1/4})<\infty.
	\end{equation}
	In particular, $\wht<\infty$, {$\P_{\hat{y},\check{y}}$-a.s.}.
\end{prop}
The condition $\rin<\maxsupp{\Lambda}$ ensures the existence of jumps $(t,r,u)$ of~$N$ with $r>\rin$. 

Finally, step 3) will consist in an application of Lemma \ref{unificationlemma} to the interlaced sequences of stopping times defined above. The following proposition will allow to check the assumptions of that lemma. It estimates the amount of time accumulated after $n$ repetitions of successively reaching the interior and trying to see the event of interest occur. The proof relies on Proposition \ref{prop:bddprobreturn} and on \mofe{an analysis of the law of $Y$ once it enters the } boundary strip.
\begin{prop}[Accumulated time after $n$ steps]\label{prop:nthboundaryvisit}
	Let $(\Lambda,\mu,\sigma)\in \Theta_2$. 
	Let $\rin$, $\lambda,\, C$ as provided by Proposition~\ref{prop:bddprobreturn}. 
	Fix $\tin>0$ and $\rbs\in (0,\rin)$. 
	Then there is $K\geq 1$ 
	such that for all $n\geq0$ and $0<\hat{y}\leq\check{y}<1$, 
	$$\E_{\hat{y},\check{y}}[e^{\lambda \waitbds{n}}]\leq (\hat{y}^{-1/4}+(1-\check{y})^{-1/4})K^n.$$
\end{prop}
\modifbis{On the basis of these preparatory results, we are ready to prove Theorem \ref{thm:probmergeYt}.}
\begin{proof}[Proof of Theorem \ref{thm:probmergeYt}]
	First we prove part (i).
	\modifbis{Let $\rin$ and $\lambda$ be given by Proposition~\ref{prop:bddprobreturn}. 
	Choose $\rbs\in (0,\rin)$, $\tstop>0$ and $c_{\rin}\in (0,1)$ as in Proposition~\ref{prop:mergeprob}.
	Set $\tin\coloneqq\tstop$. Let also $0<\hat{y}\leq\check{y}<1$ be initial starting points for $\widehat{Y}$ and $\widecheck{Y}$. 
	Let $\waitint{n},\waitbds{n}$ be the stopping times defined in~\eqref{eq:hittingtimes} (with $\rin, \rbs, \tin$ as we just chose)}.
	Let $K$ be as provided by Proposition~\ref{prop:nthboundaryvisit} (\mofe{recall that } $K$ does not depend on the choice of $\hat{y}$ and $\check{y}$). 
	We check that the assumptions of Lemma~\ref{unificationlemma} are satisfied for 
	$T\coloneqq\inf\{t\geq 0: \widehat{Y}_t=\widecheck{Y}_t\}$, $(T^1_n)_{n \geq 0}\coloneqq(\waitbds{n})_{n \geq 0}$ and $(T^2_n)_{n \geq 0}\coloneqq(\waitint{n})_{n \geq 0}$. 
	Condition (i) is clearly satisfied. 
	Proposition~\ref{prop:mergeprob} ensures that Condition (ii) holds with $c:=c_{\rin}$. Proposition~\ref{prop:nthboundaryvisit} shows that, almost surely, all stopping times $T^1_n$ are finite, and therefore all stopping times $T^2_n$ as well so Condition (iii) is satisfied. 
	That lemma also shows that Condition (iv) is satisfied with $a:=\hat{y}^{-1/4}+(1-\check{y})^{-1/4}$ and $\lambda$ and $K$ as chosen above. Then, the conclusion of part (i) follows from Lemma \ref{unificationlemma}.

	Next, we prove (ii). 
	Let $\rin\in(0,3^{-1}\wedge \max\supp\Lambda)$, and $C,\lambda>0$ be such that
	for all $y \in (0,1)$, 
	$\E_y[e^{\lambda\waitint{}}]\leq C(y^{-1/4}+(1-y)^{-1/4})$, 
	the existence of these parameters is ensured by Proposition~\ref{prop:bddprobreturn}
	with $\hat{y}=\check{y}=y$. 
	Let $\rbs\in (0,\rin)$, $\tstop>0$ and $c_{\rin}\in (0,1)$ be from Proposition~\ref{prop:renewaloccbound}. 
	Set $\tin\coloneqq\tstop$. \modifbis{Let $y \in (0,1)$ and $\waitint{n},\waitbds{n}$ be the stopping times defined in~\eqref{eq:hittingtimes} with $\widehat{Y}_0= \widecheck{Y}_0=y$ (and $\rin, \rbs, \tin$ as we just chose)}.
	Proposition~\ref{prop:nthboundaryvisit} delivers $K>0$, not depending on the choice of $y$, such that, 
	\begin{align}
		\E_y[e^{\lambda\waitbds{n}}]\leq (y^{-1/4}+(1-y)^{-1/4})K^n. \label{nthboundaryvisitspec}
	\end{align}
	Now, we verify that the assumptions of Lemma~\ref{unificationlemma} are satisfied for $T:=\renewal$, $(T^1_n)_{n \geq 0}:=(\waitbds{n})_{n \geq 0}$ and $(T^2_n)_{n \geq 0}:=(\waitint{n})_{n \geq 0}$. Condition (i) is clearly satisfied. Proposition~\ref{prop:renewaloccbound} ensures that Condition (ii) holds with $c:=c_{\rin}$. We get from \eqref{nthboundaryvisitspec} that, almost surely, all stopping times $T^1_n$ are finite, and therefore all stopping times $T^2_n$ as well so Condition (iii) is satisfied. We also get from \eqref{nthboundaryvisitspec} that Condition (iv) is satisfied with $a:=y^{-1/4}+(1-y)^{-1/4}$ and $\lambda$ and $K$ as chosen above. Then, the conclusion easily follows from Lemma \ref{unificationlemma}.
		
	Finally, we prove part (iii). 
	Let $f(x)\coloneqq\E_\cV[\int_0^{\renewal} \ind{Y_s\leq x}\dd s]/\E_\cV[\renewal]$. 
	By part (ii), 
	$\E_\cV[\renewal]<\infty$,
	so $f$ is well-defined and bounded. 
	By Fubini $f(x)=\int_0^\infty h_s(x)\dd s/\E_\cV[\renewal],$ where
	for $s\geq 0$, $h_s(x)\coloneqq \P_\cV(s\leq \renewal, Y_s\leq x)$.
	Since $h_s$ is the distribution function of the measure $\P_\cV(s\leq \renewal, Y_s\in \dd x)$, 
	$h_s$ is continuous if and only if $\P_\cV(s\leq \renewal, Y_s=x)=0$ for all $x$. 
	Let $F_\cV$ be the distribution function of $\cV$, which is continuous. 
	Using the Feller property for $X$ and the Siegmund duality, we deduce that, for any $x \in (0,1)$ and $\varepsilon \in (0,x\wedge(1-x))$,  
	\begin{align*}
		&\P_\cV(Y_s=x)  = \int_{(0,1)} \P_y(Y_s=x) \cV(\dd y) \leq \int_{(0,1)} (\P_y(Y_s \leq x+\varepsilon) - \P_y(Y_s \leq x-\varepsilon)) \cV(\dd y) \\
		& =\int_{(0,1)} (\P_{x+\varepsilon}(y \leq X_s) - \P_{x-\varepsilon}(y \leq X_s)) \cV(\dd y) = \E_{x+\varepsilon}[F_\cV(X_s)] - \E_{x-\varepsilon}[F_\cV(X_s)] \xrightarrow{\varepsilon\to 0 }0. 
	\end{align*}
	A similar argument holds for $x=0$ and $x=1$, so $\P_\cV(Y_s=x)=0$ for all $x \in [0,1]$. Hence $h_s$ is continuous and, by dominated convergence, $f$ is continuous. Also, $h_s(0)=0$ and $h_s(1)=\P_{\cV}(s\leq \renewal)$ for all $s\geq 0$, so $f(0)=0$ and $f(1)=1$. It is plain that $f$ is non-decreasing. If $f(\hat{x})=f(\check{x})$ for some $\hat{x}, \check{x}$ with $0\leq \hat{x} \leq \check{x} \leq 1$, then $f(\check{x})-f(\hat{x})=\E_\cV[\int_0^{\renewal} \ind{Y_s\in (\hat{x},\check{x}]}\dd s]=0$. This implies $\P_\cV(\int_0^{\infty}\ind{Y_s\in (\hat{x},\check{x})}\dd s>0)=0$, since $\renewal$ is a renewal time for $Y$. Since $Y$ is open-set recurrent, we deduce that $\hat{x}=\check{x}$. Thus, $f$ is strictly increasing. 
\end{proof}
\begin{remark}
One may wonder if the almost sure merging of two trajectories, implied by Theorem~\ref{thm:probmergeYt}(i), also occurs in the case $(\Lambda,\mu,\sel)\in \Theta_i$ for $i\neq 2$. In these cases, the trajectories of $Y$ are absorbed at at least one boundary point, and the rate of jumps that cause a merging may decay very rapidly toward $0$, so the probability of two trajectories never merging can be positive. 
\end{remark}
\begin{figure}[t]
	
	\scalebox{3.5}{
}
	\caption{Sketch of two trajectories of~$Y$ and the sequences $(\waitbds{k})_{k\in \N}$ and $(\waitint{k})_{k\in \N}$. Since $\widehat{Y}_{\waitbds{3}}, \widecheck{Y}_{\waitbds{3}}\in (\rin,1-\rin)$, we have $\waitbds{3}=\waitint{3}$.}
	\label{fig:bothboundaries}
\end{figure}
\subsection{Proof of Proposition~\ref{prop:mergeprob}: Merging trajectories in the interior}\label{sec:accbdd:sub:controlinside}
The proof of Proposition~\ref{prop:mergeprob} involves estimates whose derivation is technical and rather delicate. This will require some notation.
Throughout this subsection, we let $\rin\in (0,1/3\wedge \maxsupp{\Lambda})$ and $\rbs=\rin/(1+\rin)$. 
Recall the definitions of $\Tint$ and $\Tstr$ just after \eqref{eq:hittingtimes}. Recall that $J_N$ is the set of jump times of~$N$. For $t\in J_N$, $(t,R_t, U_t)$ denotes the corresponding jump of $N$. 
For $c\in (0,1)$ and $k\in \N$, 
define 
$$\tu{c}_0\coloneqq 0\quad\textrm{and}\quad\tu{c}_k\coloneqq \inf\{t> \tu{c}_{k-1}:  t\in J_N\text{ with }R_t> c\},$$ 
with the convention $\inf\emptyset=\infty$. 
If $\tu{c}_k<\infty$, 
let $\ru{c}_k$ and $\uu{c}_k$ be such that $(\tu{c}_k,\ru{c}_k,\uu{c}_k)\in N$. 
To lighten the notation, we write $\tu{c}$ instead of $\tu{c}_1$ 
(and $\ru{c}$, $\uu{c}$ instead of $\ru{c}_1$, $\uu{c}_1$).

For $y\in (0,1)$ and $(t,r)\in \R_+\times (0,1)$, 
define $$\underline{{\rm m}}(y,r)\coloneqq \frac{y-r}{1-r}\ \text{ and }\ \overline{{\rm m}}(y,r)\coloneqq \frac{y}{1-r},$$ 
so that $\mr_{r,u}(y)=\median{\underline{{\rm m}}(y,r),u, \overline{{\rm m}}(y,r)}$. 
For later reference, note that for $0<r_1<r_2<1$, 
\begin{equation}
	\underline{{\rm m}}(y,r_1)>\underline{{\rm m}}(y,r_2)\quad\text{and}\quad \overline{{\rm m}}(y,r_1)<\overline{{\rm m}}(y,r_2). \label{eq:yuylmono}
\end{equation}
Using \eqref{eq:yuylmono}, we infer that 
\[\tu{\rbs}\leq \wct\Rightarrow\mro{\widehat{Y}}{\tu{\rbs}}{\ru{\rbs}}> \rin\quad\text{and}\quad\mru{\widecheck{Y}}{\tu{\rbs}}{\ru{\rbs}}< 1-\rin.\] 
Recall also that we are assuming that $\widehat{Y}_0=\hat{y}\leq\check{y}= \widecheck{Y}_0$. In particular, by the monotonicity property \eqref{eq:monotinictyY} of $Y$, we have 
\begin{equation}
	\tu{\rbs}\leq \wct\ \text{and}\ \uu{\rbs}\in(\rin,1-\rin) \ \Rightarrow\ \rin < \widehat{Y}_{\tu{\rbs}} \leq \widecheck{Y}_{\tu{\rbs}} <1-\rin \ \text{and}\ \tu{\rbs}<\wct. \label{eq:yuylbound}
\end{equation} 
So a jump larger than $\rbs$ with a $u$-component in the $\rin$-interior does not lead to the $\rbs$-boundary strip.
Finally, we introduce a set of parameters. For $n\in \N$, define \begin{equation}
	b_n\coloneqq1-(1-\rbs)^{n/2}, \ \delta_n\coloneqq(\rin-\rbs)\wedge \frac{(b_{n+2}-b_{n+1})}{2}\,\, \text{ and }\,\, t_n\coloneqq1\wedge {\left(\frac{\delta_n}{4\capco{\star}}\right)^2},\label{eq:mergeparametersboth}
\end{equation}
where $\capco{\star}$ is the constant in Lemma~\ref{lem:capdeviation}. Set for $v,w\in[0,1]$ with $v\leq w$, 
\[N_0(v,w)\coloneqq\left\lfloor 1+\frac{2\log(1-w+v)}{\log(1-\rbs)}\right\rfloor\quad\text{and}\quad N_0\coloneqq N_0(\hat{y},\check{y}).\]
The intuition behind these parameters is as follows. We will consider events such that $N_0+1$ large jumps of~$N$ occur in the $\rin$-interior, where `large' is parametrized by~$\rbs$. At the $k$th such jump the distance of the two trajectories will be bounded by~$b_{N_0-k+1}$, the incremental time between the $k$th and the $(k+1)$th jump will be bounded by $t_{N_0-k+1}$, and $\delta_{N_0-k+1}$ bounds the deviation of a trajectory from its starting point between these jumps. 

\begin{proof}[Proof of Proposition \ref{prop:mergeprob}]
	For $k\in \{1,\ldots,N_0\}$, 
	consider the event $$A_k\coloneqq \{\tu{\rbs}_k<\waitbdsu\wedge (\tu{\rbs}_{k-1} +t_{N_0-k+1}),\, \rin\leq \widehat{Y}_{\tu{\rbs}_k}\leq \widecheck{Y}_{\tu{\rbs}_k}\leq 1-\rin,\, \widecheck{Y}_{\tu{\rbs}_k}-\widehat{Y}_{\tu{\rbs}_k}<b_{N_0-k+1}\}.$$
	Let $\repsb\in (\rbs,\maxsupp{\Lambda})$ and 
	set $$\varsigma_\star\coloneqq \inf\{t> \tu{\rbs}_{N_0} : \ t\in J_N\,\&\,R_t > \repsb\}\quad\textrm{and}\quad \modifbis{A_{N_0+1}\coloneqq\{\varsigma_{\star}<\waitbdsu\wedge (\tu{\rbs}_{N_0} +t_{0}),\, \widecheck{Y}_{\varsigma_{\star}}=\widehat{Y}_{\varsigma_\star}\}},$$
	\mofe{Set $\tstop \coloneqq \sum_{i=0}^{N_0(\rin,1-\rin)} t_i$ and note that 
	$$\bigcap_{k=1}^{N_0+1}A_k\subset \{\widehat{Y}_{\tstop\, \wedge\, \wct}=\widecheck{Y}_{\tstop\, \wedge\, \wct}\}.$$} 
	
	Since $Y$ satisfies the strong Markov property (see Proposition~\ref{fullgenerator}), 
	we have 
\mofe{	\begin{align*}
		\textstyle &\P_{\hat{y},\check{y}}\big(	\widehat{Y}_{\tstop\, \wedge\, \wct}=\widecheck{Y}_{\tstop\, \wedge\, \wct}\big)\textstyle\geq  \P_{\hat{y},\check{y}}\big(\bigcap_{k=1}^{N_0+1} A_k\big)\geq \P_{\hat{y},\check{y}}\big(A_1\big)\, \prod_{k=2}^{N_0+1}\P_{\hat{y},\check{y}}\big(A_k\mid \bigcap_{i=1}^{k-1} A_i\big).
	\end{align*}}
	We show in the subsequent lemma that there are positive constants $p_0,p_1,p_2,\ldots$ such that \begin{equation}
		\P_{\hat{y},\check{y}}(A_1)\geq p_{N_0},\text{ and for }k\in \{2,\ldots,N_0+1\},\ \P_{\hat{y},\check{y}}(A_k\mid \textstyle\bigcap_{i=1}^{k-1} A_i)\geq p_{N_0-k+1}.\label{eq:contentprop}
	\end{equation}
	Since $N_0\leq N_0(\rin,1-\rin)<\infty$, the result follows by setting $c_{\rin}\coloneqq \prod_{i=0}^{N_0(\rin,1-\rin)}p_i.$
\end{proof}
It remains to prove \eqref{eq:contentprop}, which we do in the following lemma. 

\begin{lemme}\label{lem:aux_mergingeventestimate}
	Let $(\Lambda,\mu,\sel)\in\Theta$. For any $n\geq 1$, there exists $p_n>0$ such that for any $\hat{y},\check{y}\in[\rin,1-\rin]$ with $\hat{y}\leq \check{y}$ and $\check{y}-\hat{y}< b_{n+1}$,
	\begin{equation}\label{eq:auxeventestimate}
		\P_{\hat{y},\check{y}}(\tu{\rbs}<t_n\wedge \waitbdsu,\ \rin\leq \widehat{Y}_{\tu{\rbs}}\leq \widecheck{Y}_{\tu{\rbs}}\leq 1-\rin,\ \widecheck{Y}_{\tu{\rbs}}-\widehat{Y}_{\tu{\rbs}}<b_n)\geq p_n. 
	\end{equation}
	Moreover, for $\repsb\in (\rbs,\maxsupp{\Lambda})$, there is $p_0>0$ such that for any $\hat{y},\check{y}\in[\rin,1-\rin]$ with $\hat{y}\leq \check{y}$ and $\check{y}-\hat{y}< b_{1}$,
	\begin{equation}\label{eq:auxeventestimate2}
		\P_{\hat{y},\check{y}}(\tu{\repsb}<t_0\wedge \waitbdsu,\ \widehat{Y}_{\tu{\repsb}}=\widecheck{Y}_{\tu{\repsb}})\geq p_0.
	\end{equation}
\end{lemme}
\begin{proof}
	First, we prove~\eqref{eq:auxeventestimate}. 
	Fix~$n\in \N$ and let 
	\[A_n\coloneqq\{\tu{\rbs}<t_n\wedge \waitbdsu,\ \rin\leq \widehat{Y}_{\tu{\rbs}}\leq \widecheck{Y}_{\tu{\rbs}}\leq 1-\rin,\ \widecheck{Y}_{\tu{\rbs}}-\widehat{Y}_{\tu{\rbs}}<b_n\}.\] 
	Our aim is to find a lower bound for $\P_{\hat{y},\check{y}}(A_n)$, 
	which is uniform on $\hat{y},\check{y}\in[\rin,1-\rin]$ with $\check{y}-\hat{y}< b_{n+1}$. 
	To this end, consider the following events
	\begin{align*}
		\mathcal{S}_1&\coloneqq \{\rin\leq \mru{\widecheck{Y}}{\tur}{\rbs},\mro{\widehat{Y}}{\tur}{\rbs}\leq 1-\rin\},\quad \mathcal{S}_2\coloneqq \{\mru{\widecheck{Y}}{\tur}{\rbs}<\rin\}, \\
		\mathcal{S}_3&\coloneqq \{1-\rin<\mro{\widehat{Y}}{\tur}{\rbs}\}.
	\end{align*}	
	These events are disjoint and exhaustive, 
	because $\mru{\widecheck{Y}}{\tur}{\rbs}<\rin$ and $\mro{\widehat{Y}}{\tur}{\rbs}> 1-\rin$ cannot simultaneously hold. Indeed, if this were the case, \mofe{since $\rin=\rbs/(1-\rbs)$, we would have by the monotonicity property \eqref{eq:monotinictyY} of $Y$ that } $$1-\rin<\mro{\widehat{Y}}{\tur}{\rbs}\leq \mro{\widecheck{Y}}{\tur}{\rbs}=\mru{\widecheck{Y}}{\tur}{\rbs}+\rbs/(1-\rbs)< 2\rin,$$ which contradicts $\rin<1/3$.

	Set $M(\tur)\coloneqq (\mru{\widecheck{Y}}{\tur}{\rbs}+\mro{\widehat{Y}}{\tur}{\rbs})/2$. 
	For~$n\in \N$, define the random interval 
	$$I_n\coloneqq (\rin,1-\rin) \cap \begin{cases}
		\left(M(\tur)-b_n/2,M(\tur)+b_n/2\right), &\text{on } \mathcal{S}_1,\\
		\left(\mro{\widehat{Y}}{\tur}{\rbs},\mro{\widehat{Y}}{\tur}{\rbs}+b_n\right), &\text{on }\mathcal{S}_2,\\
		\left(\mru{\widecheck{Y}}{\tur}{\rbs}-b_n,\mru{\widecheck{Y}}{\tur}{\rbs} \right),&\text{on } \mathcal{S}_3.
	\end{cases}$$
	We now claim that 
	\begin{equation}\label{cl1}
		\mathcal{E}_n\coloneqq \{\tur\leq \waitbdsu,\, \tur< t_n,\, \widecheck{Y}_{\tur-}-\widehat{Y}_{\tur-}\leq b_{n+2},\, \uur\in I_n\}\subset A_n.
	\end{equation}
	To prove this intermediate statement, note first that ~\eqref{eq:yuylbound} implies that $$\mathcal{E}_n\subset\{\tur< t_n\wedge \waitbdsu,\, \rin\leq \widehat{Y}_{\tur}\leq \widecheck{Y}_{\tur}\leq 1-\rin\}.$$ 
	So to prove \eqref{cl1},
	it only remains to show that on $\mathcal{E}_n$, $\widecheck{Y}_{\tur}-\widehat{Y}_{\tur}<b_n$. 
	
	On $\mathcal{E}_n\cap\mathcal{S}_1$, $I_n=(M(\tur)-b_n/2,M(\tur)+b_n/2)\cap (\rin,1-\rin)$. 
	Note that by~\eqref{eq:yuylmono} and the definitions of $\mathcal{E}_n$ and $b_{n+2}$, 
	we have on $\mathcal{E}_n\cap\mathcal{S}_1$,
	\begin{align*}
		\mro{\widehat{Y}}{\tur}{\rur}>\mro{\widehat{Y}}{\tur}{\rbs}&=M(\tur)-\frac{\widecheck{Y}_{\tur-}-\widehat{Y}_{\tur-}-\rbs}{2(1-\rbs)}\\ &\geq M(\tur) - \frac{b_{n+2}-\rbs}{2(1-\rbs)}=M(\tur)-\frac{b_n}{2}.
	\end{align*}
	Similarly, $\mru{\widecheck{Y}}{\tur}{\rur}<M(\tur)+b_n/2$. 
	Moreover, on $\mathcal{E}_n\cap\mathcal{S}_1$ we have $\uur\in I_n$. Therefore, using the monotonicity property \eqref{eq:monotinictyY} of $Y$, we get that $M(\tur)-b_n/2<\widehat{Y}_{\tur}\leq \widecheck{Y}_{\tur}<M(\tur)+b_n/2$. 
	In particular, $\widecheck{Y}_{\tur}-\widehat{Y}_{\tur}<b_n$.
	
	On $\mathcal{E}_n\cap \mathcal{S}_2$, $I_n=(\mro{\widehat{Y}}{\tur}{\rbs},\mro{\widehat{Y}}{\tur}{\rbs}+b_n)\cap(\rin, 1-\rin)$. 
	Since here $\uur>\mro{\widehat{Y}}{\tur}{\rbs}$ and, by~\eqref{eq:yuylmono}, $\mro{\widehat{Y}}{\tur}{\rur}>\mro{\widehat{Y}}{\tur}{\rbs}$, 
	we have $\widehat{Y}_{\tur}> \mro{\widehat{Y}}{\tur}{\rbs}$. 
	Moreover, by~\eqref{eq:yuylmono}, the definition of~$\mathcal{S}_2$ and $\rbs$, 
	and because $\tur\leq \waitbdsu$
	\[\mru{\widecheck{Y}}{\tur}{\rur}<\mru{\widecheck{Y}}{\tur}{\rbs}<\rin=\frac{\rbs}{1-\rbs}\leq \mro{\widehat{Y}}{\tur}{\rbs}<\mro{\widehat{Y}}{\tur}{\rbs}+b_n.\]
	Since on $\mathcal{E}_n\cap \mathcal{S}_2$ also $\uur<\mro{\widehat{Y}}{\tur}{\rbs}+b_n$, 
	we deduce $\widecheck{Y}_{\tur}<\mro{\widehat{Y}}{\tur}{\rbs}+b_n.$ 
	By the monotonicity property \eqref{eq:monotinictyY} of $Y$,
	we have $\mro{\widehat{Y}}{\tur}{\rbs}< \widehat{Y}_{\tur}\leq \widecheck{Y}_{\tur}< \mro{\widehat{Y}}{\tur}{\rbs}+b_n$. 
	In particular, $\widecheck{Y}_{\tur}-\widehat{Y}_{\tur}<b_n$. 
	In a similar way, one derives that also on $\mathcal{E}_n\cap \mathcal{S}_3$, $\widecheck{Y}_{\tur}-\widehat{Y}_{\tur}<b_n$. 
	Altogether we have shown~\eqref{cl1}.

	Recall that $Y^{\rbs}$ is~$Y$ capped by jumps $(t,r,u)$ of~$N$ for~$r>\rbs$. Now, we claim that
	\begin{equation}\label{cl2}
		\mathcal{E}_n^*\coloneqq\Big\{\ \tur< t_n,\, \sup_{s\in [0,t_n]}\lvert \widehat{Y}_s^{\rbs}-\hat{y}\lvert< \delta_n,\ \sup_{s\in [0,t_n]}\lvert \widecheck{Y}_s^{\rbs}-\check{y}\lvert< \delta_n,\, \uur\in I_n\Big\}\subset \mathcal{E}_n.
	\end{equation} 
	To prove this, use the definition of $\delta_n$ and that for $t< \tur$, $\widehat{Y}_t^{\rbs}=\widehat{Y}_t$ and $\widecheck{Y}_t^{\rbs}=\widecheck{Y}_t$ to obtain that  $$\mathcal{E}_n^*\subset\{\forall s\in [0,\tur), \rbs\leq \widehat{Y}_s\leq \widecheck{Y}_s\leq 1-\rbs,\ \widecheck{Y}_s-\widehat{Y}_s\leq b_{n+2},\ \tur< t_n,\uur\in I_n \}.$$ It is not difficult to see that this set in turn is a subset of $\mathcal{E}_n.$ This proves \eqref{cl2}.

	It remains to find a lower bound for {$\P_{\hat{y},\check{y}}(\mathcal{E}_n^*)$}. 
	First we show that on $\mathcal{E}_n$, 
	\[\lvert I_n\rvert \geq \ell_n\coloneqq (1-3\rin)\wedge b_n/2\] 
	On $\mathcal{E}_n\cap\mathcal{S}_1$, 
	either ${\lvert I_n\rvert}=b_n$, ${\lvert I_n\rvert}=M(\tur)+b_n/2-\rin$, 
	${\lvert I_n\rvert}=1-\rin-M(\tur)+b_n/2$, 
	or ${\lvert I_n\rvert}=1-2\rin$. 
	Since $M(\tur)\in[\rin,1-\rin]$, 
	we deduce $\lvert I_n\rvert \geq \ell_n$ on $\mathcal{E}_n\cap\mathcal{S}_1$. 
	Since $\tur\leq \waitbdsu$, 
	we have on $\mathcal{E}_n\cap\mathcal{S}_2$ 
	\[\rin=\rbs/(1-\rbs)\leq \mro{\widehat{Y}}{\tur}{\rur}\leq \mru{\widecheck{Y}}{\tur}{\rur}+\rbs/(1-\rbs)<2\rin<1-\rin,\]
	where we have used that $\rin<1/3$. 
	Distinguishing whether $\mro{\widehat{Y}}{\tur}{\rur}+b_n\leq 1-\rin$ 
	or $\mro{\widehat{Y}}{\tur}{\rur}+b_n> 1-\rin$, 
	we respectively have $\lvert I_n\rvert\geq b_n$ 
	or $\lvert I_n\rvert\geq 1-3\rin$; 
	in both cases $\lvert I_n\rvert \geq \ell_n$. 
	The proof on $\mathcal{E}_n\cap\mathcal{S}_3$ is analogous.
	We thus have $\lvert I_n\rvert \geq \ell_n$ on $\mathcal{E}_n$. 
	
	Finally, using \eqref{cl1}, \eqref{cl2}, the independence of $\tur$, $\uur$, $(\widehat{Y}^{\rbs},\widecheck{Y}^{\rbs})$, and \mofe{that on $\mathcal{E}_n$ we have $\lvert I_n\rvert \geq \ell_n$, we get}
	\begin{align*}
		&{\P_{\hat{y},\check{y}}}(A_n)\geq \textstyle \P_{\hat{y},\check{y}}\left(\mathcal{E}_n^*\right)\textstyle\\&\geq \Big(1-\P_{\hat{y}}\Big(\sup_{s\in [0,t_n]}\lvert {\widehat{Y}^{\rbs}_s}-\hat{y}\lvert\geq \delta_n\Big)-\P_{\check{y}}\Big(\sup_{s\in [0,t_n]}\lvert {\widecheck{Y}^{\rbs}_s}-\check{y}\lvert\geq \delta_n\Big)\Big) \left(1-e^{-t_n\int_{(\rbs,1)}\frac{\Lambda(\dd r)}{r^2}}\right)\ell_n\textstyle\\
		&\geq \Big(1-e^{-t_n\int_{(\rbs,1)}\frac{\Lambda(\dd r)}{r^2}}\Big)\frac{\ell_n}{2}\eqqcolon p_n>0, \end{align*}
	where in the last step we used Lemma~\ref{lem:capdeviation} and the definition of $t_n$. 
	This completes the proof of the first part of the proposition.
		
	The second part of the proposition is proven along the same lines. Define 
	{\begin{align*}
			A_0 & \coloneqq\{\tu{\repsb}<t_0\wedge \waitbdsu,\ \widehat{Y}_{\tu{\repsb}}=\widecheck{Y}_{\tu{\repsb}}\},\quad I_0\coloneqq(\mru{\widecheck{Y}}{\turb}{\repsb},\mro{\widehat{Y}}{\turb}{\repsb})\cap(\rin,1-\rin), \\
			\mathcal{E}_0 & \coloneqq\{\turb\leq \waitbdsu,\, \turb< t_0,\, \widecheck{Y}_{\turb-}-\widehat{Y}_{\turb-}\leq b_2,\, \uurb\in I_0\}. 
	\end{align*}}
	$I_0$ is well-defined and non-empty if $\widecheck{Y}_{\turb-}-\widehat{Y}_{\turb-}\leq b_2$. Note that {on $\mathcal{E}_0$}, 
	\begin{align*}
		\mro{\widecheck{Y}}{\turb}{\rurb}&\geq \mro{\widehat{Y}}{\turb}{\rurb}> {\mro{\widehat{Y}}{\turb}{\repsb}}\\
		&{> \uurb > \mru{\widecheck{Y}}{\turb}{\repsb}}>\mru{\widecheck{Y}}{\turb}{\rurb}\geq \mru{\widehat{Y}}{\turb}{\rurb}.
	\end{align*}
	Hence, {on $\mathcal{E}_0$}, $\widehat{Y}_{\turb}=\widecheck{Y}_{\turb}=\uurb$. {This together with \eqref{eq:yuylbound} yields $\mathcal{E}_0 \subset A_0$}. 
	Proceeding as in the the proof of \eqref{cl2}, one can show that	
	\[\mathcal{E}_0^*\coloneqq\Big\{\turb< t_0,\, \sup_{s\in [0,t_0]}\lvert \widehat{Y}_s^{\repsb}-\hat{y}\lvert< \delta_0,\ \sup_{s\in [0,t_0]}\lvert \widecheck{Y}_s^{\repsb}-\check{y}\lvert< \delta_0,\, {\uurb}\in I_0\Big\}\subset \mathcal{E}_0.\] 
	Note that on $\mathcal{E}_0$, $\mro{\widehat{Y}}{\turb}{\repsb}>\rbs/(1-\rbs)=\rin$ and $\mru{\widecheck{Y}}{\turb}{\repsb}<(1-2\rbs)/(1-\rbs)=1-\rin$. Hence, on $\mathcal{E}_0$, we have the following four possibilities: 
	\begin{align*}
		\text{(i)}\quad \lvert I_0\rvert&= \mro{\widehat{Y}}{\turb}{\repsb}-\mru{\widecheck{Y}}{\turb}{\repsb}\geq (\repsb-\rbs)/(1-\repsb),\\
		\text{(ii)}\quad\lvert I_0\rvert& =\mro{\widehat{Y}}{\turb}{\repsb}-\rin = \mro{\widehat{Y}}{\turb}{\repsb}-\mro{\widehat{Y}}{\turb}{\rbs}+\mro{\widehat{Y}}{\turb}{\rbs}-\rin,\\ 
		&\geq \mro{\widehat{Y}}{\turb}{\repsb}-\mro{\widehat{Y}}{\turb}{\rbs}\geq {\rin (\repsb-\rbs)/(1-\repsb)},\\
		\text{(iii)}\quad\lvert I_0\rvert &= 1-\rin-\mru{\widecheck{Y}}{\turb}{\repsb} \geq {\rin (\repsb-\rbs)/(1-\repsb)}\quad \text{(which is deduced as~(ii))},\\
		\text{(iv)}\quad\lvert I_0\rvert &= 1-2\rin.
	\end{align*}
	This shows that, on $\mathcal{E}_0$, $\lvert I_0\rvert\geq l_0\coloneqq (1-2\rin)\wedge \rin(\repsb-\rbs)/(1-\repsb)$. The final part of the proof is analogous to the first case.
\end{proof}
\subsection{Proof of Proposition~\ref{prop:renewaloccbound}: Timely occurrence of renewal time}\label{sec:accbdd:sub:representation}
First, we state a lemma that will be relevant for the proof of Proposition~\ref{prop:renewaloccbound}. We then use it to prove Proposition~\ref{prop:renewaloccbound}.

Recall from the definition of~$\renewal$ in \eqref{eq:defrenewaltime} that $\kappa\in (0,\maxsupp{\Lambda})$ and $\eta > 0$ are chosen such that  $\theta=(\kappa+\eta)/(1+\eta) < \maxsupp{\Lambda}$. 
As in the previous section, we require parameters that play a similar role as the parameters in~\eqref{eq:mergeparametersboth}. 
Since their use will be only local, 
we use the same notation as in~\eqref{eq:mergeparametersboth}, 
even if the values differ. For $n\in \N$, 
\begin{equation}\label{para3}
	b_n\coloneqq \frac{(1-\kappa)^{n/2}}{2}, \qquad \delta_n\coloneqq (b_{n+1}-b_{n+2})\wedge (b_{n-1}-b_n),\quad t_n\coloneqq 1\wedge {\frac{\delta_n^2}{4\capco{\star}^2}}.
\end{equation}
complemented by $b_{-1}=(1+\kappa)/2$, 
where $\capco{\star}$ is as in Lemma~\ref{lem:capdeviation}. 
For $y\in (0,1)$, 
let \[N_0(y)\coloneqq \lfloor 2\log(2y)/\log(1-\kappa)\rfloor.\]
For later reference, we note that with this choice \begin{equation}
	b_{N_0(y)+1}<y\leq b_{N_0(y)}.\label{eq:propN0}
\end{equation}
\begin{lemme}\label{lem:renewalauxeventbound}
	Let $(\Lambda,\mu,\sel)\in\Theta$. For any $n\geq 1$, there is $p_n>0$ such that for all $y\in [b_{n+1},b_n]$ and for $\rbs=b_{n+2}$ (so the boundary strips depends on $n$), \begin{equation}\label{eq:renewalbound}
		\P_y(\turk<t_n\wedge \waitbdsu \text{ and }Y_{\turk}\in (b_n,b_{n-1}))\geq p_n.
	\end{equation}
	Moreover, there is $p_0>0$ such that for all $y\in [b_1,b_0]$ and for $\rbs=\frac{1-\kappa}{2}\wedge \frac{1-\eta}{2}$,
	\begin{equation}\label{eq:renewalbound2}
		\P_y(\renewal< t_0\wedge \waitbdsu)\geq p_0.
	\end{equation}
\end{lemme}
\begin{proof}
	First, we prove~\eqref{eq:renewalbound}.
	Let $n\geq 1$ and $\rbs=b_{n+2}$.
	Recall that \[Y_{\turk}=\median{\Yups{\turk}{\rurk},\Ylows{\turk}{\rurk},\uurk}.\]
	If $Y_s\in (b_{n+2},b_{n-1})$ for $s\in [0,\turk)$, 
	then $Y_{\turk-}\geq b_{n+2}$ so that $\Yups{\turk}{\rurk}> b_{n+2}/(1-\kappa )= b_n$, 
	and $Y_{\turk-}\leq b_{n-1}$ so that $\Ylows{\turk}{\rurk}< Y_{\turk-}\leq b_{n-1}$. 
	If in addition $\uurk\in (b_n,b_{n-1})$, 
	we get $Y_{\turk}\in (b_n,b_{n-1})$ so $\turk<\waitbdsu$. 
	We thus have
	\begin{align*}
		A_n\coloneqq &\{Y_{\turk}\in (b_n,b_{n-1}),\ \turk< t_n\wedge \waitbdsu\}\\
		\supseteq &\big\{\forall s\in [0,\turk),\, Y_s\in (b_{n+2},b_{n-1}),\,  \turk<t_n,\, \uurk \in (b_n,b_{n-1})\big\}\eqqcolon B_n.
	\end{align*}
	Moreover, by the definition of $\delta_n$ and~$Y^{\kappa }$ 
	(see Lemma~\ref{lem:capdeviation}), 
	we have
	\begin{align*}
	B_n\supseteq\Big\{\sup_{s\in [0,t_n]} \lvert Y_s^{\kappa }-y\rvert<\delta_n,\ \turk<t_n,\ \uurk\in (b_n,b_{n-1})\Big\}\eqqcolon C_n.
	\end{align*}
	Using this, Lemma~\ref{lem:capdeviation} with $c=\kappa$, the independence of $Y^\kappa$, $\uurk$ and $\turk$, and the definition of $t_n$, we obtain
	\begin{eqnarray}
		\P_{y}(A_n)& \geq {\P_{y}(C_n)}\geq \textstyle \Big(1-{\capco{\star}}\frac{\sqrt{t_n}}{\delta_n} \Big)\Big(1-\exp\big(-t_n\int_{(\kappa,1)} r^{-2}\Lambda(\dd r)\big)\Big) (b_{n-1}-b_n) \nonumber\\
		&\geq \textstyle \frac{1}{2}\Big(1-\exp\big(-t_n\int_{(\kappa,1)} r^{-2}\Lambda(\dd r)\big)\Big) (b_{n-1}-b_n)\eqqcolon p_n>0.\label{eq:renewalbound_aux}
	\end{eqnarray}
	This completes the proof of~\eqref{eq:renewalbound}. 

	Leu us now prove~\eqref{eq:renewalbound2}. To this end, set $\rbs=\frac{1-\kappa}{2}\wedge \frac{1-\eta}{2}$ in the definition of $\waitbdsu$. 
	We have 
	\begin{align*}
		&\Big\{\sup_{s\in [0,t_0]}\lvert Y_s^{\theta}-y\lvert<\delta_0,\ \turt<t_0,\ \uurt\in \Big(\frac{1-\eta}{2},\frac{1+\eta}{2}\Big)  \Big\}\\
		&\subseteq \{ b_2\leq Y_{\turt-}\leq b_{-1},\ \turt\leq \waitbdsu,\ \turt<t_0,\ \uurt\in \Big(\frac{1-\eta}{2},\frac{1+\eta}{2}\Big) \}\subseteq \{\renewal< t_0\wedge \waitbdsu\}.
	\end{align*}                                    	
	In the last inclusion we used that $\turt=\renewal$, which follows from the definitions of~$\renewal$, $b_2$ and $b_{-1}$. We also used that, since $Y_{\renewal} \in ((1-\eta)/2,(1+\eta)/2)$, the inequality $\renewal\leq \waitbdsu$ is actually strict. The proof is achieved proceeding as in~\eqref{eq:renewalbound_aux}.
\end{proof}

Now, we have all the ingredients to prove Proposition~\ref{prop:renewaloccbound}.

\begin{proof}[Proof of Proposition~\ref{prop:renewaloccbound}]
Let $y\in [\rin,1/2]$ and set $\rbs=b_{N_0(\rin)+2}\wedge (1-\eta)/2$ in the definition of $\waitbdsu$. 
For $j\in\{1,\ldots,N_0(y)\}$, 
define $$B_j\coloneqq \{\turk_j<\waitbdsu\wedge (\turk_{j-1}+t_{N_0(y)-j+1} ),\ Y_{\turk_{j}}\in (b_{N_0(y)-j+1}, b_{N_0(y)-j}) \},$$ 
complemented by $B_{N_0(y)+1} \coloneqq \{\renewal<\waitbdsu\wedge (\turk_{N_0(y)}+ t_0)\}$. 
Set $\tstop\coloneqq \sum_{i=0}^{N_0(\rin)}t_i $. 
Note that $N_0(\rin)\geq N_0(y)$ for all $y\in [\rin,1/2]$. 
Recall also from~\eqref{eq:propN0} that $b_{N_0(y)+1}< y\leq  b_{N_0(y)}$. 
Hence, 
$$\bigcap_{j=1}^{N_0(y)+1} B_j\subset \{\renewal<\tstop\wedge \waitbdsu\}.$$
Finally, 
$$\P_y(\renewal<\tstop\wedge \waitbdsu )\geq \P_y(B_1)\prod_{j=2}^{N_0(y)+1}\P_y(B_j\mid \cap_{i=1}^{j-1} B_i).$$
Note that, since $N_0(y)\leq N_0(\rin)$, 
we have for all $j\in \{1,\ldots,\N_0(y)+1\}$ that $b_{N_0(y)-j+3}\geq b_{N_0(\rin)+2}\geq \repsek$ 
so $\waitbdsu \geq T_Y([0,b_{N_0(y)-j+3})\cup(1-b_{N_0(y)-j+3},1])$.
This allows to use Lemma~\ref{lem:renewalauxeventbound} to bound the above probabilities. 
For $j\in \{2,\ldots,\N_0(y)+1\}$, 
using the strong Markov property at time $\turk_{j-1}$ and Lemma~\ref{lem:renewalauxeventbound}, 
we have $\P_y(B_j\mid \cap_{i=1}^{j-1}B_i)\geq p_{N_0(y)-j+1}$. 
Lemma~\ref{lem:renewalauxeventbound} yields $\P_y(B_1)\geq p_{N_0(y)}$. 
Setting $c\coloneqq \prod_{i=0}^{N_0(\rin)} p_i$ yields the result for $y\in [\rin,1/2]$. 
The result for $y\in [1/2,1-\rin]$ is obtained using the previous arguments to the process~$1-Y$.
\end{proof}
\begin{remark}\label{rem:modificationsrenewal}
Let $a\in(0,1)$. Let $\kappa < (2a\, \maxsupp{\Lambda})\wedge 4a^2((1/a)-1)$, $\theta = (\eta + \kappa)/(2a+\eta)$, and set $\renewal^{(a)}\coloneqq \inf \{ T\in J_N: \  Y_{T-} \in \left [a-\kappa/2,a+\kappa/2 \right ],\ R_T\in (\theta,1) \text{ and }U_T\in\left [a-\eta/2,a+\eta/2\right ]\}.$
{Arguing as in Section \ref{sec:main:siegmund}, one can see that $Y_{\renewal^{(a)}}$ has a uniform distribution on $[a-\eta/2,a+\eta/2 ]$}. The results in this section involving $\renewal$ can be easily generalized to hold for $\renewal^{(a)}$ (note that $\renewal=\renewal^{(1/2)}$). The main changes concern the parameters $(b_n)$. More precisely, we need to set $b_{-1} \coloneqq a+\kappa/2$, $b_0\coloneqq a$, and for $n\geq 1$, $b_n \coloneqq(2a - \kappa)(1-\kappa)^{(n-2)/2}/2$. (The extra condition $\kappa < 4a^2((1/a)-1)$ ensures $b_1 < b_0$.) Moreover, $N_0(y)$ has to be adapted so that $b_{N_0(y)+1} < y \leq b_{N_0(y)}$. The definition of $(t_n)_{n\in \N}$, $(\delta_n)_{n\in \N}$, and $\repsek$ remain unchanged.
\end{remark}

\subsection{Proof of Proposition \ref{prop:bddprobreturn}: Time to the interior}\label{sec:accbdd:sub:bddstrip}
First, we require a bound on the distribution tail of the time $Y$ spends in the $\varepsilon$-boundary strip around~$0$ and~$1$, respectively, on $[0,t]$. Having that estimate at hand, \mofe{we will prove } Proposition \ref{prop:bddprobreturn}.

It will be convenient to use the following notation. For a stochastic process $Z$ with \cadlag paths and $[c,d]\subset[0,1]$, we write 
$\Gamma_{Z}(t,[c,d])\coloneqq\int_{[0,t]} \1_{\{Z_s\in [c,d]\}}\dd s$, i.e. the time~$Z$ spends in $[c,d]$ up to time~$t$.

\begin{lemme}\label{lem:bddstripocc}
Let $(\Lambda,\mu,\sigma)\in \Theta$ and $b\in\{0,1\}$. If $C_b(\Lambda,\mu,\sigma)<0$, then there exist  $\varepsilon<1/3\wedge\maxsupp{\Lambda}$ and $c_1,c_2>0$ such that for any $y\in (0,1)$ and $t\geq 0$, \[\P_y\Big(\Gamma_{Y}(t,[b(1-\varepsilon),\varepsilon+b(1-\varepsilon)])>t/2\Big)\leq c_1 \lvert b-y\rvert^{-1/4}e^{-c_2t}.\]
\end{lemme}
\begin{proof}
We prove only the case $b=0$; the proof for the case $b=1$ is analogous.
	Recall that for $A\subset[0,1]$, $T_{Y}A= \inf\{t\geq 0: Y_t\in A\}$.  
	Let $a\in (0,1/2)$ and $\gamma>0$ be such that~$\E_y[e^{\gamma T_Y(a,1]}]\leq 2((a/y)\vee 1)^{1/4}$ for any $y\in (0,1)$; 
	existence $a$ and $\gamma$ is ensured by Proposition~\ref{lem:escapetimeY}. 
	Let $a'\in(0,a)$ and fix $y\in (0,1)$. 
	We will construct a process~$M$ starting at $M_0=y$, 
	being distributed as~$Y$, 
	and whose value is reset to $a'$ any time it surpasses the value $a$. 
	Let $(\Omega,\mathcal{F},\P)$ be a probability space on which an i.i.d. family $(S^{(i)}, N^{(i)})_{i\in \N_0}$ of Poisson random measures is defined 
	such that for all $i\in \N_0$, $S^{(i)}\sim S$ and $N^{(i)}\sim N$. 
	For all $i\in \N$, let $Y^{(i)}$ and $\overline{Y}^{(i)}$ be the strong solutions to~\eqref{eq:SDE_Y} with $S$ and $N$ replaced by $S^{(i)}$ and $N^{(i)}$, respectively, 
	and with $Y_0^{(i)}=\ind{i>0}a'+\ind{i=0}y$ and $\overline{Y}_0^{(i)}=\ind{i>0}\overline{Y}_{\tau_i - \tau_{i-1}}^{(i-1)}+\ind{i=0}y$, 
	where $\tau_0\coloneqq 0$ and 
	for $i\in \N$, $\tau_{i+1}\coloneqq \inf\{t\geq \tau_{i}: Y^{(i)}_{t-\tau_{i}}>a\}.$ 
	Set for $t\geq 0$, 
	$M_t\coloneqq \sum_{k=0}^{\infty}  \ind{\tau_k \leq t < \tau_{k+1}} Y^{(k)}_{t-\tau_k}$ and $\fray_t \coloneqq \sum_{k=0}^{\infty}  \ind{\tau_k \leq t < \tau_{k+1}} \overline{Y}^{(k)}_{t-\tau_k}$. 
	Using the strong Markov property at $\tau_i$, 
	we deduce that $\fray$ is equal in law to $Y$ under $\P_y$ and that $(M_t)_{t\geq 0}$ has the desired properties. 
	Moreover, from the strong Markov property and the monotonicity property \eqref{eq:monotinictyY} of $Y$, 
	it follows easily that $M_t\leq \fray_t$ for all~$t\geq 0$. 
	
	\smallskip
	
	Set $m\coloneqq -2/\log(\E_{a'}[e^{-T_Y(a,1]}])$ and 
	let $\Gamma^{(i)}\coloneqq \int_{[\tau_i,\tau_{i+1})}1_{\{M_s\leq\varepsilon\}}\dd s$, 
	where $\varepsilon\ll1$ will be specified later. 
	Then,
	\mofe{\begin{align*}
		\P_y(\textstyle \Gamma_{Y}(t,[0,\varepsilon])>\frac{t}{2})&\leq \P(\textstyle\Gamma_{M}(t,[0,\varepsilon])>\frac{t}{2})\leq \P(\Gamma_{M}(\tau_{\lfloor m t\rfloor},[0,\varepsilon])>\frac{t}{2})+\P(\tau_{\lfloor m t\rfloor}< t)\\
		&\leq \P(\textstyle \sum_{i=0}^{\lfloor mt \rfloor -1} \Gamma^{(i)}>\frac{t}{2})+\P(\tau_{\lfloor m t\rfloor}< t).
	\end{align*}}
We now show that \begin{align*}
		\text{(i) }\P(\tau_{\lfloor m t\rfloor}< t)\leq \E_{a'}\left[e^{-T_Y(a,1]}\right]^{-2}e^{-t} \text{ and (ii) } \P\left(\textstyle \sum_{i=0}^{\lfloor mt \rfloor -1} \Gamma^{(i)}>\frac{t}{2}\right)\leq 2y^{-1/4}e^{-\gamma t/4},
	\end{align*}
	which clearly implies the result. So let us first prove (i). By the Markov property of $M$ at times~$\tau_i$, it follows that $(\tau_{i+1}-\tau_i)_{i \geq 1}$ are i.i.d. and have the same law as $T_Y(a,1]$ under $\P_{a'}$. Thus, using Chernoff's inequality, \begin{align*}
		\P(\tau_{\lfloor mt \rfloor} <t)\leq e^{t}\E\left[e^{-\sum_{i=1}^{\lfloor mt \rfloor -1}\tau_{i+1}-\tau_i}\right]=e^{t}\E_{a'}\left[e^{-T_Y(a,1]}\right]^{{\lfloor mt \rfloor -1}}&\leq e^{t}\E_{a'}\left[e^{-T_Y(a,1]}\right]^{{ mt-2}}\\
		&= \E_{a'}\left[e^{-T_Y(a,1]}\right]^{{-2}}e^{-t}.
	\end{align*}
	This proves (i). Next, we prove (ii). Note first that $\Gamma^{(i)}$, $i\in \N_0$, are all independent and for $i\geq 1$, distributed as $\Gamma_Y(T_Y(a,1],[0,\varepsilon])$ under $\P_{a'}$. Thus, using Chernoff's inequality and Proposition~\ref{lem:escapetimeY},
	\mofe{\begin{align*}
		&\P\left(\Gamma^{(0)}+\textstyle \sum_{i=1}^{\lfloor mt \rfloor -1} \Gamma^{(i)}>\frac{t}{2}\right)\leq e^{-\frac{\gamma t}{2}}\,{\E\left[e^{\gamma\Gamma^{(0)}}\right]\,\prod_{i=1}^{\lfloor mt\rfloor -1}\E\left[e^{\gamma \Gamma^{(i)}}\right]}\\
		&\leq e^{-\frac{\gamma t}{2}}\E_y\left[e^{\gamma T_Y(a,1]}\right]\E_{a'}\left[e^{\gamma \Gamma_Y(T_Y(a,1],[0,\varepsilon] )}\right]^{mt} \!\!\!\!\leq 2\Big(\frac{a}{y}\vee 1\Big)^{\frac14}e^{-\frac{\gamma t}{2}}\,\E_{a'}\left[e^{\gamma \Gamma_Y(T_Y(a,1],[0,\varepsilon])}\right]^{mt}.
	\end{align*}}
	Note that $\Gamma_Y(T_Y(a,1],[0,\varepsilon])\leq T_Y(a,1]$. Thus, by Proposition~\ref{lem:escapetimeY}, $\E_{a'}[e^{\gamma \Gamma_Y(T_Y(a,1],[0,\varepsilon])}]$ \mofe{is finite}. By dominated convergence theorem, we conclude that $\E_{a'}[e^{\gamma \Gamma_Y(T_Y(a,1],[0,\varepsilon])}]\to1$ as $\varepsilon\to 0$. Therefore, we can choose $\varepsilon$ small enough such that $m\log(\E_{a'}[e^{\gamma \Gamma_Y(T_Y(a,1],[0,\varepsilon])}])<\frac{\gamma}{4}$. For this choice of $\varepsilon$, $e^{-\gamma t/2}\,\E_{a'}[e^{\gamma \Gamma_Y(T_Y(a,1],[0,\varepsilon])}]^{mt}\leq e^{-\gamma t/4}$ and (ii) follows.
\end{proof}
We can now prove Proposition \ref{prop:bddprobreturn}.
\begin{proof}[Proof of Proposition \ref{prop:bddprobreturn}]
	Fix $\hat{y},\check{y}\in (0,1)$ such that $\hat{y}\leq \check{y}$. 
	By Lemma~\ref{lem:bddstripocc} with~$b=0$, 
	there is $\hat{\varepsilon}$ and $\hat{c}_1,\hat{c}_2$ (independent of the choice of $\hat{y}, \check{y}$)
	such that $\P_{\hat{y}}(\Gamma_{\widehat{Y}}(t,[0,\hat{\varepsilon}])>t/2))\leq \hat{c}_1 \hat{y}^{-1/4} e^{-\hat{c}_2 t }$. 
	Similarly, by Lemma~\ref{lem:bddstripocc} with $b=1$, 
	there is $\check{\varepsilon}$ and $\check{c}_1,\check{c}_2$ (independent of $\hat{y}, \check{y}$)
	such that $\P_{\check{y}}(\Gamma_{\widecheck{Y}}(t,[1-\check{\varepsilon},1])>t/2)\leq \check{c}_1 (1-\check{y})^{-1/4} e^{-\check{c}_2 t }$. 
	Set $c_1\coloneqq \hat{c}_1\vee \check{c}_1$, 
	$c_2\coloneqq \hat{c}_2\wedge \check{c}_2$, 
	and \modifbis{$\rin\coloneqq\hat{\varepsilon}\wedge\check{\varepsilon}$}. 
	We claim that $\P_{\hat{y},\check{y}}(\waitint{}>t)\leq c_1e^{-c_2t}(\hat{y}^{-1/4}+(1-\check{y})^{-1/4})$. Assuming the claim is true, 
	we obtain for $\lambda=c_2/2$, 
	\mofe{\begin{align*}
		\E_{\hat{y},\check{y}}[e^{\lambda \waitint{}}]&=1+\!\!\int_0^\infty\!\!\lambda e^{\lambda s} \P_{\hat{y},\check{y}}(\waitint{}>s)\dd s
		\leq 1+\!\!\int_0^\infty\!\!\lambda c_1e^{-(c_2-\lambda)s}(\hat{y}^{-1/4}+(1-\check{y})^{-1/4} ) \dd s\\
		&\leq (1+c_1)(\hat{y}^{-1/4}+(1-\check{y})^{-1/4} ),
	\end{align*}}
	which proves the proposition. 
	It remains to prove the claim. 
	By the monotonicity property \eqref{eq:monotinictyY} of $Y$, 
	if there is a time~$s\in[0,t]$ such that $\widehat{Y}_s>\hat{\varepsilon}$ and $\widecheck{Y}_s<1-\check{\varepsilon}$, then $\waitint{}\leq t$. 
	Hence, 
	$$\big\{\Gamma_{\widehat{Y}}(t,[0,\hat{\varepsilon}])<t/2\big\} \cap \big\{\Gamma_{\widecheck{Y}}(t,[1-\check{\varepsilon},1])<t/2\big\}\subset \big\{ \waitint{}\leq t\big\}.$$
	Considering the \mofe{complementary } events and exploiting the properties of $\hat{c}_1,\hat{c}_2,\check{c}_1,\check{c}_2,c_1,c_2$ yields 
	\begin{align*}
		\P_{\hat{y},\check{y}}\big(\waitint{}> t\big) & \leq \P_{\hat{y}}\Big( \Gamma_{\widehat{Y}}(t,[0,\hat{\varepsilon}])> t/2 \Big) +\P_{\check{y}} \Big( \Gamma_{\widecheck{Y}}(t,[1-\check{\varepsilon},1])> t/2 \Big) \\
		&\leq \hat{c}_1 \hat{y}^{-1/4} e^{-\hat{c}_2 t } + \check{c}_1 (1-\check{y})^{-1/4} e^{-\check{c}_2 t }\leq  c_1e^{-c_2t}(\hat{y}^{-1/4}+(1-\check{y})^{-1/4}),
	\end{align*}
	which achieves the proof of the claim.
\end{proof}
\subsection{Proof of Proposition \ref{prop:nthboundaryvisit}: Accumulated time after $n$ steps} \label{procedure}
We now prove Proposition~\ref{prop:nthboundaryvisit}. The proof uses an estimate, which will be provided afterwards.

\begin{proof}[Proof of Proposition~\ref{prop:nthboundaryvisit}]
The result is trivially true for $n=0$. For $n\geq 1$, using that $\waitbds{n}-\waitint{n-1}\leq \tau$ and the strong Markov property, we get for all $0<\hat{y}\leq\check{y}<1$, 
	\begin{align*} 
		&{\E_{\hat{y},\check{y}}}\big[e^{\lambda \waitbds{n}}\big]={\E_{\hat{y},\check{y}}}\Big[e^{\lambda (\waitbds{n}-\waitint{n-1})}e^{\lambda (\waitint{n-1}-\waitbds{n-1})}e^{\lambda \waitbds{n-1}} \Big]\\ \leq& e^{\lambda \tau} {\E_{\hat{y},\check{y}}}\Big[e^{\lambda \waitbds{n-1}}{\E_{\hat{y},\check{y}}}\Big[e^{\lambda(\waitint{n-1}-\waitbds{n-1})}\mid \mathcal{F}_{\waitbds{n-1}}\Big]\Big]
		=e^{\lambda \tau} {\E_{\hat{y},\check{y}}}\Big[e^{\lambda \waitbds{n-1}}{\E_{\widehat{Y}_{\waitbds{n-1}},\widecheck{Y}_{\waitbds{n-1}}}\Big[e^{\lambda\waitint{\star}}\Big]}\Big],
	\end{align*}
where $\waitint{\star}$ is defined as $\wht$, 
but from an independent pair of coupled trajectories of~$Y$ 
started in $\widehat{Y}_{\waitbds{n-1}}$ and $\widecheck{Y}_{\waitbds{n-1}}$, respectively. 
Using Proposition~\ref{prop:bddprobreturn}, 
we obtain
\begin{eqnarray} 
{\E_{\hat{y},\check{y}}}\big[e^{\lambda \waitbds{n}}\big] \leq e^{\lambda \tau} C {\E_{\hat{y},\check{y}}}\Big[ e^{\lambda \waitbds{n-1}} \Big(\widehat{Y}_{\waitbds{n-1}}^{-1/4} +\big(1-\widecheck{Y}_{\waitbds{n-1}}\big)^{-1/4} \Big)\Big]. \label{eq:forlaterlem}
\end{eqnarray} 
Note that for $n=1$, 
this yields ${\E_{\hat{y},\check{y}}}[e^{\lambda \waitbds{1}}] \leq e^{\lambda \tau} C (\hat{y}^{-1/4} +(1-\check{y})^{-1/4} )$ \mofe{and the result holds in this case. Therefore, we assume now that } $n\geq 2$.

Recall that $J_N$ is the set of jump times of~$N$. 
If $\waitbds{n-1}\in J_N$, 
let $(\waitbds{n-1},R_{n-1},U_{n-1})\in N$ be the corresponding jump. 
We claim that 
\begin{eqnarray}
	&\widehat{Y}_{\waitbds{n-1}}^{-1/4}+\Big(1-\widecheck{Y}_{\waitbds{n-1}}\Big)^{-1/4}\leq 2\Big(\frac{\rbs}{2}\Big)^{-1/4}+U_{n-1}^{-1/4} \1_{\{\waitbds{n-1}\in J_N,\widehat{Y}_{\waitbds{n-1}}<\rbs\}}\nonumber \\
	&\hspace{5cm}\qquad+(1-U_{n-1})^{-1/4}\1_{\{\waitbds{n-1}\in J_N,\widecheck{Y}_{\waitbds{n-1}}>1-\rbs\}},\label{eq:wctdecompclaim1}\\
	&{\E_{\hat{y},\check{y}}}\Big[e^{\lambda\waitbds{n-1}}U_{n-1}^{-1/4}\1_{\{\waitbds{n-1}\in J_N, \widehat{Y}_{\waitbds{n-1}}<\rbs\}} \Big]\leq \frac{4}{3}\rbs^{-1/4}{\E_{\hat{y},\check{y}}}\Big[e^{\lambda\waitbds{n-1}}\Big], \label{eq:wctdecompclaim2} \\
	&{\E_{\hat{y},\check{y}}}\Big[e^{\lambda\waitbds{n-1}}(1-U_{n-1})^{-1/4}\1_{\{\waitbds{n-1}\in J_N, \widecheck{Y}_{\waitbds{n-1}}>1-\rbs\}} \Big]\leq \frac{4}{3}\rbs^{-1/4}{\E_{\hat{y},\check{y}}}\Big[e^{\lambda\waitbds{n-1}}\Big]. \label{eq:wctdecompclaim3}
\end{eqnarray}
Assume~\eqref{eq:wctdecompclaim1}--\eqref{eq:wctdecompclaim3} are true. Then, using \eqref{eq:wctdecompclaim1}--\eqref{eq:wctdecompclaim3}, ${\E_{\hat{y},\check{y}}}[e^{\lambda \waitbds{n}}]\leq C \tilde{C} e^{\lambda\tau} {\E_{\hat{y},\check{y}}}[e^{\lambda \waitbds{n-1}}]$, where $\tilde{C}\coloneqq (2(\rbs/2)^{-1/4}+8\rbs^{-1/4}/3)$. \mofe{Iterating this inequality}, we obtain $${\E_{\hat{y},\check{y}}}[e^{\lambda \waitbds{n}}]\leq (\hat{y}^{-1/4}+(1-\check{y})^{-1/4})\tilde{C}^{n-1}(Ce^{\lambda\tau})^{n}.$$ Since $\tilde{C}\geq 1$, the result follows by setting {$K\coloneqq C\tilde{C}e^{\lambda }$}. It remains to prove \eqref{eq:wctdecompclaim1}--\eqref{eq:wctdecompclaim3}.

First we deal with~\eqref{eq:wctdecompclaim1}. Let $J_{S+}$ (resp. $J_{S-}$) be the set of jump times of~$S$ with positive (resp. negative) $r$-component. We consider four cases.
\begin{enumerate} \item $\waitbds{n-1}\notin J_N\cup J_{S+}\cup J_{S-}$: We have $\widehat{Y}_{\waitbds{n-1}}=\widehat{Y}_{\waitbds{n-1}-}\geq \rbs$ and $1-\widecheck{Y}_{\waitbds{n-1}}=1-\widecheck{Y}_{\waitbds{n-1}-}\geq \rbs$, and \eqref{eq:wctdecompclaim1} holds.
	\item $\waitbds{n-1}\in J_N$: Here, $$\widehat{Y}_{\waitbds{n-1}}=\median{\mru{\widehat{Y}}{\waitbds{n-1}}{R_{n-1}},U_{n-1},\mro{\widehat{Y}}{\waitbds{n-1}}{R_{n-1}} }.$$ Note that $\mro{\widehat{Y}}{\waitbds{n-1}}{R_{n-1}}> \widehat{Y}_{\waitbds{n-1}-}\geq \rbs$. Therefore, if $\widehat{Y}_{\waitbds{n-1}}<\rbs$, $U_{n-1}< \rbs <1-\rbs$ and $\widehat{Y}_{\waitbds{n-1}}= \mru{\widehat{Y}}{\waitbds{n-1}}{R_{n-1}}\vee U_{n-1}\geq U_{n-1}$. Since $\mru{\widecheck{Y}}{\waitbds{n-1}}{R_{n-1}}<\widecheck{Y}_{\waitbds{n-1}-}\leq 1-\rbs$, we also have $\widecheck{Y}_{\waitbds{n-1}}<1-\rbs$. If $\widecheck{Y}_{\waitbds{n-1}}>1-\rbs$, one shows similarly that $\widecheck{Y}_{\waitbds{n-1}}\leq U_{n-1}$ and $\widehat{Y}_{\waitbds{n-1}}\geq \rbs$. Hence, \eqref{eq:wctdecompclaim1} holds also in this case. 
	\item $\waitbds{n-1}\in J_{S+}$: By Lemma~\ref{lem:estimatesel}, \eqref{eq:estimateselpos}, $\widehat{Y}_{\waitbds{n-1}}\geq \widehat{Y}_{\waitbds{n-1}-}/2\geq \rbs/2$ and $1-\widecheck{Y}_{\waitbds{n-1}}\geq 1-\widecheck{Y}_{\waitbds{n-1}-}\geq \rbs$, and \eqref{eq:wctdecompclaim1} holds.
	\item $\waitbds{n-1}\in J_{S-}$: By Lemma~\ref{lem:estimatesel}, \eqref{eq:estimateselneg}, $\widehat{Y}_{\waitbds{n-1}}\geq \widehat{Y}_{\waitbds{n-1}-}\geq \rbs$ and $1-\widecheck{Y}_{\waitbds{n-1}}\geq (1-\widecheck{Y}_{\waitbds{n-1}-})/2\geq \rbs/2$. This completes the proof of~\eqref{eq:wctdecompclaim1}.
\end{enumerate}

Next, we prove~\eqref{eq:wctdecompclaim2}. 
To this end, use the strong Markov property at time $\waitint{n-2}$ to get
\begin{align*}
	&\E_{\hat{y},\check{y}}\big[e^{\lambda\waitbds{n-1}}U_{n-1}^{-1/4}\1_{\{\waitbds{n-1}\in J_N, \widehat{Y}_{\waitbds{n-1}}<\rbs\}} \big]\\&={\E_{\hat{y},\check{y}}}\Big[e^{\lambda\waitint{n-2}}{\E_{\hat{y},\check{y}}}\Big[ e^{\lambda(\waitbds{n-1}-\waitint{n-2}) }U_{n-1}^{-1/4}\1_{\{\waitbds{n-1}\in J_N, \widehat{Y}_{\waitbds{n-1}}<\rbs\}}\mid \mathcal{F}_{\waitint{n-2}}\Big]\Big]\\
	&={\E_{\hat{y},\check{y}}}\Big[e^{\lambda\waitint{n-2}}{\E_{\widehat{Y}_{\waitint{n-2}},\widecheck{Y}_{\waitint{n-2}}}\Big[ e^{\lambda\wctn{\star}} U_{\wctn{\star}}^{-1/4}\1_{\{\wctn{\star}\in J_N\cap[0,\tau], \hat{\bar{Y}}_{\wctn{\star}}<\rbs\}}\Big]\Big]},
\end{align*}
where $\wctn{\star}$ is defined as $\waitbdsu$ 
but from an independent pair of coupled trajectories $\hat{\bar{Y}}$ and $\check{\bar{Y}}$ of~$Y$ 
with $\hat{\bar{Y}}_0=\widehat{Y}_{\waitint{n-2}}$ and $\check{\bar{Y}}_0=\widecheck{Y}_{\waitint{n-2}}$. 
Using Lemma~\ref{lem:boundjumpevent} (below) with $\gamma=1/4$ and the Markov property, 
we obtain 
\begin{align*}
	&\E_{\hat{y},\check{y}}\Big[e^{\lambda\waitbds{n-1}}U_{n-1}^{-1/4}\1_{\{\waitbds{n-1}\in J_N, \widehat{Y}_{\waitbds{n-1}}<\rbs\}} \Big]\leq\frac{4}{3}\rbs^{-1/4}{\E_{\hat{y},\check{y}}}\Big[e^{\lambda \waitint{n-2}}{\E_{\widehat{Y}_{\waitint{n-2}},\widecheck{Y}_{\waitint{n-2}}}}\Big[e^{\lambda(\tau\wedge \wctn{\star})}\Big]\Big]\\
	&=\frac{4}{3}\rbs^{-1/4}{\E_{\hat{y},\check{y}}}\Big[e^{\lambda \waitint{n-2}}{\E_{\hat{y},\check{y}}}\Big[e^{\lambda(\waitbds{n-1}-\waitint{n-2})}\mid \mathcal{F}_{\waitint{n-2}}\Big]\Big]= \frac{4}{3}\rbs^{-1/4}{\E_{\hat{y},\check{y}}}\Big[e^{\lambda \waitbds{n-1}}\Big].
\end{align*}
This proves~\eqref{eq:wctdecompclaim2}. The proof for \eqref{eq:wctdecompclaim3} is analogous.\end{proof}
\begin{lemme}\label{lem:boundjumpevent}
	Let $(\Lambda,\mu,\sel)\in\Theta$, $\lambda>0$, $\gamma\in (0,1)$, $\rin < (1/3)\wedge \maxsupp{\Lambda}$, $\tau>0$, $\rbs\in (0,\rin)$, and $\rin \leq \hat{y}\leq \check{y}\leq 1-\rin$. Then, 
	\begin{eqnarray}
		\E_{\hat{y},\check{y}}\Big[ e^{\lambda\waitbdsu} U_{\waitbdsu}^{-\gamma}\1_{\{\waitbdsu\in J_N\cap[0,\tau], \widehat{Y}_{\waitbdsu}<\rbs\}}\Big]\leq \frac{\rbs^{-\gamma}}{1-\gamma}\E_{\hat{y},\check{y}}\big[e^{\lambda(\tau\wedge \waitbdsu)}\big], \label{eq:boundjumpevent1}\\
		\E_{\hat{y},\check{y}}\Big[ e^{\lambda\waitbdsu} (1-U_{\waitbdsu})^{-\gamma}\1_{\{\waitbdsu\in J_N\cap[0,\tau], \widecheck{Y}_{\waitbdsu}>1-\rbs\}}\Big]\leq \frac{\rbs^{-\gamma}}{1-\gamma}\E_{\hat{y},\check{y}}\big[e^{\lambda(\tau\wedge \waitbdsu)}\big]. \label{eq:boundjumpevent2}
	\end{eqnarray}
\end{lemme}
\begin{proof}
	We only prove~\eqref{eq:boundjumpevent1}; 
	the proof of \eqref{eq:boundjumpevent2} is analogous. 
We define a sequence of stopping times that, on the event $\{\widehat{Y}_{\waitbdsu}<\rbs\}$, contains $\waitbdsu$. For $\eta\in (0,1)$, set $L_0^{0,\eta}\coloneqq 0 $ and 
	\begin{align*}
		L_n^{0,\eta}&\coloneqq \inf\{t> L_{n-1}^{0,\eta}: t\in J_N \text{ with }R_t>\eta \quad\text{ and }\quad \mru{\widehat{Y}}{t}{R_t}<\rbs\}.
	\end{align*} Note that $(L_n^{0,\eta})_{n\geq 0}$ is well-defined 
	since the set of jumping times of~$N$ with $r$-component larger than $\eta$ is discrete. 
	For $T\in J_N$ (the jump times of~$N$), 
	let $(T,R_T,U_T)$ be the corresponding jump. 
	Note that if $\waitbdsu\in J_N$ with $R_{\waitbdsu}>\eta$ and $\widehat{Y}_{\waitbdsu}<\rbs$, 
	then $\mru{\widehat{Y}}{\waitbdsu}{R_{\waitbdsu}}<\rbs$. 
	This follows from the definition of $\widehat{Y}_{\waitbdsu}$ and $\mro{\widehat{Y}}{\waitbdsu}{R_{\waitbdsu}}>\widehat{Y}_{\waitbdsu-}\geq \rbs$. 
	In particular, $\waitbdsu=L_n^{0,\eta}$ for some {$n\geq 1$ and $\eta>0$ 
	if $\waitbdsu\in J_N$ and $\widehat{Y}_{\waitbdsu}<\rbs$.}

	Fix $n\geq 1$. 
	If $L_n^{0,\eta}\leq \waitbdsu$, 
	then $\widehat{Y}_{L_n^{0,\eta}-}\geq \rbs$, 
	and thus $\mro{\widehat{Y}}{L_n^{0,\eta}}{R_{L_n^{0,\eta}}}>\rbs.$ 
	In addition, by the definition of $L_n^{0,\eta}$, 
	we have $\mru{\widehat{Y}}{L_n^{0,\eta}}{R_{L_n^{0,\eta}}} <\rbs$. 
	Hence, if $L_n^{0,\eta}\leq \waitbdsu$, by the definition of $\widehat{Y}_{L_n^{0,\eta}}$, 
	we have $U_{L_n^{0,\eta}}<\rbs$ if and only if $\widehat{Y}_{L_n^{0,\eta}}<\rbs$; 
	and the latter implies $L_n^{0,\eta}=\waitbdsu$. 
	Moreover, $U_{L_n^{0,\eta}}$ is independent of $\mathcal{F}_{L_n^{0,\eta}-}$. 
	Using this, we get 
	\begin{align*}
		&\E_{\hat{y},\check{y}}\Big[ e^{\lambda\waitbdsu} U_{\waitbdsu}^{-\gamma} \ind{\waitbdsu\in J_n\cap[0,\tau],R_{\waitbdsu}>\eta,\widehat{Y}_{\waitbdsu}<\rbs}\Big] =\sum_{n\geq 1} \E_{\hat{y},\check{y}}\Big[e^{\lambda L_{n}^{0,\eta}}U_{L_{n}^{0,\eta}}^{-\gamma} \ind{\tau\geq \waitbdsu=L_{n}^{0,\eta}, \widehat{Y}_{L_{n}^{0,\eta}}<\rbs}\Big]\\
		&=\sum_{n\geq 1} \E_{\hat{y},\check{y}}\Big[e^{\lambda L_{n}^{0,\eta}} \ind{\tau\wedge \waitbdsu\geq L_{n}^{0,\eta}}\E_{\hat{y},\check{y}}\big[U_{L_{n}^{0,\eta}}^{-\gamma} \ind{U_{L_{n}^{0,\eta}}<\rbs}\mid \mathcal{F}_{L_n^{0,\eta}-}\big]\Big] \\
		&=\sum_{n\geq 1} \E_{\hat{y},\check{y}}\Big[e^{\lambda L_{n}^{0,\eta}} \ind{\tau\wedge \waitbdsu\geq L_{n}^{0,\eta}}\frac{\rbs^{-\gamma}}{1-\gamma} \E_{\hat{y},\check{y}}\big[\ind{U_{L_{n}^{0,\eta}}<\rbs}\mid \mathcal{F}_{L_n^{0,\eta}-}\big]\Big]\\
		&=\frac{\rbs^{-\gamma}}{1-\gamma} \sum_{n\geq 1} \E_{\hat{y},\check{y}}\Big[e^{\lambda L_{n}^{0,\eta}} \ind{\tau\geq \waitbdsu=L_{n}^{0,\eta}, \widehat{Y}_{L_{n}^{0,\eta}}<\rbs}\Big]\leq \frac{\rbs^{-\gamma}}{1-\gamma}\E_{\hat{y},\check{y}}[e^{\lambda(\tau\wedge \waitbdsu)}].
	\end{align*}
	Letting $\eta\to 0$ we obtain~\eqref{eq:boundjumpevent1} by monotone convergence. 
\end{proof}

\section{The case \texorpdfstring{$\Theta_0\cup\Theta_1$}{Theta0 and Theta1}: Proof of Theorem \ref{thm:stuckatboundary}}\label{sec:asextinction}
In this section, we prove Theorem \ref{thm:stuckatboundary}. 
We first lay out the proof idea in the case $\Theta_1$ stating three propositions, 
which we then directly use to prove the theorem. 
The propositions are then proved in the remainder of the section. 
The case $\Theta_0$ follows in a straightforward way from the result in $\Theta_1$.

The main idea is to show that~$Y$ gets trapped at the boundary strip around~$0$ sufficiently fast. 
\mofe{To prove this we will: } 
1) \mofe{Prove that } the boundary strip can indeed trap $Y$ if started close enough to the boundary, and estimate what \mofe{one can interpret as an \emph{escape time} } in case the trapping fails. 
2) \mofe{Prove that } the process rapidly reaches the boundary strip from outside the strip. 
3) Iteratively combine 1) and 2) to estimate the trapping time of $Y$. 

For step 1), \mofe{the fact that the } boundary can trap $Y$ is justified below (see the discussion after \eqref{eq:Yaux}) based on the results from Section \ref{sec:comparisonlevy}. 
\mofe{The estimation of the escape time has essentially already been } carried out in Section \ref{sec:asextinction:sub:prepext}.

\mofe{The following proposition takes care of the step 2)}.
\begin{prop}[Time to boundary strip]\label{prop:boundtimeto0extinction}
	Let $(\Lambda,\mu,\sel)\in\Theta$ and assume $C_1(\Lambda,\mu,\sigma)<0$. Let $a\in (1/2,1)$ and $\gamma>0$ as provided by Proposition~\ref{lem:escapetimeY}--(2). 
	For each $\varepsilon\in (0,1-a)$, there are constants $\zeta,M>0$ such that for any $y\in (0,1)$, $$\E_y[e^{\zeta T_Y[0,\varepsilon)}]\leq M(1-y)^{-1/4}.$$
\end{prop}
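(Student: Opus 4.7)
The plan is to mimic the two-trajectory estimate of Proposition~\ref{prop:bddprobreturn}, specialized to a single trajectory and a one-sided target. The idea is to combine a Doeblin-type estimate, ensuring a uniform positive probability of reaching $[0,\varepsilon)$ within a fixed time window starting from $[0,a)$, with the exponential-moment bound for $T_Y[0,a)$ from Proposition~\ref{lem:escapetimeY}(2), and then to iterate via the strong Markov property.

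For the Doeblin step I would mimic Lemma~\ref{prop:aux_mergingeventestimate}. Fix $\kappa\in(0,\maxsupp{\Lambda})$ small and set $b_n\coloneqq a(1-\kappa)^{n/2}$, so that $b_0=a$ and $b_n\searrow 0$; let $N_0$ be the smallest integer with $b_{N_0}<\varepsilon$. At the $n$-th step, starting from a position in $[0,b_{n-1})$, I would wait for a jump $(T,R,U)$ of $N$ occurring within a short window $t_n$ with $R>c_n\coloneqq(b_{n-1}-b_n)/(1-b_n)$ and $U<b_n$. A direct case analysis of the median formula in~\eqref{eq:defmruandsr} shows that such a jump forces $Y_T<b_n$ provided $Y_{T-}\in[0,b_{n-1}+\delta_n)$ for a suitable $\delta_n>0$, and this latter constraint is enforced by applying Lemma~\ref{lem:capdeviation} to the capped process $Y^{\kappa}$ on $[0,t_n]$. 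By taking $\kappa$ small enough so that $c_n<\maxsupp{\Lambda}$ for every $n=1,\dots,N_0$, each step succeeds with probability at least some $p_n>0$, and the product yields $p_\varepsilon\coloneqq\prod_{n}p_n>0$ and $t_\varepsilon\coloneqq\sum_{n}t_n$, delivering $\P_y(T_Y[0,\varepsilon)<t_\varepsilon)\geq p_\varepsilon$ uniformly in $y\in[0,a)$.

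To iterate, set $\tau_0\coloneqq 0$ and recursively $\sigma_k\coloneqq \inf\{t\geq\tau_{k-1}:Y_t<a\}$ and $\tau_k\coloneqq\sigma_k+t_\varepsilon$. By the strong Markov property, the conditional law of $\sigma_k-\tau_{k-1}$ given $\mathcal{F}_{\tau_{k-1}}$ coincides with the law of $T_Y[0,a)$ started at $Y_{\tau_{k-1}}$, whose exponential moment is bounded by $2((1-a)/(1-Y_{\tau_{k-1}})\vee 1)^{1/4}$ by Proposition~\ref{lem:escapetimeY}(2). The Doeblin step ensures that the conditional probability of reaching $[0,\varepsilon)$ during $[\sigma_k,\tau_k]$ is at least $p_\varepsilon$, so the number $K$ of failed attempts before success is stochastically dominated by a geometric random variable with parameter $p_\varepsilon$. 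Combining these ingredients as in the proof of Lemma~\ref{lem:nthboundaryvisit}, and using $T_Y[0,\varepsilon)\leq \sigma_{K+1}$, I expect to obtain the stated bound $\E_y[e^{\zeta T_Y[0,\varepsilon)}]\leq M(1-y)^{-1/4}$ for $\zeta>0$ sufficiently small.

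The main obstacle is the control of the prefactor $(1-Y_{\tau_{k-1}})^{-1/4}$ appearing in the exponential-moment bound for $\sigma_k-\tau_{k-1}$: at the end of a failed attempt $Y_{\tau_{k-1}}$ may lie arbitrarily close to $1$ if a large upward jump of $N$ or $S$ occurred in the window $[\sigma_{k-1},\tau_{k-1}]$. This is the analogue of the difficulty handled in the proof of Lemma~\ref{lem:nthboundaryvisit} via the decomposition \eqref{eq:wctdecompclaim1}--\eqref{eq:wctdecompclaim3}, and I expect the same strategy of conditioning on the type of event at the up-crossing of level $a$, together with Lemma~\ref{lem:boundjumpevent}, to absorb the singular contribution and close the recursion.
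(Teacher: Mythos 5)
Your overall strategy matches the paper's: a Doeblin-type condition from a compact subset to $[0,\varepsilon)$, combined with the exponential moment bound on $T_Y[0,a)$ from Proposition~\ref{lem:escapetimeY}(2), iterated via the strong Markov property. However, the precise iteration you set up has a gap that the proposed fix cannot close. You define $\tau_k=\sigma_k+t_\varepsilon$, a \emph{deterministic-length} window after the stopping time $\sigma_k$. At that deterministic time, the singularity of $(1-Y_{\tau_k})^{-1/4}$ is not naturally decomposable by ``type of jump'', because $Y_{\tau_k}$ is the result of an arbitrary number of jumps during $[\sigma_k,\tau_k]$. Lemma~\ref{lem:boundjumpevent}, which you cite, works precisely because the stopping time there ($\wct$) is a \emph{first passage} above a level: at such a time, $Y$ is determined by a single jump of $N$ (in which case it is dominated by the uniform $U$-component of that jump) or by a single jump of $S$ (in which case it is within a factor $2$ of the pre-jump value, by Lemma~\ref{lem:estimatesel}). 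With your deterministic cut-off there is no single ``up-crossing event'' on which to condition, so you cannot deduce a uniform bound on $\E[(1-Y_{\tau_k})^{-1/4}\mid\mathcal F_{\sigma_k}]$ from that lemma, and the recursion does not close.

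The paper resolves this exactly where you identify the obstruction: it replaces the deterministic window by one that is \emph{truncated at the first up-crossing}, namely $\wctns{n+1}=(\whtns{n}+\tau)\wedge\inf\{t\geq\whtns{n}:Y_t>1-\varepsilon/2\}$ (Lemma~\ref{lem:expmomentsreallyclose1}). With this definition, on the event that the cycle ends because time $\tau$ expired one has $Y_{\wctns{n+1}}\leq 1-\varepsilon/2$, and otherwise $Y_{\wctns{n+1}}$ is the result of a single up-crossing jump, enabling exactly the argument behind \eqref{eq:wctdecompclaim1}--\eqref{eq:wctdecompclaim3}. Correspondingly, the Doeblin condition must be strengthened to also avoid the up-crossing: Lemma~\ref{prop:extinctiondoeblinlike} gives $\P_y(T_Y[0,\varepsilon)<\tau\wedge T_Y(1-\varepsilon/2,1])\geq c$ uniformly for $y\in[\varepsilon,1-\varepsilon]$, whereas your ladder construction only controls $\P_y(T_Y[0,\varepsilon)<t_\varepsilon)$ from $[\varepsilon,a)$. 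Your ladder $b_n=a(1-\kappa)^{n/2}$ and the verification that a jump with $R>c_n$ and $U<b_n$ forces $Y$ down one rung are sound; adding the additional constraint that the trajectory does not leave $[0,1-\varepsilon/2]$ before success, and redefining $\tau_k$ with the up-crossing cap, would make your argument go through. Finally, note the small slip $T_Y[0,\varepsilon)\leq\sigma_{K+1}$ should read $T_Y[0,\varepsilon)\leq\tau_{K+1}$.
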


Finally, step 3) \mofe{builds on Lemma \ref{unificationlemma}. For this we will define two } interlaced sequences of stopping times and study, in Propositions \ref{prop:controltimetobdd} and \ref{prop:controltimetobddstrong}, the amount of time accumulated after $k$ sojourns in the trapping region, which will allow us to check the assumptions of Lemma~\ref{unificationlemma}. The proofs of those propositions rely on the results from the two previous steps.

We now construct a new process $\fray$ that has the same distribution as~$Y$ 
and that alternates between the boundary strip around~$0$ and the interior 
until it ultimately becomes trapped at the boundary strip.
Fix $b> 0$ such that $\E[\hat{L}_1^b]>0$ (existence of $b$ follows from Lemma \ref{lem:existence_suit_approx} with $m=0$). \modifbis{We choose $\varepsilon\in (0,e^{-b})$ such that Proposition~\ref{prop:boundtimeto0extinction} holds for some $M$ and $\zeta$ for the boundary strip $[0,\varepsilon/2)$, that is, $\E_y[e^{\zeta T_Y[0,\varepsilon/2)}]\leq M(1-y)^{-1/4}$}.
Let $(\Omega,\mathcal{F},\P)$ be a probability space 
on which we define an i.i.d. sequence of Poisson random measures $(N^{n,i},S^{n,i})_{n\in \N, i\in \{1,2\}}$ 
with $N^{n,i}\sim N$ and $S^{n,i}\sim S$, $n\in\N$, $i\in\{1,2\}$. 
For $n\in \N$, let $\hL^{n}$ be the copy of the L{\'e}vy process $\hat{L}^b$ with $N$ and $S$ replaced by $N^{n,2}$ and $S^{n,2}$, respectively. 
Similarly, let $Y^{n,i}$ be the solution of SDE~\eqref{eq:SDE_Y} with $N$ and $S$ replaced by $N^{n,i}$ and $S^{n,i}$, respectively; 
their initial values and the time frames where they will be considered are specified iteratively as follows.
Define $\wctny{0}\coloneqq 0$, $\whtnl{0}\coloneqq 0$, $Y_0^{1,1}=Y_0^{(1)}=y\in (0,1)$, and for $n\in \N$, 
\begin{equation}\label{eq:definition_fray}
	\begin{split}
	&\wctny{n}\coloneqq \inf \Big\{t\geq \whtnl{n-1}:\ Y_{t-\whtnl{n-1}}^{n,1} <\varepsilon/2\Big\}, 
	\qquad\qquad\qquad\,\,\, Y_0^{n,2}=\hL_0^{(n)}\coloneqq Y_{\wctny{n}-\whtnl{n-1}}^{n,1},\\
	&\whtnl{n}\coloneqq \inf\Big\{t\geq \wctny{n}: \ \hL_0^{(n)}\exp\big(-\hL_{t-\wctny{n}}^{n}\big)>\varepsilon\Big\}, \qquad Y_0^{n+1,1}=Y_0^{(n+1)}\coloneqq Y_{\whtnl{n}-\wctny{n}}^{n,2}.
\end{split}
\end{equation}
By the definition of $\whtnl{n}$ and Lemma \ref{lem:existence_suit_approx}, for $n\geq 1$, 
\begin{eqnarray}
{\P\Big(\whtnl{n}=\infty \mid \whtnl{1}<\infty,...,\whtnl{n-1}<\infty \Big)}&=\P\Big(\inf_{t\in[0,\infty)} \hL_t^{n}>\log(\hL_0^{(n)}/\varepsilon) \Big)\nonumber \\
&\geq \P\Big(\inf_{t\in [0,\infty)} \hat{L}^b_t>-\log(2) \Big)>0. \label{minoprobastaybelow}
\end{eqnarray}
Define $\fray=(\fray_t)_{t\geq 0}$ via
\begin{equation}
	\fray_t\coloneqq  \begin{cases}
		Y_{t-\whtnl{n-1}}^{n,1},&\text{if }t\in [\whtnl{n-1},\wctny{n}),\,n\in\N,\\
		Y_{t-\wctny{n}}^{n,2},& \text{if }t\in [\wctny{n}, \whtnl{n}),n\in\N.\end{cases} \label{eq:Yaux}
\end{equation}
\begin{figure}[t]
	
	\scalebox{3.5}{
 }
	\caption{Illustration of path segments related to~$\fray$.}
	\label{fig:gettingstuck}
\end{figure}

See Fig.~\ref{fig:gettingstuck} for an illustration. From the strong Markov property at $\whtnl{n}$ and $\wctny{n}$, we deduce that $\fray$ is equal in law to $Y$ under $\P_y$. 
Let $\Nstuck\coloneqq\min \{n\geq 1:\ \whtnl{n}=\infty\}$. 
Note from \eqref{minoprobastaybelow} that $\Nstuck<\infty$ a.s. and, from Lemma~\ref{lem:levysandwich}, that $\fray_t\leq \varepsilon$ for all $t\geq \wctny{\Nstuck}$.

We now see that Proposition \ref{prop:boundtimeto0extinction} allows us to control the random variables $\wctny{k}-\whtnl{k-1}$, while Lemma~\ref{lem:lowerlevyhittingtime} and Corollary~\ref{coro:lowerlevyhittingtime} allow us to control the random variables $(\whtnl{k}-\wctny{k}) \ind{\whtnl{k}<\infty}$. Combining \mofe{these } results leads to the following two propositions that control the time it takes $\fray$ to visit $[0,\varepsilon/2)$ for the $k$th time, depending on the integrability condition.
\begin{prop}\label{prop:controltimetobdd}
	Assume that $(\Lambda,\mu,\sigma)\in \Theta_1$ and that $\rW_\gamma(r^{-2}\Lambda(\dd r))<\infty$ and $\rW_\gamma(\bar{\mu})<\infty$ for some $\gamma>0$. 
	Then for any $\alpha\in (0,\gamma)$ there is a constant $C\geq 1$ 
	such that for all $k\geq 1$, $t>0$ and $y\in(0,1)$, if $Y_0^{(1)}=y$, 
	we have $$\P(\wctny{k}>t, \Nstuck \geq k)\leq Ck^{1+\alpha}(1-y)^{-1/4}t^{-\alpha}.$$
	In particular, $\wctny{k}<\infty$ a.s. on $\{\Nstuck\geq k\}$.
\end{prop}
\begin{prop}\label{prop:controltimetobddstrong}
	Assume that $(\Lambda,\mu,\sigma)\in \Theta_1$ and that $s_\gamma(r^{-2}\Lambda(\dd r))<\infty$ and $s_\gamma(\bar{\mu})<\infty$ for some $\gamma>0$. 
	Then there are $\lambda>0$ and $C\geq 1$ such that for all $y\in(0,1)$ and $k\geq 1$, 
	we have for $Y_0^{(1)}=y$ $$\E[e^{\lambda \wctny{k}}\ind{\Nstuck\geq k}]\leq (1-y)^{-1/4}C^k.$$ 
	\modifbis{In particular, $\wctny{k}<\infty$ a.s. on $\{\Nstuck\geq k\}$}.
\end{prop}

Finally, we piece everything together and prove Theorem~\ref{thm:stuckatboundary}.
\begin{proof}[Proof of Theorem \ref{thm:stuckatboundary}]
	Without loss of generality, we prove the result only for small~$\varepsilon$, 
	because if $\varepsilon' \geq \varepsilon$, 
	then $\P_y(Y_t>\varepsilon') \leq \P_y(Y_t>\varepsilon)$. 
	So if \eqref{survYexpodecay} (resp. \eqref{survYpoldecay}) is true for some small $\varepsilon$, 
	then it is true for all $\varepsilon' \in [\varepsilon,1)$. 
	Fix $b> 0$ such that $\E[\hat{L}_1^b]>0$ (\mofe{the } existence of $b$ follows from Lemma \ref{lem:existence_suit_approx} with $m=0$). 
	By Proposition~\ref{prop:boundtimeto0extinction}, we can choose $\varepsilon\in (0,e^{-b})$ so that for some $\zeta,M>0$, 
	\begin{equation}\label{ezm}
		\E_y[e^{\zeta T_Y[0,\varepsilon/2)}]\leq M(1-y)^{-1/4}.
	\end{equation}
	Let $y\in (0,1)$ (resp. $y\in (0,1)$ and $\alpha\in (0,\gamma))$. 
	Under this choice of $\varepsilon$ and $y$, 
	let $(\wctny{n})_{n\geq0}$ and $(\whtnl{n})_{n\geq 0}$ be defined as in~\eqref{eq:definition_fray} 
	and recall that if $\wctny{\Nstuck}<\infty$, 
	then $\fray_t\leq \varepsilon $ for all $t\geq \wctny{\Nstuck}$, \modifbis{where $(\fray_t)_{t\geq 0}$ is defined in \eqref{eq:Yaux}}. 
	We claim that there exists $K_1,K_2> 0$ (resp. $K>0$), independent of our choice of $y$, 
	such that for any $t\geq 0$, 
	\begin{equation}\label{eq:stuckboundaryaux}
		\P(\wctny{\Nstuck}>t)\leq (1-y)^{-1/4} K_1e^{-K_2t} \quad \Big(\text{resp. }\P(\wctny{\Nstuck}>t)\leq (1-y)^{-1/4} Kt^{-\alpha}\Big).
	\end{equation}
	If the claim is true, then \eqref{survYpoldecay} and \eqref{survYexpodecay} follow, because \[\P_y(Y_t>\varepsilon)=\P(\fray_t>\varepsilon)\leq \P(\wctny{\Nstuck}>t).\]

	To verify the claim, we check the assumptions of Lemma \ref{unificationlemma} 
	for $T\coloneqq \wctny{\Nstuck}$, $(T^1_n)_{n \geq 0}:=(\wctny{n})_{n\geq0}$ and $(T^2_n)_{n \geq 0}:=(\whtnl{n})_{n\geq 0}$. 
	Condition (i) is clearly satisfied.
	Condition (iii) is satisfied by the definition of $\Nstuck$
	and by $T\coloneqq\wctny{\Nstuck}$. 
	We then see from \eqref{minoprobastaybelow} that Condition (ii) holds with $c:=\P(\inf_{t\in [0,\infty)} \hat{L}^b_t>-\log(2))$. 
	If for some $\gamma>0$, $s_\gamma(r^{-2}\Lambda(\dd r))<\infty$ and $s_\gamma(\bar{\mu})<\infty$, then by Proposition~\ref{prop:controltimetobddstrong}, 
	there is $\lambda>0$ and $C\geq 1$ (not depending on the choice of $y$) 
	such that for all $k \geq 1$, 
	$\E[e^{\lambda \wctny{k}}\ind{\Nstuck \geq k}]\leq (1-y)^{-1/4}C^k$. 
	Without loss of generality we assume $C>1$. 
	Thus, Condition (iv) is satisfied with $a:=(1-y)^{-1/4}$ and $\lambda$ as above and $K:=C$.
	If for some $\gamma>0$, $\rW_\gamma(r^{-2}\Lambda(\dd r))<\infty$ and $\rW_\gamma(\bar{\mu})<\infty$, 
	let $\alpha'\in (\alpha,\gamma)$ and let $C'\geq 1$ be such that $\P(\wctny{k}>t,N_0\geq k)\leq C'k^{1+\alpha'}(1-y)^{-1/4}t^{-\alpha'}$; the existence of such~$C'$, that does not depend on the choice of $y$, is ensured by Proposition~\ref{prop:controltimetobdd}. 
	Using this, we get that Condition (iv)' is satisfied with $\ell:=\alpha'$ and $a:=C'(1-y)^{-1/4}$. 
    Thus, in both cases, we can apply Lemma \ref{unificationlemma}. 
	In particular, $\wctny{\Nstuck}<\infty$ almost surely so, by the discussion in the beginning of the proof, 
	we have $\mathbb{P}_y$-almost surely $Y_t \in [0,\varepsilon]$ for $t$ sufficiently large. 
	Since $\varepsilon$ is arbitrary in $(0,e^{-b})$, the $\mathbb{P}_y$-almost sure convergence of $Y_t$ follows, ending the proof. 
\end{proof}
\subsection{Proof of Proposition~\ref{prop:boundtimeto0extinction}} \label{btttgctob}
The derivation of the bound for the exponential moments of the first entrance time into $[0,\varepsilon)$ is based on the following strategy. First, we establish a Doeblin-type condition to reach $[0,\varepsilon)$ before some finite time and before coming close to the boundary at $1$.
Second, we establish that from the boundary at~$1$ the process sufficiently fast returns to a state where Doeblin's condition may be used. Iterating these two steps then yields an exponential bound. We begin by establishing Doeblin's condition.
\begin{lemme}[Doeblin type condition]\label{lem:extinctiondoeblinlike}
	Let $(\Lambda,\mu,\sel)\in\Theta$. For any $\varepsilon\in (0,1/2)$ there is $t^\circ>0$ and $c\in (0,1)$ such that for any $y\in [\varepsilon,1-\varepsilon]$, \begin{equation}
		\P_y(T_Y[0,\varepsilon)<t^\circ\wedge T_Y(1-\varepsilon/2,1])\geq c.
	\end{equation}
\end{lemme}
The proof of the lemma is rather technical and based on ideas similar to the ones of Sections~\ref{sec:accbdd:sub:controlinside} and \ref{sec:accbdd:sub:representation}. 
First, we recall and introduce some notation. 
The following parameters play a similar role as the parameters in~\eqref{eq:mergeparametersboth} and \eqref{para3} 
and since their use will be only local, we use the same notation, 
even if the values are not the same. 
Fix $\varepsilon\in (0,1/2)$ and $\xi>0$ such that $2\xi<\max\{\maxsupp{\Lambda},(1-(1-\varepsilon)^2),1/2\}.$ For $n\in \Z$, set 

$$b_n\coloneqq 
	\left(1-\frac{(1/(1-2\xi))^{\frac{n}{2}}}{2}\right)\1_{\{n<\frac{2\log(2)}{\log(1/(1-2\xi))}\}},
	\quad \delta_n\coloneqq (b_{n+1}-b_{n+2})\wedge(b_{n-1}-b_n),
$$
and $t_n\coloneqq 1\wedge (\delta_n^2/4{\capco{\star}}^2)$, where {$\capco{\star}$ }is the constant given in Lemma~\ref{lem:capdeviation}. Let $n_0\coloneqq \max\{n\in \Z:\ b_n\neq 0\}$. In particular, $\lim_{n\to-\infty}b_n=1$, $b_n$ is strictly decreasing and positive while $n\leq n_0$. For $y\in (0,1)$, define $N_0(y)\coloneqq \lfloor 2\log(2(1-y))/\log(1/(1-2\xi))\rfloor,$ which can be negative. This implies that $b_{N_0(y)+1}< y \leq b_{N_0(y)}$. The  condition on~$2\xi$ ensures that $b_{n_0}\in (0,\varepsilon)$ and $b_{N_0(1-\varepsilon)-1}<1-\varepsilon/2$. More precisely, the first property holds because $2\xi< 1-(1-\varepsilon)^2$, and the second because $2\xi<1/2$. The choice of the parameters is motivated by their use in the following proof, see also~Fig.~\ref{fig:reaching0boundary}.

\begin{proof}[Proof of Lemma~\ref{lem:extinctiondoeblinlike}]
	
\begin{figure}[t]
	
	\scalebox{3.5}{	
}
\caption{A trajectory of~$Y$ that travels on its way to the $\varepsilon$-boundary strip around~$0$ through the intervals $(b_{N_0(y)+1},b_{N_0(y)}),\ldots, (b_{n_0}+1,b_{n_0})$.}
\label{fig:reaching0boundary}
\end{figure}

Let $\varepsilon\in (0,1/2)$ and fix $y\in [\varepsilon,1-\varepsilon]$. From the discussion above $y\in(b_{N_0(y)+1},b_{N_0(y)}]$. 
Note that $N_0(1-\varepsilon)\leq N_0(y)<n_0$. Set $t^\circ\coloneqq \sum_{i=N_0(1-\varepsilon)}^{n_0-1}t_i$. Define $\turtk_{N_0(y)-1}\coloneqq 0$ and, for $k\in \{N_0(y),\ldots, n_0-1\}$, 
$\turtk_{k}\coloneqq \inf\{t>\turtk_{k-1}:\, (t,r,u)\in N,\ r>2\xi\}$ and \[B_k\coloneqq \Big\{\turtk_k<(\turtk_{k-1}+t_k)\wedge T_Y(b_{k-1},1],\ Y_{\turtk_k}\in (b_{k+2},b_{k+1})\Big\}.\] 
	Using then that $b_{N_0(y)-1}\leq b_{N_0(1-\varepsilon)-1}<1-\varepsilon/2$ and $b_{n_0}\in (0,\varepsilon)$, we obtain \[\bigcap_{k=N_0(y)}^{n_0-1}B_k\subset \{T_Y[0,b_{n_0})<t^\circ\wedge T_Y(b_{N_0(y)-1},1]\}\subset \{ T_Y[0,\varepsilon)<t^\circ \wedge T_Y(1-\varepsilon/2,1]\}.\]
	Thus, $$\P_y(T_Y[0,\varepsilon)<t^\circ\wedge T_Y(1-\varepsilon/2,1])\geq \P_y(B_{N_0(y)}) \prod_{k=N_0(y)+1}^{n_0-1} \P_y(B_k\mid \cap_{i=N_0(y)+1}^{k-1} B_i).$$
	Using the strong Markov property and Lemma~\ref{lem:specialeventbound} below, we obtain, for each $k\in \{N_0(y)+1,\ldots, n_0-1\}$, {$\P_y(B_k\mid \cap_{i=N_0(y)+1}^{k-1} B_i)\geq p_k$ and $\P_y(B_{N_0(y)})\geq p_{N_0(y)}$, where the positive constants~$p_k$ are as in Lemma~\ref{lem:specialeventbound} and do not depend on the choice of $y$}. The result follows setting $c\coloneqq \prod_{k=N_0(1-\varepsilon)}^{n_0-1}p_k$.
\end{proof}

It remains to prove the bounds that we have used in the proof of Lemma~\ref{lem:extinctiondoeblinlike}.
\begin{lemme}\label{lem:specialeventbound}
	Let $(\Lambda,\mu,\sel)\in\Theta$. For any $n\leq n_0-1$ there exists $p_n>0$ such that for any $y\in [b_{n+1},b_n]$, \begin{equation}
		\P_y(\turtk<t_n\wedge T_Y(b_{n-1},1],\ Y_{\turtk}\in (b_{n+2},b_{n+1}))\geq p_n.
	\end{equation}
\end{lemme}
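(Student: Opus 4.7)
The plan is to mirror Lemma~\ref{prop:aux_mergingeventestimate} applied to a single trajectory: I will engineer the first ``big'' jump of $N$ (occurring at $\turtk$) to happen quickly and to deposit $Y$ in the target interval. Fix $y\in[b_{n+1},b_n]$ and, for a threshold $\bar r\in(2\xi,\maxsupp{\Lambda})$ to be specified, consider
\[
\mathcal{E}_n^{*} \coloneqq \Big\{\turtk<t_n,\ \rurtk>\bar r,\ \sup_{s\in[0,t_n]}\lvert Y_s^{2\xi}-y\rvert<\delta_n,\ \uurtk\in(b_{n+2},b_{n+1})\Big\}.
\]
On the third event, $Y$ coincides with the capped process $Y^{2\xi}$ on $[0,\turtk)$ and remains within $(y-\delta_n, y+\delta_n)\subset(b_{n+2},b_{n-1})$ by the choice of $\delta_n$, so $\turtk\leq T_Y(b_{n-1},1]$ and $Y_{\turtk-}\in(b_{n+2},b_{n-1})$. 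The threshold $\bar r$ will be arranged so that $\underline{\mathrm{m}}(Y_{\turtk-},\rurtk)\leq b_{n+2}$ and $\overline{\mathrm{m}}(Y_{\turtk-},\rurtk)\geq b_{n+1}$; combined with the fourth event, this collapses the median defining $Y_{\turtk}$ to $\uurtk\in(b_{n+2},b_{n+1})$, so $\mathcal{E}_n^{*}$ is contained in the event of the lemma.

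The technical core is the selection of $\bar r$. Using that $\overline{\mathrm{m}}(y',r)=y'/(1-r)$ is increasing and $\underline{\mathrm{m}}(y',r)=(y'-r)/(1-r)$ decreasing in $r$, the two median requirements reduce, uniformly over $Y_{\turtk-}\in(b_{n+1}-\delta_n,b_n+\delta_n)$, to $\bar r\geq(b_n+\delta_n-b_{n+2})/(1-b_{n+2})$ and $\bar r\geq\delta_n/b_{n+1}$. An explicit calculation with the geometric form $b_k=1-(1-2\xi)^{-k/2}/2$ yields $(b_n-b_{n+2})/(1-b_{n+2})=2\xi$, and a short manipulation will show that both right-hand sides can be made strictly less than $\maxsupp{\Lambda}$, so an admissible $\bar r$ exists. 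Here the assumption $n\leq n_0-1$ ensures $b_{n+1}>0$, which is required for the second inequality to make sense.

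Finally, I will exploit that the four components of $\mathcal{E}_n^{*}$ split into independent pieces of the Poisson random measure $N$: $(Y_s^{2\xi})_{s\leq t_n}$ is driven only by jumps of $N$ with $r$-component $\leq 2\xi$, while $\turtk$ is exponential with rate $\int_{(2\xi,1)}r^{-2}\Lambda(\dd r)$, and independently $\rurtk$ has density proportional to $r^{-2}\Lambda(\dd r)$ on $(2\xi,1)$ while $\uurtk$ is uniform on $(0,1)$. Lemma~\ref{lem:capdeviation} combined with the inequality $t_n\leq(\delta_n/(2\capco{\star}))^2$ yields $\P_y(\sup_{s\in[0,t_n]}\lvert Y_s^{2\xi}-y\rvert<\delta_n)\geq 1/2$, and multiplying by the three other strictly positive factors gives $\P_y(\mathcal{E}_n^{*})\geq p_n>0$. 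The hard part will be the algebraic selection of $\bar r$ in the second paragraph: the computation is elementary but relies sensitively on the geometric scaling of $b_k$, designed precisely so that a single jump of size just above $2\xi$ can transport $Y$ from the layer $[b_{n+1},b_n]$ into the layer $(b_{n+2},b_{n+1})$ immediately below.
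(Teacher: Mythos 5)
Your overall framework — cap the small jumps, use Lemma~\ref{lem:capdeviation} and the choice of $t_n$ to keep $Y$ within $\delta_n$ of $y$ until the first $2\xi$-jump, and then exploit the independence of the jump time, jump size, and the uniform mark $\uurtk$ — matches the paper's strategy. But the additional threshold $\bar r$ you introduce to collapse the median to $\uurtk$ exactly is both unnecessary and unverified, and in fact it need not exist. You would need $\bar r > 2\xi + \delta_n/(1-b_{n+2})$ and simultaneously $\bar r < \maxsupp{\Lambda}$, but the lemma is stated for any $\xi$ satisfying only $2\xi < \maxsupp{\Lambda}$ (together with the other constraints involving $\varepsilon$); if $2\xi$ is chosen close to $\maxsupp{\Lambda}$, then $2\xi + \delta_n/(1-b_{n+2})$ can exceed $\maxsupp{\Lambda}$ and no admissible $\bar r$ exists. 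Similarly $\delta_n/b_{n+1}$ is not a priori small — for $n=n_0-1$ the denominator $b_{n_0}$ can be close to $0$ and the ratio can be close to $1$. The sentence ``a short manipulation will show that both right-hand sides can be made strictly less than $\maxsupp{\Lambda}$'' is the place where the argument breaks.

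The fix is to drop $\bar r$ entirely and not insist on $Y_{\turtk}=\uurtk$. Using only $\rurtk>2\xi$ and $Y_{\turtk-}\in[b_{n+2},b_{n-1}]$, one gets the two one-sided bounds
\[
\mru{Y}{\turtk}{\rurtk}<\mru{Y}{\turtk}{2\xi}\leq\frac{b_{n-1}-2\xi}{1-2\xi}=b_{n+1},
\qquad
\mro{Y}{\turtk}{\rurtk}>Y_{\turtk-}\geq b_{n+2},
\]
where the first equality is exactly the geometric identity you computed for the layers. Now for $a<c$ with $a<b_{n+1}$, $c>b_{n+2}$ and $u\in(b_{n+2},b_{n+1})$, the value $\median{a,u,c}$ lies in $(b_{n+2},b_{n+1})$ in every branch: if $u<a$ the median is $a\in(u,b_{n+1})\subset(b_{n+2},b_{n+1})$; if $a\leq u\leq c$ the median is $u$; if $u>c$ the median is $c\in(b_{n+2},u)\subset(b_{n+2},b_{n+1})$. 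So $Y_{\turtk}\in(b_{n+2},b_{n+1})$ holds without any lower bound on $\rurtk$ beyond $2\xi$, and the event $\{\sup_{s\in[0,t_n]}|Y_s^{2\xi}-y|<\delta_n,\ \turtk<t_n,\ \uurtk\in(b_{n+2},b_{n+1})\}$ is already contained in the target; its probability factors by independence into $(1-\capco{\star}\sqrt{t_n}/\delta_n)(1-e^{-t_n\int_{(2\xi,1)}r^{-2}\Lambda(\dd r)})(b_{n+1}-b_{n+2})\geq p_n>0$.
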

\begin{proof}
	We proceed similarly as in the proof of Lemma~\ref{lem:aux_mergingeventestimate}. Note first that for $s\in [0,\turtk)$, $Y_s=Y_s^{2\xi}$. Note also that, {if $b_{n+2}\leq Y_{\turtk-} \leq b_{n-1}$}, \[\mru{Y}{\turtk}{\rurtk}<\mru{Y}{\turtk}{2\xi}\leq \frac{b_{n-1}-2\xi}{1-2\xi}=b_{n+1},\quad\text{and}\quad \mro{Y}{\turtk}{\rurtk}>Y_{\turtk-}\geq b_{n+2}.\] Thus, since ${Y_{\turtk}}=\median{\mru{Y}{\turtk}{\rurtk},\uurtk, \mro{Y}{\turtk}{\rurtk}}$, if {moreover $\uurtk\in (b_{n+2},b_{n+1})$ and $\turtk\leq T_Y(b_{n-1},1]$, then $Y_{\turtk}=\uurtk\in (b_{n+2},b_{n+1})$ and $\turtk<T_Y(b_{n-1},1]$}. Therefore,
	\begin{align*}
	&\{ Y_{\turtk}\in (b_{n+2},b_{n+1}),\ \turtk<t_n\wedge T_Y(b_{n-1},1]\}\\
	&\qquad\qquad\qquad\supseteq\{ b_{n+2}\leq Y_{\turtk-} \leq b_{n-1},\ \turtk\leq T_Y(b_{n-1},1],\, \turtk< t_n,\uurtk\in (b_{n+2},b_{n+1})\}\\
	&\qquad\qquad\qquad\supseteq \{ \forall s\in [0,\turtk):\ b_{n+2}\leq Y_s \leq b_{n-1},\ \turtk<t_n,\uurtk\in (b_{n+2},b_{n+1})\}\\
	&\qquad\qquad\qquad\supseteq\Big\{\sup_{s\in [0,t_n]} \lvert Y_s^{2\xi}-y\rvert< \delta_n,\ \turtk<t_n,\ \uurtk\in (b_{n+2},b_{n+1}) \Big\}.
	\end{align*}
	Therefore, combining the independence of $Y^{2\xi}$, $\turtk$, and $\uurtk$, with Lemma~\ref{lem:capdeviation} and the definition of $t_n$ we obtain \begin{align*}
		&\P_y(\turtk<t_n \wedge T_Y(b_{n-1},1],\ Y_{\turtk}\in (b_{n+2},b_{n+1}))\\
		&\qquad\qquad\qquad\geq \P_y\Big(\sup_{s\in [0,t_n]} \lvert Y_s^{2\xi}-y\rvert< \delta_n,\ \turtk<t_n, \ \uurtk\in (b_{n+2},b_{n+1})\Big)\\
		&\qquad\qquad\qquad=\Big( 1-{\capco{\star}}\frac{\sqrt{t_n}}{\delta_n} \Big) \Big(1- \exp\big(-t_n\textstyle\int_{(2\xi,1)} r^{-2}\Lambda(\dd r)\big)\Big) (b_{n+1}-b_{n+2})\\
		&\qquad\qquad\qquad\geq \Big(1- \exp\big(-t_n\textstyle\int_{(2\xi,1)} r^{-2}\Lambda(\dd r)\big)\Big) (b_{n+1}-b_{n+2})/2\eqqcolon p_n>0,
	\end{align*}
achieving the proof.	
\end{proof}
The next lemma bounds the exponential moments of the time to the $n$th visit of~$Y$ to $(1-\varepsilon/2,1]$.
\begin{lemme}\label{lem:expmomentsreallyclose1}
	Let $(\Lambda,\mu,\sel)\in\Theta$. Assume $C_1(\Lambda,\mu,\sigma)<0$. Let $a\in (1/2,1)$ and $\gamma>0$ be as provided by Proposition~\ref{lem:escapetimeY}--(2). Let  $\tau>0$ and $\varepsilon\in (0,1/2)$ be such that $1-\varepsilon>a$. Define $\wctns{0} \coloneqq 0$, and for $n\in \N_0$, $$\whtns{n} \coloneqq\inf\{t\geq \wctns{n}:\ Y_t<1-\varepsilon\},\quad \wctns{n+1}\coloneqq (\whtns{n}+\tau)\wedge \inf\{t\geq \whtns{n}:\ Y_t>1-\varepsilon/2\}.$$
	Then there is $K\geq 1$ such that for all $n\geq 0$ and $y\in(0,1)$, \[\E_y[e^{\gamma \wctns{n}}]\leq (1-y)^{-1/4}K^n.\]
\end{lemme}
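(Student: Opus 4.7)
The plan is to proceed by induction on $n$, closely following the argument of Lemma~\ref{lem:nthboundaryvisit}, but adapted to the single-trajectory, one-sided setting. The case $n=0$ is immediate. For the inductive step, first use that $\wctns{n}-\whtns{n-1}\leq \tau$ to get $\E_y[e^{\gamma \wctns{n}}]\leq e^{\gamma\tau}\E_y[e^{\gamma \whtns{n-1}}]$, and condition at $\wctns{n-1}$ via the strong Markov property. If $Y_{\wctns{n-1}}<1-\varepsilon$, then $\whtns{n-1}=\wctns{n-1}$; otherwise $\whtns{n-1}-\wctns{n-1}$ is distributed as $T_Y[0,1-\varepsilon)$ started from $Y_{\wctns{n-1}}$. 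Because $a<1-\varepsilon$, $T_Y[0,1-\varepsilon)\leq T_Y[0,a)$, so Proposition~\ref{lem:escapetimeY}(2) produces a universal constant $M_1$ with $\E_z[e^{\gamma T_Y[0,1-\varepsilon)}]\leq M_1(1-z)^{-1/4}$ for every $z\in(0,1)$. Altogether this reduces the inductive step to establishing a bound of the form $\E_y[e^{\gamma \wctns{n-1}}(1-Y_{\wctns{n-1}})^{-1/4}]\leq C_\star \E_y[e^{\gamma \wctns{n-1}}]$, with $C_\star$ independent of $n$ and $y$.

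To obtain this I would next establish the one-trajectory analog of~\eqref{eq:wctdecompclaim1}: there exists a constant $C_0>0$ such that, pointwise,
$(1-Y_{\wctns{n-1}})^{-1/4}\leq C_0+(1-U_{n-1})^{-1/4}\1_{A_{n-1}}$, where $A_{n-1}:=\{\wctns{n-1}\in J_N,\,Y_{\wctns{n-1}}>1-\varepsilon/2\}$ and, on $A_{n-1}$, $(\wctns{n-1},R_{n-1},U_{n-1})$ denotes the triggering jump of $N$. The proof is a case analysis on the nature of $\wctns{n-1}$: if $\wctns{n-1}\notin J_N\cup J_{S+}\cup J_{S-}$ the process is continuous there, so $Y_{\wctns{n-1}}=Y_{\wctns{n-1}-}\leq 1-\varepsilon/2$; if $\wctns{n-1}\in J_{S+}$, the inequality $s_r(y)\leq y$ for $r>0$ yields the same bound; if $\wctns{n-1}\in J_{S-}$, an elementary computation with the explicit form $1-s_r(y)=2(1-y)/((1-r)+\sqrt{(1-r)^2+4r(1-y)})$ gives $1-Y_{\wctns{n-1}}\geq (1-Y_{\wctns{n-1}-})/2\geq \varepsilon/4$. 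In each of these sub-cases $(1-Y_{\wctns{n-1}})^{-1/4}$ is absorbed in $C_0$. If instead $\wctns{n-1}\in J_N$ and $Y_{\wctns{n-1}}>1-\varepsilon/2$, then $\mru{Y}{\wctns{n-1}}{R_{n-1}}<Y_{\wctns{n-1}-}\leq 1-\varepsilon/2$ by direct computation, and the median identity $Y_{\wctns{n-1}}=\median{\mru{Y}{\wctns{n-1}}{R_{n-1}},U_{n-1},\mro{Y}{\wctns{n-1}}{R_{n-1}}}$ forces $Y_{\wctns{n-1}}\leq U_{n-1}$, giving $(1-Y_{\wctns{n-1}})^{-1/4}\leq (1-U_{n-1})^{-1/4}$.

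It then remains to show $\E_y[e^{\gamma \wctns{n-1}}(1-U_{n-1})^{-1/4}\1_{A_{n-1}}]\leq C_1\E_y[e^{\gamma \wctns{n-1}}]$, the one-trajectory analog of~\eqref{eq:boundjumpevent2}. Following the proof of Lemma~\ref{lem:boundjumpevent}, introduce stopping times $L_k^{\eta}:=\inf\{t>L_{k-1}^{\eta}:t\in J_N,\,R_t>\eta,\,\mro{Y}{t}{R_t}>1-\varepsilon/2\}$ and note that on $A_{n-1}$ one has $\wctns{n-1}=L_k^{\eta}$ for some $k$ and arbitrarily small $\eta>0$; the independence of $U_{L_k^{\eta}}$ from $\mathcal{F}_{L_k^{\eta}-}$, the fact that on $A_{n-1}$ the constraint $Y_{L_k^{\eta}}>1-\varepsilon/2$ forces $U_{L_k^{\eta}}>1-\varepsilon/2$, and integrating the uniform $U_{L_k^{\eta}}$-factor over $(1-\varepsilon/2,1)$ produce the constant $(\varepsilon/2)^{-1/4}/(3/4)$ and reconstruct $\E_y[e^{\gamma(\tau\wedge \wctns{n-1})}]\leq \E_y[e^{\gamma \wctns{n-1}}]$, yielding the required bound. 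Combining everything gives $\E_y[e^{\gamma \wctns{n}}]\leq e^{\gamma\tau}M_1(C_0+C_1)\E_y[e^{\gamma \wctns{n-1}}]$; choosing $K\coloneqq e^{\gamma\tau}M_1(C_0+C_1)\vee 1$ and verifying the base case directly via Proposition~\ref{lem:escapetimeY}(2) completes the induction. The main obstacle is the careful verification of this one-trajectory analog of Lemma~\ref{lem:boundjumpevent}: the heuristic transfers, but one must identify exactly the filtration at which $U_{L_k^{\eta}}$ remains an independent uniform random variable, and ensure that summing over $k$ and letting $\eta\downarrow 0$ telescope neatly to produce the single factor $\E_y[e^{\gamma(\tau\wedge \wctns{n-1})}]$ rather than an uncontrolled sum.
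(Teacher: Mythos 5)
Your proof is correct and follows essentially the same route as the paper: the paper too reduces via Proposition~\ref{lem:escapetimeY}(2) to bounding $\E_y[e^{\gamma \wctns{n-1}}(1-Y_{\wctns{n-1}})^{-1/4}]$, and then cites ``repeating the argument behind~\eqref{eq:wctdecompclaim1} and~\eqref{eq:wctdecompclaim3}'' for the case analysis and jump-uniform bound, which is exactly the one-trajectory analog you spell out. Your worry about the adaptation of Lemma~\ref{lem:boundjumpevent} is resolved just as you outline: one conditions at $\whtns{n-2}$ where the restarted trajectory is below $1-\varepsilon$, the indicator already forces $\wctns{n-1}-\whtns{n-2}\leq\tau$ so the $\tau\wedge$ collapses, and the $U_{L_k^\eta}$-independence holds exactly as in the original lemma since $L_k^\eta-$ is a predictable stopping time for the Poisson filtration.
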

\begin{proof}
	The case $n=0$ is trivially true. For $n\geq 1$, we proceed similarly to Proposition~\ref{prop:nthboundaryvisit}. More precisely, arguing as in \eqref{eq:forlaterlem}, but using Proposition~\ref{lem:escapetimeY} instead of Proposition~\ref{prop:bddprobreturn}, we obtain
	\begin{align*} 
		\E_y\big[e^{\gamma \wctns{n}}\big]&	\leq  2e^{\gamma \tau}\E_y\bigg[ e^{\gamma \wctns{n-1} } \Big( \frac{1-a}{1-Y_{\wctns{n-1}}}\vee 1\Big)^{1/4}\bigg]\leq 2 e^{\gamma \tau}\E_y\bigg[ e^{\gamma \wctns{n-1} } (1-Y_{\wctns{n-1}})^{-1/4}\bigg] .
	\end{align*}
	For $n=1$, this corresponds already to the claim. For $n>1$, repeating the argument behind \eqref{eq:wctdecompclaim1} and~\eqref{eq:wctdecompclaim3}, we deduce that $${\E_y}[e^{\gamma \wctns{n-1}}(1-Y_{\wctns{n-1}})^{-1/4}]\leq \tilde{C} {\E_y}[e^{\gamma \wctns{n-1}}],$$ where $\tilde{C}\coloneqq (\varepsilon/2)^{-1/4}(2^{1/4}+4/3)$. By induction, we obtain $\E_y\big[e^{\gamma \wctns{n}}\big]\leq (2e^{\gamma\tau}\tilde{C})^{n-1} (1-y)^{-1/4}.$ Thus, setting $K\coloneqq (2e^{\gamma\tau}\tilde{C})$ yields the result.
\end{proof}

\begin{proof}[Proof of Proposition~\ref{prop:boundtimeto0extinction}]
	Let $a\in (1/2,1)$, $\gamma>0$ as in Lemma \ref{lem:expmomentsreallyclose1}. Fix $\varepsilon\in (0,1-a)$ and $\tau, c$ so that $\P_y(T_Y[0,\varepsilon)<\tau \wedge T_Y(1-\varepsilon/2,1])\geq c$ for any $y\in [\varepsilon,1-\varepsilon]$; the existence of such $\tau, c$ is ensured by Lemma~\ref{lem:extinctiondoeblinlike}. \modifbis{For this choice of $\tau$, we fix $K$ as given by } Lemma~\ref{lem:expmomentsreallyclose1}. We then fix $y \in (0,1)$ and check that the assumptions of Lemma~\ref{unificationlemma} are satisfied for $T:=T_Y[0,\varepsilon)$, $(T^1_n)_{n \geq 0}:=(\wctns{n})_{n \geq 0}$ and $(T^2_n)_{n \geq 0}:=(\whtns{n})_{n \geq 0}$, with $(\wctns{n})_{n \geq 0}$ and $(\whtns{n})_{n \geq 0}$ as in Lemma~\ref{lem:expmomentsreallyclose1}. Condition (i) is clearly satisfied. Lemma \ref{lem:extinctiondoeblinlike} ensures that Condition (ii) holds with $c$ as above. Lemma \ref{lem:expmomentsreallyclose1} shows that, almost surely, all stopping times $T^1_n$ are finite, and therefore all stopping times $T^2_n$ as well so Condition (iii) is satisfied. That lemma also shows that Condition (iv) is satisfied with $a:=(1-y)^{-1/4}$, $\lambda:=\gamma$ and $K$ as above. Then, the conclusion of the proposition follows from Lemma \ref{unificationlemma}.
\end{proof}

\subsection{Path of $Y$ to the boundary strip and the probability to get stuck there}\label{sec:asextinction:sub:returning_stuck}

We first prove Propositions~\ref{prop:controltimetobdd} and \ref{prop:controltimetobddstrong}. The latter will require another estimate that we subsequently prove.

\begin{proof}[Proof of Proposition~\ref{prop:controltimetobdd}]
	Recall that we have chosen $\varepsilon \in (0,e^{-b})$, $\zeta>0,M>0$ from Proposition~\ref{prop:boundtimeto0extinction}, that is, such that $\E_y[e^{\zeta T_Y[0,\varepsilon/2)}]\leq M(1-y)^{-1/4}.$
	For $a>0$ and $k\geq 0$, set $u_k(a)\coloneqq \P(\wctny{k}>ka, \Nstuck\geq k)$. Note that $u_0(a)=0$. We have for any $k\geq 1$ and $a\geq 0$, 
	\begin{align}
		u_k(a)=&\ \P\Big(\big(\wctny{k}-\whtnl{k-1}\big)+\big(\whtnl{k-1}-\wctny{k-1}\big)+\wctny{k-1}>ka,\ \whtnl{k-1}<\infty, \Nstuck\geq k-1 \Big) \nonumber \\
		\leq&\ u_{k-1}(a)+\P\big(\wctny{k}-\whtnl{k-1}>a/2,\whtnl{k-1}<\infty,\ \Nstuck\geq k-1\big)\label{eq:auxuka}\\
		&+ \P\big(a/2<\whtnl{k-1}-\wctny{k-1}<\infty,\Nstuck\geq k-1\big)\nonumber.	
	\end{align}
	\mofe{According to } Lemma~\ref{lem:lowerlevyhittingtime}--(1), we can choose \mofe{constants } $\alpha\in (0,\gamma)$ and $K>0$ such that $\P({t<T_{\hat{L}^b}(-\infty,-x)<\infty} )\leq K t^{-\alpha}$.
	We claim that \begin{eqnarray} &\P\Big(\wctny{k}-\whtnl{k-1}>a/2,\whtnl{k-1}<\infty,\ \Nstuck\geq k-1\Big) \leq (1-y)^{-1/4} M\tilde{M} e^{-\zeta a/2}, \label{eq:auxuka1}\\
		& \P\Big({\frac{a}{2}<\whtnl{k-1}-\wctny{k-1}<\infty},\Nstuck\geq k-1\Big)	\leq (1-y)^{-1/4} 2^\alpha K a^{-\alpha}, \label{eq:auxuka2} \end{eqnarray}
	for some $\tilde{M}\geq 1$ ({not depending on the choice of $y$}). Assume the claim is true. Note that we can choose $C$ such that $M\tilde{M} e^{-\zeta a/2}+2^\alpha K a^{-\alpha}\leq Ca^{-\alpha}$. Thus, \eqref{eq:auxuka} together with the claim yields, $u_k(a)\leq u_{k-1}(a)+C(1-y)^{-1/4} a^{-\alpha}$. Iterating this inequality yields $u_k(a)=kC (1-y)^{-1/4} a^{-\alpha}$. Therefore, $\P(\wctny{k}>t,\Nstuck\geq k)=u_k(t/k)\leq Ck^{1+\alpha}(1-y)^{-1/4} t^{-\alpha}$ as desired. It remains to prove the claim, i.e.~\eqref{eq:auxuka1} and~\eqref{eq:auxuka2}.

	Start with~\eqref{eq:auxuka1} and note that the left-hand side is 
	\begin{equation}
		\begin{split}
			&\E\Big[\P\big(\wctny{k}-\whtnl{k-1}>\frac{a}{2}\mid \mathcal{F}_{\whtnl{k-1}} \big)\ind{\whtnl{k-1}<\infty,\Nstuck\geq k-1} \Big]\\
			&=\E\Big[e^{-\zeta \frac{a}{2}} \E\big[e^{\zeta(\wctny{k}-\whtnl{k-1}) }\mid \mathcal{F}_{\whtnl{k-1}} \big] \ind{\whtnl{k-1}<\infty,\ \Nstuck\geq k-1} \Big]\\
		&\leq \E\Big[e^{-\frac{\zeta a}{2}} \E_{Y_0^{(k)}}\big[e^{\zeta T_Y[0,\frac{\varepsilon}{2})}\big]\ind{\whtnl{k-1}<\infty,\Nstuck\geq k-1}\Big]\\
		&\leq M e^{-\frac{\zeta a}{2}} \E\big[(1-Y_0^{(k)})^{-\frac14} \ind{\whtnl{k-1}<\infty, \Nstuck\geq k-1} \big],
		\end{split}\label{eq:auxuka3}
	\end{equation}
	where we used Chernoff's inequality, the strong Markov property, and Proposition~\ref{prop:boundtimeto0extinction}. If $k=1$, then {\eqref{eq:auxuka1} holds provided we choose $\tilde{M}\geq 1$}. If $k\geq 2$, use Lemma~\ref{lem:auxlemmacloseT} (with $\lambda=0$) and that {$(1-y)^{-1/4}\geq 1$ to show that \eqref{eq:auxuka1} holds with $\tilde{M}= 5(1-\varepsilon)^{-1/4}$}.

	Now, we prove~\eqref{eq:auxuka2}. Note that for $k=1$, since $\whtnl{0}=\wctny{0}=0$, the left-hand side of \eqref{eq:auxuka2} equals $0$. For $k\geq 2$, use the definition of $\whtnl{k-1}$ and Lemma~\ref{lem:lowerlevyhittingtime}--(1), to get 
	\begin{align*}
		&\P\Big({\frac{a}{2}<\whtnl{k-1}-\wctny{k-1}<\infty},\Nstuck\geq k-1\Big)	\\
		&\leq \P\Big({\frac{a}{2}<T_{\hat{\hL}^b}\Big(-\infty,\log\Big(\frac{\hL_0^{(k-1)}}{\varepsilon}\Big)\Big)<\infty}\Big) \leq K2^{\alpha}a^{-\alpha}\leq (1-y)^{-1/4} 2^\alpha K a^{-\alpha}.
	\end{align*}
	This completes the proof of~\eqref{eq:auxuka2}.	
\end{proof}
\begin{proof}[Proof of Proposition~\ref{prop:controltimetobddstrong}]
	Recall that we have chosen $\varepsilon \in (0,e^{-b})$, $\zeta>0,M>0$ from Proposition~\ref{prop:boundtimeto0extinction}, that is, such that $\E_y[e^{\zeta T_Y[0,\varepsilon/2)}]\leq M(1-y)^{-1/4}.$
	Thanks to Corollary~\ref{coro:lowerlevyhittingtime}, 
	there exist $\alpha$ and $K$ such that  $\E[e^{\alpha T_{\hat{L}^b}(-\infty,x)} \ind{T_{\hat{L}^b}(-\infty,x)<\infty} ]\leq K$. 
	Fix $\lambda\in (0,\alpha\wedge \zeta)$. 
	By the strong Markov property at $\whtnl{k-1}$ and Proposition~\ref{prop:boundtimeto0extinction}, we have for $k\geq 1$, 
	\mofe{\begin{align*}
		\E&\Big[e^{\lambda \wctny{k}}\ind{\Nstuck\geq k}\Big]=\E\Big[ e^{\lambda\whtnl{k-1}}   \E\big[e^{\lambda(\wctny{k}-\whtnl{k-1})}\mid \mathcal{F}_{\whtnl{k-1}} \big] \ind{\whtnl{k-1}<\infty,\Nstuck\geq k-1}\Big]\\
		&=\E\Big[ e^{\lambda\whtnl{k-1}}  \E_{Y_0^{(k)}}\big[e^{\lambda T_Y[0,\varepsilon/2)} \big]\ind{\whtnl{k-1}<\infty,\Nstuck\geq k-1}  \Big]\\
		&\leq M \E\Big[e^{\lambda \whtnl{k-1}}  (1-Y_0^{(k)})^{-1/4} \ind{\whtnl{k-1}<\infty,\ \Nstuck\geq k-1}\Big]\\
		&= M\E\Big[ \E\big[ e^{\lambda(\whtnl{k-1}-\wctny{k-1})}(1-Y_0^{(k)})^{-1/4} \ind{\whtnl{k-1}<\infty }\mid \mathcal{F}_{\wctny{k-1}}\big]e^{\lambda \wctny{k-1} }\ind{\Nstuck\geq k-1}\Big].
	\end{align*}}
	If $k=1$, the claim is true since the inner (conditional) expectation is just $(1-y)^{-1/4}$. For $k\geq2$, we will show in Lemma~\ref{lem:auxlemmacloseT} below that the inner (conditional) expectation can be bounded by some $\tilde{K}\geq 1$. Thus, we obtain $\E[e^{\lambda \wctny{k}}\ind{\Nstuck\geq k}]\leq M\tilde{K}\E[e^{\lambda \wctny{k-1}}\ind{\Nstuck\geq k-1}]$ for {this }$\tilde{K}\geq 1$. It then follows by induction that $$\E\Big[e^{\lambda \wctny{k}}\ind{\Nstuck\geq k}\Big]\leq (1-y)^{-1/4} M^k \tilde{K}^{k-1}\leq (1-y)^{-1/4} (M\tilde{K})^{k}.$$ Setting $C\coloneqq M\tilde{K}$ yields the desired result. 
\end{proof}
It remains to prove the bound we used in the proof of Proposition~\ref{prop:controltimetobddstrong}.

\begin{lemme}\label{lem:auxlemmacloseT}
	\begin{enumerate}
		\item Assume that $(\Lambda,\mu,\sigma)\in \Theta_1$. Let $\varepsilon \in (0,e^{-b}), \zeta>0,M>0$ as provided by  Proposition~\ref{prop:boundtimeto0extinction}. Then, for all $\lambda\in[0,\zeta)$ and $y\in(0,1)$, if $Y_0^{(1)}=y$, we have
		\begin{equation}\label{lem:auxlemmacloseTintermediary} 
			\begin{split}
				&\E\Big[ e^{\lambda(\whtnl{k-1}-\wctny{k-1})}(1-Y_0^{(k)})^{-1/4} \ind{\whtnl{k-1}<\infty }\mid \mathcal{F}_{\wctny{k-1}}\Big]  \\
			&\leq 5(1-\varepsilon)^{-1/4}\E\Big[e^{\lambda T_{\hL^{k-1}}(-\infty,\log( \hL_0^{(k-1)} /\varepsilon))}\ind{T_{\hL^{k-1}}(-\infty,\log(\hL_0^{(k-1)}/\varepsilon))<\infty} \Big].
			\end{split} 
		\end{equation}
		\item Assume in addition that $s_\gamma(r^{-2} \Lambda(\dd r))<\infty$ and $s_\gamma(\bar{\mu})<\infty$ for some $\gamma>0$. Let $\alpha,K>0$ as provided by Corollary~\ref{coro:lowerlevyhittingtime} with $m=0$. Then, for $\lambda\in [0,\alpha\wedge \zeta)$, there is $\tilde{K}\geq 1$ such that, for all $y\in (0,1)$, if $Y_0^{(1)}=y$, we have
		\begin{align*}
			\E\Big[ e^{\lambda(\whtnl{k-1}-\wctny{k-1})}(1-Y_0^{(k)})^{-1/4} \ind{\whtnl{k-1}<\infty }\mid \mathcal{F}_{\wctny{k-1}}\Big]\leq  \tilde{K}.
		\end{align*}
	\end{enumerate}
	
\end{lemme}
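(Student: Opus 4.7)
The first step is to strip the outer conditioning. Since $(N^{k-1,2},S^{k-1,2})$ is independent of $\mathcal{F}_{\wctny{k-1}}$, and $\hL^{k-1}$ and $Y^{k-1,2}$ are functionals of these Poisson measures while the initial value $y_0:=\hL_0^{(k-1)}=Y_0^{k-1,2}$ is $\mathcal{F}_{\wctny{k-1}}$-measurable, the conditional expectation in the statement equals
\[F(y_0):=\mathsf{E}\bigl[e^{\lambda\tau}(1-Y_\tau)^{-1/4}\ind{\tau<\infty}\bigr],\]
where $(\hat L,Y)$ denotes an independent copy of $(\hL^{k-1},Y^{k-1,2})$ with $Y_0=y_0$, and $\tau:=T_{\hat L}(-\infty,\log(y_0/\varepsilon))$. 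Since $\whtnl{k-1}-\wctny{k-1}=\tau$ on $\{\whtnl{k-1}<\infty\}$, the right-hand side of \eqref{lem:auxlemmacloseTintermediary} equals $5(1-\varepsilon)^{-1/4}\,\mathsf{E}[e^{\lambda\tau}\ind{\tau<\infty}]$, so part~(1) reduces to $F(y_0)\le 5(1-\varepsilon)^{-1/4}\mathsf{E}[e^{\lambda\tau}\ind{\tau<\infty}]$, which in turn would follow from a pointwise estimate $\mathsf{E}[(1-Y_\tau)^{-1/4}\mid\mathcal{G}_\tau]\le 5(1-\varepsilon)^{-1/4}$ for an appropriate $\sigma$-algebra $\mathcal{G}_\tau$ containing $\tau$, the pre-jump path of $\hat L$, and $Y_{\tau-}$.

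On $\{\tau<\infty\}$ the jump of $\hat L$ at $\tau$ (which pushes $\hat L$ below $\log(y_0/\varepsilon)$) comes either from $N^{k-1,2}$, with coordinates $(R,U)$, or from $S^{k-1,2}$, with coordinate $R$. The sandwich Lemma~\ref{lem:levysandwich} gives $Y_{\tau-}\le y_0 e^{-\hat L_{\tau-}}\le \varepsilon$ on $\{\tau\le T_Y(e^{-b},1]\}$, using $\hat L_{\tau-}\ge\log(y_0/\varepsilon)$. For an $N$-jump the mark $U$ is $\mathrm{Unif}(0,1)$ and independent of $\mathcal{G}_\tau$ (as $\hat L$ depends on $N^{k-1,2}$ only through its $(t,r)$-projection), so
\[\mathsf{E}[(1-Y_\tau)^{-1/4}\mid\mathcal{G}_\tau,R]=\int_0^1 \bigl(1-\mr_{R,u}(Y_{\tau-})\bigr)^{-1/4}\,\dd u,\]
and I would split this integral according to the position of $u$ relative to the three quantities defining the median $\mr_{R,u}(Y_{\tau-})$. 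The bound $Y_{\tau-}\le\varepsilon$ turns each case into a uniform estimate of order $(1-\varepsilon)^{-1/4}$, irrespective of $R\in(0,1)$. For $S$-jumps I distinguish: if $r>0$ then $s_r\le\mathrm{id}$, so $Y_\tau\le Y_{\tau-}\le\varepsilon$; if $r<0$ the identity
\[1-s_r(y)=\frac{2(1-y)}{1+|r|+\sqrt{(1-|r|)^2+4|r|y}}\]
combined with $Y_{\tau-}\le\varepsilon$ gives $1-Y_\tau\ge(1-\varepsilon)/2$, hence $(1-Y_\tau)^{-1/4}\le 2^{1/4}(1-\varepsilon)^{-1/4}$. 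Summing the three contributions produces the constant $5$.

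The main obstacle is the complementary event $\{\tau>T_Y(e^{-b},1]\}$, where the sandwich no longer controls $Y_{\tau-}$. I plan to address this by decomposing $\tau=\sigma+(\tau-\sigma)$ with $\sigma:=T_Y(e^{-b},1]$, applying the strong Markov property of $Y$ at $\sigma$, and exploiting that the $u$-marks of $N^{k-1,2}$ after $\sigma$ are still independent of the L\'evy filtration so that the integration-in-$U$ trick can be reused on the post-$\sigma$ excursion; the contributions are again bounded by multiples of $(1-\varepsilon)^{-1/4}$ and can be absorbed into the same constant $5(1-\varepsilon)^{-1/4}$. This is the step requiring the most care, since the sandwich is the only a priori quantitative link between $Y$ and $\hat L$. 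Finally, part~(2) is immediate from part~(1): under the strong integrability assumption, Corollary~\ref{coro:lowerlevyhittingtime} applied with $m=0$ and $x=\log(\varepsilon/y_0)>0$ yields a constant $K$, independent of $y_0$, with $\mathsf{E}[e^{\lambda\tau}\ind{\tau<\infty}]\le K$ for $\lambda\in[0,\alpha\wedge\zeta)$, so that $\tilde K:=5K(1-\varepsilon)^{-1/4}$ supplies the uniform bound in the statement.
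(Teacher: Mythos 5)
Your approach matches the paper's in outline: reduce via the (strong) Markov property to a function of $y_0$, split on the type of event occurring at $\tau$, and exploit that the $u$-mark of an $N$-jump is $\mathrm{Unif}(0,1)$ and independent of the L\'evy filtration so that one can integrate $(1-m_{R,u}(Y_{\tau-}))^{-1/4}$ in $u$; part (2) then follows from Corollary~\ref{coro:lowerlevyhittingtime} exactly as you say. The $S$-jump computations (including the identity for $1-s_r(y)$ with $r<0$) and the median split for the $N$-jump integral are sound in spirit.

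However, what you identify as ``the main obstacle'' is a misdiagnosis: the event $\{\tau>T_Y(e^{-b},1]\}$ is almost surely empty. Apply the sandwich at $t=T_Y(e^{-b},1]$ to get $\hat L_t\leq\log(Y_0/Y_t)$; since $Y_{T_Y(e^{-b},1]}>e^{-b}$ while $Y_0<\varepsilon<e^{-b}$, this forces $\hat L_{T_Y(e^{-b},1]}<\log(Y_0/e^{-b})<\log(Y_0/\varepsilon)$, hence $\tau\leq T_Y(e^{-b},1]$. This is why the paper never mentions the complement, and the decomposition at $\sigma=T_Y(e^{-b},1]$ you flag as ``the step requiring the most care'' is unnecessary. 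Recognising this automatic containment is the key structural fact that makes the sandwich technique usable throughout the section, and missing it is a genuine gap in your understanding of the setup, even if your stated estimates can still be assembled into a correct proof.

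Two lesser points. (a) You do not treat the case where $\hat L$ reaches the level $\log(y_0/\varepsilon)$ at $\tau$ with no jump; this is a nonempty case when the drift of $\hat L^b$ is negative (the choice of $b$ only guarantees $\E[\hat L_1^b]>0$, not positivity of the drift), and there the sandwich gives $Y_\tau\leq\varepsilon$ directly. (b) Your claim that each of the three median cases yields a uniform estimate ``of order $(1-\varepsilon)^{-1/4}$'' is loose: the middle regime contributes the absolute constant $\int_0^1(1-u)^{-1/4}\dd u=4/3$, and the regime where $u$ exceeds the upper median bound contributes at most $1$; these are multiples of $(1-\varepsilon)^{-1/4}$ only because $(1-\varepsilon)^{-1/4}\geq 1$, so your accounting closes but does not deliver the specific constant $5$. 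Finally, the paper dismisses the $S_+$-jump case outright because a nonnegative jump of $\hat L^b$ cannot cause the downcrossing; your alternative $s_r\le\mathrm{id}$ estimate is correct but is bounding an empty event.
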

The expectations in \eqref{lem:auxlemmacloseTintermediary} are finite for $\lambda=0$ or, by Corollary~\ref{coro:lowerlevyhittingtime}, for some $\lambda>0$ under the assumption that $s_\gamma(r^{-2} \Lambda(\dd r))<\infty$ and $s_\gamma(\bar{\mu})<\infty$ for some $\gamma>0$. Otherwise, the expectations in \eqref{lem:auxlemmacloseTintermediary} may be infinite. 
\begin{proof}
	Note that if (1) holds, then (2) follows from Corollary~\ref{coro:lowerlevyhittingtime}. Thus we are left to prove (1). We proceed similarly as in the proof of Proposition~\ref{prop:nthboundaryvisit}. First recall that \begin{equation}
		\whtnl{k-1}-\wctny{k-1}= T_{\hL^{k-1}}(-\infty,\log(\hL_0^{(k-1)}/\varepsilon)).\label{eq:auxlemmacloseT_auxeq1}
	\end{equation} Let $J_N$, $J_{S-}$ and $J_{S+}$ be the sets of jump times of~$N$, and of $S$ with negative and positive $r$-component, respectively. For $T\in J_N$, let $(T,R_T,U_T)$ be the corresponding jump. Note that $Y_0^{(k)}$ cannot be reached via a jump in $J_{S+}$. Thus, by the definition of~$Y_0^{(k)}$, {Lemma~\ref{lem:estimatesel} and~\eqref{eq:estimateselneg}}, we obtain 
\begin{align*}Y_0^{(k)}&\leq \varepsilon \ind{\whtnl{k-1}\notin J_N\cup J_{S-}\cup J_{S+}}+\frac{1+\varepsilon}{2}\ind{\whtnl{k-1}\in J_{S-}}\\
&+U_{\whtnl{k-1}}\ind{\whtnl{k-1}\in J_N,\ Y_0^{(k)}>\varepsilon} + \varepsilon \ind{\whtnl{k-1}\in J_N, Y_0^{(k)}\leq \varepsilon }.\end{align*}
Now we go through all possible cases. First, consider $\{\whtnl{k-1}\notin J_N\cup J_{S-}\cup J_{S+}\}$.
	Using~\eqref{eq:auxlemmacloseT_auxeq1} {and the definition of $\hL^{k-1}$}, 
	\begin{align*}
		&\E\Big[ e^{\lambda(\whtnl{k-1}-\wctny{k-1})}(1-Y_0^{(k)})^{-1/4} \ind{\whtnl{k-1}<\infty,\whtnl{k-1}\notin J_N\cup J_{S-}\cup J_{S+}}\mid \mathcal{F}_{\wctny{k-1}}\Big]\\
		&\qquad\qquad\leq (1-\varepsilon)^{-1/4}\E_{L_0^{(k-1)}}\Big[ e^{\lambda {T_{\hat{L}^{b}}}(-\infty,\log( \hL_0^{(k-1)} /\varepsilon))}\ind{{T_{\hat{L}^{b}}}(-\infty,\log(\hL_0^{(k-1)}/\varepsilon))<\infty}\Big].
		\end{align*}
	A similar argument applies for the cases where $\{\whtnl{k-1}\in J_{S-}\}$ and $\{ \whtnl{k-1}\in J_N, Y_0^{(k)}\leq \varepsilon \}$. The case $\{\whtnl{k-1}\in J_N,\ Y_0^{(k)}>\varepsilon\}$ is slightly more delicate. First note that on $\{Y_0^{(k)}>\varepsilon\}$, since $\hL^{k-1}$ is a lower bound for $\log(1/Y^{2,k-1})$ (until the first time $Y^{2,k-1}$ exits $[0,e^{-b}]$), we have $$\whtnl{k-1}-\wctny{k-1}= {T_{Y^{k-1,2}}(\varepsilon,1]}=T_{\hL^{k-1}}(-\infty,\log(\hL_0^{(k-1)}/\varepsilon)).$$ Using this and the definitions of $Y^{k-1,2}$ and $\hL^{k-1}$,
	\begin{align*}
		&\E\Big[ e^{\lambda(\whtnl{k-1}-\wctny{k-1})}(1-Y_0^{(k)})^{-1/4} \ind{\whtnl{k-1}<\infty,\whtnl{k-1}\in J_N,\ Y_0^{(k)}>\varepsilon}\mid \mathcal{F}_{\wctny{k-1}}\Big]\\
		&\leq \E_{L_0^{(k-1)}}\Big[ e^{\lambda T_{Y}(\varepsilon,1]}(1-U_{T_{Y}(\varepsilon,1]})^{-1/4}\ind{T_{Y}(\varepsilon,1]<\infty,T_{Y}(\varepsilon,1]\in J_N, T_{Y}(\varepsilon,1]=T_{\hat{L}^{b}}(-\infty,\log( \hL_0^{(k-1)} /\varepsilon))}\Big]\\
	&\leq 2(1-\varepsilon)^{-1/4}\E_{L_0^{(k-1)}}\Big[ e^{\lambda T_{\hat{L}^{b}}(-\infty,\log( L_0^{(k-1)} /\varepsilon))} \ind{T_{\hat{L}^{b}}(-\infty,\log( L_0^{(k-1)} /\varepsilon))<\infty} \Big],
	\end{align*}
	where the last identity is obtained as in the proof of \eqref{eq:boundjumpevent2} with $T_{Y}(\varepsilon,1]$ instead of $\wct$.
 Combining all cases yields the desired result.
\end{proof}

	\subsection{A conjecture on the optimality of the polynomial decay}\label{rem:conjpolynomial} Note that the assumption $w_{\gamma}(r^{-2}\Lambda(\dd r))<\infty$ in Theorem~\ref{thm:survival_probability}--(1)(i) implies that, for all $x\in (0,1/2)$, \[\Lambda([1-x,1))<w_{\gamma}(r^{-2}\Lambda(\dd r)) \log(1/x)^{-(1+\gamma)}.\] For $(\Lambda,\mu,\sigma)\in \Theta_1$, we conjecture that the upper bound for $\P_x(X_t<1-t^{-\rho})$ in Theorem~\ref{thm:survival_probability}--(1)(i) could not be better than polynomial. More precisely, we conjecture that, if there is $c>0$ and $\xi>1$ such that, for all $x\in (0,1)$, \begin{equation}
		\Lambda([1-x,1))>c(\log(1/x))^{-\xi},\label{eq:clog}
	\end{equation} then there is $c_x>0$ such that $\P_x(X_t<1-t^{-\rho})\geq c_x t^{-\xi}$. (An analogous conjecture can be stated for $\P_x(X_t>t^{-\rho})$ when $(\Lambda,\mu,\sigma)\in \Theta_0$.) Note that \eqref{eq:clog} violates the strong integrability assumption from Theorem~\ref{thm:survival_probability}--(1)(ii), but can be compatible with the weak integrability assumption in part (1)(i). The conjecture is based on the following heuristic. First note that, since for $t$ large $\P_x(X_t<1/2)\leq \P_x(X_t<1-t^{-\rho})$ , it would be sufficient to prove that $c_xt^{-\xi}\leq \P_x(X_t<1/2)$. Fix $x\in (0,1)$. Recall that by Theorem \ref{thm:stuckatboundary}, $\P_{1/2}$-a.s. $\lim_{t\to\infty}Y_t=0$. When $Y$ is close to~$0$, $\log(1/Y)$ approximately behaves like a L{\'e}vy process with mean $C_0(\Lambda,\mu,\sel)$ (\modifbis{see Lemma \ref{lem:levysandwich} and Remark \ref{limrateofgrowth}}). More rigorously, it is not difficult to show (using among others Lemma~\ref{lem:levysandwich} and Chernoff's inequality) that there is $\tilde{c}>0$ such that $\lim_{t\to\infty} \P_{e^{-\tilde{c}t/2}}(e^{-\tilde{c}t}<Y_t)=1$. In particular, $\lim_{t\to\infty} \P_{1/2}(e^{-\tilde{c}t}<Y_t)=1$. Let~$c$ and~$\xi$ be such that \eqref{eq:clog} is satisfied. Then, with high $\P_{1/2}$-probability, $\Lambda([1-Y_{s-},1))>c\log(1/Y_{s-})^{-\xi}> c's^{-\xi}$ for some $c'>0$. $\Lambda([1-Y_{s-},1))$ is the rate of jumps of~$N$ having $r$-component larger than $1-Y_{s-}$. Therefore, the $\P_{1/2}$-probability of such a jump to occur for $s\in [t-1,t]$ can be expected, for large~$t$, to be larger than $c''t^{-\xi}$ for some $c''>0$. Given such a jump occurs, we can expect to have $Y_t>x$ with a $\P_{1/2}$-probability larger than some $c_x'>0$ independent of~$t$. In particular, for large~$t$, we should have $\P_{1/2}(Y_t>x)\geq c_xt^{-\xi}$ for some $c_x>0$. By Siegmund duality, this would imply $\P_{x}(X_t<1/2)\geq c_x t^{-\xi}$, yielding the expected result. 
 
\section{The case \texorpdfstring{$\Theta_3$}{Theta3}: Proof of Theorem~\ref{thm:coexistenceY}}\label{sect:coex}
In this section, we prove Theorem~\ref{thm:coexistenceY} using a similar approach as in the previous section. 
We construct a process $\fray$ with the same distribution as~$Y$ by assembling a sequence of path segments. 
These segments are alternately contained within the $\varepsilon/2$-interior and the $\varepsilon$-boundary strip, 
where $\varepsilon>0$.
Additionally, the distance to the boundary of each path segment within the $\varepsilon$-boundary strip is upper bounded by the exponential of a L\'evy process 
until the latter exceeds $\varepsilon$.

In the entire section we assume $(\Lambda,\mu,\sigma)\in \Theta_3$. Recall that $\bar{\mu}$ denotes the pushforward measure of $\mu$ under the map $r\mapsto-r$, and $\bar{\sigma}(y)\coloneqq-\sigma(1-y)$. Fix $\rho\in (0,C_0(\Lambda,\mu,\sigma) \wedge C_1(\Lambda,\mu,\sigma))$ and $m \in (\rho,C_0(\Lambda,\mu,\sigma) \wedge C_1(\Lambda,\mu,\sigma))$. Fix also $b> 0$ such that $\E[\hat{L}_1^b]\wedge\E[\hat{L}_1^{b,*}] >m$, where $\hat{L}^{b,*}$ is defined as $\hat{L}^b$, but using $(\Lambda,\bar{\mu},\bar{\sigma})$ instead of $(\Lambda,\mu,\sigma)$; existence of $b$ follows from Lemma~\ref{lem:existence_suit_approx}. Recall that $\hat{L}^{b}_t$ is a lower bound for $\log(Y_0/Y_t)$ when $Y$ is close to $0$ (see Lemma \ref{lem:levysandwich}). Similarly, $\hat{L}^{b,*}$ is a lower bound for $\log((1-Y_0)/(1-Y_t))$ when $Y$ is close to $1$. Finally fix $\varepsilon \in (0,e^{-b})$.
\smallskip
 
Let $(\Omega,\mathcal{F},\P)$ be a probability space on which {is defined }an i.i.d. sequence of Poisson random measures $(N^{n,i},S^{n,i})_{n\in \N, i\in \{1,2\}}$ with $N^{n,i}\sim N$ and $S^{n,i}\sim S$, $n\in\N$ and $i\in\{1,2\}$. For $n\in \N$, let $L^{0,n}$ and $L^{1,n}$ be copies of $\hat{L}^b$ and $\hat{L}^{b,*}$, respectively, with $N$ and $S$ replaced by $N^{n,2}$ and $S^{n,2}$, respectively. For $n\in\N$ and $i\in\{1,2\}$, let $Y^{n,i}$ be the solution of~\eqref{eq:SDE_Y} with $N$ and $S$ replaced by $N^{n,i}$ and $S^{n,i}$, respectively; their initial values and the time frames in which they will be considered are determined iteratively as follows. Set $Y^{1,1}_0\coloneqq y {\in (0,1)}$, $\whtnl{0}\coloneqq \wctny{0} \coloneqq0$ and for $n\geq 1$, 
\begin{align*}
&\wctny{n}\coloneqq \inf\big\{t\geq \whtnl{n-1}:Y_{t-\whtnl{n-1}}^{1,n}\in [0,\varepsilon/2)\cup (1-\varepsilon/2,1]\big\},\qquad Y^{2,n}_0\coloneqq L_0^{(n)}\coloneqq Y_{\wctny{n}-\whtnl{n-1}}^{1,n},\\
&\whtnl{n}\coloneqq \begin{cases} 
	\inf\big\{t\geq \wctny{n}: L_0^{(n)} \,{\exp(-L_{t-\wctny{n}}^{0,n}+m(t-\wctny{n}))}>\varepsilon\big\},&\text{if }L_0^{(n)}<\varepsilon/2,\\
	\inf\big\{t\geq \wctny{n}: (1-L_0^{(n)})\, {\exp(-L_{t-\wctny{n}}^{1,n}+m(t-\wctny{n}))}>\varepsilon\big\},&\text{if }1-L_0^{(n)}<\varepsilon/2, \end{cases}
\end{align*} 
and $Y_0^{1,n+1}\coloneqq Y_{\whtnl{n}-\wctny{n}}^{2,n}$. Set $$\fray_t=\begin{cases}
	Y_{t-\whtnl{n-1}}^{1,n}&\text{for }t\in [\whtnl{n-1},\wctny{n}),\\
	Y_{t-\wctny{n}}^{2,n}&\text{for }t\in [\wctny{n},\whtnl{n}).
\end{cases}$$

Clearly, by the Markov property, $\fray\sim Y$ under $\P_y$. The choice of~$b$ and Lemma~\ref{lem:existence_suit_approx} yield  \begin{eqnarray}
	&\P(\whtnl{n}=\infty \mid \whtnl{1}<\infty,...,\whtnl{n-1}<\infty) \nonumber \\
	\geq & \min_{i\in\{0,1\}}\P\Big(\inf_{t\in [0,\infty)}(L_t^{i,1}-mt)>-\log(2)\Big)>0,\label{eq:probabilitygettingstuck}
\end{eqnarray} so that, there is a.s. an $n\geq 1$ such that $\whtnl{n}=\infty$. {Define $\Nstuck\coloneqq \min\{n\geq 1:\ \whtnl{n}=\infty\}$}.

We first control the time it takes~$\fray$ to go from the boundary strip to the $\varepsilon$-interior analogously to Proposition~\ref{prop:controltimetobdd}.

\begin{lemme}\label{lem:expmomentreachint}
	Let $(\Lambda,\mu,\sel)\in\Theta_3$. There exist $\lambda > 0$ and $\check{M}>0$, such that, for any $k\in \N$ and $y\in(0,1)$, if $Y_0^{1,1}=y$, we have $$\E\left[e^{\lambda(\wctny{k}-\whtnl{k-1})}\mid \mathcal{F}_{\whtnl{k-1}}\right] \ind{\Nstuck\geq k} \leq \check{M} \ind{\Nstuck\geq k}.$$ In particular, we have $\wctny{k}<\infty$ a.s. on $\{\Nstuck\geq k\}$. 
\end{lemme}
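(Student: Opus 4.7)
The plan is to reduce the conditional exponential moment bound to a uniform first-passage exponential moment for $Y$, and then to establish the latter through a Doeblin-type estimate followed by strong-Markov iteration. Set $\mathcal{S}\coloneqq[0,\varepsilon/2)\cup(1-\varepsilon/2,1]$. Since the background $(N^{k,1},S^{k,1})$ is independent of $\mathcal{F}_{\whtnl{k-1}}$, $Y^{1,k}_0$ is $\mathcal{F}_{\whtnl{k-1}}$-measurable, and $\wctny{k}-\whtnl{k-1}$ equals the first hitting time of $\mathcal{S}$ by the process $Y^{1,k}$, a standard conditioning argument yields
\[\E\bigl[e^{\lambda(\wctny{k}-\whtnl{k-1})}\mid\mathcal{F}_{\whtnl{k-1}}\bigr]\ind{N_0\geq k}=\E_{Y^{1,k}_0}\bigl[e^{\lambda T_Y(\mathcal{S})}\bigr]\ind{N_0\geq k}.\]
It therefore suffices to exhibit $\lambda>0$ and $\check{M}>0$, independent of $y\in[0,1]$, such that $\E_y[e^{\lambda T_Y(\mathcal{S})}]\leq\check{M}$.

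For $y\in\mathcal{S}$ the bound is trivial with value $1$ since $T_Y(\mathcal{S})=0$. For $y\in[\varepsilon/2,1-\varepsilon/2]$ the strategy is to apply Lemma~\ref{prop:extinctiondoeblinlike} with the parameter $\varepsilon/2$ in place of $\varepsilon$. This application is legitimate because that lemma is stated for arbitrary $(\Lambda,\mu,\sigma)\in\Theta$ and relies only on structural properties of the jumps of $N$, not on the signs of $C_0(\Lambda,\mu,\sigma)$ or $C_1(\Lambda,\mu,\sigma)$; in particular it is available under our assumption $(\Lambda,\mu,\sigma)\in\Theta_3$. It produces constants $t^\circ=t^\circ(\varepsilon)>0$ and $c=c(\varepsilon)\in(0,1)$ with
\[\P_y\bigl(T_Y[0,\varepsilon/2)<t^\circ\wedge T_Y(1-\varepsilon/4,1]\bigr)\geq c,\qquad y\in[\varepsilon/2,1-\varepsilon/2],\]
and in particular $\P_y(T_Y(\mathcal{S})<t^\circ)\geq c$ uniformly for $y\in[\varepsilon/2,1-\varepsilon/2]$.

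Iterating by the strong Markov property of $Y$ (Proposition~\ref{SM}) at the deterministic times $n t^\circ$, I will prove by induction on $n$ that $\P_y(T_Y(\mathcal{S})>n t^\circ)\leq(1-c)^n$ for every $y\in[\varepsilon/2,1-\varepsilon/2]$: on $\{T_Y(\mathcal{S})>(n-1)t^\circ\}$ the random variable $Y_{(n-1)t^\circ}$ lies in $[\varepsilon/2,1-\varepsilon/2]$, so conditionally on $\mathcal{F}_{(n-1)t^\circ}$ a further Doeblin step succeeds with probability at least $c$. Combined with the trivial case $y\in\mathcal{S}$, this gives the uniform exponential tail
\[\P_y(T_Y(\mathcal{S})>t)\leq\frac{1}{1-c}\exp\!\left(-\frac{\log(1/(1-c))}{t^\circ}\,t\right),\qquad y\in[0,1],\ t\geq 0.\]
Integrating against $\lambda e^{\lambda t}\,\dd t$ for any $\lambda\in\bigl(0,\log(1/(1-c))/t^\circ\bigr)$ yields a finite $\check{M}$, completing the proof. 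The almost sure finiteness of $\wctny{k}$ on $\{N_0\geq k\}$ is then an immediate consequence of Markov's inequality applied to the finite conditional exponential moment.

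I expect no serious obstacle: all the machinery has been set up in the previous sections, and no fresh analysis of the L\'evy sandwich near the boundary or of the signs of $C_0(\Lambda,\mu,\sigma)$ and $C_1(\Lambda,\mu,\sigma)$ is required, because the Doeblin condition of Lemma~\ref{prop:extinctiondoeblinlike} is purely structural and holds uniformly over $\Theta$. The only point that deserves attention is the justification of the strong-Markov identity on $\{N_0\geq k\}$, which is immediate from the independence between $(N^{k,1},S^{k,1})$ and $\mathcal{F}_{\whtnl{k-1}}$ in the pathwise construction of the sequence $(Y^{n,i})$.
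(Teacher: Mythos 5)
Your proof is correct and follows essentially the same route as the paper's: both reduce the conditional expectation to a uniform exponential-moment bound for $T_Y(\mathcal{S})$ via independence of $(N^{k,1},S^{k,1})$ from $\mathcal{F}_{\whtnl{k-1}}$, both invoke the Doeblin-type Lemma~\ref{prop:extinctiondoeblinlike} with parameter $\varepsilon/2$, and both iterate with the Markov property at deterministic time steps to get a geometric tail. The only (cosmetic) difference is that you integrate the exponential tail while the paper sums a geometric series.
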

\begin{proof}
	Consider first $k=1$. Using Lemma~\ref{lem:extinctiondoeblinlike}, we choose $\tau,c>0$ such that, for any $Y_0^{1,1}=y\in [\varepsilon/2,1-\varepsilon/2]$, we have $\P(\wctny{1}<\tau)\geq c$. Thus, using the Markov property at times $k\tau$, we get that, for any $\lambda\in (0,\log(1/(1-c))/\tau)$, 
	$$\E[e^{\lambda \wctny{1}}]\!\leq \!\E\Big[\sum_{k=0}^{\infty}\ind{\wctny{1}\in [k\tau,(k+1)\tau) }e^{\lambda(k+1)\tau} \Big]\!\leq\! \sum_{k=0}^{\infty} e^{\lambda(k+1)\tau}(1-c)^k=\frac{e^{\lambda\tau}}{1-e^{\lambda\tau}(1-c)}\eqqcolon \check{M}.$$ 
	
	Next, note that for $k\geq 2$, on $\{\Nstuck\geq k\}$, if $\wctny{k}-\whtnl{k-1}>0$, then $Y_0^{1,k}\in [\varepsilon/2,1-\varepsilon/2]$. Thus, by the Markov property at time $\whtnl{k-1}$, we have $\E[e^{\lambda(\wctny{k}-\whtnl{k-1})}\mid \mathcal{F}_{\whtnl{k-1}}]\leq \check{M}$. Moreover, if $\wctny{k}-\whtnl{k-1}=0$, the result holds, because $\check{M}>1$.
\end{proof}
The following lemma, analogously to Proposition~\ref{prop:controltimetobdd}, estimates $\P(\wctny{k}>t,\Nstuck\geq k)$ under the weak integrability conditions on the measures $\Lambda$ and $\mu$.
\begin{lemme}\label{lem:tyt}
	Assume that $(\Lambda,\mu,\sigma)\in \Theta_3$, $\rW_\gamma(r^{-2}\Lambda(\dd r))<\infty$, $\rW_\gamma(\mu)<\infty$ and $\rW_\gamma(\bar{\mu})<\infty$, for some $\gamma>0$. Then for any $\alpha\in (0,\gamma)$, there is a constant $C\geq 1$ such that, for all $k\geq 1$, $t>0$ and $y\in(0,1)$, if $Y_0^{1,1}=y$, we have $$\P(\wctny{k}>t, \Nstuck\geq k)\leq Ck^{1+\alpha}t^{-\alpha}.$$
\end{lemme}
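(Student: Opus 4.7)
The strategy mirrors that of Lemma~\ref{lem:hittingbddaux}, with the twist that the excursions of $\fray$ away from the $\varepsilon$-interior can be attached to either boundary point, so we have to use the L\'evy minorant near~$0$ \emph{and} near~$1$. Fix $\alpha\in(0,\gamma)$ and set, for $a>0$ and $k\in\N_0$, $u_k(a)\coloneqq\P(\wctny{k}>ka,\,N_0\geq k)$, with $u_0(a)=0$. Using the decomposition $\wctny{k}=(\wctny{k}-\whtnl{k-1})+(\whtnl{k-1}-\wctny{k-1})+\wctny{k-1}$ and a union bound,
\begin{align*}
u_k(a)\leq u_{k-1}(a)&+\P(\wctny{k}-\whtnl{k-1}>a/2,\,\whtnl{k-1}<\infty,\,N_0\geq k-1)\\
&+\P(a/2<\whtnl{k-1}-\wctny{k-1}<\infty,\,N_0\geq k-1).
\end{align*}

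For the first new term I will apply Chernoff's inequality and Lemma~\ref{lem:expmomentreachint}: there is $\lambda>0$ and $\check M>0$ such that the conditional exponential moment is bounded by $\check M$ on $\{N_0\geq k\}$, and the tower property yields a bound of the form $\check M e^{-\lambda a/2}$. For the second term the key observation is that, on $\{\whtnl{k-1}<\infty,\,N_0\geq k-1\}$, the definition of $\whtnl{k-1}$ forces
\[\whtnl{k-1}-\wctny{k-1}=T_{L^{i,k-1}-m\iota}(-\infty,-\log(\varepsilon/L_0^{(k-1)})) \text{ or } T_{L^{1-i,k-1}-m\iota}(-\infty,-\log(\varepsilon/(1-L_0^{(k-1)}))),\]
according to whether the excursion took place near~$0$ or near~$1$. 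Since $m<C_0(\Lambda,\mu,\sigma)\wedge C_1(\Lambda,\mu,\sigma)$ and $b$ was chosen so that $\E[\hat{L}_1^b]\wedge\E[\hat{L}_1^{b,*}]>m$, Lemma~\ref{lem:lowerlevyhittingtime}(1) applies to both $\hat L^b-m\iota$ (under the assumptions $w_\gamma(r^{-2}\Lambda(\dd r))<\infty$ and $w_\gamma(\bar\mu)<\infty$) and $\hat L^{b,*}-m\iota$ (under $w_\gamma(r^{-2}\Lambda(\dd r))<\infty$ and $w_\gamma(\mu)<\infty$). Bounding the event by the union of the two, I will obtain an upper bound $K(a/2)^{-\alpha}$ for some $K>0$, independent of~$y$ and~$k$.

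Combining the two estimates, the exponential bound is dominated by a polynomial one, so there exists $C_0>0$, independent of~$y$ and~$k$, with $u_k(a)\leq u_{k-1}(a)+C_0 a^{-\alpha}$. A straightforward induction then gives $u_k(a)\leq k C_0 a^{-\alpha}$. Applying this with $a=t/k$ yields
\[\P(\wctny{k}>t,\,N_0\geq k)=u_k(t/k)\leq C_0 k^{1+\alpha}t^{-\alpha},\]
which is the desired bound with $C\coloneqq C_0\vee 1$.

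The main technical point is making sure Lemma~\ref{lem:lowerlevyhittingtime}(1) is applicable to both L\'evy processes $\hat L^b-m\iota$ and $\hat L^{b,*}-m\iota$ uniformly in the starting position $L_0^{(k-1)}$; this is where the symmetric weak integrability hypotheses on $\mu$ and $\bar\mu$ intervene, so that both minorants drift to $+\infty$ and the polynomial tail estimate holds irrespective of which boundary the excursion is attached to.
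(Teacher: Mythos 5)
Your proposal reproduces the paper's proof essentially verbatim: same recursion $u_k(a)\leq u_{k-1}(a)+\cdot$, same treatment of the two error terms via Lemma~\ref{lem:expmomentreachint} (Chernoff bound, exponential moment of the return time to the interior) and Lemma~\ref{lem:lowerlevyhittingtime}(1) applied separately to the two drift-subtracted L\'evy minorants $\hat L^b-m\iota$ and $\hat L^{b,*}-m\iota$, and the same final induction followed by the substitution $a=t/k$. The observation about which symmetric weak-integrability hypothesis feeds which minorant is also exactly what the paper uses.
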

\begin{proof} 
	We will proceed as in the proof of Proposition~\ref{prop:controltimetobdd}. For $a>0$ and $k\geq1$, we define $u_k(a)\coloneqq \P(\wctny{k}>ka,\Nstuck\geq k)$. In particular, $u_0(a)=0$. Recall from \eqref{eq:auxuka} that for $k\geq 1$,\begin{eqnarray}
		u_k(a)&\leq u_{k-1}(a)+\P\big(\wctny{k}-\whtnl{k-1}>a/2, \Nstuck\geq k\big) \nonumber \\
		&+\P\big({a/2<\whtnl{k-1}-\wctny{k-1}<\infty}, \Nstuck\geq k-1\big).\label{eq:ukaaux}
	\end{eqnarray}
	We first deal with the last term on the right-hand side of~\eqref{eq:ukaaux}. If $k=1$, this term is~$0$. Hence, assume $k\geq 2$. Fix $\alpha\in (0,\gamma)$. {For $i \in \{0,1\}$, let us denote by $\tilde L^{i,k}$ the L\'evy process $(L^{i,k}_t - mt)_{t \geq 0}$}. By Lemma~\ref{lem:lowerlevyhittingtime}--(1), we can choose $K$ and $\bar{K}$, {independent of $k$ and of the choice of $y$}, such that for all $t,x\geq 0$, $$\P({t<T_{\tilde L^{0,k-1}}(-\infty,-x)<\infty})\leq Kt^{-\alpha}\quad \text{and}\quad\P({t<T_{\tilde L^{1,k-1}}(-\infty,-x)<\infty})\leq \bar{K}t^{-\alpha}.$$ Then, \begin{align*}
		&\P({a/2<\whtnl{k-1}-\wctny{k-1}<\infty},\Nstuck\geq k-1)\\
		&\leq\ \E\Big[{\P\left({a/2<T_{\tilde L^{0,k-1}}\big(-\infty,-\log(L_0^{(k-1)}/\varepsilon)\big)<\infty}\mid L_0^{(k-1)}\right)} \ind{L_0^{(k-1)}<\varepsilon/2}\\
		& \qquad\quad+{\P\left({a/2<T_{\tilde L^{1,k-1}}\big(-\infty,-\log((1-L_0^{(k-1)})/\varepsilon)\big)<\infty}\mid L_0^{(k-1)}\right)} \ind{1-L_0^{(k-1)}<\varepsilon/2}\Big]\\
		&\qquad\qquad\leq (2/a)^\alpha (K+\bar{K}).
	\end{align*}
	Next, we deal with the second-last term on the right-hand side of~\eqref{eq:ukaaux} for $k\geq 1$. Proceeding as in~\eqref{eq:auxuka3} and using Lemma~\ref{lem:expmomentreachint}, we deduce that {for some $\zeta> 0$ and $\check{M}>0$, independent of the choice of $y$},
	\begin{align*}
		\P\Big(\wctny{k}-\whtnl{k-1}>a/2, \Nstuck\geq k\Big)&\leq e^{-\zeta a/2}\E\Big[\E\Big[e^{\zeta(\wctny{k}-\whtnl{k-1})}\mid \mathcal{F}_{\whtnl{k-1}}\Big] \ind{\Nstuck\geq k}\Big]\leq e^{-\zeta a/2} \check{M}.	
	\end{align*}
	Combining the estimates of the second-last and last term on the right-hand side of~\eqref{eq:ukaaux} yields
	$$u_k(a)\leq u_{k-1}(a)+\check{M}e^{-\zeta a/2}+2^{\alpha}(K+\bar{K})a^{-\alpha}\leq u_{k-1}(a)+Ca^{-\alpha},$$ where $C\geq 1$ is a constant chosen such that $Ca^{-\alpha}\geq \check{M}e^{-\zeta a/2}+2^{\alpha}(K+\bar{K})a^{-\alpha}$ for all $a> 0$. Thus, by induction, $u_k(a)\leq kCa^{-\alpha}$. Setting $a=t/k$ yields the statement of the lemma. 
\end{proof}

Next, we control $\wctny{k}$ under the stronger integrability conditions on the intensity measures. 
\begin{lemme}\label{lem:momentescapeintstrong}
	Assume that $(\Lambda,\mu,\sigma)\in \Theta_3$, $s_\gamma(r^{-2}\Lambda(\dd r))<\infty$, $s_\gamma(\mu)<\infty$ and $s_\gamma(\bar{\mu})<\infty$, for some $\gamma>0$. Then, there is $\lambda>0$ and $C\geq 1$ such that, for all $k\geq 1$ and $y\in(0,1)$, if $Y_0^{1,1}=y$, $$\E[e^{\lambda \wctny{k}}\ind{\Nstuck\geq k}]\leq C^k.$$
\end{lemme}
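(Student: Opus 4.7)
The plan is to mirror the argument of Lemma~\ref{lem:controltimetobdd}, replacing Proposition~\ref{prop:boundtimeto0extinction} (which previously controlled the time to reach the boundary) with Lemma~\ref{lem:expmomentreachint}, and replacing Lemma~\ref{lem:lowerlevyhittingtime} with the sharper Corollary~\ref{coro:lowerlevyhittingtime} applied to both $\hat{L}^{b}-m\iota$ and $\hat{L}^{b,*}-m\iota$. The strong integrability hypotheses are precisely what unlock Corollary~\ref{coro:lowerlevyhittingtime} on each side.

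First I would write the telescoping decomposition
\begin{equation*}
\wctny{k}=\wctny{k-1}+(\whtnl{k-1}-\wctny{k-1})+(\wctny{k}-\whtnl{k-1}),
\end{equation*}
and apply the strong Markov property at $\whtnl{k-1}$. Since $\{N_0\geq k\}=\{\whtnl{k-1}<\infty\}\in \mathcal{F}_{\whtnl{k-1}}$, Lemma~\ref{lem:expmomentreachint} supplies $\lambda_1,\check{M}>0$ (uniform in $k$ and $y$) such that
\begin{equation*}
\E\bigl[e^{\lambda_1 (\wctny{k}-\whtnl{k-1})}\mid \mathcal{F}_{\whtnl{k-1}}\bigr]\,\ind{N_0\geq k}\leq \check{M}\,\ind{N_0\geq k}.
\end{equation*}

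For the middle increment, by construction of $\whtnl{k-1}$,
\begin{equation*}
\whtnl{k-1}-\wctny{k-1}=T_{L^{0,k-1}-m\iota}\bigl(-\infty,\log(L_0^{(k-1)}/\varepsilon)\bigr)
\end{equation*}
on $\{L_0^{(k-1)}<\varepsilon/2\}$, with the analogous identity involving $L^{1,k-1}-m\iota$ and $\log((1-L_0^{(k-1)})/\varepsilon)$ on $\{1-L_0^{(k-1)}<\varepsilon/2\}$. Since $C_0(\Lambda,\bar\mu,\bar\sigma)=C_1(\Lambda,\mu,\sigma)>0$ and the hypotheses give $s_\gamma(r^{-2}\Lambda(\dd r))<\infty$ together with $s_\gamma(\mu),s_\gamma(\bar\mu)<\infty$, Corollary~\ref{coro:lowerlevyhittingtime} applies to both $\hat{L}^{b}-m\iota$ and $\hat{L}^{b,*}-m\iota$, yielding $\alpha,\bar\alpha>0$ and $K,\bar K>0$ such that the exponential moment of the corresponding hitting time is bounded by $K$ respectively $\bar K$, uniformly in the level. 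Because $L^{0,k-1}$ and $L^{1,k-1}$ are independent of $\mathcal{F}_{\wctny{k-1}}$, this gives, for every $\lambda\leq \alpha\wedge\bar\alpha$,
\begin{equation*}
\E\bigl[e^{\lambda(\whtnl{k-1}-\wctny{k-1})}\ind{\whtnl{k-1}<\infty}\mid \mathcal{F}_{\wctny{k-1}}\bigr]\leq M\coloneqq K\vee\bar K.
\end{equation*}

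Setting $\lambda\coloneqq \lambda_1\wedge\alpha\wedge\bar\alpha$ and combining the two estimates through the tower property produces the recursion
\begin{equation*}
\E[e^{\lambda\wctny{k}}\ind{N_0\geq k}]\leq \check{M}\,M\,\E[e^{\lambda\wctny{k-1}}\ind{N_0\geq k-1}],
\end{equation*}
with base case $\E[e^{\lambda\wctny{1}}\ind{N_0\geq 1}]=\E[e^{\lambda\wctny{1}}]\leq \check{M}$ given directly by Lemma~\ref{lem:expmomentreachint} (as $\whtnl{0}=0$ and $N_0\geq 1$ a.s.). Iterating yields the claim with $C\coloneqq \check{M}(M\vee 1)$. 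The only real subtlety, which is the main (though minor) obstacle, is ensuring the uniform control over the middle increment without case-splitting on which boundary $L_0^{(k-1)}$ lies near; this is achieved by pooling the two Corollary~\ref{coro:lowerlevyhittingtime} estimates via the constants $\lambda_1\wedge\alpha\wedge\bar\alpha$ and $K\vee\bar K$, which is exactly where the two-sided strong integrability assumption $s_\gamma(\mu)<\infty$ and $s_\gamma(\bar\mu)<\infty$ becomes crucial.
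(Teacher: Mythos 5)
Your proof is correct and mirrors the paper's argument: the same telescoping of $\wctny{k}$, the same use of the strong Markov property at $\whtnl{k-1}$ and $\wctny{k-1}$, the same reduction of the middle increment to a hitting time of $L^{0,k-1}-m\iota$ or $L^{1,k-1}-m\iota$, and the same appeal to Corollary~\ref{coro:lowerlevyhittingtime} (applied to both sides, which is exactly where $s_\gamma(\mu)<\infty$ and $s_\gamma(\bar\mu)<\infty$ are each needed). The only cosmetic difference is that you bound the middle-increment conditional expectation by $K\vee\bar K$ (valid, since the two events $\{L_0^{(k-1)}<\varepsilon/2\}$ and $\{1-L_0^{(k-1)}<\varepsilon/2\}$ are disjoint), while the paper writes $K+\bar K$.
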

\begin{proof}
	{For $i \in \{0,1\}$, let us denote by $\tilde L^{i,k}$ the L\'evy process $(L^{i,k}_t - mt)_{t \geq 0}$}. By Corollary~\ref{coro:lowerlevyhittingtime}, we can choose $\alpha,\bar{\alpha},K,\bar{K}$, {independent of the choice of $y$}, such that for any $z>0$ and $k\in \N$, {$$\E\big[e^{\alpha T_{\tilde L^{0,k}}(-\infty,-z)} \ind{T_{\tilde L^{0,k}} (-\infty,-z)<\infty}\big]\leq K\quad \text{and}\quad \E\big[e^{\bar{\alpha} T_{\tilde L^{1,k}}(-\infty,-z)}\ind{T_{\tilde L^{1,k}}(-\infty,-z)<\infty}\big]\leq \bar{K}.$$} Set $\lambda\in (0,\alpha\wedge \bar{\alpha})$ that is also smaller than the $\lambda$ in Lemma~\ref{lem:expmomentreachint}. Then, on $\{\Nstuck\geq k\}$, 
	\begin{eqnarray}
		&\E\Big[ e^{\lambda (\whtnl{k}-\wctny{k})}\ind{\whtnl{k}<\infty } \mid \mathcal{F}_{\wctny{k} } \Big]\leq {\E\Big[ e^{\lambda (\whtnl{k}-\wctny{k})}\ind{T_{\tilde L^{0,k}} (-\infty,\log(L_{0}^{(k)}/\varepsilon))<\infty } \mid L_0^{(k)}\Big]}\ind{L_0^{(k)}<\varepsilon/2}\nonumber\\
		&\qquad\qquad+ {\E\Big[ e^{\lambda (\whtnl{k}-\wctny{k})}\ind{T_{\tilde L^{1,k}} (-\infty,\log((1-L_{0}^{(k)})/\varepsilon))<\infty } \mid L_0^{(k)}\Big]}\ind{L_0^{(k)}>1-\varepsilon/2}\leq K+\bar{K}.\label{eq:expmomentreachbddaux}
	\end{eqnarray}
	Then, using Lemma~\ref{lem:expmomentreachint} and~\eqref{eq:expmomentreachbddaux},
	\begin{align*}
		\E[e^{\lambda \wctny{k}}\ind{\Nstuck\geq k}]&=\E\Big[e^{\lambda \whtnl{k-1}}\ind{\Nstuck\geq k} \E\big[e^{\lambda(\wctny{k}-\whtnl{k-1})}\mid \mathcal{F}_{\whtnl{k-1}} \big]\Big] \\
		&\leq \check{M} \E\Big[e^{\lambda \wctny{k-1}}\ind{\Nstuck\geq k-1} \E\big[ e^{\lambda (\whtnl{k-1}-\wctny{k-1})}\ind{\whtnl{k-1}<\infty } \mid \mathcal{F}_{\wctny{k-1} } \big]\Big]\\
		&\leq \check{M}(K+\bar{K}) \E\Big[ e^{\lambda \wctny{k-1}}\ind{\Nstuck\geq k-1}\Big],
	\end{align*}
	{where $\check{M}>0$ is independent of the choice of $y$}. The result follows by induction.
\end{proof}

Now all the ingredients to prove Theorem \ref{thm:coexistenceY} are prepared.
\begin{proof}[Proof of Theorem \ref{thm:coexistenceY}]
	From \eqref{eq:probabilitygettingstuck} and the subsequent discussion follows that $\Nstuck < \infty$ almost surely. Lemma \ref{lem:expmomentreachint} implies that $\wctny{\Nstuck}<\infty$ almost surely. By Lemma~\ref{lem:levysandwich}, 
		$\fray_t \leq \varepsilon \exp(-m(t-\wctny{\Nstuck}))$ for all $t \geq \wctny{\Nstuck}$ if $L_0^{(\Nstuck)}<\varepsilon/2$, and $\fray_t \geq 1-\varepsilon \exp(-m(t-\wctny{\Nstuck}))$ for all $t \geq \wctny{\Nstuck}$ if $1-L_0^{(\Nstuck)}<\varepsilon/2$. Since $\exp(-m(t-\wctny{\Nstuck}))=\exp(-\rho t) \exp( m\wctny{\Nstuck}-(m-\rho) t)$ and $m \wctny{\Nstuck}/(m-\rho) \geq \wctny{\Nstuck}$, we deduce that either $\fray_t \in [0,e^{-\rho t}]$ for all $t \geq m \wctny{\Nstuck}/(m-\rho)$, or $\fray_t \in [1-e^{-\rho t},1]$ for all $t \geq m \wctny{\Nstuck}/(m-\rho)$. Since $\fray\sim Y$ under $\P_y$, the first statement in Theorem \ref{thm:coexistenceY} is true and $$\P_y(Y_t\in [e^{-\rho t},1-e^{-\rho t}])=\P(\fray_t\in [e^{-\rho t},1-e^{-\rho t}])\leq \P(\wctny{\Nstuck}>(m-\rho) t / m).$$
	Hence, to complete the proof, we have to bound $\P(\wctny{\Nstuck}>(m-\rho) t / m)$. To this end, we check that the assumptions of Lemma \ref{unificationlemma} are satisfied for $T:=\wctny{\Nstuck}$, $(T^1_n)_{n \geq 0}:=(\wctny{n})_{n\geq0}$ and $(T^2_n)_{n \geq 0}:=(\whtnl{n})_{n\geq 0}$. Condition (i) is clearly satisfied. Condition (iii) is satisfied by the definition of $\Nstuck$ after \eqref{eq:probabilitygettingstuck} and by $T:=\wctny{\Nstuck}$. We then see from \eqref{eq:probabilitygettingstuck} that Condition (ii) holds with $c:=\min_{i\in\{0,1\}}\P(\inf_{t\in [0,\infty)}(L_t^{i,1}-mt)>-\log(2))$. If for some $\gamma>0$, $s_\gamma(r^{-2}\Lambda(\dd r))<\infty$, $s_\gamma(\mu)<\infty$, and $s_\gamma(\bar{\mu})<\infty$, then by Lemma \ref{lem:momentescapeintstrong}, we can choose $\lambda>0$ and $C\geq 1$, {independent of the choice of $y$}, such that for all $k \geq 1$, $\E[e^{\lambda \wctny{k}}\ind{\Nstuck\geq k}]\leq C^k$. Without loss of generality we assume $C>1$. Thus, Condition (iv) is satisfied with $a:=1$, $\lambda$ as above and $K:=C$. If for some $\gamma>0$, $\rW_\gamma(r^{-2}\Lambda(\dd r))<\infty$, $\rW_\gamma(\mu)<\infty$, and $\rW_\gamma(\bar{\mu})<\infty$, then we fix $\alpha\in (0,\gamma)$. By Lemma~\ref{lem:tyt}, we can choose $\alpha'\in (\alpha,\gamma)$ and $C'\geq 1$, {independent of the choice of $y$, such that $\P(\wctny{k}>t,\Nstuck\geq k)\leq C'k^{1+\alpha'}t^{-\alpha'}$}. Using this, we get that Condition (iv)' is satisfied with $\ell:=\alpha'$ and $a:=C'$. 
In both cases, we can thus apply Lemma \ref{unificationlemma} to concludes the proof of Theorem \ref{thm:coexistenceY}.
\end{proof}	

\begin{appendix}

\section{Existence, uniqueness and stochastic monotonicity}\label{sec:ExistenceUniqueness}
In this section, we show the existence of a pathwise unique strong solution in the unit interval for the SDEs \mofe{\eqref{eq:SDEWFP} and~\eqref{eq:SDE_Y}}. We also prove that the corresponding processes are stochastically monotone. \mofe{To this end}, we invoke~\citep[Thm.~6.1]{li2012strong} and~\citep[Thm.~2.3]{DL12}. For the sake of completeness, we restate  those results in a \mofe{weaker form that is well-adapted to our setting. More precisely, consider $b\in\Cs^{1}([0,1])$, and for $i\in\{1,2\}$, let $D_i \subset \R^{n_i}$ be closed, $\mu_i\in\cM(D_i)$ and $g_i:[0,1]\times D_i\to\R$ be Borel measurable. For $i\in\{1,2\}$, let $N_i(\dd t,\dd u)$ be a Poisson measure on $[0,\infty)\times D_i$ with intensity $\dd t \times \mu_i(\dd u)$. Assume that $N_1$ and $N_2$ are independent.  Let $(\cF_t)_{t\geq 0}$ be the augmented natural filtration generated by ${N_1}$ and $N_2$. We are interested in SDEs of the form
\begin{equation}\label{eq:SDEgeneric}
	\dd Z_t = b(Z_{t})\dd t + \int_{D_1} g_1(Z_{t-}, u) N_1(\dd t,\dd u)+\int_{D_2} g_2(Z_{t-}, u) N_2(\dd t,\dd u). 
\end{equation}
By a \emph{strong solution} of the SDE \eqref{eq:SDEgeneric}
we mean a \cadlag and~$(\cF_t)_{t\geq 0}$-adapted real process~$(Z_t)_{t\geq 0}$ satisfying~\eqref{eq:SDEgeneric} almost surely for all~$t\geq 0$. We say that \emph{pathwise uniqueness} holds for~\eqref{eq:SDEgeneric} if for any two solutions~$Z^1$ and~$Z^2$, satisfying  $Z^1_0=Z^2_0$ , we have $Z^1_t=Z^2_t$ almost surely for all~$t\geq 0$. We formulate now \citep[Thm. 6.1]{li2012strong} and \citep[Thm.~2.3]{DL12} in the form we will use it in this section.}
\begin{theo}(\citep[Thm. 6.1]{li2012strong},\citep[Thm.~2.3]{DL12}) \label{thm:lipu} Assume that, for $i\in\{1,2\}$, there is $D_{i}^*\subseteq D_i$ with $\mu_i(D_i\setminus D_{i}^*)<\infty$ such that the following conditions are satisfied. \begin{enumerate}
		\item There is a constant $C>0$ such that, for any $z,y\in[0,1]$, 
		\begin{align*}
	& \sum_{i=1}^2\int_{D_i^*} \lvert g_i(z,u)-g_i(y,u)\rvert \mu_i(\dd u)\leq C \lvert z-y\rvert,
\end{align*}
\item There is a constant $K\geq 0$ such that for all $z\in[0,1]$,
			\begin{equation*}
	\sum_{i=1}^2\int_{D_i^*} \lvert g_i(z,u)\rvert \mu_i(\dd u)\leq K.
		\end{equation*}	
		\item For $i\in\{1,2\}$ and every $u\in D_i$ and $z\in[0,1]$, we have $z+g_i(z,u)\in [0,1]$. Moreover, $b(0)\geq0$, $b(1)\leq 0$, and for $i\in\{1,2\}$ and $u\in D_i$, ${g_i}(0,u)={g_i}(1,u)=0$.

	\end{enumerate}
	Then for any $z_0\in [0,1]$, there is a pathwise unique strong solution $(Z_t)_{t\geq0}$ to~\eqref{eq:SDEgeneric} such that $Z_0=z_0$. The solution is such that $Z_t\in[0,1]$ for all $t\geq 0$.

Furthermore, if for $i\in\{1,2\}$ and any $u\in D_i$ the function $z\mapsto z+{g_i}(z,u)$ is non-decreasing, then for any two solutions $Z$ and $Z^{'}$ of~\eqref{eq:SDEgeneric} satisfying $0\leq Z_0\leq Z_0^{'}\leq 1$ we have $\P(Z_t\leq Z_t^{'} \ \text{for all }t\geq 0)=1$; we say that the SDE satisfy the monotonicity property.
\end{theo}
\begin{remark}
To deduce the previous theorem from \citep[Thm. 6.1]{li2012strong} and \citep[Thm.~2.3]{DL12}, one has first to extend the functions $b,{g_i}$, $i\in\{1,2\}$, in the variable $z$ from $[0,1]$ to $\R$ by setting ${g_i}(z,\cdot)$ equal to $0$ for $z\notin[0,1]$, and ${b}(z)\coloneqq{b}(0)$ for $z<0$ and ${b}(z)\coloneqq {b}(1)$ for $z>1$. Note also that \citep[Thm. 6.1]{li2012strong} provides the existence of a non-negative solution $Z$; to show that $Z$ has values on $[0,1]$, one applies \citep[Thm. 6.1]{li2012strong} to the SDE satisfied by $1-Z$.
\smallskip

Condition (3) on $g_i$, $i\in\{1,2\}$, is not directly as in~\citep[Thm. 6.1]{li2012strong}. The condition there is that for all $z\in \R$, $z+{g_i}(z,\cdot)\in[0,1]$, which is clearly not satisfied here for $z\notin[0,1]$. However, the proof of their result relies on \citep[Prop. 2.1]{FuLi10} and the corresponding proof works without modifications under the alternative Condition (3) in Theorem~\ref{thm:lipu}.
\end{remark}
We now apply Theorem~\ref{thm:lipu} to the SDE~\eqref{eq:SDEWFP}. 
\begin{prop}[Existence, pathwise uniqueness and monotonicity for SDE~\eqref{eq:SDEWFP}]\label{prop:existuniqueX}
Let $(\Lambda,\mu,\sigma)\in \Theta$. For any~$x_0\in[0,1]$, there is a pathwise unique strong solution~$X$  to~\eqref{eq:SDEWFP} such that $X_0=x_0$ and $X_t\in [0,1]$ for all~$t\geq 0$.
Moreover, if $X$ and $X'$ are the solutions to~\eqref{eq:SDEWFP} such that $0\leq X_0=x_0\leq x_0'=X_0'\leq 1$, then
	\begin{equation}
	{\P_{x_0,x_0'}}(X_t\leq X_t'\ \text{ for all }t\geq 0)=1. \label{eq:monotinictyX}
	\end{equation}\end{prop}
\begin{proof}
Let $D_1=D_1^*=[0,1]^2$, $D_2=D_2^*=[-1,1]$. Consider the measures on $D_1$ and $D_2$ given respectively via  $$\mu_1(\dd r,\dd u)=\1_{\{r\in(0,1]\}}r^{-2}\Lambda(\dd r)\times \dd u,\quad\mu_2(\dd r)=\1_{\{r\in(-1,1)\}}\mu(\dd r).$$ Set $b(x)\coloneqq x(1-x)\sel(x)$, for $x\in[0,1]$. We also define ${g}_2(x,r)\coloneqq rx(1-x)$ for $x\in[0,1]$ and $r\in [-1,1]$. Similarly, we consider ${g}_1(x,r,u)\coloneqq r((1-x)\1_{\{x\geq u>0\}}-x \1_{\{x<u\}})$, for $x\in[0,1]$ and $(r,u)\in [0,1]^2$. Clearly, with this choice of parameters the SDE \eqref{eq:SDEgeneric} becomes~\eqref{eq:SDEWFP}. 
Now, we verify the conditions of Theorem~\ref{thm:lipu}. 
First, one can easily show that for $0\leq z<y\leq 1$, $|g_1(z,r,u)-g_1(y,r,u)|\leq 2r (|z-y|+ \1_{\{z\leq u\leq y\}})$. Similarly, we have $\lvert {g}_2(z,r)-g_2(y,r)\rvert\leq \lvert r\rvert \times \lvert z-y\rvert$. Using these two inequalities and the fact that $(\Lambda,\mu,\sigma)\in\Theta$ yields condition (1).
Moreover, we have $|{g}_1(z,r,u)|\leq r$ and
$|{g}_2(z,r)|\leq |r|$, which implies that condition (2) holds. Note also that $b(0)=b(1)=0$ and $g_1(0,r,u)=g_1(1,r,u)=0$ and $g_2(0,r)=g_2(1,r)=0$. Moreover, straightforward calculation shows that the functions
$z\mapsto r \1_{\{z\geq u>0\}}+(1-r)z$ and $z\mapsto z+rz(1-z)$ are non-decreasing. In particular, for all $z\in[0,1]$, we have $z+g_1(z,r,u)\in[0,1]$ and $z+g_2(z,r)\in[0,1]$. Therefore, condition (3) and the condition for monotonicity are satisfied. Hence, existence, pathwise uniqueness, and stochastic monotonicity of strong solutions of SDE \eqref{eq:SDEWFP} and the positive invariance of $[0,1]$ follow from Theorem~\ref{thm:lipu}. 
\end{proof}
The next result states that the solutions of SDE \eqref{eq:SDEWFP} enjoy the Feller property. 
\begin{prop}[Feller property for SDE \eqref{eq:SDEWFP}]\label{Feller}
Let $(\Lambda,\mu,\sigma)\in \Theta$. For $x\in[0,1]$, denote by $(X_t^x)_{t\geq 0}$ the solution of SDE \eqref{eq:SDEWFP} such that $X_0^x=x$. Define for $f\in\Cs([0,1])$, 
$$T_t f(x)\coloneqq\E[f(X_t^x)],\quad x\in[0,1].$$
Then, $(T_t)_{t\geq 0}$ is a Feller semigroup. 
In addition, the infinitesimal generator~$\cL$ of~$X$ acts on $f\in \Cs^2([0,1])$ via  
$\cL f(x)=(\cL_\sigma +\cL_{\mu}+\cL_{\Lambda})f(x)$, $x\in[0,1]$, where 
\begin{align*}
			\cL_\sigma f(x)&\coloneqq x(1-x)\sigma(x)f'(x),\quad\cL_{\mu}f(x)\coloneqq \int_{(-1,1)}(f(x+rx(1-x))-f(x))\mu(\dd r),\\
			\cL_{\Lambda}f(x)&\coloneqq \int_{(0,1]\times (0,1)}\big(f(x + r(\1_{\{u\leq x\}}(1-x)-\textbf{1}_{\{u>x\}} x ))-f(x)\big)\frac{\Lambda(\dd r)}{r^2}\dd u. 
	\end{align*}
\end{prop}
\begin{proof}
Thanks to the monotonicity property of SDE \eqref{eq:SDEWFP} and \mofe{It\^o's formula}, we have for $0\leq x\leq z\leq 1$, 
\modifbis{$$\E[\lvert X_t^z-X_t^x\rvert]=\E[X_t^z-X_t^x]=z-x+\int_0^t \left( \tilde \cL i(X_s^z)-\tilde \cL i(X_s^x)\right)\dd s,$$
where $i:[0,1]\to[0,1]$ denotes the identity function, and $\tilde \cL:=\cL_\sigma +\cL_{\mu}+\cL_{\Lambda}$, with $\cL_\sigma,\cL_{\mu},\cL_{\Lambda}$ as in the statement of the proposition. A straightforward calculation yields
$$\tilde \cL i(y)=y(1-y)\sigma(y)+y(1-y)\int_{(-1,1)} r \mu(\dd r).$$}
Since $\sigma\in\Cs^1([0,1])$ and $\int_{(-1,1)}|r|\mu(\dd r)<\infty$, we can conclude that there is $C>0$ such that
$$\E[\lvert X_t^z-X_t^x\rvert]\leq |z-x|+ C\int_0^t \E[\lvert X_s^z-X_s^x\rvert]\dd s.$$
Hence, using Gronwall's lemma we obtain $\E[\lvert X_t^z-X_t^x\rvert]\leq |z-x|e^{Ct}$. As a consequence, for any $f\in\Cs^1([0,1])$, we have for $z,x\in[0,1]$, 
$$\lvert T_t f(z)-T_t f(x)\rvert \leq \E[\lvert f(X_t^z)-f(X_t^x)\rvert]\leq \lVert f' \rVert_{\infty}\E[\lvert X_t^z-X_t^x\rvert]\leq \lVert f' \rVert_{\infty} e^{Ct}|z-x|.$$
Now, let $g\in\Cs([0,1])$ and let $(f_n)_{n\geq 1}$ be a sequence of functions in $\Cs^1([0,1])$ such that $f_n\to g$ uniformly in $[0,1]$ as $n\to\infty$. Then, using triangular inequality, we get
\begin{align*}
\lvert T_t g(z)-T_t g(x)\rvert&\leq \lvert T_t g (z)-T_t f_n(z)\rvert+ \lvert T_t f_n (z)-T_t f_n(x)\rvert+\lvert T_t f_n (x)-T_t g(x)\rvert\\
&\leq 2 \lVert f_n -g \rVert_{\infty}+ \lVert f_n' \rVert_{\infty} e^{Ct}|z-x|.
\end{align*}
Thus, for all $x\in[0,1]$ and $n\geq 1$, we have
$$\limsup_{z\to x}\lvert T_t g(z)-T_t g(x)\rvert\leq 2 \lVert f_n -g \rVert_{\infty}.$$
Letting $n\to\infty$ in the previous inequality, shows that the map $x\mapsto T_t g(x)$ is continuous, i.e. $T_t$ maps $\Cs([0,1])$ into $\Cs([0,1])$. It remains to prove that for any $g\in\Cs([0,1])$, $T_tg\to g$ uniformly as $t\to 0$. We prove this for any $g\in\Cs^2([0,1])$; the result for all $g\in\Cs([0,1])$ follows via an approximation argument similar to the one we have just used. 

Fix $g\in\Cs^2([0,1])$. Using Taylor expansions of order one and two around $x$, one can show that there are constants $C_1,C_2>0$ (depending on $\lVert g'\rVert_\infty$ and $\lVert g''\rVert_\infty$) such that 
$$\lvert g(x+rx(1-x))-g(x)\rvert\leq C_1\lvert r\rvert \quad\textrm{and}\quad \lvert xg(x+r(1-x))+(1-x)g(x-rx)-g(x)\rvert \leq C_2 r^2.$$ 
We conclude that there is a constant $C_3>0$ (depending on $\lVert g'\rVert_\infty$ and $\lVert g''\rVert_\infty$) such that, for all $x\in[0,1]$, $\lvert \cL g(x)\rvert \leq C_3.$ Using this and \mofe{It\^o's formula}, we obtain
\modifbis{$$\lvert T_t g(x)-g(x)\rvert=\lvert \E[g(X_t^x)]-{g(x)}\rvert\leq \int_0^t \lvert \E[\tilde \cL g(X_s^x)]\rvert \dd s\leq C_3 t.$$}
Therefore,
$\lVert T_t g-g \rVert_\infty\leq C_3 t\xrightarrow[t\to 0]{}0.$
\mofe{Hence, $T_t$ is Feller. The expression of the generator~$\cL$ is obtained via a standard application of It{\^o}'s formula (see e.g. \citep[Thm.~4.4.7]{Applebaum2009}).}
\end{proof}

We now apply Theorem~\ref{thm:lipu} to the SDE~\eqref{eq:SDE_Y}. 
\begin{prop}[Existence, pathwise uniqueness and monotonicity property for SDE~\eqref{eq:SDE_Y}]\label{lem:existuniqueY}
Let $(\Lambda,\mu,\sigma)\in \Theta$. For any~$y_0\in[0,1]$, there exists a pathwise unique strong solution~$Y\coloneqq (Y_t)_{t\geq 0}$ to~\eqref{eq:SDE_Y} such that $Y_0=y_0$ and $Y_t\in [0,1]$ for all~$t\geq 0$. 
Moreover, if $0\leq y_0\leq y_0'\leq 1$ and $Y$ and~$Y'$ are the solutions to~\eqref{eq:SDE_Y} such that $Y_0=y_0$ and $Y_0'=y_0'$, respectively, then 
	\begin{equation}
		{\P_{y_0,y_0'}}(Y_t\leq Y_t'\ \text{for all }t\geq 0)=1 .\label{eq:monotinictyY}
	\end{equation}
\end{prop}
\begin{proof}
For $y\in[0,1]$, set  $b(y)=-y(1-y)\sel(y)$. \mofe{Let $D_{1}=[0,1]^2$, $D_{2}=[-1,1]$. We consider the Poisson measures $N_1=N$ and $N_2=S$, and the functions $g_{1}(z,r,u)\coloneqq m_{r,u}(z)-z$ and $g_{2}(z,r)\coloneqq s_r(z)-z)$) so that SDE \eqref{eq:SDEgeneric} becomes \eqref{eq:SDE_Y}. Consider now the sets $D_{1}^*=[0,1/2]\times[0,1]\subset D_{1}$ and $D_{2}^*=[-1/2,1/2]\subset D_{2}$. Since 
$$\int_{D_{1}\setminus D_{1}^*}\frac{\Lambda(\dd r)}{r^2}\dd u<\infty\quad\textrm{and}\quad \int_{D_{2}\setminus D_{2}^*}\mu(\dd r)<\infty,$$
checking conditions (1) and (2) in Theorem~\ref{thm:lipu} amounts to prove that
$$\int_{D_{1}^*}\lvert m_{r,u}(z)-z\rvert \frac{\Lambda(\dd r)}{r^2}\dd u+\int_{D_{2}^*}\lvert s_r(z)-z \rvert \mu(dr)\leq C,$$
 and $$\int_{D_{1}^*}\lvert m_{r,u}(z)-z-(m_{r,u}(y)-y)\rvert \frac{\Lambda(\dd r)}{r^2}\dd u+\int_{D_{2}^*}\lvert s_r(z)-z-(s_r(y)-y) \rvert \mu(dr)\leq C\lvert z-y \rvert,$$
for some $C>0$}. \modifbis{Since we assume $(\Lambda,\mu,\sigma)\in \Theta$}, these inequalities follow directly from Lemmas \ref{hr2} and \ref{bsr}.  

Clearly for $r\in[-1,1]\setminus \{0\}$ and $z\in[0,1]$, $s_r(z)\in[0,1]$, $s_r(1)-1=s_r(0)=0$. Similarly, for $r,u\in(0,1)$ and $z\in[0,1]$, $m_{r,u}(z)\in[0,1]$, and $m_{r,u}(1)-1=m_{r,u}(0)=0$.
We also have $b(0)=b(1)=0$, and hence, condition (3) is satisfied.  
This already proves the existence and pathwise uniqueness of strong solutions of SDE \eqref{eq:SDE_Y} as well as the positive invariance of $[0,1]$. Since the functions $z\mapsto s_r(z)$ and $z\mapsto m_{r,u}(z)$ are non-decreasing, the monotonicity property also follows from Theorem~\ref{thm:lipu}.
\end{proof}
\begin{prop}\label{fullgenerator}
Let $(\Lambda,\mu,\sigma)\in \Theta$. Consider the linear operator $\cA$ acting on $f\in \Cs^2([0,1])$ via
\begin{align*}
\cA f(y)&= - y(1-y)\sel(y)f'(y) +\int_{(0,1]^2} \big( f(\mr_{r,u}(y)) -f(y)\big) \frac{\Lambda(\dd r)}{r^2}\dd u\\
&\quad+\int_{(-1,1)} \big (f(s_r(y))-f(y) \big )\mu(\dd r).
\end{align*}	
Then the solution $Y$ of \eqref{eq:SDE_Y} is the unique solution of the martingale problem $(\cA,\Cs^2([0,1]))$. Moreover, $Y$ is strongly Markovian. 
\end{prop}
\begin{proof}
A standard application of It{\^o}'s formula (see e.g. \citep[Thm.~4.4.7]{Applebaum2009}) shows that $Y$ is a solution of the martingale problem. Is it proven in Proposition~\ref{lem:existuniqueY} that $Y$ exists, is pathwise unique, and remains in~$[0,1]$. It follows from \citep[Thm.~2.3]{Kurtz2011} that every solution to the martingale problems is a weak solution to the SDE \eqref{eq:SDE_Y}. Because pathwise uniqueness implies weak uniqueness \citep[Thm.~1]{BLP15}, the martingale problems are well-posed, completing the proof of the first statement.

Note also that $\cA$ maps $\Cs^2{[0,1]}$ to $\Cs{[0,1]}$. This follows from the continuity of the maps $y\mapsto m_{r,u}(y)$ and $y\mapsto s_r(y)$, and the theorem of continuity for parameter-dependent integrals. Thus, using the well-posedness of the martingale problem $(\cA,\Cs^2([0,1]))$, \citep[Thm. 4.4.6]{EK86} and \citep[Rem. 2.5]{Kurtz1998}, we conclude that $y\mapsto\P_y(Y\in B)$ is measurable, for any  Borel set $B\subset\Db([0,\infty))$ of c\`adl\`ag functions. Hence, the strong Markov property for $Y$ is obtained applying \citep[Thm. 4.4.2]{EK86}.
\end{proof}

We end this section with a result that will help us get rid of the small and large jumps of $X$ and $Y$ in Appendix \ref{sec:proofsiegdual}. For this, fix $z\in[0,1]$, let  $(\Lambda,\mu,\sigma)\in \Theta$ and consider for $0\leq \varepsilon<\rho\leq 1$ the solution $Z^{\varepsilon,\rho}$ of
\begin{align*}
	\dd Z^{\varepsilon, \rho}_t  &=b(Z^{\varepsilon, \rho}_t)\dd t+\int_{(\varepsilon,\rho)\times (0,1)} \!\!g_1(r,u,Z^{\varepsilon, \rho}_{t-}) N(\dd t, \dd r,\dd u)+\int_{\{\varepsilon\leq|r|\leq \rho\}} \!\!g_2(r,Z^{\varepsilon, \rho}_{t-}) S(\dd t,\dd r), 
\end{align*} 
with $Z^{\varepsilon,\rho}_0=z$, where $b:\pm x\mapsto x(1-x)\sigma(x)$, and the functions $g_1$ and $g_2$ are such that for any $\rho<1$, there is a constant $C_\rho>0$ such that for any $r\in(0,\rho)$, $a\in\{-1,1\}$ and $z,w \in (0,1)$
\begin{align}
\int_0^1 |g_1(r,u,z)| \dd u \leq C_{\rho} r,& \qquad \int_0^1 |g_1(r,u,z)-g_1(r,u,w)| \dd u  \leq C_{\rho} |z-w| r, \label{hypg1} \\
|g_2(ar,z)| \leq C_{\rho} r,& \qquad\ |g_2(ar,z)-g_2(ar,w)|  \leq C_{\rho} |z-w|  r, \label{hypg2}
\end{align} 
A direct application of Theorem \ref{thm:lipu} shows that, under these conditions, $Z^{\varepsilon,\rho}$ is well-defined for any choice of $0\leq \varepsilon<\rho\leq 1$.
\begin{prop} \label{approxz}
For any $t \geq 0$ and $\rho \in [1/2,1)$, $Z^{\varepsilon, \rho}_t$ converges in distribution to $Z^{0, \rho}_t$ as $\varepsilon\to0$. For any $t \geq 0$, $Z^{0, \rho}_t$ converges in distribution to $Z^{0, 1}_t$ as $\rho\to 1$.  
\end{prop} 
\begin{proof}
Subtracting the SDEs defining $Z^{0,\rho}$ and $Z^{\varepsilon,\rho}$ and taking absolute values, we get that almost surely surely, 
\begin{align*}
&|Z^{0, \rho}_t - Z^{\varepsilon, \rho}_t| \leq  \!\int_{[0,t] \times (0,\varepsilon] \times (0,1)}\!\! \big|g_1(r,u,Z^{0, \rho}_{s-})\big|N(\dd s, \dd r, \dd u) +\! \int_{[0,t] \times [-\varepsilon,\varepsilon]}\!\! \big|g_2(Z^{0, \rho}_{s-})\big|S(\dd s, \dd r) \\
 &\qquad+ \int_{[0,t] \times (\varepsilon,\rho) \times (0,1)} \big|g_1(r,u,Z^{0, \rho}_{s-})-g_1(r,u,Z^{\varepsilon, \rho}_{s-})\big|N(\dd s, \dd r, \dd u) \\
 &\qquad+ \int_{[0,t] \times (-\rho,\rho)\setminus[-\varepsilon,\varepsilon]} \big|g_2(r,Z^{0, \rho}_{s-})-g_2(r,Z^{\varepsilon, \rho}_{s-})\big|S(\dd s, \dd r) + \int_0^t \big | b(Z^{0, \rho}_s) - b(Z^{\varepsilon, \rho}_s) \big | \dd s. 
\end{align*}
Taking expectations, using the compensation formula, \eqref{hypg1}, \eqref{hypg2} and that $b\in \Cs^1$,  we obtain 
\begin{align*}
\mathbb{E}_z \big [ |Z^{0, \rho}_t - Z^{\epsilon, \rho}_t| \big ] & \leq C_{\rho} \delta(\varepsilon) t + (C_{\rho} \delta(\rho)+ \lVert b' \rVert) \int_0^t \mathbb{E}_z \big [ \big | Z^{0, \rho}_s - Z^{\epsilon, \rho}_s \big | \big ] \dd s. 
\end{align*}
where $\delta(a)\coloneqq  ( \int_{(0,a]} r^{-1} \Lambda(\dd r) + \int_{[-a,a]} |r| \mu(\dd r))$.
Thus, using Gronwall's lemma we get
\begin{align*}
& \mathbb{E}_z \big [ |Z^{0, \rho}_t-Z^{\varepsilon, \rho}_t| \big ] \leq C_\rho \delta(\varepsilon) \,t\,e^{\left(C_\rho\delta(\rho)+\lVert b'\rVert\right)\,t}.
\end{align*}
Since $\delta(\varepsilon)\to 0$ as $\varepsilon\to 0$, the first result follows. The second convergence result follows noticing that
\begin{align}\label{unifconv}
\mathbb{P}_z(Z^{0, \rho}_t \neq Z^{0, 1}_t) & \leq \mathbb{P}(N([0,t] \times [\rho,1) \times (0,1))+S([0,t] \times (-1,-\rho] \cup [\rho,1))\geq 1)\nonumber \\
& =1-e^{-t(\mu((-1,-\rho] \cup [\rho,1))+\int_{[\rho,1)} r^{-2} \Lambda(\dd r))}\to 0\quad\textrm{as $\rho\to 1$}. 
\end{align}
\end{proof}
\section{Proof of the Siegmund duality} \label{sec:proofsiegdual}

In this section we show Theorem \ref{thm:siegmund_duality}, which states the Siegmund duality between the processes $X$ and $Y$. Because $X$ is stochastically monotone by Prop.~\ref{prop:existuniqueX} and the map $x\mapsto \P_x(X_t\geq y)$ is right-continuous for every~$y$ (follows from the Feller property of $X$ in Prop.~\ref{Feller}, see~\citep[p. 917]{Siegmund1976}), we conclude from \cite[Thm. 1]{Siegmund1976} that there is a unique Markov Process  $X^D=(X_t^D)_{t\geq 0}$ which is
Siegmund dual to $X$, i.e. $\P_x(X_t\geq y)=\P_y(x\geq X_t^D).$
Therefore, it remains to show that $X^D$ and $Y$ are equal in distribution.

One of the main issues for characterizing the Siegmund dual of a given process is that the duality function $(x,y)\mapsto \ind{x\geq y}$ usually does not belong to the domain of the generator (unless the state space is discrete). Kolokoltsov developed in \cite{Kolo11, Kolokoltsov2013} methods to circumvent this problem (see also \cite[Thm. 6.1]{CF21+} for a more direct approach for one-dimensional diffusions). One is based on a discretization argument, and the other one, on a careful analysis of the adjoint of the generator of the Siegmund dual process. In our case, those approaches are rather difficult to apply due to integrablity problems, and the lack of prior regularity of the semigroup associated to $X^D$. For this reason we adopt a different strategy. We first prove the duality relation if we remove from $X$ and $Y$ the jumps that are smaller than some $\varepsilon>0$ and larger than some $\rho<1$, and then use a limit argument as $\varepsilon\to 0$ and $\rho\to 1$ to conclude. More precisely, for $0<\varepsilon<\rho<1$, let $X^{\varepsilon,\rho}$ and $Y^{\varepsilon,\rho}$ be the solutions of the SDEs \eqref{eq:SDEWFP} and \eqref{eq:SDE_Y}, respectively, with $(N,S)$ being replaced by the Poisson measures $(N_{\varepsilon,\rho},S_{\varepsilon,\rho})$ defined via $N_{\varepsilon,\rho}(\dd t,\dd r,\dd u)=N(\dd t,\dd r,\dd u)\ind{r\in(\varepsilon,\rho)}$ and $S_{\varepsilon,\rho}(\dd t,\dd r)=S(\dd t, \dd r)\ind{|r|\in(\varepsilon,\rho)}$
(existence and uniqueness follows from Theorems \ref{prop:existuniqueX} and \ref{lem:existuniqueY}). The next result states the duality between $X^{\varepsilon,\rho}$ and $Y^{\varepsilon,\rho}$.
\begin{prop}\label{SDweak}
For all $0<\varepsilon<\rho<1$, $t\geq 0$, and $x,y\in[0,1]$, we have
\begin{equation*}
\P_x(X^{\varepsilon,\rho}_t\geq y)=\P_y(x\geq Y_t^{\varepsilon,\rho}).
	\end{equation*} 
\end{prop}
\begin{proof}
Let us first understand the (deterministic) evolution of $X^{\varepsilon,\rho}$ and $Y^{\varepsilon,\rho}$ between jumping times. For this, let $\theta(x)=\sigma(x)x(1-x)$, $x\in[0,1]$, and consider for $a,b\in[0,1]$ the ODEs 
\begin{align}
\frac{\dd }{\dd t} x_t(a) &= \theta(x_t(a)), \quad x_0(a)=a,\quad\textrm{and}\quad \frac{\dd}{\dd t} y_t(b) = -\theta(y_t(b)), \qquad y_0(b)=b. \label{deterministicduality2}
\end{align}
Since the function $x\mapsto \theta(x)$ is Lipschitz in $[0,1]$ and vanishes at $0$ and $1$, one can infer from Cauchy-Lipschitz Theorem that for any $a, b\in[0,1]$, the ODEs in \eqref{deterministicduality2} have unique global solutions, which remain in $[0,1]$. Thus, the functions $a \mapsto x_t(a)$ and $b \mapsto x_t(b)$ are well-defined on $[0,1]$, and they are increasing and of class $\mathcal{C}^1$ (this follows from classical properties of flows of ODEs). This yields, in particular, the existence of their inverses, $a\mapsto x_t^{-1}(a)$ and $b\mapsto y_t^{-1}(b)$. Note that  
\[ a= x_t(a) - \int_0^t \theta(x_s(a)) \dd s. \]
Now fix $b \in [0,1]$ and use this identity for $a:=x^{-1}_t(b)=x^{-1}_s(x^{-1}_{t-s}(b))$, to obtain 
\[ x^{-1}_t(b)=  b- \int_0^t \theta(x^{-1}_{t-s}(b)) \dd s=b - \int_0^t \theta(x^{-1}_{s}(b)) \dd s. \]
Therefore $t \mapsto x^{-1}_t(b)$ satisfies the same ODE as $t \mapsto y_t(b)$ so, by uniqueness, they are equal. This implies the following deterministic version of the Siegmund duality
\begin{equation}\label{detsieg}
x_t(a)\geq b \iff a\geq y_t(b),\qquad t\geq0, a,b\in[0,1].
\end{equation}
By construction, if $T<T'$  are consecutive jump times of $(N_{\varepsilon,\rho},S_{\varepsilon,\rho})$, we have for $t<T'-T$ 
\begin{equation}\label{betjumps}
X^{\varepsilon,\rho}_{T+t}=x_t(X^{\varepsilon,\rho}_{T})\quad\textrm{and}\quad Y^{\varepsilon,\rho}_{T+t}=y_t(Y^{\varepsilon,\rho}_{T}).
\end{equation}
Now, we consider the effect of the jumps. Set $\tilde{m}_{r,u}(x)\coloneqq  x + r(\textbf{1}_{\{u\leq x\}}(1-x)-\textbf{1}_{\{u>x\}} x )$ and $\tilde{s}_{r}(x)\coloneqq  x + rx(1-x)$, and recall that (see \eqref{eq:medianexplanation} and the subsequent explanation) 
\begin{align}
\tilde{m}_{r,u}(x) \geq y \Leftrightarrow x \geq \mr_{r,u}(y) \ \ \ \text{and} \ \ \ \tilde{s}_{r}(x) \geq y \Leftrightarrow x \geq s_{r}(y). \label{invatjumps}
\end{align}

Fix $t>0$ and $0<\varepsilon<\rho<1$. Let $K\coloneqq N_{\varepsilon,\rho}([0,t] \times (0,1)^2)+S_{\varepsilon,\rho}([0,t] \times (-1,1))$ and let $T_1 < \cdots < T_K$ be the time components of the $K$ jumps on $[0,t]$. For each $i \in \{1,\ldots,K\}$, we set $\phi_{}^i(\cdot) := \tilde{m}_{R_i,U_i}(\cdot)$ and $\psi_{}^i(\cdot) := \mr_{R_i,U_i}(\cdot)$ if $T_i$ is an arrival time of $N_{\varepsilon,\rho}$ with associated jump $(T_i,R_i,U_i)$, and we set $\phi_{}^i(\cdot) := \tilde{s}_{R_i}(\cdot)$ and $\psi_{}^i(\cdot) := s_{R_i}(\cdot)$ if $T_i$ is an arrival time of $S_{\varepsilon,\rho}$ with associated jump $(T_i,R_i)$. Combining \eqref{betjumps} with the effect of the jumps, we get
\begin{align}
X^{\varepsilon,\rho}_t &:=(x_{t-T_K} \circ \phi_{}^{K} \circ x_{T_K-T_{K-1}} \circ \cdots \circ \phi_{}^{1} \circ x_{T_1})(x),\nonumber \\
Y^{\varepsilon,\rho}_t &:=(y_{t-T_K} \circ \psi_{}^{K} \circ y_{T_K-T_{K-1}} \circ \cdots \circ \psi_{}^{1} \circ y_{T_1})(y).\label{exprxepsilon}  
\end{align}
It is implicit in \eqref{exprxepsilon} that $X^{\varepsilon,\rho}_t=x_{t}(x)$ and $Y^{\varepsilon,\rho}_t=y_{t}(y)$ if $K=0$. By the time-homogeneity of the Poisson processes $N$ and $S$ we have 
\begin{align}
Y^{\varepsilon,\rho}_t \overset{(d)}{=} \tilde Y^{\varepsilon,\rho}_t :=(y_{T_1} \circ \psi_{}^{1} \circ \ldots \circ y_{T_K-T_{K-1}} \circ \psi_{}^{K} \circ y_{t-T_K})(y). \label{expryepsilontilde}
\end{align}
Combining the expressions \eqref{exprxepsilon} and \eqref{expryepsilontilde} of $X^{\varepsilon,\rho}_t$ and $\tilde Y^{\varepsilon,\rho}_t$ with \eqref{detsieg} and \eqref{invatjumps}, we infer that for any choice of starting values  $x,y \in [0,1]$, we have almost surely $X^{\varepsilon,\rho}_t \geq y \Leftrightarrow x \geq \tilde Y^{\varepsilon,\rho}_t$. Since $Y^{\varepsilon,\rho}_t\overset{(d)}{=} \tilde{Y}^{\varepsilon,\rho}_t$, the result follows.
\end{proof}
The next lemma will help us pass to the limit when $\varepsilon\to 0$ and $\rho\to 1$ in the previous result.
\begin{lemme}\label{passlimit}
For any $t\geq 0$, we have convergence in distribution of $X^{\varepsilon,\rho}_t$ to $X_t$ and of $Y^{\varepsilon,\rho}_t$ to $Y_t$ as $\varepsilon\to 0$ and then $\rho\to 1$. Moreover, the map $y\mapsto \P_y(Y_t\in\cdot)$ is continuous for the topology of convergence in distribution.
\end{lemme}
\begin{proof}
According to Proposition \ref{approxz}, to prove the convergence results, we only need to show that the coefficients of the SDEs defining $X$ and $Y$ satisfy the assumptions \eqref{hypg1} and \eqref{hypg2}. Lemma \ref{hr2} and the estimates from the proof of Proposition \ref{prop:existuniqueX} yield \eqref{hypg1} for $Y$ and $X$, respectively. Lemma \ref{bsr} and the estimates from the proof of \ref{prop:existuniqueX} yield \eqref{hypg2} for $Y$ and $X$, respectively. For the proof of the continuity of $y\mapsto \P_y(Y_t\in\cdot)$, it is sufficient to show that for every $f\in\Cs^1$ the function $y\mapsto \E_y[f(Y_t)]$ is continuous. Proceeding analogously to the first part of the proof of Proposition \ref{Feller} and using Lemmas \ref{hr2} and \ref{bsr}, one can prove that for any $\rho\in(0,1)$, $y\mapsto \E_y[f(Y_t^{0,\rho})]$ is continuous. Hence, using \eqref{unifconv} we see that $y\mapsto \E_y[f(Y_t)]$ is the uniform limit as $\rho\to0$ of $y\mapsto \E_y[f(Y_t^{0,\rho})]$, and hence it is continuous, which ends the proof.
\end{proof}
 
Now, we can proceed with the proof of the Siegmund duality.
\begin{proof}[Proof of Theorem \ref{thm:siegmund_duality}]
Let $f,g \in C([0,1])$. Set $F(x):=\int_x^1f(y)\dd y$ and $G(x):=\int_0^xg(y)\dd y$. Note that if two processes $Z^1$ and $Z^2$ on $[0,1]$ are Siegmund dual then 
\begin{align}
\int_0^1 f(z) \mathbb{E}_z[G(Z^1_t)] \dd z & = \int_0^1  \int_0^1 f(z) g(w) \mathbb{P}_z (w \leq Z^1_t) \dd w \dd z \nonumber \\ 
& = \int_0^1  \int_0^1 f(z) g(w) \mathbb{P}_w (Z^2_t \leq z) \dd w \dd z = \int_0^1 \mathbb{E}_w[F(Z^2_t)] g(w) \dd w. \label{siegalt}
\end{align}
Therefore, it follows by Proposition \ref{SDweak} that
\begin{align}
\int_0^1 f(z) \mathbb{E}_z[G(X^{\varepsilon, \rho}_t)] \dd z = \int_0^1 \mathbb{E}_w[F(Y^{\varepsilon, \rho}_t)] g(w) \dd w, \label{siegalteps}
\end{align}
for all $f,g \in C([0,1]$, $\rho \in [1/2,1)$ and $\varepsilon \in (0,1/2)$. Letting $\varepsilon$ go to $0$ and then $\rho$ go to $1$ in \eqref{siegalteps} and using Lemma \ref{passlimit} we get 
\begin{align}
\int_0^1 f(z) \mathbb{E}_z[G(X_t)] \dd z = \int_0^1 \mathbb{E}_w[F(Y_t)] g(w) \dd w. \label{siegalt01}
\end{align} 
Applying \eqref{siegalt} to the pair $(X,X^D)$ and combining with \eqref{siegalt01} we obtain 
\begin{align}
\int_0^1 \mathbb{E}_y[F(X^D_t)] g(y) \dd y = \int_0^1 \mathbb{E}_y[F(Y_t)] g(y) \dd y. \label{identificationxdy}
\end{align} 
We deduce that, for almost every $y \in [0,1]$, the probability distributions $\mathbb{P}_y(X^D_t \in \cdot)$ and $\mathbb{P}_y(Y_t \in \cdot)$ are equal. By Lemma \ref{passlimit}, $y \mapsto \mathbb{P}_y(Y_t \in \cdot)$ is continuous on $[0,1]$. Moreover, by Proposition B.5 and the monotonicity of the Siegmund dual, the functions $y \mapsto \mathbb{P}_y(X^D_t \in \cdot)$ and $y \mapsto \mathbb{P}_y(Y_t \in \cdot)$ are non-decreasing (in the sense of stochastic monotonicity). We thus deduce that $\mathbb{P}_y(X^D_t \in \cdot)$ and $\mathbb{P}_y(Y_t \in \cdot)$ are equal for all $y \in [0,1]$. Hence, the result follows using the Siegmund duality between $X^D$ and $X$.
\end{proof}
%

\begin{proof}[Proof of Corollary \ref{2dual}]
Let $0\leq \hat{x}< \check{x}\leq 1$, $y\in [0,1]$ and $t\geq 0$. Since the solutions of SDE \eqref{eq:SDEWFP} are stochastically monotone (see Proposition~\ref{prop:existuniqueX}), 
$\P_{\hat{x},\check{x}}(\widehat{X}_t<y\leq \check{X}_t)={\P_{\check{x}}(y\leq \check{X}_t)-\P_{\hat{x}}(y\leq \widehat{X}_t).}$
Using Theorem~\ref{thm:siegmund_duality}, we obtain
$\P_{\hat{x},\check{x}}(\widehat{X}_t<y\leq \check{X}_t)=\P_y(Y_t \leq \check{x})-\P_y(Y_t \leq \hat{x})=\P_y(\hat{x}<Y_t \leq \check{x}),$
achieving the proof of the first identity. The proof of the second identity is analogous.
\end{proof}
\section{Some auxiliary results on L{\'e}vy processes}\label{sec:app:levy}
In this section, we collect some results on L{\'e}vy processes that are useful in Sections~\ref{sec:asextinction} and~\ref{sect:coex}. These results are standard, but we could not find a reference in the \mofe{required form}. For a real-valued L{\'e}vy-process $L\coloneqq (L_t)_{t\geq 0}$ that admits a Laplace transform, denote by $\psi_L$ the corresponding Laplace exponent, i.e. if $\lambda\in \R$ is in the domain of the Laplace transform and $t\geq 0$, then $\E[e^{\lambda L_t}]=e^{t\psi_L(\lambda)}$. Moreover, define $G_L\coloneqq \sup\{t\geq 0:\, L_t\vee L_{t-}=\sup_{s\in[0,\infty)} L_s\}$. For $L$, $(\gamma_L,Q_L,\Pi_L)$ is the corresponding L{\'e}vy triplet, i.e. $\gamma_L\in \R$, $Q_L\geq 0$, and $\Pi_L$ is a L{\'e}vy measure.
\begin{lemme}\label{lem:auxlevyexp}
	Let $L$ be a real-valued L{\'e}vy process. Assume there is $\lambda_0>0$ in the domain of the Laplace transform of~$L$ such that $\psi_L(\lambda_0)<0$. Then, a) $L_t\xrightarrow{t\to\infty} -\infty$ a.s.  (in particular, $G_L$ is well defined and finite a.s.) and b) for any $r\in (0,-\psi_L(\lambda_0))$, $\E[e^{r G_L}]<\infty$.
\end{lemme}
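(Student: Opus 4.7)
The proof splits into two parts: first establish $L_t \to -\infty$ a.s.\ (which yields finiteness of $G_L$), then derive an exponential tail bound for $G_L$ by combining the ordinary Markov property at a deterministic time with Doob's maximal inequality applied to a suitable exponential supermartingale.

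For (a), I would first observe that $\psi_L(\lambda_0)<\infty$ forces $\E[L_1^+]<\infty$ via the elementary inequality $\lambda_0 x \le e^{\lambda_0 x}$ for $x\geq 0$, so $\E[L_1]$ is a well-defined element of $[-\infty,\infty)$. Jensen's inequality then gives $\lambda_0 \E[L_1] \le \log \E[e^{\lambda_0 L_1}] = \psi_L(\lambda_0) < 0$, whence $\E[L_1] < 0$. The strong law of large numbers for L\'evy processes \citep[Thm.~36.5]{KenIti1999} yields $L_t/t \to \E[L_1]$ a.s., so $L_t \to -\infty$ a.s. In particular $M \coloneqq \sup_{s\ge 0} L_s < \infty$ a.s., the supremum is attained, and $G_L$ is well defined and a.s.\ finite.

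For (b), fix $r \in (0, -\psi_L(\lambda_0))$. The key observation is the inclusion $\{G_L > t\} \subseteq \{\sup_{s>t} L_s \ge 0\}$, valid because $L_0 = 0$ implies $M \ge 0$ and the global maximum is attained after time $t$ whenever $G_L > t$. Writing $\sup_{s>t} L_s = L_t + \tilde M$, where by the Markov property at the deterministic time $t$ the variable $\tilde M \coloneqq \sup_{s\ge 0}(L_{t+s}-L_t)$ is independent of $\mathcal{F}_t$ and distributed as $M$, Chernoff's inequality yields, for any $\lambda \in (0, \lambda_0)$,
\[
\P(G_L > t) \le \P(L_t + \tilde M \ge 0) \le e^{t\psi_L(\lambda)}\, \E[e^{\lambda M}].
\]
To control $\E[e^{\lambda M}]$, I would note that $\psi_L(\lambda_0)<0$ makes $(e^{\lambda_0 L_t})_{t\ge 0}$ a non-negative c\`adl\`ag supermartingale, so Doob's maximal inequality yields $\P(M\ge x)\le e^{-\lambda_0 x}$ and hence finiteness of every exponential moment of $M$ of order strictly less than $\lambda_0$.

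To conclude, convexity of $\psi_L$ together with $\psi_L(0)=0$ gives $\psi_L(\lambda) \le (\lambda/\lambda_0)\psi_L(\lambda_0)$ on $[0,\lambda_0]$; choosing $\lambda$ in the non-empty interval $(\lambda_0 r/(-\psi_L(\lambda_0)),\,\lambda_0)$ forces $\psi_L(\lambda)<-r$, so the above tail bound decays strictly faster than $e^{-rt}$ and $\E[e^{r G_L}]<\infty$ follows via the standard identity $\E[e^{rG_L}]=1+r\int_0^\infty e^{rt}\P(G_L>t)\,\dd t$. The only real subtlety is that $G_L$ is a last-exit time rather than a stopping time, which is why the argument goes through the Markov property at deterministic $t$ and an independent copy of the all-time supremum, rather than a direct splitting of the trajectory at $G_L$ itself.
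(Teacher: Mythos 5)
Your proof is correct, and it takes a genuinely different route from the paper's on both parts. For (a), the paper invokes Rogozin's criterion, checking $\int_1^\infty t^{-1}\P(L_t\geq 0)\,\dd t<\infty$ via the Chernoff bound $\P(L_t\geq 0)\leq e^{t\psi_L(\lambda_0)}$; you instead deduce $\E[L_1]<0$ by Jensen and appeal to the strong law of large numbers. Both work, although your SLLN route requires the small caveat that when $\E[L_1^-]=\infty$ one needs the extended version of \citep[Thm.~36.5]{KenIti1999} (giving $L_t/t\to-\infty$), since $\psi_L(\lambda_0)<\infty$ only controls the positive tail of $L_1$; the paper's Rogozin argument sidesteps this entirely. (An even shorter alternative: $e^{\lambda_0 L_t}$ is a non-negative supermartingale with $\E[e^{\lambda_0 L_t}]\to 0$, so by Fatou it converges a.s.\ to $0$.) For (b), the paper goes through fluctuation theory: it identifies $G_L$ as an infinitely divisible random variable with L\'evy measure $t^{-1}\P(L_t\geq 0)\,\dd t$ via \citep[Lem.~VI.8, Thm.~VI.5]{Bertoin1996}, then applies Sato's exponential-moment criterion \citep[Thm.~25.3]{KenIti1999}. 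You instead use the elementary inclusion $\{G_L>t\}\subseteq\{\sup_{s>t}L_s\geq 0\}$, split $\sup_{s>t}L_s=L_t+\tilde M$ by the Markov property at deterministic $t$, bound $\E[e^{\lambda M}]$ by Doob's maximal inequality for the supermartingale $e^{\lambda_0 L_\cdot}$, and exploit convexity of $\psi_L$ to pick $\lambda<\lambda_0$ with $\psi_L(\lambda)<-r$. Your argument is more self-contained and avoids the last-exit-time fluctuation machinery; the paper's characterization of the L\'evy measure of $G_L$ is the heavier tool, but it pays off because the same identification is reused verbatim in Lemma~\ref{lem:aux_driftlevy} to obtain polynomial moments of $G_L$, where the sharp integrability criterion for infinitely divisible laws is the natural device. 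Both routes yield the same threshold $r<-\psi_L(\lambda_0)$.
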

\begin{proof}
	For the first part, we use Rogozin's criterion, i.e. if $L$ is not identically zero, and if $\int_1^\infty t^{-1}\P(L_t\geq 0)\dd t<\infty$, then $\lim_{t\to\infty}L_t=-\infty$ almost surely~\cite[Thm.~VI.12(i)]{Bertoin1996}. Let $\lambda_0>0$ such that $\psi_L(\lambda_0)<0$. Then, \begin{equation}
		\P(L_t\geq 0)\leq \E[\ind{L_t\geq 0}e^{\lambda_0L_t}]\leq \E[e^{\lambda_0L_t}]=e^{t\psi_L(\lambda_0)}.\label{eq:rogoineq}
	\end{equation} In particular, $\int_1^\infty t^{-1}\P(L_t\geq 0)\, \dd t<\infty$ and by Rogozin's criterion~$L$ drifts to $-\infty$. For the second part, let $\tau(q)$ be an independent exponentially distributed random variable with parameter~$q>0$, \modifbis{$S_{\tau(q)}\coloneqq \sup_{s\in[0,\tau(q)]} L_s$ and $G_{\tau(q)}\coloneqq \sup\{t \leq \tau(q): L_t\vee L_{t-}=S_{\tau(q)}\}$. By~\citep[Lem.~VI.8]{Bertoin1996}, $G_{\tau(q)}$ is infinitely divisible, and letting $q\to0$, we deduce that $G_L$ also has this property}. It is known that for $\lambda>0$, $\E[e^{-\lambda G_{\tau(q)}}]=\exp(\int_0^\infty (\exp(-\lambda  t)-1)t^{-1}e^{-qt}\P(L_t \geq 0) \dd t)$ \citep[Eq.~VI.(5) or Thm.~VI.5(ii))]{Bertoin1996}. Hence, again letting $q\to0$ and using the L{\'e}vy-Khintchine formula, we deduce that the L{\'e}vy measure of $G_L$ is $\nu(\dd t)=t^{-1}\P(L_t\geq 0)\dd t$. For $r\in (0,-\psi_L(\lambda_0))$, by \citep[Thm.~25.3]{KenIti1999}, $G_L$ has exponential moments of order~$r$ if and only if $\int_1^\infty e^{rt}\nu(\dd t)<\infty$, which can be seen to be satisfied by $\nu$ using~\eqref{eq:rogoineq}.
\end{proof}
\begin{lemme}\label{lem:aux:levy}
	Let~$L$ be a real-valued L{\'e}vy process. \modifbis{Assume }$\E[L_1]<0$ and there is $\theta>0$ such that $\int_1^{\infty} x^{1+\theta}\Pi_L(\dd x)<\infty$. Then, for any $r\in (0,\theta)$, there exists $C>0$ such that for sufficiently large~$t$, $\P(L_t\geq 0)\leq C t^{-r}$.
\end{lemme}
\begin{proof}
	The idea is to decompose~$L$ into jumps that are small-positive-or-negative, medium-positive, and large-positive relative to time passed; then we deal with each term separately. Fix $r\in (0,\theta)$. Let $t>1$ and set $a\coloneqq t^{(1+r)/(1+\theta)}$. Consider the decomposition $L=L^{(1)}+L^{(a)}+L^{(2)}$, where $L^{(1)}$, $L^{(a)}$, and $L^{(2)}$ are independent L{\'e}vy processes with L{\'e}vy triplets $(\gamma_L,Q_L,\Pi_L(\, \cdot\, \cap (-\infty,1])$, $(0,0,\Pi_L(\, \cdot\, \cap (1,a) ) )$, and $(0,0,\Pi_L(\, \cdot\, \cap [a,\infty)). $
	Then, $\P(L_t\geq 0)\leq \P(L_t^{(2)}\neq 0)+ \P(L_t^{(1)}+L_t^{(a)}\geq 0)$. We now bound the first and the second term on the right-hand side of this inequality.
	For the first term, note that \begin{align*}
		\P(L_t^{(2)}\neq 0)&\leq 1-e^{-t\Pi_L([a,\infty))}\leq t\, \Pi_L([a,\infty))\leq ta^{-(1+\theta)}\int_{[a,\infty)}x^{1+\theta}\Pi_L(\dd x)\\
		&\leq t^{-r}\int_{[1,\infty)}x^{1+\theta}\Pi_L(\dd x).
	\end{align*}
	Next, we deal with the second term, i.e. $\P(L_t^{(1)}+L_t^{(a)}\geq 0)$. Note that, by construction, the L\'evy processes $L^{(1)}$ and $L^{(a)}$ have bounded positive jumps so they have Laplace transforms defined on $[0,\infty)$, thanks to \citep[Thm.~25.3]{KenIti1999}. By the L{\'e}vy-Khintchine formula, the Laplace exponent of $L^{(1)}$ and $L^{(a)}$, respectively, is \begin{align*}
		\psi_{L^{(1)}}(\lambda)&=\gamma_L\lambda+ \frac{Q_L}{2}\lambda^2+\int_{(-\infty,1]} (e^{\lambda x}-1-\lambda x\1_{[-1,1]}(x))\Pi_L(\dd x),\\ \psi_{L^{(a)}}(\lambda)&=\int_{(1,a)} (e^{\lambda x}-1)\Pi_L(\dd x).
	\end{align*}
	Fix $\varepsilon\in (0,\frac{\theta-r}{1+r})$. Using the classical estimate $\psi_{L^{(1)}}(\lambda) \sim \lambda \psi_{L^{(1)}}'(0) = \lambda \mathbb{E}[L^{(1)}_1]$ and Taylor's theorem (\modifbis{together with $\int_{(1,a)} (\lambda x)^2 \Pi_L(\dd x) \leq \lambda^2 a \int_{(1,a)} x \Pi_L(\dd x)$}) we get that as $a\to\infty$,  \begin{align*}
		\psi_{L^{(1)}}(a^{-(1+\varepsilon)})& \sim a^{-(1+\varepsilon)}\Big(\gamma_L+\int_{(-\infty,-1)}x\Pi_L(\dd x)\Big), \\ \psi_{L^{(a)}}(a^{-(1+\varepsilon)})&=a^{-(1+\varepsilon)}\int_{(1,a)}x\Pi_L(\dd x) + \modifbis{O(a^{-(1+2\varepsilon)})}.
	\end{align*}  In particular, \modifbis{$\limsup_{a \rightarrow \infty} a^{1+\varepsilon}(\psi_{L^{(1)}}(a^{-(1+\varepsilon)})+\psi_{L^{(a)}}(a^{-(1+\varepsilon)}))\leq \gamma_L+\int_{\R\setminus[-1,1] } x\Pi_L(\dd x)=\E[L_1]$}. Now, for sufficiently large~$t$, 
\begin{align*}
	\P(L_t^{(1)}+L_t^{(a)}\geq 0) &\leq \E\left[\ind{L_t^{(1)}+L_t^{(a)}\geq 0}\exp\left(a^{-(1+\varepsilon)}\left(L_t^{(1)}+L_t^{(a)}\right) \right)\right] \\
	&\leq \exp\left(t\psi_{L^{(1)}}(a^{-(1+\varepsilon)})+t\psi_{L^{(a)}}(a^{-(1+\varepsilon)})\right) \\
	&\leq \exp\left(ta^{-(1+\varepsilon)}\E[L_1]/2\right)= \exp\left(t^{(\theta-r-\varepsilon(1+r))/(1+\theta)}\E[L_1]/2\right), 
\end{align*}
where in the last row we used the definition of~$a$. Recall that $\E[L_1]<0$ by assumption. Since $ (\theta-r-\varepsilon(1+r))/(1+\theta)>0$ and because the exponential decay is faster than any polynomial decay, the result follows. Altogether, this completes the proof of the lemma.
\end{proof}

\begin{lemme}\label{lem:aux_driftlevy}
	Let~$L$ be a real-valued L{\'e}vy process. Suppose $\E[L_1]<0$, and there is $\theta>0$ such that $\int_1^{\infty} x^{1+\theta}\Pi_L(\dd x)<\infty$. Then, for any $\alpha\in(0,\theta)$, $\E[G_L^\alpha]<\infty$.
\end{lemme}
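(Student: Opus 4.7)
The strategy is to reuse the infinite divisibility of $G_L$ established (essentially) in the proof of Lemma~\ref{lem:auxlevyexp}, combine it with the polynomial tail estimate of Lemma~\ref{lem:aux:levy}, and conclude with the classical moment criterion for infinitely divisible laws.

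Since $\E[L_1]<0$, the strong law of large numbers for L\'evy processes gives $L_t\to-\infty$ almost surely, so $G_L$ is well-defined and finite $\P$-a.s. Next, exactly as in the second part of the proof of Lemma~\ref{lem:auxlevyexp}, for an independent exponential time $\tau(q)$ with parameter $q>0$, Bertoin's identity \citep[Lem.~VI.8]{Bertoin1996} yields that $G_{\tau(q)}\coloneqq \sup\{t<\tau(q):\, L_t=\sup_{s\leq t}(0\vee L_s)\}$ is infinitely divisible with
\[
\E[e^{-\lambda G_{\tau(q)}}] \ =\ \exp\Bigl(\int_0^\infty (e^{-\lambda t}-1)\,t^{-1}e^{-qt}\P(L_t\geq 0)\,\dd t\Bigr), \qquad \lambda>0.
\]
Letting $q\to 0$ (and noting $G_{\tau(q)}\to G_L$ a.s.) shows that $G_L$ is also infinitely divisible and, by the L\'evy--Khintchine formula, its L\'evy measure is $\nu(\dd t) = t^{-1}\P(L_t\geq 0)\,\dd t$ on $(0,\infty)$.

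Fix $\alpha\in(0,\theta)$. By the classical moment criterion for infinitely divisible laws \citep[Thm.~25.3]{KenIti1999}, $\E[G_L^\alpha]<\infty$ is equivalent to $\int_1^\infty t^\alpha\,\nu(\dd t)<\infty$, i.e.
\[
\int_1^\infty t^{\alpha-1}\P(L_t\geq 0)\,\dd t\ <\ \infty.
\]
Now pick any $r\in(\alpha,\theta)$ (which is possible since $\alpha<\theta$). Lemma~\ref{lem:aux:levy} applied to this choice of $r$ provides constants $C>0$ and $t_0\geq 1$ such that $\P(L_t\geq 0)\leq C\,t^{-r}$ for all $t\geq t_0$. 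Splitting the integral at $t_0$, the first piece is bounded (since $\P(L_t\geq 0)\leq 1$) and for the tail,
\[
\int_{t_0}^\infty t^{\alpha-1}\P(L_t\geq 0)\,\dd t\ \leq\ C\int_{t_0}^\infty t^{\alpha-1-r}\,\dd t\ <\ \infty,
\]
because $\alpha-1-r<-1$. This yields the required finiteness $\E[G_L^\alpha]<\infty$.

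The plan is essentially book-keeping: the conceptual work (infinite divisibility of $G_L$ with the stated L\'evy measure, and the polynomial tail bound on $\P(L_t\geq 0)$) has already been done in Lemmas~\ref{lem:auxlevyexp} and~\ref{lem:aux:levy}. The only mildly delicate point is that the identification of the L\'evy measure of $G_L$ in Lemma~\ref{lem:auxlevyexp} was carried out under an exponential integrability assumption; however, the derivation via \citep[Lem.~VI.8]{Bertoin1996} and the $q\to 0$ limit only requires $L_t\to-\infty$ a.s., which we have here from $\E[L_1]<0$, so no additional hypothesis is needed.
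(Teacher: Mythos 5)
Your proposal is correct and follows essentially the same approach as the paper: deduce the infinite divisibility of $G_L$ and identify its L\'evy measure $\nu(\dd t)=t^{-1}\P(L_t\geq 0)\,\dd t$ by arguing as in Lemma~\ref{lem:auxlevyexp}, then invoke \citep[Thm.~25.3]{KenIti1999} to reduce the finiteness of $\E[G_L^\alpha]$ to $\int_1^\infty t^{\alpha-1}\P(L_t\geq 0)\,\dd t<\infty$, and close by picking $r\in(\alpha,\theta)$ and applying the polynomial tail bound of Lemma~\ref{lem:aux:levy}. Your explicit remark that the infinite-divisibility step of Lemma~\ref{lem:auxlevyexp} only needs $L_t\to-\infty$ (here supplied by $\E[L_1]<0$ and the strong law of large numbers) rather than the exponential-moment hypothesis is a useful clarification, but the underlying argument is identical to the one in the paper.
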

\begin{proof}
	Proceeding as in the proof of Lemma~\ref{lem:auxlevyexp}, we deduce that $G_L$ is an infinitely divisible random variable with L{\'e}vy measure $\nu(\dd t)\coloneqq t^{-1}\P(L_t\geq 0)\dd t$. Fix $\alpha\in (0,\theta)$. $G_L$ has finite $\alpha$th moment if and only if $\int_1^\infty t^\alpha\nu(\dd t)<\infty$~\citep[Thm.~25.3]{KenIti1999}. {The latter condition can be checked }using that, by Lemma~\ref{lem:aux:levy}, for $\alpha'\in (\alpha,\theta)$ there is $C>0$ such that $\P(L_t\geq 0)\leq Ct^{-\alpha'}$ for all large~$t$.
 \end{proof}
\section{Technical estimates}\label{sec:TechEst}
In this section we provide technical estimates that are useful throughout the paper.
\subsection{Inequalities related to the function $y\mapsto m_{r,u}(y)$}
\begin{lemme}\label{claim1}
Assume $\int_{(0,1)}r^{-1}\Lambda(\dd r)<\infty$. Then for any $\delta\in(0,1/2)$, and $b\geq \log(2)$, 
\begin{align}
& \sup_{y\leq e^{-b}}\int_{(0,\delta)} \left ( \int_{(0,1)} \left(\sqrt{\frac{y}{\mr_{r,u}(y)}}-1\right) du \right ) \frac{\Lambda(\dd r)}{r^2}\leq \int_{(0,\delta)}\frac{\Lambda(\dd r)}{r}<\infty, \label{majoplusmoins} \\
& \sup_{y\leq e^{-b}}\int_{(0,\delta)\times (0,1)} \left | \sqrt{\frac{y}{\mr_{r,u}(y)}}-1\right | \frac{\Lambda(\dd r)}{r^2}\dd u<\infty. \label{majova}
\end{align}
\end{lemme}
\begin{proof}
For $y\leq e^{-b}$ and $r,u \in (0,1)$ we have $\mr_{r,u}(y)\leq y/(1-r)$ so $\sqrt{y/\mr_{r,u}(y)}-1\geq \sqrt{1-r}-1\geq -r$. We thus get $\int_{(0,\delta)\times (0,1)} \left (\sqrt{\frac{y}{\mr_{r,u}(y)}}-1 \right )_{-} r^{-2}\Lambda(\dd r)\dd u \leq \int_{(0,\delta)}r^{-1}\Lambda(\dd r)$. 

Then, note that since $y\leq e^{-b}<1/2$ and $r\leq \delta< 1/2$, $y/(1-r)\leq 1$. A straightforward calculation then yields \begin{align*}
		\int_{(0,1)} \frac{1}{\sqrt{\mr_{r,u}(y)}}\dd u &=\textstyle 
		\ind{r\leq y}\left(\sqrt{\frac{y}{1-r}}-\sqrt{\frac{y-r}{1-r}}+\sqrt{\frac{1-r}{y}}\right)+\ind{r>y}\left(\sqrt{\frac{y}{1-r}}+\sqrt{\frac{1-r}{y}}\right)\\
		&=\frac{y+1-r-\sqrt{y(y-r)}\,\ind{r\leq y}}{\sqrt{y(1-r)}}.
	\end{align*}
	Using this, we obtain \begin{align*}
		\int_{(0,1)}\big(\sqrt{y/\mr_{r,u}(y)}-1\big)\dd u = \frac{y(1-\sqrt{1-r/y}\,\ind{r\leq y})-\sqrt{1-r}(1-\sqrt{1-r}) }{\sqrt{1-r}}\eqqcolon C(r,y).
	\end{align*}
	
It then suffices to show that $C(r,y)\leq r$ . For $r>y$, using $\sqrt{1-r}\leq 1-r/2$ and that $\delta\leq 1/2$, we obtain  \[C(r,y)\leq\frac{y-r\sqrt{1-r}/2}{\sqrt{1-r}}<r \frac{2-\sqrt{1-\delta}}{2\sqrt{1-\delta}}\leq r.\]
For $r\leq y$, using $1-\frac{r}{2}-\frac{r^2}{2}\leq \sqrt{1-r}\leq 1-r/2$, we obtain 
	\[\textstyle C(r,y)\leq \frac{1}{\sqrt{1-r}}\left(y\left(\frac{r}{2y}+\frac{r^2}{2y^2}\right)-\frac{r}{2}\sqrt{1-r}\right)\leq \frac{r}{\sqrt{1-r}}\left(1-\frac{\sqrt{1-r}}{2}\right)\leq r\left(\sqrt{2}-\frac{1}{2}\right)\leq r,\]
	where we used again that $\delta\leq 1/2$. We thus get \eqref{majoplusmoins}. Since, for any $f$, $|f|=f+2f_{-}$ the proof is complete.
\end{proof}

\begin{lemme}\label{lem:medianboundspoisson}
For all $r\in(0,1/2]$ and $y\in[0,1]$,
\begin{equation}\label{in:mru}
\left\lvert\int_{(0,1)}(m_{r,u}(y)-y)\dd u\right\rvert \leq 2r^2\quad\textrm{and}\quad\left\lvert\int_{(0,1)}(m_{r,u}(y)-y)^2\dd u\right\rvert \leq 4r^2.
\end{equation}
In particular, for any {$c\in (0,1)$ }and $y\in[0,1]$, we have
	\begin{eqnarray}
		&{\left\lvert\int_{(0,c)}\left(\int_{(0,1)}  (\mr_{r,u}(y) -y) \ \dd u\right)  \ \frac{\Lambda (\dd r)}{r^2}\right\rvert}\leq\ 4\Lambda((0,1)), \label{eq:boundfirstm} \\
		&{\left\lvert\int_{(0,1)}\left(\int_{(0,1)}  (\mr_{r,u}(y) -y)^2 \ \dd u\right)  \ \frac{\Lambda (\dd r)}{r^2}\right\rvert}\leq\ 4\Lambda((0,1)). \label{eq:boundsecondtm} 
	\end{eqnarray}
	Moreover, {let $(\Lambda,\mu,\sigma)\in \Theta$, $y \in [0,1]$, $c\in (0,1)$, and $Y^c$ be as defined in \eqref{Yc} with $Y^c_0=y$. Then for any $t\geq 0$, \begin{eqnarray}
		&\E_y\left[\left(\int_{[0,t]\times (0,c]\times(0,1)}\left(\mr_{r,u}(Y^c_{s-}) -Y^c_{s-}\right)  \tilde{N}(\dd s,\dd r, \dd u)\right)^2 \right]\leq  4\Lambda((0,1))\,t. \label{eq:quad} 
	\end{eqnarray}}
\end{lemme}
\begin{proof}
Let $r\in(0,1/2]$. Let us prove the first inequality in \eqref{in:mru}. For this, we split the integral according to the relative position of $u$ with respect to $(y-r)/(1-r)$ and $y/(1-r)$. A straightforward calculation yields that
 \[
	\int_{(0,1)}(m_{r,u}(y)-y)\dd u=\begin{cases}
		\frac{r^2}{(1-r)^2} (\frac1{2} - y), &\text{ if }r\leq y\leq 1-r,\\
		\frac{1}{(1-r)^2} (-\frac{y^2}{2}+r(1-r)y),&\text{ if }y\leq r, \\
		 \frac{1}{(1-r)^2} (\frac{(1-y)^2}{2}-r(1-r)(1-y)) ,&\text{ if }y\geq 1-r.
	\end{cases}\]
A simple analysis of these expressions yields that $\lvert\int_{(0,1)}(m_{r,u}(y)-y)\dd u\rvert\leq r^2/2(1-r)^2$ and the first inequality follows. 	
Proceeding in a similar way, we obtain
 \[
	\int_{(0,1)}(m_{r,u}(y)-y)^2\dd u=\begin{cases}
		\frac{r^2}{(1-r)^3} (y(1-y)(1+r) - \frac{2r}{3}), &\text{ if }r\leq y\leq 1-r,\\
		\frac{y^2}{(1-r)^3} (\frac{y}{3}-ry+r^2(1-r)),&\text{ if }y\leq r, \\
		 \frac{(1-y)^2}{(1-r)^3} (\frac{1-y}{3}-r(1-y)+r^2(1-r))  ,&\text{ if }y\geq 1-r.
	\end{cases}\]
Using these expressions, one can easily show that $\int_{(0,1)}(m_{r,u}(y)-y)^2\dd u\leq r^2/2(1-r)^3$, and the second inequality in \eqref{in:mru} follows. 

Splitting the integral on the left-hand side of \eqref{eq:boundfirstm} (resp. \eqref{eq:boundsecondtm}) for $r\in(0,1/2)$ and $r\in[1/2,1)$, using \eqref{in:mru} in the first case and, in the second case, that $m_{r,u}(y)\in[0,1]$ for all $r,u\in[1/2,1)\times (0,1)$ and $y\in[0,1]$, we get \eqref{eq:boundfirstm} (resp. \eqref{eq:boundsecondtm}). Finally, by properties of the stochastic integral~\citep[Thm. 4.2.3--(2)]{Applebaum2009} and~\eqref{eq:boundsecondtm}, \begin{align*}
	& \E_y\Big[\Big( \int_{[0,t]\times (0,c]\times (0,1)} (\mr_{r,u}(Y^c_{s-}) -Y^c_{s-})  \tilde{N}(\dd s,\dd r, \dd u)\Big)^2 \Big] \\
	= & \E_y\Big[\int_0^t \Big( \int_{(0,c]}\Big(\int_{(0,1)}  (\mr_{r,u}(Y^c_{s-}) -Y^c_{s-})^2 \ \dd u\Big)  \ \frac{\Lambda (\dd r)}{r^2} \Big ) \dd s\Big] \leq  4\Lambda((0,1))\,t.
	\end{align*}
Hence, we get \eqref{eq:quad}.
	\end{proof}

\begin{lemme}\label{hr2}
Define for $r\in(0,1)$ and $x,y\in(0,1)$,
\begin{equation*}
h_1(r,y)\coloneqq \int_0^1 \lvert m_{r,u}(y)-y\rvert\dd u\quad\textrm{and}\quad h_2(r,x,y)\coloneqq \int_0^1 \lvert m_{r,u}(y)-y-(m_{r,u}(x)-x)\rvert \dd u.
\end{equation*}
Then, there is a constant $K>0$ such that
\begin{equation}
h_1(r,y)\leq K r\quad\textrm{and}\quad h_2(r,x,y)\leq K\frac{r}{(1-r)^2}\lvert x-y\rvert.
    \end{equation}
\end{lemme}

\begin{proof}
For $r\in(0,1/2]$, the inequality for $h_1$ is a direct consequence of \eqref{in:mru} and Cauchy-Schwartz inequality; for $r\in(1/2,1)$ the inequality follows using that $m_{r,u}(y)\in[0,1]$. Let us now prove the inequality for $h_2$.
Define for $\alpha>0$ and $u,x,y\in[0,1]$,
\begin{align*}
d_\alpha^{(1)}(u,x,y)&\coloneqq\lvert\mathrm{median}\left(y-1,y,(u-y)\alpha\right)-\mathrm{median}\left(y-1,y,(u-x)\alpha\right)\rvert,\\
d_\alpha^{(2)}(u,x,y)&\coloneqq\lvert\mathrm{median}\left(y-1,y,(u-x)\alpha\right)-\mathrm{median}\left(x-1,x,(u-x)\alpha\right)\rvert.
\end{align*}
Using the definition of the function $m_{r,u}$, basic properties of the median and the triangular inequality, we obtain
$$h_2(r,x,y)\leq \frac{r}{1-r}\int_{(0,1)}\left(d_{\frac{1-r}{r}}^{(1)}(u,x,y)+d_{\frac{1-r}{r}}^{(2)}(u,x,y)\right)\dd u,$$
In what follows we will show that for any $\alpha>0$
\begin{equation}\label{dab}
\int_0^1 d_\alpha^{(1)}(u,x,y) \dd u\leq 3\lvert x-y\rvert\quad\textrm{and}\quad \int_0^1 d_\alpha^{(2)}(u,x,y) \dd u\leq \left(2+\frac{1}{\alpha}\right)\lvert x-y\rvert,
\end{equation}
from where the result follows.
Fix $\alpha\geq 1$ and assume w.l.o.g. that $0\leq y<x\leq 1$. Consider, for $z\in[0,1]$, the partition of $[0,1]$:
$$\textstyle A_1(z)\coloneqq \left[0,z+\frac{z-1}{\alpha}\right]\cap[0,1],\quad A_2(z)\coloneqq \left(z+\frac{z-1}{\alpha},z+\frac{z}{\alpha}\right]\cap[0,1],\quad A_3(z)\coloneqq \left(z+\frac{z}{\alpha},1\right].$$ 
In all this proof, we adopt the convention $(a,b]=[a,b]=\emptyset$ for $a>b$. Since $x>y$, we have for $u\in A_1(y)$, $(u-x)\alpha\leq (u-y)\alpha\leq y-1$ so that $\int_{A_1(y)}d_\alpha^{(1)}(u,x,y)\dd u=0.$
Now, split $A_2(y)$ into 
$$\textstyle A_{2}^1(y,x)\coloneqq A_2(y)\cap \left[0,x+\frac{y-1}{\alpha}\right]\quad\textrm{ and}\quad A_{2}^2(y,x)\coloneqq A_2(y)\cap \left(x+ \frac{y-1}{\alpha},1\right].$$
Note that for $u\in A_2^1(y,x)$, $(u-x)\alpha\leq y-1\leq (u-y)\alpha\leq y$. Thus, 
\begin{align*}
\int_{A_{2}^1(y,x)}d_\alpha^{(1)}(u,x,y)\dd u&=\int_{A_{2}^1(y,x)}|\alpha(u-y)-y+1|\dd u
\leq \alpha\int_{0}^{(x-y)\wedge \frac1{\alpha}}v\dd v\leq \frac{(x-y)}{2}.
\end{align*}
Similarly, for $u\in A_2^2(y,x)$, $y-1\leq (u-x)\alpha\leq (u-y)\alpha\leq y$, so that 
\begin{align*}
\int_{A_{2}^2(y,x)}d_\alpha^{(1)}(u,x,y)\dd u&=\int_{A_{2}^2(y,x)}|\alpha(x-y)|\dd u\leq \alpha(x-y)\textrm{Length}(A_2^2(y,x))\leq x-y.
\end{align*}
Now, we split $A_3(y)$ into
\begin{align*}
\textstyle A_{3}^1(y,x)&\coloneqq \textstyle A_3(y)\cap \left[0,x+\frac{y-1}{\alpha}\right],\quad A_{3}^2(y,x)\coloneqq A_3(y)\cap \left(x+\frac{y-1}{\alpha}, x+\frac{y}{\alpha}\right],\\
\textstyle A_{3}^3(y,x)&\coloneqq\left(x+ \frac{y}{\alpha},1\right].
\end{align*}
Note that for $u\in A_3^3(y,x)$, $y\leq (u-x)\alpha \leq (u-y)\alpha$, so that 
$\int_{A_3^3(y,x)}d_\alpha^{(1)}(u,x,y)=0.$ Moreover, for $u\in A_3^1(y,x)$, $(u-x)\alpha \leq y-1 < y \leq (u-y)\alpha$, so that
\begin{align*}
\int_{A_{3}^1(y,x)}d_\alpha^{(1)}(u,x,y)\dd u&=\int_{A_{3}^1(y,x)}\lvert y-(y-1)\rvert \dd u\leq\left(x-y-\frac{1}{\alpha}\right)\vee 0\leq x-y,
\end{align*}
and for $u\in A_3^2(y,x)$, $y-1\leq (u-x)\alpha \leq y \leq (u-y)\alpha$, so that 
\begin{align*}
\int_{A_{3}^2(y,x)}d_\alpha^{(1)}(u,x,y)\dd u&=\alpha\int_{A_{3}^2(y,x)}\lvert x+\frac{y}{\alpha}-u\rvert \dd u\leq \alpha\int_0^{(x-y)\wedge \frac{1}{\alpha}}v\dd v\leq \frac{x-y}{2}.
\end{align*}
Thus, $\int_0^1 d_\alpha^{(1)}(u,x,y)\dd u\leq 3\lvert x-y\rvert $. It remains to bound $\int_0^1d_\alpha^{(2)}(u,x,y)\dd u$. This time we partition $[0,1]$ as $A_1(x)\cup A_2(x)\cup A_3(x)$. Since $x>y$, we have for $u\in A_3(x)$, $x\leq (u-x)\alpha\leq (u-y)\alpha$, so that
$$\int_{A_3(x)} d_\alpha^{(2)}(u,x,y)\dd u=(x-y)\textrm{Length}(A_3(x))\leq x-y.$$
Now, split $A_2(x)$ into 
$$\textstyle \bar{A}_{2}^1(x,y)\coloneqq A_2(x)\cap \left(x+\frac{y-1}{\alpha},x+\frac{y}{\alpha}\right]\quad\textrm{ and}\quad \bar{A}_{2}^2(x,y)\coloneqq A_2(x)\cap \left[x+\frac{y}{\alpha},1\right].$$
For $u\in \bar{A}_2^1(x,y)$ we have $x-1\leq (u-x)\alpha\leq x$ and $y-1\leq (u-x)\alpha\leq y$, so that $\int_{\bar{A}_2^1(x,y)} d_\alpha^{(2)}(u,x,y)\dd u=0$. For $u\in \bar{A}_2^2(x,y)$ we have $x-1\leq (u-x)\alpha\leq x$ and $y\leq (u-x)\alpha$. Hence, $$\int_{\bar{A}_2^2(x,y)} d_\alpha^{(2)}(u,x,y)\dd u= \int_{\bar{A}_2^2(x,y)} |\alpha(u-x)-y|\dd u\leq\alpha\int_0^{\frac{x-y}{\alpha}}|v|\dd v=\frac{(x-y)^2}{2\alpha}. $$ 
Finally, split $A_1(x)$ into 
$$\textstyle \bar{A}_{1}^1(x,y)\coloneqq A_1(x)\cap \left[0,x+\frac{y-1}{\alpha}\right]\quad\textrm{ and}\quad \bar{A}_{1}^2(x,y)\coloneqq A_1(x)\cap \left(x+\frac{y-1}{\alpha},x+\frac{x-1}{\alpha}\right].$$
Since for $u\in\bar{A}_1^1(x,y)$, $(u-x)\alpha\leq y-1\leq x-1$, then $$\int_{\bar{A}_1^1(x,y)} d_\alpha^{(2)}(u,x,y)\dd u=\int_{\bar{A}_1^1(x,y)}\lvert x-y\rvert \leq x-y.$$ 
Note that $x-1\leq y$. In particular, for $u\in\bar{A}_1^2(x,y)$, $y-1 \leq (u-x)\alpha\leq x-1\leq y$, and hence
$$\int_{\bar{A}_1^2(x,y)} d_\alpha^{(2)}(u,x,y)\dd u= \alpha\int_{\bar{A}_1^2(x,y)} \lvert x+\frac{x-1}{\alpha}-u\rvert \dd u\leq\alpha\int_0^{\frac{x-y}{\alpha}}|v|\dd v=\frac{(x-y)^2}{2\alpha}. $$ 
Therefore, $\int_0^1 d_\alpha^{(2)}(u,x,y)\dd u\leq (2+1/\alpha)\lvert x-y\rvert $. This completes the proof.
\end{proof}

\subsection{Inequalities related to the function $y\mapsto s_r(y)$}	
\begin{lemme} \label{lem:estimatesel}
For any $y \in [0,1]$ and $r \in (0,1)$, we have 
\begin{equation}\label{eq:estimateselpos}
-\frac{r(1-y)}{1-r} \leq s_r(y)-y \leq 0\quad\textrm{and}\quad\frac{y}{2} \leq \frac{y}{1+r} \leq s_r(y) \leq y.
\end{equation}
Similarly, for all $y\in[0,1]$ and $r\in(-1,0)$, we have
\begin{equation}\label{eq:estimateselneg}
0\leq s_r(y)-y \leq \frac{-ry}{1+r}\quad\textrm{and}\quad y \leq s_r(y) \leq \frac{y-r}{1-r} \leq \frac{y+1}{2}.
\end{equation}
\end{lemme}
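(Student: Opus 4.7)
The cornerstone of the proof is the observation that $s_r$ is defined as the inverse on $[0,1]$ of the continuous increasing map $z\mapsto z+rz(1-z)$; see the discussion following equation \eqref{eq:medianexplanation}. Substituting $z=s_r(y)$ in this relation gives, for every $r\in(-1,1)\setminus\{0\}$ and $y\in[0,1]$, the fundamental identity
\begin{equation}\label{eq:fundidsel}
s_r(y)-y\;=\;-r\,s_r(y)\bigl(1-s_r(y)\bigr),
\end{equation}
together with the (equally useful) factorisation obtained by expanding $1-y$:
\begin{equation}\label{eq:fundidsel2}
1-y\;=\;\bigl(1-s_r(y)\bigr)\bigl(1-r\,s_r(y)\bigr).
\end{equation}
All the estimates asserted in the lemma will follow from these two identities by elementary manipulations, using only that $s_r(y)\in[0,1]$, which is a consequence of the fact that $s_r$ maps $[0,1]$ into $[0,1]$.

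First I would treat the case $r\in(0,1)$. Since the right-hand side of \eqref{eq:fundidsel} is non-positive, one immediately gets $s_r(y)\leq y$. For the lower bound $y/(1+r)\leq s_r(y)$, I would bound $1-s_r(y)\leq 1$ in \eqref{eq:fundidsel} to get $s_r(y)-y\geq -r\,s_r(y)$, hence $s_r(y)(1+r)\geq y$; the bound $y/2\leq y/(1+r)$ is then clear. To get the sharper bound $s_r(y)-y\geq -r(1-y)/(1-r)$, I would combine \eqref{eq:fundidsel} with \eqref{eq:fundidsel2} to write
\[
y-s_r(y)\;=\;\frac{r\,s_r(y)\,(1-y)}{1-r\,s_r(y)},
\]
and then use $s_r(y)\leq 1\Leftrightarrow s_r(y)(1-r)\leq 1-r\,s_r(y)$ to conclude $y-s_r(y)\leq r(1-y)/(1-r)$.

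Next I would handle the case $r\in(-1,0)$ by symmetry of argument. The inequality $s_r(y)\geq y$ is immediate from \eqref{eq:fundidsel} since $-r>0$. For the upper bound $s_r(y)\leq y/(1+r)$ (which yields $s_r(y)-y\leq -ry/(1+r)$), I would again use $1-s_r(y)\leq 1$ in \eqref{eq:fundidsel}; the sign of $-r$ is now positive, so the inequality $s_r(y)-y\leq -r\,s_r(y)$ gives $s_r(y)(1+r)\leq y$, and $1+r>0$ preserves the direction. For the alternative upper bound $s_r(y)\leq (y-r)/(1-r)$, I would instead bound $s_r(y)\leq 1$ inside \eqref{eq:fundidsel}, obtaining $s_r(y)-y\leq -r(1-s_r(y))$, which rearranges to $s_r(y)(1-r)\leq y-r$. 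Finally, the elementary inequality $(y-r)/(1-r)\leq (y+1)/2$ reduces to $(y-1)(1+r)\leq 0$, which holds since $y\leq 1$ and $1+r>0$.

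There is no substantive obstacle here: once identities \eqref{eq:fundidsel} and \eqref{eq:fundidsel2} are in hand, each inequality reduces to an algebraic check of one line. The only mild care is in tracking the sign of $r$ (and of $-r$) to ensure that direction of inequality is preserved when multiplying or dividing, and in choosing which of the two elementary bounds ($1-s_r(y)\leq 1$ or $s_r(y)\leq 1$) is used, since the two choices yield the two distinct bounds stated in the lemma.
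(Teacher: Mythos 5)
Your proof is correct, but it takes a genuinely different route from the paper's. The paper works directly with the explicit closed form $s_r(y)=\frac{1+r-\sqrt{(1+r)^2-4ry}}{2r}$, establishing $s_r(y)\leq y$ by squaring, and the two lower bounds by the Taylor-type inequalities $\sqrt{1-x}\leq 1-x/2$ and $\sqrt{1+x}\leq 1+x/2$; it then dispenses with the $r<0$ case entirely via the symmetry $s_r(y)=1-s_{|r|}(1-y)$. You instead never touch the radical: you exploit only the fact that $s_r$ inverts $z\mapsto z+rz(1-z)$, extract the functional identities $s_r(y)-y=-r\,s_r(y)(1-s_r(y))$ and $1-y=(1-s_r(y))(1-r\,s_r(y))$, and obtain every inequality by dropping a factor bounded by $1$. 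This is arguably cleaner: it replaces two ad hoc square-root estimates with two choices of which factor to bound, and it exposes why there are exactly two distinct estimates in each half of the lemma (they come from the two bounds $1-s_r(y)\leq 1$ and $s_r(y)\leq 1$). The trade-off is that you handle $r<0$ by redoing the argument, whereas the paper's symmetry reduction is slightly more economical; but you could equally well graft the paper's observation $s_r(y)=1-s_{|r|}(1-y)$ onto your method and prove only the $r>0$ case. Both arguments are sound; yours is more conceptual, the paper's more computational.
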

\begin{proof}
Note first that for $r\in(-1,0)$, $s_r(y)=1-s_{|r|}(1-y)$. Thus, it is enough to {prove \eqref{eq:estimateselpos}}.
Assume that $r\in(0,1)$. We start showing $s_{r}(y)\leq y$. Note that $$y\geq s_{r}(y)\Leftrightarrow \sqrt{(1+r)^2-4ry}\geq (1+r)-2yr\Leftrightarrow 0\geq r^2y(y-1),$$
which holds for all $y\in [0,1]$.
Since $\sqrt{1+x} \leq 1 + x/2$, {$x\geq 0$}, we infer that for any $y \in [0,1]$, 
$$\sqrt{(1-r)^2 + 4r(1-y)} \leq (1-r)\left(1+\frac{2r(1-y)}{(1-r)^2}\right),$$ and hence, $s_r(y) -y\geq -r(1-y)/(1-r)$. It remains to prove that $y/(1+r)\leq s_r(y)$. To this end, note that $\sqrt{1-x} \leq 1 - x/2$, $x\in[0,1]$. Thus, the remaining inequality follows using
\[s_r(y)= \frac{1+r}{2r} \left(1 - \sqrt{1 - \frac{4ry}{(1+r)^2}} \right).\]
\end{proof}	

\begin{lemme}\label{bsr}
There are constants $C_1,C_2>0$ such that, for all $r\in(-1,1)$ with $r\neq 0$ and all {$x,y\in[0,1]$},
\begin{equation} \label{bsreq}
\lvert s_{r}(y)-y\rvert \leq C_1  \frac{\lvert r\rvert}{1-|r|} \quad\textrm{and}\quad |s_{r}(x)-x-(s_r(y)-y)|\leq  C_2\frac{\lvert r\rvert}{1-|r|}\lvert x-y\rvert.
\end{equation}
\end{lemme}
\begin{proof}
The first inequality follows directly from Lemma \ref{lem:estimatesel}. For the second inequality, note that $|s_{r}(x)-x-(s_r(y)-y)|=|s_{|r|}(1-x)-(1-x)-(s_{|r|}(1-y)-(1-y))|$, for $r\in(-1,0)$ and {$x,y\in[0,1]$}.
Hence, it is enough to prove the inequality for $r\in(0,1)$ and $x,y\in[0,1]$.  
Note that for $r\in(0,1)$ and $x,y\in[0,1]$, 
\begin{align*}
\lvert s_r(y)-y-s_r(x) +x\rvert&=\frac{\lvert 2r(x-y)+\sqrt{(1+r)^2-4rx}-\sqrt{(1+r)^2-4ry}\rvert}{2r}\\
&=\frac{1}{2r}\left\lvert 2r(x-y)-\frac{4r(x-y)}{\sqrt{(1+r)^2-4rx}+\sqrt{(1+r)^2-4ry}}\right\rvert\\
&=\lvert x-y\rvert\frac{\lvert \sqrt{(1+r)^2-4rx}+\sqrt{(1+r)^2-4ry}-2\rvert}{\sqrt{(1+r)^2-4rx}+\sqrt{(1+r)^2-4ry}}\\
&\leq \frac{\lvert x-y\lvert}{2(1-r)}\left(\lvert\sqrt{(1+r)^2-4rx}-1\rvert+\lvert\sqrt{(1+r)^2-4ry}-1\rvert\right)\\
&= \frac{\lvert x-y\lvert}{2(1-r)}\left(\frac{\lvert r^2+2r(1-2x)\rvert}{1+\sqrt{(1+r)^2-4rx}}+\frac{\lvert r^2+2r(1-2y)\rvert}{1+\sqrt{(1+r)^2-4ry}}\right).
\end{align*}
Thus, we have
$|s_r(y)-y-s_r(x) +x|\leq 3|x-y||r|/(1-|r|),$
which concludes the proof. \end{proof}

\end{appendix}

\begin{acks}[Acknowledgments]
SH is funded by the Deutsche Forschungsgemeinschaft (DFG, German Research Foundation) -- Projektnummer 449823447. SH thanks Steve Evans and the Department of Statistics at UC Berkeley for their hospitality. FC gratefully acknowledges financial support by the Deutsche Forschungsgemeinschaft (DFG, German Research Foundation) -- SFB 1283/2 2021 -- 317210226. 
\end{acks}
\bibliographystyle{imsart-number} 
\bibliography{reference}       


\end{document}